\documentclass[11pt,reqno]{amsart}

\usepackage[latin1]{inputenc}
\usepackage{amsmath,amsthm,amssymb,amscd,mathrsfs,mathtools,color,soul}
\usepackage[shortlabels]{enumitem}
\definecolor{darkgreen}{rgb}{0.0, 0.6, 0.13}
\usepackage[pdftex, colorlinks=true, pdfstartview=FitV, linkcolor=blue, citecolor=red, urlcolor=blue]{hyperref}

\usepackage[margin=1.07in]{geometry}
\setlength{\textwidth}{16.2cm}
\setlength{\oddsidemargin}{0.3cm}
\setlength{\evensidemargin}{0.3cm}

\newtheorem{thm}{Theorem}[section]
 
 \newtheorem{lem}[thm]{Lemma}
 \newtheorem{prop}[thm]{Proposition}

 \theoremstyle{definition}
 \newtheorem{df}[thm]{Definition}
 \theoremstyle{remark}
 \newtheorem{rem}[thm]{Remark}
 
 \numberwithin{equation}{section}

\newcommand{\eps}{\epsilon}
\newcommand{\Rb}{\mathbb R}
\newcommand{\Tb}{\mathbb T}
\newcommand{\Eb}{\mathbb E}
\newcommand{\Hb}{\mathbb H}
\newcommand{\Gc}{\mathcal G}

\newcommand{\Mb}{\mathbb M}

\newcommand{\Zb}{\mathbb Z}
\newcommand{\Jc}{\mathcal J}

\newcommand{\Af}{\mathfrak A}
\newcommand{\Bc}{\mathcal B}
\newcommand{\Nc}{\mathcal N}
\newcommand{\Hc}{\mathcal H}
\newcommand{\Wc}{\mathcal W}
\renewcommand{\Mc}{\mathcal M}
\newcommand{\Rc}{\mathcal R}
\newcommand{\Uc}{\mathcal U}
\newcommand{\Pc}{\mathcal P}
\newcommand{\Ac}{\mathcal A}
\newcommand{\Lc}{\mathcal L}
\newcommand{\Dc}{\mathcal D}
\newcommand{\Ec}{\mathcal E}
\newcommand{\Qc}{\mathcal Q}
\newcommand{\Sc}{\mathcal S}
\newcommand{\Vc}{\mathcal V}
\newcommand{\Ic}{\mathcal I}
\newcommand{\Kc}{\mathcal K}
\newcommand{\Tc}{\mathcal T}

\newcommand{\Xc}{\mathcal X}
\newcommand{\hf}{\mathfrak h}
\newcommand{\nf}{\mathfrak n}
\newcommand{\mf}{\mathfrak m}
\newcommand{\pf}{\mathfrak p}

\newcommand{\ef}{\mathfrak e}
\newcommand{\ff}{\mathfrak f}
\newcommand{\lf}{\mathfrak l}
\newcommand{\qf}{\mathfrak q}
\newcommand{\Cf}{\mathfrak C}
\newcommand{\Yc}{\mathcal Y}
\newcommand{\rf}{\mathfrak r}

\newcommand{\Vs}{\mathscr V}

\linespread{1.08}

\def\T{\mathbb{T}}
\def\Z{\mathbb{Z}}

\definecolor{bluegreen}{rgb}{0.0, 0.3, 0.9}

\begin{document}

\author{Yu Deng}
\address{University of Chicago}
\email{yudeng@uchicago.edu}

\author{Alexandru D. Ionescu}
\address{Princeton University}
\email{aionescu@math.princeton.edu}

\author{Fabio Pusateri}
\address{University of Toronto}
\email{fabiop@math.toronto.edu}

\thanks{
\noindent
Y. D. was supported in part by a Simons Collaboration Grant on Wave
Turbulence, NSF grant DMS-2246908, and a Sloan Fellowship. A. D. I.
was supported in part by NSF-FRG grant DMS-2245228, by NSF-UEFISCDI
grant DMS-2407694, and a Simons Collaboration Grant on Wave Turbulence.
F. P. was supported in part by a start-up grant from the University of
Toronto,
and NSERC grants RGPIN-2018-06487 and RGPIN-2025-06419.}

\title[Wave Turbulence for 2D gravity waves: propagation of randomness]{On
the wave turbulence theory of 2D gravity waves, II: \\ propagation of randomness}

\begin{abstract}
\normalsize
This is a sequel to our work \cite{DIP}, initiating the rigorous study of wave turbulence 
in the context of the water waves equations. 
We combine energy estimates, normal forms, and probabilistic and combinatorial arguments 
to complete the construction of long-time
solutions with random initial data for the $2$d ($1$d interface) gravity water waves system 
on large tori. 

More precisely, we consider solutions to the Cauchy problem with spatial period $R\gg 1$, 
which have randomly distributed
phases at the initial time, with an $L^\infty$ norm of size $\approx\epsilon$ and large energy 
of size $\approx\epsilon^2 R$.
Under the assumption $\epsilon\in[R^{-8/3+},R^{0-}]$ we show that randomness propagates on a time interval $[0,T_1]$ with $T_1 = O(\epsilon^{-8/3+})$;
in particular, solutions satisfy similar $L^2$ and $L^\infty$ type bounds 
as those satisfied at the initial time on this long time interval.

This is the first long-time regularity result for solutions of water waves systems with large energy (but small local energy), which is the correct setup for applications to wave turbulence. Such a result is only possible in the presence of randomness. The proof requires several novel ideas, including a robust framework involving Feynman trees, compatible with normal forms, a new time and frequency dependent renormalization procedure, and new combinatorial arguments that are specific to one dimensional problems.
\end{abstract}

\maketitle

\setcounter{tocdepth}{1}

\begin{quote}
\tableofcontents
\end{quote}

\medskip

\section{Introduction} 

Our work in this paper is motivated by the outstanding problem of understanding 
rigorously the wave turbulence theory of water waves.
This is a classical problem in Mathematical Physics, going back to work of Hasselmann \cite{Has1, Has2}, and a key part of Hasselman's {\it{theory of climate variability}}. We refer the reader to the recent book \cite{Naz} 
for a more complete description of the problem and many examples.

The wave turbulence problem has received intense attention in recent years, mainly in the context of simpler 
dispersive models such as semilinear Schr\"{o}dinger equations;
see for example \cite{BGHS1, BGHS2, ColGer1, ColGer2, JinMa, DeHa1, DeHa2, DeHa3, DeHa4, StafTran}. 
The focus in these papers has been the rigorous derivation of the {\it{wave kinetic equation}} (WKE).
At the analytical level, this amounts 
to understanding precisely the dynamics of solutions corresponding to randomized initial data, 
in the large box and weakly nonlinear limit, and on long time intervals.

Our goal is to initiate a similar investigation for irrotational water waves models. 
There is a fundamental difference between water waves, which are quasilinear evolutions, 
and semilinear models: the solution cannot be constructed using the Duhamel formula,
due to unavoidable loss of derivatives. In particular, the solution cannot be described using 
iterations of the Duhamel formula (Feynman trees), which is the central idea of the analysis of semilinear models. 

To address this fundamental difficulty we propose a new mechanism, based on a combination of two main ingredients:

(1) {\it{Energy estimates:}} In our first paper \cite{DIP} we prove energy estimates. These are {\it{deterministic}} estimates, in the sense that they hold for all solutions. The key point is to show that the energy increment is controlled by an $L^\infty$-based 
norm of the solution, which can then be exploited effectively in the analysis of solutions with random initial data.

(2) {\it{Propagation of randomness and pointwise bounds:}} 
The second step consists in proving pro\-pa\-gation of randomness 
(in a suitable form) then deriving optimal pointwise bounds on randomized solutions in a large box. 
For this one can use the iterated Duhamel formula; the point is that losses 
of derivatives are allowed at this stage, thanks to the accompanying estimate on the high order energy.

In this paper we complete the second part of this program for the problem of pure
gravity water waves in two dimensions (1d interface).

\subsection{The water waves equations}
We consider the two-dimensional irrotational gravity water wave problem 
on a large torus of size $R$, written in the form
\begin{equation}\label{ww0}
\left\{
\begin{aligned}\partial_th&=G(h)\phi,
\\
\partial_t\phi&=-h-\frac{1}{2}(\partial_x\phi)^2+\frac{(G(h)\phi+\partial_xh\cdot\partial_x\phi)^2}{2(1+(\partial_xh)^2)},
\end{aligned}
\right.
\end{equation} 
where $G(h)=\sqrt{1+(\partial_xh)^2}\cdot\Nc(h)$ and $\Nc(h)$ is the Dirichlet 
to Neumann operator associated with the (periodic) domain below the graph of a function $h$,
$\Omega_t := \{(x,y) \,: \, y<h(x), \, x \in \mathbb{T}_R\}$ where $\mathbb{T}_R := \mathbb{R}/(2\pi R)$.
We denote the complex physical unknown 
\begin{equation}\label{complex}u:=h+i|\partial_x|^{1/2}\phi,
\end{equation}
where $\phi$ is the trace on the interface $\partial \Omega_t$ of the fluid's velocity potential in $\Omega_t$.

In this work, which is a sequel to our work \cite{DIP}, we prove the long-time existence of
random solutions for the full equations \eqref{ww0} for the Cauchy problem with random data.

\subsection{The random data problem and main result}  
We now set up the random initial data problem for the equation \eqref{ww0}. 
Let $\{g_k(\omega)\}$ be i.i.d. normalized complex Gaussian random variables, 
and $\psi$ be a fixed, rapidly decaying Lipschitz function on $\Rb$. 
We set the random initial data $u(0)=u_{\mathrm{in}}$, for the unknown $u$ defined in \eqref{complex}, as
\begin{equation}\label{data_u}
u_{in}(x) = \frac{1}{2\pi R}\sum_{k\in\Zb_R\setminus \{0\}}\epsilon R^{1/2} \sqrt{\psi(k)}g_k(\omega)e^{ikx}
\end{equation} 
where $\Zb_R=R^{-1}\Zb$. Without loss of generality we may assume that $\int_{\Tb_R} h = 0$, so that our solution has zero mean, consistent with the assumption that the sum in \eqref{data_u} is taken over $k\neq 0$;
this condition is preserved by the solution flow of \eqref{ww0}. 
We now state our main result as follows.

\begin{thm}\label{main} 
Assume that $0<\theta_0\leq 1/10$, $N_0$ is an integer $\geq (100/\theta_0)^{3}$, and $\eta_0:=(\theta_0/100)^3$. Assume that $R\gg 1$ is sufficiently large (depending on $\theta_0,N_0$), $\epsilon\in[R^{-8/3+\theta_0},R^{-\theta_0}]$, and set $T_1:=\epsilon^{-8/3+\theta_0}$. Finally, assume that $\psi\in C^1(\Rb)$ satisfies the bounds
\begin{equation}\label{databoun}
|\psi(x)|+|\psi'(x)|\leq (1+|x|)^{-N_0-10},\qquad\text{ for any }x\in\Rb.
\end{equation}
Then, with probability $\geq 1-e^{-R^{\eta_0}}$, the equation (\ref{ww0}) with initial data (\ref{data_u}) has a smooth solution $u=h+i|\partial_x|^{1/2}\phi\in C([0,T_1]:H^{N_0})$
on the time interval $[0,T_1]$, which satisfies the estimates
\begin{align}\label{mainconc}
\|u(t)\|_{H^{N_0}} \leq \epsilon^{1-\theta_0}R^{1/2}, \qquad \sum_{\alpha\leq 6}\|D^\alpha_xu(t)\|_{L^\infty} \leq\epsilon^{1-\theta_0} , \qquad \text{ for any }t\in[0,T_1].
\end{align}
\end{thm}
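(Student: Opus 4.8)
The plan is a continuity (bootstrap) argument on $[0,T_1]$ propagating the two bounds of \eqref{mainconc} simultaneously, the first ingredient being the deterministic energy estimates of \cite{DIP} and the second a finite-order Duhamel (Feynman tree) expansion of the solution. First, on an event of probability $\ge 1-e^{-R^{\eta_0}}$ the data \eqref{data_u} satisfies $\norm{u_{in}}_{H^{N_0}}\les\eps R^{1/2}\ll\eps^{1-\theta_0}R^{1/2}$, by concentration for the quantity $\norm{u_{in}}_{H^{N_0}}^2\approx\eps^2R\sum_k\langle k\rangle^{2N_0}\psi(k)|g_k|^2$, which is quadratic in the i.i.d. Gaussians and whose mean is $\les_\psi\eps^2R$ by \eqref{databoun}, and $\sum_{\alpha\le 6}\norm{D_x^\alpha u_{in}}_{L^\infty}\les\eps R^{\eta_0}\ll\eps^{1-\theta_0}$, by a union bound over a fine net in $\Tb_R$ for the Gaussian field $D_x^\alpha u_{in}$, whose pointwise variance is $\les\eps^2$; here the smallness $\eps\le R^{-\theta_0}$ is what makes $\eps^{-\theta_0}$ beat the $\psi$-dependent constants and $R^{\eta_0}$. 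Let $T^\ast\in[0,T_1]$ be the largest time for which \eqref{ww0}--\eqref{data_u} has a smooth solution $u\in C([0,T^\ast]:H^{N_0})$ satisfying \eqref{mainconc} with the constants doubled; local well-posedness of \eqref{ww0} in $H^{N_0}$ gives $T^\ast>0$. It then suffices to recover the undoubled constants on a further high-probability event, which forces $T^\ast=T_1$; persistence of regularity for \eqref{ww0} yields the claimed smoothness.

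For the high Sobolev bound I would invoke the normal-form-corrected energy estimates of \cite{DIP}: there is an energy $\Ec_{N_0}(t)\approx\norm{u(t)}_{H^{N_0}}^2$ whose increment over $[0,t]$ is bounded by a space-time integral that is multilinear in $u$ and carries at least one $L^\infty$-type factor of the solution --- this is precisely the structure suited to the present regime of large energy ($\approx\eps^2R$) but small local energy. On the scale $T_1=\eps^{-8/3+\theta_0}$, however, merely inserting the bootstrap bound $\sum_{\alpha\le 6}\norm{D_x^\alpha u}_{L^\infty}\le 2\eps^{1-\theta_0}$ is not enough, since $\eps^{2-2\theta_0}T_1$ is large. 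One must instead exploit the oscillatory factor $e^{is\Omega}$, $\Omega=\sum\pm|k_i|^{1/2}$, of the remaining inputs: substituting the Feynman-tree description of $u$ from the next step, integrating by parts in $s$, and using the absence of low-order resonances for the dispersion relation $|k|^{1/2}$ together with the $1/R$ cancellation from the random phases, one arranges that the increment over $[0,T_1]$ stays $\le\tfrac12\eps^{2-2\theta_0}R$, so $\Ec_{N_0}(t)\le\eps^{2-2\theta_0}R$ and $\norm{u(t)}_{H^{N_0}}\le\eps^{1-\theta_0}R^{1/2}$, improving the bootstrap.

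The pointwise bounds --- the propagation of randomness --- are the heart of the argument. Because \eqref{ww0} is quasilinear the Duhamel formula cannot be iterated indefinitely (loss of derivatives), so I would write $u=\sum_{j<M}u^{(j)}+u^{(\ge M)}$ for $M=M(\theta_0,N_0)$ large, where $u^{(j)}$ is the $j$-th Duhamel iterate of the normal-form-reduced equation --- a finite sum over Feynman trees with $j$ Gaussian leaves $g_k$, with amplitudes given by iterated time integrals of the interaction kernels against the phases $e^{is\Omega}$ --- and the remainder $u^{(\ge M)}$, which loses derivatives, is controlled in $H^{N_0}$ by the energy estimate of the previous step (this is the whole point of the hybrid scheme: finitely many iterations, the loss absorbed by the energy). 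Then one estimates $D_x^\alpha u^{(j)}$ for $\alpha\le 6$ pointwise: (i) by Wiener chaos / hypercontractivity a degree-$j$ Gaussian polynomial is bounded by $R^{\eta_0/2}$ times its $L^2_\omega$ norm off an $e^{-R^{\eta_0}}$-event; (ii) that $L^2_\omega$ norm is evaluated via Wick pairings, each pairing identifying two leaf frequencies and gaining a factor $\les R^{-1}$ times a convergent frequency sum; (iii) a new time- and frequency-dependent renormalization removes the secular (growing-in-$t$) terms produced on the resonant set over the long interval $[0,T_1]$, by absorbing the quartic self-interaction into a renormalized phase; and (iv) genuinely one-dimensional combinatorial estimates bound the number of trees and, above all, the measure of the (near-)resonant frequency configurations. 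The two-sided constraint $\eps\in[R^{-8/3+\theta_0},R^{-\theta_0}]$ is exactly what makes these $1/R$ gains outweigh the growth accumulated over the $\eps^{-8/3+\theta_0}$ iterations in time; summing over $j<M$ yields $\sum_{\alpha\le 6}\norm{D_x^\alpha u(t)}_{L^\infty}\le\eps^{1-\theta_0}$, improving the bootstrap, and intersecting the finitely many high-probability events keeps the total probability $\ge 1-e^{-R^{\eta_0}}$.

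The main obstacle is not any single one of the three ingredients --- energy estimates, normal forms, Feynman-tree probabilistics with renormalization --- each of which is delicate on the time scale $\eps^{-8/3+\theta_0}$, but rather their mutual compatibility: the normal-form transformation yielding the cubic energy estimate must be reconciled with the one generating the tree expansion; the renormalization that kills the secular growth in the pointwise bounds must not destroy the structure exploited in the energy estimate; and, because the 1D resonance geometry is rigid, the frequency-counting bounds behind (ii) and (iv) --- which in higher dimensions would follow from the size of an algebraic variety --- require new combinatorial input to handle the clustering of near-resonances. Designing this time/frequency renormalization and the accompanying one-dimensional counting is where I expect the real work to lie.
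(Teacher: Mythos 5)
Your outline reproduces the paper's philosophy at the coarsest level (deterministic energy estimates plus a probabilistic Feynman--tree expansion with a time--frequency renormalization and 1D counting), but two of your steps do not work as stated. First, the energy step: your claim that inserting the bootstrap bound into the energy estimate fails ``since $\epsilon^{2-2\theta_0}T_1$ is large'' misreads what \cite{DIP} provides. Theorem \ref{IncremBound} gives an increment $C_{N_2}A^2\int_{t_1}^{t_2}[\varepsilon(s)]^3\,ds$, \emph{cubic} in the $L^\infty$-type norm, so with $\varepsilon(s)\le\epsilon^{1-\theta}$ and $A\approx\epsilon^{1-\theta}R^{1/2}$ the increment over $[0,T_1]$ is $\approx\epsilon^{2-2\theta}R\cdot\epsilon^{1/3+\theta_0-3\theta}$, already a gain; direct insertion of the pointwise bootstrap bound closes the $H^{N_0}$ bound deterministically, exactly as in Proposition \ref{mainboot}. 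Your proposed fix --- substituting the Feynman-tree description of $u$ into the energy increment, integrating by parts in time and invoking random-phase cancellations --- is both unnecessary and problematic: the tree expansion is only available for the frequency-truncated, low-frequency part of the solution, using it inside the top-order energy reintroduces the derivative loss the energy method exists to avoid, and it would make the $H^{N_0}$ bound probabilistic, destroying the clean split (deterministic energy, probabilistic pointwise control) on which the scheme rests.

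The more serious gap is in the pointwise step. You truncate the Duhamel iteration in homogeneity only and assert that the remainder $u^{(\ge M)}$ ``is controlled in $H^{N_0}$ by the energy estimate.'' That does not give \eqref{mainconc}: the energy bound for the remainder is $\lesssim\epsilon^{1-\theta}R^{1/2}$, the same size as the full solution, and no Bernstein/Sobolev argument turns this into the required $o(\epsilon)$ smallness in a $W^{6,\infty}$-type norm at low frequencies. What is actually needed is a stability estimate for the linearized flow around the expansion, and for a quasilinear system this again threatens derivative loss. The paper handles this by truncating in frequency at $2^{K_{\mathrm{tr}}}\approx R^{1/A}$ (so losses cost only $R^{O(1/A)}$, absorbed by $\epsilon^{N\theta}$), by comparing $w$ with the truncated solution $w_{\mathrm{tr}}$ through the linear equation \eqref{error1.1'}, and by proving the key bound \eqref{error1.2'} \emph{probabilistically}, via a parametrix for $1-\Lc_0$ built from flower trees with an $R^8$ loss (Propositions \ref{properror1}, \ref{propdiff}, \ref{prop.linear}); the high-frequency part of the $L^\infty$ bound then comes from the energy together with the largeness of $N_0$, as in the last step of Proposition \ref{mainboot}. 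Nothing in your sketch plays the role of the frequency truncation or of the inversion of the random linearized operator, and without them the hybrid scheme does not close. Your items (i)--(iv) do correspond to the machinery of Sections \ref{SecRan}--\ref{sec.molecule}, but note that the 1D counting alone is not sufficient: the paper also needs the cancellation along irregular chains, which in turn forces the second (cubic) normal form and the $C^{1/2}$ regularity \eqref{symbolcubic} of the surviving symbol --- an essential ingredient your outline does not anticipate.
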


\subsection{Remarks}\label{ssecdisc1} We make some comments on the statement of Theorem \ref{main}.

\smallskip

(1) {\it The smallness assumption.} We emphasize that our solutions are assumed to be small in $L^\infty$-type norms (or, equivalently, in local energy norms per unit area), but have very large total energy. This is the proper framework for understanding wave turbulence and for physically relevant applications connected to the Hasselman theory, since this is the natural appearance of oceans: locally small perturbations of the flat solution, but which have large total energy due to the large size of the oceans.

This is in sharp contrast with all the previous (deterministic) extended lifespan results for solutions of the water waves system, such as those in \cite{ADa, ADb, BertiDelort, BeFeFr, BeFePu, DIPP, GMS2, HIT, IT, IT2, IoPu2, IoPu4, IoPu5, Wa1, WuAG, Wu3DWW, WuNew, ZhengWW, Zheng22} and others, in which the total energy of the solution at the initial time (and, in fact, high order energies involving high Sobolev norms) was assumed to be small.
\smallskip

(2) {\it Propagation of randomness and pointwise control}. Given that our solutions have large total energy, long time existence is only possible due of randomness. The main point of the theorem is therefore to prove that the solutions retain
 some form of randomness on long time intervals, which, in particular, leads to uniform pointwise control as stated in \eqref{mainconc}. 

This requires an elaborate scheme, starting from the Duhamel formula and further expanding the solutions in terms of the initial data using Feynman trees. A key difficulty is that our water waves system is quasilinear, so the standard scheme based on the Duhamel formula loses derivatives. We address this by truncating the solution in frequency, and then controlling the original solution $u$ in high Sobolev norms (using a deterministic energy argument) and the truncated solution $u_{\mathrm{tr}}$ as well as the difference $u-u_{\mathrm{tr}}$ in $L^\infty$-type norms (using a probabilistic argument based on Feynman expansions).
\smallskip

(3) {\it Lifespan of the solution.} Wave turbulence can only be observed if we control the evolution for a long time, and only in the presence of randomness. At the analytical level, the basic question then becomes for how long we can control the solution given random initial data of $L^\infty$ norm $\|u_{in}\|_{L^\infty}\approx \epsilon\ll 1$ and energy $\mathcal{E}\approx \epsilon^2 R\gg 1$ (as in \eqref{data_u}). 

Ideally, in a cubic problem one would like to control the solution for a time $T_\epsilon\approx\eps^{-4}$, which is the {\it kinetic time} at which one could hope to observe wave turbulence and derive the wave kinetic equation. This is precisely the conclusion in the case of the cubic NLS in dimensions $\geq 2$, as proved in a sequence of
remarkable papers \cite{BGHS1, BGHS2, DeHa1, DeHa2, DeHa3}, but the problem remains wide open for more complex (especially quasilinear) evolutions such as the water waves systems.

In the case of water waves we are not able to reach the kinetic time due to multiple issues, as described 
in Subsection \ref{MainIdeas} below. A more realistic first step is to establish control of the solution for as long as possible, both at the energy level and in terms of propagation of randomness, with the hope that this is a key step towards understanding a full wave turbulence theory. Our main claim in this paper is that in the case of irrotational gravity water waves in 2D we can have such control for a time $T_1\approx\epsilon^{-8/3+}$. 
\smallskip

(4) {\it Parameters.} For the rest of this paper, we fix several parameters. As in Theorem \ref{main} assume that $\theta_0\in (0,1/10]$ is given, and set $\theta:=\theta_0/100$. We also fix $N_0\geq \theta^{-3}$ as the highest Sobolev regularity we will propagate on our solution, and $\eta_0:=\theta^3$ sufficiently small. Later on, in Section \ref{SecRan}, we will fix two more constants $A=\lfloor\theta^{-1}\rfloor$ and $N=\lfloor\theta^{-2}\rfloor$.

The implicit constants in $\lesssim$ may depend on the parameters $\theta_0,N_0$, while $R$ is assumed to be sufficiently large relative to these parameters, and $\epsilon\in[R^{-8/3+\theta_0},R^{-\theta_0}]$.

\subsection{Main ideas of the proof}\label{MainIdeas} We provide a complete outline of the proof in Section \ref{SecStra}. 
The basic idea is to adapt and extend the analysis developed recently by Deng-Hani \cite{DeHa1, DeHa2, DeHa3} in the case of the cubic NLS to our more complicated water waves system. There are several significant differences, which lead to substantial new difficulties\footnote{These issues should be thought of as generic, and are likely to appear in the study of the wave turbulence theory of many dispersive evolution equations, both semilinear and quasilinear.}, such as:
\smallskip

$\bullet\,\,\,$ Truncation of the solution in frequency. The water waves system is a quasilinear system, so the solution cannot be described using just the Duhamel formula due to loss of derivatives. However, in order to be able to use the main randomness assumption on the data, we need to express a substantial part of the solution as, essentially, an algebraic expression in terms of the initial data.

Our approach is to first truncate the solution at a certain frequency level (depending on $R$ and $\theta_0$). We can then control the high frequencies using the deterministic energy estimates proved in our earlier work \cite{DIP}, and the low frequencies using Feynman expansions (derived from the Duhamel formula) and the randomness of the initial data. 
\smallskip

$\bullet\,\,\,$ {\it Normal form variable.} The original water waves system \eqref{ww0} is quadratic in the variable $u$. However, in order to control the solution for a longer time, we need to perform a normal form and write the system in terms of a new variable $w$. This is possible, of course, since the system does not contain quadratic resonances, but the issue is that our randomness assumption is in terms of the original variable $u$ (as in \eqref{data_u}), not in terms of the modified normal form variable $w$. 

To address this issue we develop a new and more elaborate notion of trees, in the context of Feynman expansions. Our trees naturally contain two types of nodes: the interaction nodes, corresponding to the nonlinear interactions, and the normal form nodes which correspond to the transition to the normal form variable, and which appear right before the leaves. 

In addition to a first (quadratic) normal form, it turns out that we also need 
to perform a second (cubic) partial normal form which eliminates non-resonant contributions.
For the remaining, potentially resonant, interactions we prove a $C^{1/2}$ smoothness property for the symbol.
Such a regularity is then needed in the combinatorial part of the argument to prove a necessary
cancellation between``irregular chains" (first introduced in \cite{DeHa1} for the cubic NLS, where the symbol
is the constant $1$).


\smallskip  

$\bullet\,\,\,$ {\it Time and frequency dependent renormalization.} 
Renormalization is a key step in order to remove certain nonlinear interactions that do not satisfy good estimates and identify a ``profile" of the solution that can be controlled for a long time. In the case of the cubic NLS only a simple renormalization procedure is needed, namely multiplication by a linear oscillatory factor that depends only on the (conserved) mass of the solution.

In our case, such a simple renormalization is not possible. We employ instead a two-step renormalization process. The first step is explicit and involves weakening the contribution of the so-called ``trivial cubic resonances", which represent the largest nonlinear contribution in the problem. The second step is more subtle and involves an implicit renormalization factor that depends on both time and frequency, and which weakens higher order resonant contributions. 

It is important to notice that both of these renormalization steps are {\it{deterministic}}: 
they depend only on the profile $\psi$ in \eqref{data_u}, not on the full random initial data $u_{in}$.
\smallskip

$\bullet\,\,$ {\it Unfavorable counting in dimension $1$ evolutions.}
One dimensional problems appear naturally in fluids and other physical situations, but have not been treated
in the series of papers on the cubic NLS cited above. Only the recent work \cite{Va} dealt with a $1$d MMT model.
In this work we notice some tree structures\footnote{This could be named ``broken chains" for their
resemblance to the ``irregular chains" of Deng-Hani \cite{DeHa1}.}
whose size diverges for times sightly
larger than our existence time $T_1$. See Appendix \ref{SecDiv} for details. 
This divergence is due to some unfavorable counting in one dimension, where a small ball
of size $\lambda$ only contributes linearly, while it contributes $\lambda^d$ in dimension $d$; 
see \eqref{eq.lower_bd} and \eqref{eq.lower_bd_1}.
Moreover, unlike the case of ``irregular chains" \cite{DeHa1} there do not seem to be apparent
cancellations\footnote{It is conceivable that (approximate) integrability would play a role here, 
but we leave this problem open for further investigation.} within these structures.

\subsection{Further results}\label{further} The ideas developed here can be adapted to other models, to prove long-time regularity results for data of small local energy, in the presence of randomness. There are two main factors that ultimately determine the length of time on which one can establish control: (1) the combinatorics of the problem, in particular the linear dispersion relation, the structure of resonances, and the dimension of the underlying space, and (2) the range of validity of a high order energy estimate. 

For example, in the case of irrotational water waves in 3D (a system similar to \eqref{ww0} on a 2D large torus $\Tb_R^2$), with random initial data similar to \eqref{data_u}, our approach would give control the solution on the time interval $[0,T_1]$, where 
\begin{equation}\label{MMT0}
T_1\approx\epsilon^{-2}.
\end{equation}
The limitation here is mainly due to the energy estimate: since there are nontrivial cubic resonances we can only control high order energy norms of the solution on a time of length $\approx \epsilon^{-2}$, where $\epsilon$ measures $L^\infty$-type norms of the solution. Randomness and pointwise control can then be propagated on the time interval $[0,T_1]$ by adapting the construction in this paper.

Another widely used wave turbulence model is the MMT equation
\begin{equation}\label{MMT1}
\partial_t\psi+i|\partial_x|^\alpha\psi=\pm i |\partial_x|^{\beta/4}\big(|\partial_x|^{\beta/4}\psi\cdot \big||\partial_x|^{\beta/4}\psi\big|^2\big)
\end{equation}
in one or two dimensions $x\in\T_R$ or $x\in\Tb_R^2$, for various parameters $\alpha\in(0,2]\setminus \{1\}$ and $\beta\in\Rb$. The model has been introduced by Majda--McLaughlin--Tabak \cite{MMT}, and has been studied extensively both at the numerical and at the theoretical level. 

In the special (semilinear) case $\beta=0$ our analysis in this paper leads to a time of existence of the solution $T_1\approx\epsilon^{-8/3+}$, improving on the recent result of Vassilev \cite{Va}. Energy estimates are not needed in this case, since there is no derivative loss in the Duhamel formula, and one can proceed like in this paper with several simplifications (no truncation step is needed, one can use only standard ternary Feynman trees, without normal form branching nodes, and the renormalization involves only the $L^2$ norm of the solution). 

On the other hand, for larger values $\beta>0$ one would likely need to also prove energy estimates to keep control of some high order energy of the solution, which would lead to a shorter lifespan of the solution with $T_1\approx\epsilon^{-2}$ as in \eqref{MMT0}.

\subsection{Organization} 

The rest of the paper is organized as follows: 
in Section 2 we collect all the ingredients needed for the proof of Theorem \ref{main} and reduce matters to proving two propositions: Proposition \ref{normalprop} concerning normal forms and long expansions of the nonlinearity, and Proposition \ref{apriori} concerning propagation of randomness and pointwise control of the solution. 

In Section \ref{SecRan} we introduce the main ideas of the probabilistic argument, including truncation, the construction of approximate solutions using suitable Feynman trees expansions, 
and the time and frequency dependent renormalization procedure. 
The point is to reduce the proof of  Proposition \ref{apriori} to Propositions \ref{propjr} and \ref{propdiff}. 
The last part of Section \ref{SecRan} contains some results on counting bounds that will be used throughout the paper.

Section \ref{sec.irre} treats the irregular chains. Here,  to obtain the main cancellation as in \cite{DeHa1} we 
need to exploit the $C^{1/2}$ smoothness of the symbols of the transformed water waves equations 
proved in Proposition \ref{normalprop}.
Section \ref{sec.molecule} contains the heart of the combinatorial analysis and the estimates for molecules,
eventually leading to the main bound in Proposition \ref{propcong_2}.
Section \ref{sec.rem} is dedicated to the proof of Proposition \ref{propdiff} which
estimates the difference between the profile of the approximate solution constructed by Feynman trees
expansions, and the true (renormalized) profile.

Section \ref{sectionNormal} contains the proof of the normal form Proposition \ref{normalprop}.
Finally, Section \ref{SecDiv} shows an example Feynman diagrams whose size diverges 
in the limit $\epsilon\rightarrow 0$ for times larger than $\epsilon^{-8/3-}$, showing the almost optimality 
of our current arguments.

\medskip
\section{Preliminaries and strategy of the proof}\label{SecStra}
In this section we describe the general strategy for the proof of Theorem \ref{main},
which is composed of several steps. 
We state all the main propositions that lead to the proof of the main theorem, and prove some of the more immediate supporting results.

After introducing some notation,
we begin our proof with a normal form transformation that eliminates all quadratic terms (in $u$)
in the equations, as well as the non-resonant cubic terms.
This is the main content of Proposition \ref{normalprop}
together with an homogeneity expansion for the water waves equations up to arbitrary order $A$, 
with suitable control on the remainders. The proofs of these results are given in Section \ref{sectionNormal}.
Then we show how the initial data for the transformed variable essentially preserves 
the initial randomness \eqref{data_u} and recall the main energy increment Proposition from \cite{DIP}.
Subsection \ref{SsecPR} contains the main probabilistic result of the paper with the 
``propagation of ramdomness'' stated in Proposition \ref{apriori}. The strategy for this proposition
is explained in more details in Section \ref{SecRan}.
Using Proposition \ref{apriori} 
we perform our main bootstrap argument in Proposition \ref{mainboot}, thereby obtaining the main theorem.

\subsection{Some notation}\label{SecNot}
We introduce first some notation. For functions $f\in L^2(\mathbb{T}_R)$ we define the Fourier transform
\begin{equation}\label{FT1}
\widehat{f}(\xi)=\mathcal{F}(f)(\xi):=\int_{\mathbb{T}_R}f(x)e^{-i\xi x}\,dx,\qquad \xi\in\mathbb{Z}/R:=\{n/R:\,n\in\mathbb{Z}\}.
\end{equation}
The definition shows easily that
\begin{equation}\label{FT2}
\begin{split}
&f(x)=\frac{1}{2\pi R}\sum_{\xi\in\mathbb{Z}/R}\widehat{f}(\xi)e^{i\xi x},\\
&\int_{\mathbb{T}_R}f(x)\overline{g(x)}\,dx=\frac{1}{2\pi R}\sum_{\xi\in\mathbb{Z}/R}\widehat{f}(\xi)\overline{\widehat{g}(\xi)},\\
&\mathcal{F}(fg)(\xi)=\frac{1}{2\pi R}\sum_{\eta\in\mathbb{Z}/R}\widehat{f}(\xi-\eta)\widehat{g}(\eta).
\end{split}
\end{equation}

For any $k\in\mathbb{Z}$ let $k^+:=\max(k,0)$ and $k^-:=\min(k,0)$. We fix an even smooth function $\varphi: \Rb\to[0,1]$ supported in $[-8/5,8/5]$ and equal to $1$ in $[-5/4,5/4]$,
and define
\begin{equation*}
\varphi_k(x) := \varphi(x/2^k) - \varphi(x/2^{k-1}) , \qquad \varphi_{\leq k}(x):=\varphi(x/2^k),
  \qquad\varphi_{>k}(x) := 1-\varphi(x/2^{k}),
\end{equation*}
for any $k\in\mathbb{Z}$. Let $P_k$, $P_{\leq k}$, and $P_{>k}$ denote the operators on $\mathbb{T}_R$ defined by the Fourier multipliers $\varphi_k$,
$\varphi_{\leq k}$, and $\varphi_{\geq k}$ respectively. Notice that $P_k=0$ if $2^kR\leq 1/2$.
We also let $\overline{\Zb}:=\Zb\cup\{-\infty\}$ and define the function $\varphi_{-\infty}$ and the operator $P_{-\infty}$ by
\begin{equation*}
\varphi_{-\infty}(\xi):=\mathbf{1}_{\{0\}}(\xi),\qquad P_{-\infty}f(x):=\frac{1}{2\pi R}\int_{\Tb_R}f(y)\,dy.
\end{equation*}

We define the Sobolev spaces $\dot{H}^{N,b}(\Tb_R)$ and the $L^\infty$-based spaces $\dot{W}^{N,b}(\Tb_R)$ by the norms 
\begin{equation}\label{normsx1}
\begin{split}
\|f\|_{\dot{H}^{N,b}}&:=\big\{\|P_{-\infty}f\|_{L^2}^2R^{-2b}+\sum_{k\in\Zb}\|P_kf\|_{L^2}^2(2^{2Nk}+2^{2bk})\big\}^{1/2},\\
\|f\|_{\dot{W}^{N,b}}&:=\|P_{-\infty}f\|_{L^\infty}R^{-b}+\sum_{k\in\Zb}\|P_kf\|_{L^\infty}(2^{Nk}+2^{bk}),
\end{split}
\end{equation}
where $b\in[-1,1]$ and $N\geq \max(b,0)$, see (2.25)--(2.26) in \cite{DIP}. 

Many of our operators are defined by multipliers in the Fourier space. A convenient norm to estimate multipliers is the Wiener norm. For $d\geq 1$ we define the class of multipliers
\begin{equation}\label{Sinfty}
S^\infty=S^\infty((\Zb/R)^d):= \{m: (\Zb/R)^d \to \mathbb{C} :\, {\| m \|}_{S^\infty} := {\|\mathcal{F}^{-1}(m)\|}_{L^1(\Tb_R^d)} < \infty \}.
\end{equation}

Finally we introduce paradifferential operators on $\T_R$, 
which are defined for functions $f,g\in H^1(\T_R)$ by the formula
\begin{equation}\label{Taf}
\begin{split}
\mathcal{F} \big( T_fg\big)(\xi) &:= \frac{1}{2\pi R} 
  \sum_{\eta\in\Z/R} \chi (\xi-\eta,\eta)\widehat{f}(\xi-\eta) \widehat{g}(\eta),\\
\chi(x,y)&:=\sum_{k\in\mathbb{Z}}\varphi_{\leq k-20}(x)\varphi_k(y+x/2).
\end{split}
\end{equation}
We remark that paradifferential is an important technical tool in \cite{DIP};
however, in this paper the notation \eqref{Taf} and associated bounds
are only used in Propositions \ref{IncremBound} and \ref{mainboot}.

\subsection{Normal form reduction}\label{SsecNF}
We consider solutions $(h,\phi)\in C([T_1,T_2]:H^{N_2}\times \dot{H}^{N_2+1/2,1/2})$ of the system (\ref{ww0}) 
on large one-dimensional tori $x\in\Tb_R$, satisfying
\begin{equation}\label{aprio}
\|(h+i|\partial_x|^{1/2}\phi)(t)\|_{H^{N_2}}\leq B_2,\qquad \|(h+i|\partial_x|^{1/2}\phi)(t)\|_{\dot{W}^{N_\infty,0}}
  \leq \varepsilon_\infty,
\end{equation}
for some parameters $N_2\geq 20$, $N_\infty\in[4,N_2-4]$, $B_2\in [0,\infty)$, $\varepsilon_\infty\ll1$, and for any $t\in[T_1,T_2]$. 
We will prove the following:

\begin{prop}[Normal form reduction] \label{normalprop} 
Assume $A\in[3,N_2/2]$ is a fixed integer. Then we can construct a normal form transformation
\begin{equation}\label{normal}
w:=u+\mathcal{A}_2(u)+\mathcal{A}_3(u),
\end{equation} 
such that the new variable $w$ satisfies the equation
\begin{equation}\label{ww1}(\partial_t+i|\partial_x|^{1/2})w=\sum_{r=3}^A\Nc_r(w)+\Nc_{>A}.
\end{equation} 
Here $\mathcal{A}_r$ and $\Nc_r$ are homogeneous expressions of degree $r$, which can be represented in the Fourier space in the form
\begin{equation}\label{nonlin1}
\begin{split}
\widehat{\mathcal{A}_r(u)}(\xi)=\frac{1}{(2\pi R)^{r-1}}\sum_{\iota_j\in\{\pm\}}\sum_{\eta_1,\ldots,\eta_r\in\Zb/R,\,\eta_1+\ldots+\eta_r=\xi}a_{r,\iota_1\ldots\iota_r}(\eta_1,\ldots,\eta_r)\prod_{j=1}^r\widehat{u^{\iota_j}}(\eta_j),\\
\widehat{\Nc_r(w)}(\xi)=\frac{1}{(2\pi R)^{r-1}}\sum_{\iota_j\in\{\pm\}}\sum_{\eta_1,\ldots,\eta_r\in\Zb/R,\,\eta_1+\ldots+\eta_r=\xi}n_{r,\iota_1\ldots\iota_r}(\eta_1,\ldots,\eta_r)\prod_{j=1}^r\widehat{w^{\iota_j}}(\eta_j),
\end{split}
\end{equation} 
where the symbols $a_{r,\iota_1\ldots\iota_r}$ and $n_{r,\iota_1\ldots\iota_r}$ vanish unless there is $r'\in\{0,\ldots,r\}$ such that $\iota_1=\ldots=\iota_{r'}=+$ and $\iota_{r'+1}=\ldots=\iota_r=-$. Here and below we define $z^+=z$ and $z^-=\overline{z}$ for any $z\in\mathbb{C}$. 

The functions $w$, $\mathcal{N}_n(w)$, and $\Nc_{>A}$ satisfies the bounds
\begin{equation}\label{nonlin2}
\begin{split}
&\|w\|_{H^{N_2-2}}\lesssim B_2,\qquad \|w\|_{\dot{W}^{N_\infty-2,0}}\lesssim \varepsilon_\infty,\\
&\|\mathcal{N}_n(w)\|_{H^{N_2-n-3/2}}\lesssim \varepsilon_\infty^{n-1}B_2,\qquad \|\Nc_{>A}\|_{H^{N_2-A-5/2}}\lesssim \varepsilon_\infty^AB_2.
\end{split}
\end{equation}

In addition, the symbols  $a_{r,\iota_1\ldots\iota_r}$, $n_{r,\iota_1\ldots\iota_r}$ in (\ref{nonlin1}) satisfy the following properties:
\begin{enumerate}
\item For any $r\in\{3,\ldots,A\}$ the symbols $n_{r,\iota_1\ldots\iota_r}$ are homogeneous of degree $r-1/2$ and satisfy suitable $S^\infty$ estimates, i.e. for any $k,k_1,\ldots,k_r\in\overline{\Zb}$ and $\lambda\in[0,\infty)$
\begin{equation}\label{symbolbound}
\begin{split}
&n_{r,\iota_1\ldots\iota_r}(\lambda(\xi_1,\ldots,\xi_r))=\lambda^{r-1/2}n_{r,\iota_1\ldots\iota_r}(\xi_1,\ldots,\xi_r),\\
&\big \|n_{r,\iota_1\ldots\iota_r}(\xi_1,\ldots,\xi_r)\varphi_{k_1}(\xi_1)\cdot\ldots\cdot\varphi_{k_r}(\xi_r)\varphi_k(\xi_1+\ldots+\xi_r)\big\|_{S^\infty}\lesssim 2^{k/2}2^{(r-1)\max(k_1,\ldots,k_r)}.
\end{split}
\end{equation}

\item For $r\in\{2,3\}$ the symbols $a_{r,\iota_1\ldots\iota_r}$ are homogeneous of degree $r-1$ and satisfy suitable $S^\infty$ estimates, i.e. for any $k,k_1,\ldots,k_r\in\overline{\Zb}$ and $\lambda\in[0,\infty)$
\begin{equation}\label{symbolboundN}
\begin{split}
&a_{r,\iota_1\ldots\iota_r}(\lambda(\xi_1,\ldots,\xi_r))=\lambda^{r-1}a_{\iota_1\ldots\iota_r}(\xi_1,\ldots,\xi_r),\\
&\big \|a_{r,\iota_1\ldots\iota_r}(\xi_1,\ldots,\xi_r)\varphi_{k_1}(\xi_1)\cdot\ldots\cdot\varphi_{k_r}(\xi_r)\varphi_k(\xi_1+\ldots+\xi_r)\big\|_{S^\infty}\lesssim 2^{k/2}2^{(r-3/2)\max(k_1,\ldots,k_r)}.
\end{split}
\end{equation}

\item For $r=3$ the symbols $n_{3,\iota_1\iota_2\iota_3}$ vanish unless $(\iota_1\iota_2\iota_3)=(++-)$. The symbol $n_{3,++-}$ is purely imaginary, symmetric in the first two variables, supported in the set $\{(\eta_1,\eta_2,\eta_3)\in\Rb^3:\,\eta_1\eta_2\eta_3(\eta_1+\eta_2+\eta_3)\leq 0\}$, and satisfies the $C^{1/2}$ bounds
\begin{equation}\label{symbolcubic}
|n_{3,++-}(\underline{\xi})-n_{3,++-}(\underline{\eta})|\lesssim \big|\underline{\xi}-\underline{\eta}\big|^{1/2}(|\underline{\xi}|+|\underline{\eta}|)^2\qquad\text{ for any }\underline{\xi},\underline{\eta}\in\Rb^3.
\end{equation}
\end{enumerate}
\end{prop}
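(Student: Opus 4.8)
The proof, which I carry out in Section~\ref{sectionNormal}, begins from the homogeneity expansion of the water waves system in the complex unknown $u$. Expanding the Dirichlet--to--Neumann operator in powers of $h$ and collecting terms by degree (using the multilinear expansion of $G(h)\phi$ from \cite{DIP}), one writes \eqref{ww0} as
\begin{equation*}
(\partial_t+i|\partial_x|^{1/2})u=\sum_{r=2}^{A}\Qc_r(u)+\Qc_{>A},
\end{equation*}
where $\Qc_r$ is $r$-homogeneous with Fourier multiplier $q_{r,\iota_1\ldots\iota_r}$ obeying $S^\infty$ bounds in the style of \eqref{symbolbound}, the sign patterns are constrained to $(+\cdots+\,-\cdots-)$ by the reality of $(h,\phi)$, and $\Qc_{>A}$ is a remainder controlled in Sobolev norm with a loss of $O(A)$ derivatives and a gain of $\varepsilon_\infty^{A}$. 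The objects that drive the normal form are the resonance functions $\Phi_r(\eta_1,\ldots,\eta_r):=|\eta_1+\cdots+\eta_r|^{1/2}-\sum_j\iota_j|\eta_j|^{1/2}$, and I would first establish two facts by elementary case analysis (after substituting $\eta_j=\pm s_j^2$): (i) $\Phi_2$ never vanishes and, on each dyadic block, $|\Phi_2|$ is bounded below by a positive homogeneous--degree--$1/2$ weight of the inputs (no quadratic resonances); (ii) for $r=3$ the only vanishing occurs for the pattern $(++-)$, and only on the trivial set $\{\eta_1+\eta_3=0\}\cup\{\eta_2+\eta_3=0\}$, while on the complement of $\{\eta_1\eta_2\eta_3(\eta_1+\eta_2+\eta_3)\le 0\}$ one again has a homogeneous--degree--$1/2$ lower bound.

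\emph{First (quadratic) normal form.} I would define $\Ac_2(u)$ with multiplier $a_{2,\iota_1\iota_2}=q_{2,\iota_1\iota_2}/(i\Phi_2)$ (symmetrized when $\iota_1=\iota_2$); by fact (i), the bound on $q_2$, and the known null structure of the quadratic water waves nonlinearity, this defines a bounded bilinear operator satisfying \eqref{symbolboundN}. Differentiating $w_1:=u+\Ac_2(u)$, inserting the equation for $\partial_t u$, and reorganizing by homogeneity, the quadratic term is removed exactly and one obtains $(\partial_t+i|\partial_x|^{1/2})w_1=\sum_{r=3}^{A}\Qc'_r(w_1)+\Qc'_{>A}$; the new cubic multiplier $q'_3$ is $q_3$ plus the corrections produced by $\Qc_2$ evaluated on $\Ac_2(u)$ and by $\partial_t\Ac_2$, and it inherits $S^\infty$ bounds of the type \eqref{symbolbound}. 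Inverting $w_1\mapsto u$ to order $A$ (a Neumann-type iteration, with higher homogeneity absorbed into the remainder) re-expresses every term in $w_1$, and tracking that each step costs only finitely many derivatives and one power of $\varepsilon_\infty$ gives the Sobolev bounds \eqref{nonlin2}.

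\emph{Second (cubic) partial normal form.} Using fact (ii), I would split $q'_3=q'_{3,\mathrm{nr}}+q'_{3,\mathrm{res}}$, where $q'_{3,\mathrm{res}}$ is the restriction of $q'_{3,++-}$ to $\{\eta_1\eta_2\eta_3(\eta_1+\eta_2+\eta_3)\le 0\}$ and $q'_{3,\mathrm{nr}}$ is the rest. On the support of $q'_{3,\mathrm{nr}}$ the function $\Phi_3$ has the homogeneous--degree--$1/2$ lower bound, so $\Ac_3$ is obtained by dividing $q'_{3,\mathrm{nr}}$ by $i\Phi_3$; this cancels the non-resonant cubic term and only generates further cubic-and-higher contributions, which are swept into the $\Nc_r$ with $r\ge 4$ and into $\Nc_{>A}$. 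The surviving piece, symmetrized in the first two variables, is $n_{3,++-}$: it is supported in $\{\eta_1\eta_2\eta_3(\eta_1+\eta_2+\eta_3)\le 0\}$ by construction, homogeneous of degree $3-1/2=5/2$ because $|\partial_x|^{1/2}$, the products, and the divisions by the homogeneous $\Phi$'s all preserve homogeneity, and symmetric in $(\eta_1,\eta_2)$ automatically since the inputs $\widehat{w}(\eta_1)\widehat{w}(\eta_2)$ are symmetric and $\iota_1=\iota_2$. That $n_{3,++-}$ is \emph{purely imaginary} I would deduce from the Hamiltonian (equivalently, time-reversal) structure of \eqref{ww0} carried through the transformation: the leading resonant cubic interaction must be conservative, so the symmetrized symbol has vanishing real part. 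This is precisely what later makes the first, explicit renormalization step (the ``trivial cubic resonances'') a pure phase rotation. The remaining $\Nc_r(w)$, $4\le r\le A$, are the leftover $r$-homogeneous terms re-expanded in $w=u+\Ac_2(u)+\Ac_3(u)$, and $\Nc_{>A}$ collects everything of homogeneity $>A$; the $S^\infty$ estimates \eqref{symbolbound}--\eqref{symbolboundN} and the bounds \eqref{nonlin2} then follow from those of the $q'_r$, the lower bounds on $\Phi_r$, and the multilinear $S^\infty$ calculus.

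The step I expect to be the main obstacle is the $C^{1/2}$ bound \eqref{symbolcubic}. The loss of a full derivative is unavoidable because $q'_{3,++-}$ carries half-derivative factors such as $|\eta_i+\eta_j|^{1/2}$ that are merely $C^{1/2}$ across the hyperplanes $\eta_i+\eta_j=0$, and these are exactly the trivial resonant loci on which the symbol is not normalized away. One must show that away from such hyperplanes $q'_{3,++-}$ is smooth with the expected symbol-type bounds, so that the sharp cutoff onto $\{\eta_1\eta_2\eta_3(\eta_1+\eta_2+\eta_3)\le 0\}$ does not degrade regularity there (its boundary meets the relevant frequency region only on loci irrelevant for the zero-mean output), while the genuine non-smoothness is only the $C^{1/2}$ one, uniformly controlled by $|\underline{\xi}-\underline{\eta}|^{1/2}(|\underline{\xi}|+|\underline{\eta}|)^2$ once the degree-$5/2$ homogeneity is accounted for. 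This regularity, together with the companion $S^\infty$ bounds on frequency-localized pieces, is what feeds the cancellation among irregular chains in Section~\ref{sec.irre}.
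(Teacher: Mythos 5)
Your overall architecture (DN expansion, a quadratic normal form, then a partial cubic normal form keeping only the symbol supported in $\{\eta_1\eta_2\eta_3(\eta_1+\eta_2+\eta_3)\le 0\}$) matches the paper's, but your step (ii) contains a genuine gap. You claim that for the pattern $(++-)$ the phase $\Phi_3(\eta_1,\eta_2,\eta_3)=|\eta_1+\eta_2+\eta_3|^{1/2}-|\eta_1|^{1/2}-|\eta_2|^{1/2}+|\eta_3|^{1/2}$ vanishes only on the trivial set and enjoys a homogeneous degree-$1/2$ lower bound on $\mathcal{R}=\{\eta_1\eta_2\eta_3(\eta_1+\eta_2+\eta_3)\ge 0\}$, so that $\Ac_3$ can be built there by dividing by $i\Phi_3$. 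This is false: $\mathcal{R}$ contains a full cone of nontrivial (Benjamin--Feir type) resonances, e.g. $(\eta_1,\eta_2,\eta_3)=(4,4,1)$ with output $\xi=9$, where $3-2-2+1=0$, and all rescalings thereof. Hence no phase lower bound is available on $\mathcal{R}$, and the division is not justified by your argument. What actually saves the construction in the paper is an exact algebraic cancellation specific to the explicitly computed cubic symbol: after the first normal form one finds that $p_{3,++-}$ is divisible by the phase on $\mathcal{R}$, with the quotient given by the explicit, $S^\infty$-bounded expressions in \eqref{adi18.6} (this is the delicate point flagged as ``Benjamin--Feir resonances''). Your proposal never verifies this structural divisibility, and without it the second normal form on $\mathcal{R}$ cannot be performed.

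A second, related problem is the $C^{1/2}$ bound \eqref{symbolcubic}. If, as you propose, the surviving cubic symbol is the sharp restriction of $q'_{3,++-}$ to $\{\eta_1\eta_2\eta_3(\eta_1+\eta_2+\eta_3)\le 0\}$, it has a jump of size comparable to $|\xi|^{3/2}|\eta_1|^{1/2}|\eta_2|^{1/2}$ across the hyperplane $\{\eta_3=0\}$ (the symbol does not vanish as $\eta_3\to 0^-$ in the region $\eta_1,\eta_2>0>\eta_3$), so \eqref{symbolcubic} fails; your remark that the boundary is ``irrelevant for the zero-mean output'' does not help, since the estimate is needed for nearby nonzero frequencies (this is exactly how it enters the irregular-chain cancellation, where the gap $h$ is small but nonzero). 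The paper removes this defect by an additional piece of the normal form on ${}^c\mathcal{R}$, localized by the cutoff $\Upsilon$ in \eqref{adi31} to the region where the smallest frequency is much smaller than the second smallest (there the phase does have the lower bound \eqref{adi19}); on the support of $1-\Upsilon$ the two smallest frequencies are comparable, the surviving symbol vanishes like $\min(|\eta_1|,|\eta_2|,|\eta_3|,|\xi|)^{1/2}$ as in \eqref{adi55}, and only then does the $C^{1/2}$ estimate hold. Finally, the pure imaginarity of $n_{3,++-}$ is obtained in the paper by direct computation of the explicit formulas rather than by the Hamiltonian/time-reversal heuristic you invoke, which as stated is not a proof.
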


\subsection{Initial data and energy estimate} 
Recall the initial data $u_{\mathrm{in}}$ given by \eqref{data_u}. 
By \eqref{normal} 
\begin{equation}\label{data_w}w(0) = w_{\mathrm{in}} = u_{\mathrm{in}}+\Ac_2(u_{\mathrm{in}})
  +\Ac_3(u_{\mathrm{in}}).
\end{equation} 
We start with some general bounds on products of functions with random Fourier coefficients. 

\begin{prop}\label{initialprop} 

(i) [Multilinear Gaussian estimates] Assume that $g_k:\Omega\to \mathbb{C}$, $k\in\{1,\ldots,n\}$, are i.i.d. normalized complex Gaussian random variables on a probability space $\Omega$ and define
\begin{equation}\label{genprob1}
G(\omega):=\sum_{k_1,\ldots,k_r\in\{1,\ldots,n\}}a_{k_1,\ldots,k_r}\prod_{i=1}^rg_{k_i}^{\iota_i}(\omega),
\end{equation}
where $r\geq 1$, $\omega\in\Omega$, $\iota_j\in\{+,-\}$, and $a_{k_1,\ldots,k_r}\in\mathbb{C}$. Then, for any $A\geq 1$
\begin{equation}\label{genprob2}
\mathbb{P}\big\{\omega:\,|G(\omega)|\geq A\|G(\omega)\|_{L^2_\omega}\big\}\leq Ce^{-A^{2/r}/C},
\end{equation}
for some constant $C\geq 1$. 

(ii) Assume that $\Mc$ is an $r$-linear operator defined by
\begin{equation}\label{gen1}
\mathcal{F}\big[\Mc(f_1,\ldots,f_r)\big](\xi):=\frac{1}{(2\pi R)^{r-1}}\sum_{\eta_1,\ldots,\eta_r\in\mathbb{Z}/R,\,\eta_1+\ldots+\eta_r=\xi}m(\eta_1,\ldots,\eta_r)\widehat{f}_1(\eta_1)\cdot\ldots\cdot \widehat{f_r}(\eta_r),
\end{equation}
with an associated symbol $m$ satisfying 
\begin{equation}\label{gen2}
|m(\eta_1,\ldots,\eta_r)| \leq (1+|\eta_1|+\ldots+|\eta_r|)^A,
\end{equation}
for some $A\geq 0$. Let $f=f(\omega)$ be a random function with Fourier coefficients $\widehat{f}(k) = 
R^{1/2}\varphi(k)\cdot g_k(\omega)$ with i.i.d Gaussians $g_k$, where $\varphi:\Rb\to\Rb$ satisfies $|\varphi(x)|\leq (1+|x|)^{-A'}$, $A'\geq A+r+2$. 
Then, for $\eta\in(0,1/10)$ and $\iota_1,\ldots,\iota_r\in\{+,-\}$, with high probability $\geq 1 - e^{-R^{\eta/r}}$ one has
\begin{equation}\label{gen2.5}
\| \Mc(f^{\iota_1},\dots, f^{\iota_r}) \|_{L^\infty(\Tb_R)} 
  + R^{-1/2} \| \Mc(f^{\iota_1},\dots, f^{\iota_r}) \|_{L^2(\Tb_R)} \lesssim R^{\eta}.
\end{equation}

(iii) In particular, there is a set $\Omega'\subseteq\Omega$ with  $\mathbb{P}(\Omega\setminus\Omega')\leq e^{-R^{2\eta_0}}$ such that for all $\omega\in\Omega'$ the initial data $u_{in}$ defined in \eqref{data_u} satisfies the bounds
\begin{equation}\label{initialenergy}
\|u_{in}\|_{H^{N_0+4}}+\|w_{\mathrm{in}}\|_{H^{N_0+4}}\leq \epsilon^{1-\theta/2} R^{1/2},
  \quad \|u_{\mathrm{in}}\|_{\dot{W}^{8,0}}+\|w_{\mathrm{in}}\|_{\dot{W}^{8,0}}\leq \epsilon ^{1-\theta/2}.
\end{equation}
\end{prop}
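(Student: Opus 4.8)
The plan is to prove (i), (ii), (iii) in order, since (ii) follows from (i) by a union bound over frequencies, and (iii) is a direct specialization of (ii) combined with the symbol bounds from Proposition \ref{normalprop}. For part (i), I would use the standard hypercontractivity/Wiener chaos estimate: the random variable $G(\omega)$ lies in the (inhomogeneous) Gaussian chaos of order $\le r$, so by the Gaussian hypercontractivity inequality one has $\|G\|_{L^p_\omega}\le (p-1)^{r/2}\|G\|_{L^2_\omega}$ for all $p\ge 2$. Then \eqref{genprob2} follows from Chebyshev in $L^p_\omega$ optimized over $p$: choosing $p\approx A^{2/r}$ gives $\mathbb{P}(|G|\ge A\|G\|_{L^2_\omega})\le (p-1)^{rp/2}A^{-p}\le Ce^{-A^{2/r}/C}$. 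One subtlety is that the $g_{k_i}$ may be repeated and conjugated, but this does not affect membership in the chaos of order $\le r$, so the bound is unchanged; also one may WLOG assume $a_{k_1,\dots,k_r}$ is symmetric in the appropriate indices after grouping, which only helps.

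For part (ii), fix a frequency $\xi=\ell/R$ and look at $\widehat{\Mc(f^{\iota_1},\dots,f^{\iota_r})}(\xi)$; by \eqref{gen1} this is exactly of the form \eqref{genprob1} with $a_{k_1,\dots,k_r}=(2\pi R)^{-(r-1)}R^{r/2}m(k_1,\dots,k_r)\varphi(k_1)\cdots\varphi(k_r)\mathbf{1}[k_1+\dots+k_r=\xi]$ (absorbing the Gaussian normalization). First I would compute $\|\widehat{\Mc}(\xi)\|_{L^2_\omega}$: expanding the square and using independence of the $g_k$, this is bounded by $\big(\sum_{k_1+\dots+k_r=\xi}|a_{k_1,\dots,k_r}|^2\big)^{1/2}$ up to combinatorial constants from pairings, and using $|m|\le (1+|\eta_1|+\dots)^A$ together with the rapid decay $|\varphi(x)|\le(1+|x|)^{-A'}$, $A'\ge A+r+2$, this sum converges (the decay beats the symbol growth with room to spare, the "$+2$" giving an $\ell^2$-summable tail in each of $r-1$ free frequency variables) and yields $\|\widehat{\Mc}(\xi)\|_{L^2_\omega}\lesssim R^{r/2}\cdot R^{-(r-1)}\cdot (\text{convergent sum in }R^{-1}\Zb)\lesssim R^{1/2}$, uniformly in $\xi$ — and in fact with extra decay in $\xi$ itself, which I would keep. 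Then apply part (i) with $A=R^{\eta/r}$ at this fixed $\xi$: with probability $\ge 1-Ce^{-R^{\eta}/C}$ one has $|\widehat{\Mc}(\xi)|\lesssim R^{1/2+\eta}$ (say). Taking a union bound over the $O(R^{\,?})$ frequencies $\xi$ that matter — here one uses the decay of $\varphi$ to truncate to $|\xi|\lesssim R^{C}$, so only $\text{poly}(R)$ values of $\xi$ contribute at the level $R^{1/2+\eta}$ and the rest are negligible — the bound holds simultaneously for all $\xi$ off an exceptional set of probability $\le e^{-R^{\eta/r}}$ (after renaming $\eta$). Finally, $\|\Mc(f^{\iota_1},\dots,f^{\iota_r})\|_{L^\infty}\le \frac{1}{2\pi R}\sum_\xi|\widehat{\Mc}(\xi)|$ and $\|\cdot\|_{L^2}=(2\pi R)^{-1/2}\|\widehat{\Mc}\|_{\ell^2_\xi}$; inserting the pointwise-in-$\xi$ bound with its $\xi$-decay makes both sums converge and gives $\|\Mc\|_{L^\infty}+R^{-1/2}\|\Mc\|_{L^2}\lesssim R^{\eta}$, which is \eqref{gen2.5}.

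For part (iii): the data $u_{\mathrm{in}}$ in \eqref{data_u} has Fourier coefficients of the form $\epsilon R^{1/2}\sqrt{\psi(k)}g_k$, i.e. $f$ as in (ii) with $\varphi=\epsilon\sqrt{\psi}$ (note $\epsilon\le R^{-\theta_0}\le 1$ and $|\sqrt{\psi(x)}|\le(1+|x|)^{-(N_0+10)/2}$ by \eqref{databoun}), rescaled by $\epsilon$. By \eqref{data_w}, $w_{\mathrm{in}}=u_{\mathrm{in}}+\Ac_2(u_{\mathrm{in}})+\Ac_3(u_{\mathrm{in}})$, and by \eqref{nonlin1}–\eqref{symbolboundN} the operators $\Ac_2,\Ac_3$ are of the form \eqref{gen1} with symbols controlled in $S^\infty$, hence (after dyadic decomposition and summation using the $2^{(r-3/2)\max}$ bounds, which are affordable against $N_0$ derivatives of decay in $\psi$) they satisfy polynomial symbol bounds \eqref{gen2} with $A\le N_0+ O(1)$. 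Applying (ii) to each of $u_{\mathrm{in}}$, $\Ac_2(u_{\mathrm{in}})$, $\Ac_3(u_{\mathrm{in}})$ in the $\dot W^{8,0}$ and $H^{N_0+4}$ norms — tracking the powers of $\epsilon$ ($\epsilon^1$ for the linear term, $\epsilon^2,\epsilon^3$ for the corrections, all $\le\epsilon^{1-\theta/2}$ since $\epsilon\ll 1$) and the power of $R$ ($R^{1/2}$ for $H^{N_0+4}$ and the gain $R^{-1/2}\cdot R^{1/2}=R^0$ for $\dot W^{8,0}$, with the $R^{\eta_0}$ losses from (ii) absorbed into $\epsilon^{-\theta/2}\gg R^{\theta_0\theta/2}\gg R^{\eta_0}$ since $\eta_0=\theta^3$) — gives \eqref{initialenergy} on the intersection $\Omega'$ of the three (or $O(1)$) high-probability sets, which still has $\mathbb{P}(\Omega\setminus\Omega')\le e^{-R^{2\eta_0}}$.

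The main obstacle is part (ii): one has to organize the union bound over frequencies correctly, making sure the $L^2_\omega$ norm of each Fourier coefficient comes with enough decay in $\xi$ that (a) only polynomially many frequencies need to be controlled at the top level and (b) the residual high-frequency sum, where the per-frequency bound is worse than $R^{1/2+\eta}$ but the deterministic decay is strong, still sums to something $\lesssim R^\eta$ in $L^\infty$ and $L^2$. This is where the hypothesis $A'\ge A+r+2$ (rather than just $A'\ge A$) is used, and where the counting is delicate; everything else is bookkeeping of $\epsilon$ and $R$ powers.
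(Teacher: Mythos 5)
Your parts (i) and (iii) are fine in outline (and (i) is exactly the paper's hypercontractivity argument), but there is a genuine gap in part (ii), at precisely the step you flag as the main obstacle. Fixing a frequency $\xi$ you correctly get $\|\widehat{\Mc(f^{\iota_1},\dots,f^{\iota_r})}(\xi)\|_{L^2_\omega}\lesssim R^{1/2}$ (with polynomial decay in $|\xi|$), and a union bound then gives, with high probability, $|\widehat{\Mc}(\xi)|\lesssim R^{1/2+\eta}\langle \xi\rangle^{-2}$ simultaneously for all $\xi$. This suffices for the $L^2$ half of \eqref{gen2.5} (Plancherel), but not for the $L^\infty$ half: the triangle inequality $\|\Mc\|_{L^\infty}\le (2\pi R)^{-1}\sum_\xi|\widehat{\Mc}(\xi)|$ is lossy, because about $R$ lattice frequencies $\xi\in\Zb/R$ with $|\xi|\lesssim 1$ each contribute $\approx R^{1/2+\eta}$, and the $\xi$-decay you keep only helps for $|\xi|\gg 1$; this route can give at best $\|\Mc\|_{L^\infty}\lesssim R^{1/2+\eta}$. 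Already for $r=1$, $m\equiv 1$, the function $f(x)=(2\pi R)^{-1}\sum_k R^{1/2}\varphi(k)g_k e^{ikx}$ has $|\widehat f(k)|\approx R^{1/2}$ for $O(R)$ values of $k$, so your argument yields $R^{1/2+\eta}$ while the true size (and the claim) is $R^{\eta}$: the bound $R^\eta$ encodes square-root cancellation across the $\sim R$ unit-size modes, which a union bound over frequencies followed by $\ell^1_\xi$ summation cannot see. The loss is fatal downstream: in (iii) the $\dot W^{8,0}$ bounds at size $\epsilon^{1-\theta/2}$ require the $L^\infty$ estimate at the sharp level (an extra $R^{1/2}$ destroys the smallness, since $\epsilon R^{1/2}$ need not be small in the admissible range of $\epsilon$). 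There is also a minor inconsistency in your choice of threshold in (i) ($A=R^{\eta/r}$ gives failure probability $e^{-cR^{2\eta/r^2}}$, not $e^{-cR^{\eta}}$; one should take $A=R^{\eta}$), but that is bookkeeping.

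The paper's proof avoids the problem by keeping the randomness pointwise in the physical variable: it sets $G(x,\omega)=\langle\nabla\rangle_x\Mc(f^{\iota_1},\dots,f^{\iota_r})(x)$ and shows $\|G(x,\cdot)\|_{L^2_\omega}\lesssim 1$ for each fixed $x$ (the extra factor $(2\pi R)^{-1}$ from Fourier inversion is exactly what brings the variance down to $O(1)$ after the pairing count $O(R^r)$). Then (i) gives $\|G(x,\cdot)\|_{L^p_\omega}\lesssim p^{r/2}$, hence $\|G\|_{L^p_\omega L^p_x}\lesssim p^{r/2}R^{1/p}$, and Chebyshev with $p=\eta^{-2}R^{\eta/r}$ yields $\|G(\cdot,\omega)\|_{L^p_x}\leq R^\eta$ outside a set of probability $e^{-R^{\eta/r}}$; the $L^\infty$ bound follows since $\|\Mc\|_{L^\infty}\lesssim\|G\|_{L^p_x}$ thanks to the derivative factor, and the $L^2$ bound by integration. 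If you wish to keep your frequency-side setup, you must replace the $\ell^1_\xi$ step by something that captures the cancellation in $\sum_\xi\widehat{\Mc}(\xi)e^{i\xi x}$ --- which in effect means returning to estimates at fixed $x$ (or over an $x$-grid plus a derivative bound), i.e. the paper's argument.
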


\begin{proof} (i) The general bounds \eqref{genprob2} follow by the hypercontractivity estimate for Ornstein-
Uhlenbeck semigroups; see for example \cite{DeHa1} and references therein.

(ii) Let
\begin{equation}\label{gen3}
\begin{split}
G(x,\omega)&:=\langle\nabla\rangle_x\Mc(f^{\iota_1},\dots, f^{\iota_r})(x)\\
&=\frac{1}{(2\pi R)^{r}}\sum_{k_1,\ldots,k_r\in\mathbb{Z}/R}e^{ix(k_1+\ldots+k_r)}m(k_1,\ldots,k_r)(1+|k_1|^2+\ldots+|k_r|^2)^{1/2}\\
&\times\varphi(k_1)\cdot\ldots\cdot\varphi(k_r)\cdot R^{r/2}g_{\iota_1k_1}^{\iota_1}(\omega)\cdot\ldots\cdot g_{\iota_rk_r}^{\iota_r}(\omega).
\end{split}
\end{equation}
For any $x\in\Tb_R$ fixed we use the bounds \eqref{gen2} and the decay assumption on $\varphi$ to estimate
\begin{equation*}
\|G(x,\omega)\|_{L^2_\omega}^2\lesssim_r R^{-r}\sum_{k,k'\in(\Zb/R)^r}(1+|k|)^{-r-1}(1+|k'|)^{-r-1}\int_\Omega \prod_{a,b\in\{1,\ldots,r\}}g_{\iota_ak_a}^{\iota_a}(\omega)\overline{g_{\iota_bk_b}^{\iota_b}(\omega)}\,d\omega.
\end{equation*}
Notice that, due to independence, the variables $k_1,\ldots,k_r, k'_1,\ldots, k'_r$ must come in pairs in order for the $\omega$-integral to be nonzero. Thus the summation over $k,k'$ contributes a factor $O(R^r)$, and it follows that
\begin{equation}\label{gen4}
\|G(x,\omega)\|_{L^2_\omega}\lesssim 1\qquad\text{ for any }x\in\Tb_R.
\end{equation}

It follows from \eqref{genprob2} that for any $x\in\Tb_R$ and $A\geq 1$
\begin{equation*}
\mathbb{P}\big\{\omega:\,|G(x,\omega)|\geq A\|G(x,\omega)\|_{L^2_\omega}\big\}\leq Ce^{-A^{2/r}/C},
\end{equation*}
for some constant $C\geq 1$. Therefore, using also \eqref{gen4},
\begin{equation*}
\|G(x,\omega)\|_{L^p_\omega}\lesssim_r p^{r/2}\qquad\text{ for any }x\in\Tb_R,\,p\in[2,\infty).
\end{equation*}
In particular $\|G(x,\omega)\|_{L^p_\omega L^p_x}\lesssim_r p^{r/2}R^{1/p}$ for any $p\in[2,\infty)$, so
\begin{equation*}
R^{\eta}\mathbb{P}\big\{w:\,\|G(x,\omega)\|_{L^p_x}\geq R^{\eta}\big\}^{1/p}\lesssim_r p^{r/2}R^{1/p}.
\end{equation*}
We apply this with $p=\eta^{-2}R^{\eta/r}$, so $|\mathbb{P}\big\{w:\,\|G(x,\omega)\|_{L^p_x}\geq R^{\eta}\big\}|\leq e^{-R^{\eta/r}}$ for $R$ sufficiently large. The $L^\infty$ bound in \eqref{gen2.5} follows since $\|\Mc(f^{\iota_1},\dots, f^{\iota_r}) \|_{L^\infty(\Tb_R)}\lesssim \|G(x,\omega)\|_{L^p_x}$ for any $p\in[2,\infty]$ and $\omega\in\Omega$. The $L^2$ bound in \eqref{gen2.5} then follows by integration.
\end{proof}

Next, we recall the following energy estimate proved in \cite{DIP}:

\begin{thm}[Energy increment]
\label{IncremBound}
Assume that $N_2\geq 6$, $N_\infty\in[4,N_2-2]$, $t_1\leq t_2\in\mathbb{R}$,
and $(h,\phi)\in C([t_1,t_2]:H^{N_2}\times \dot{H}^{N_2,1/2})$
is a solution of the system \eqref{ww0}. Let
\begin{align}\label{omega}
B:=\frac{G(h)\phi+\partial_xh\partial_x\phi}{1+(\partial_xh)^2},\qquad\omega := \phi-T_Bh,
\end{align}
see \eqref{Taf} for the definition of the paraproduct $T_ab$. Assume that 
\begin{align}\label{BootstrapFull}
\begin{split}
&\|h(t)\|_{H^{N_2}}+\||\partial_x|^{1/2}\omega(t)\|_{H^{N_2}} +\||\partial_x|^{1/2}\phi(t)\|_{H^{N_2-1/2}}\leq A,
\\
&\|h(t)\|_{\dot{W}^{N_\infty,0}} + \||\partial_x|^{1/2}\omega(t)\|_{\dot{W}^{N_\infty,0}}
  + \||\partial_x|^{1/2}\phi(t)\|_{\dot{W}^{N_\infty-1/2,0}}\leq \varepsilon(t),
\end{split}
\end{align}
for any $t\in[t_1,t_2]$, where $A\in (0,\infty)$ and $\varepsilon:[t_1,t_2]\to [0,\infty)$ 
is a continuous function satisfying $\varepsilon(t)\ll 1$.
Then there is a constant $C_{N_2}\geq 1$ such that
\begin{equation}\label{MainGrowth}
\big\|(h+i|\partial_x|^{1/2}\omega)(t_2)\big\|_{H^{N_2}}^2
  \leq C_{N_2}\big\|(h+i|\partial_x|^{1/2}\omega)(t_1)\big\|_{H^{N_2}}^2 + C_{N_2} A^2\int_{t_1}^{t_2}[\varepsilon(s)]^3\,ds.
\end{equation}
\end{thm}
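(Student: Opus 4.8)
The plan is to prove this by a paradifferential energy estimate for \eqref{ww0} in the spirit of Alazard--Delort and Alazard--Burq--Zuily, adapted to the torus $\Tb_R$, and then to upgrade the naive linear-in-$\varepsilon$ energy growth to the sharp cubic rate by two successive normal-form (modified-energy) corrections. \emph{Step 1 (paralinearization and symmetrization).} Using the good unknown $\omega=\phi-T_Bh$ one paralinearizes the Dirichlet--Neumann operator, $G(h)\phi=|\partial_x|\omega-\partial_x(T_Vh)+\mathcal{R}_{\mathrm{DtN}}$, the worst derivative loss being absorbed exactly by the substitution $\phi\mapsto\omega$. Inserting this into \eqref{ww0} and passing to the complex unknown $U:=h+i|\partial_x|^{1/2}\omega$ one obtains
\begin{equation*}
\partial_tU+T_V\partial_xU+i|\partial_x|^{1/2}U=\mathcal{Q}(U,U)+\mathcal{C}(U,U,U)+\mathcal{R}_{>}(U),
\end{equation*}
where the dispersive term is more precisely $iT_{\Lambda(h;\,\cdot)}U$ with $\Lambda$ real and $\Lambda(h;\xi)=|\xi|^{1/2}+O(\varepsilon)$, the horizontal surface velocity $V$ is real with $\|V\|_{\dot W^{N_\infty,0}}\lesssim\varepsilon(t)$, the symbols of the bilinear and trilinear terms $\mathcal{Q},\mathcal{C}$ obey $S^\infty$-type bounds losing only $1/2$ derivative (as in Proposition \ref{normalprop}), and the quartic-and-higher remainder satisfies $\|\mathcal{R}_{>}(U)\|_{H^{N_2}}\lesssim\varepsilon(t)^3A$ by \eqref{BootstrapFull}.

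\emph{Step 2 (basic identity and its defect).} Since $|\partial_x|^{1/2}$ and $T_\Lambda$ are self-adjoint modulo lower order and $V$ is real, $T_V\partial_x$ is skew-adjoint modulo the bounded operator $-\tfrac12 T_{\partial_xV}$ and $iT_\Lambda$ is skew-adjoint modulo an $O(\varepsilon)$ operator of order $\le-\tfrac12$, so
\begin{equation*}
\frac{d}{dt}\|U(t)\|_{H^{N_2}}^2=2\,\mathrm{Re}\,\langle\langle\partial_x\rangle^{N_2}(\mathcal{Q}+\mathcal{C}+\mathcal{R}_{>}),\,\langle\partial_x\rangle^{N_2}U\rangle_{L^2}+O\big(\varepsilon(t)\|U\|_{H^{N_2}}^2\big).
\end{equation*}
The transport/antisymmetric defects and the $\mathcal{R}_{>}$ pairing contribute $\lesssim\varepsilon A^2$ and $\lesssim\varepsilon^3A^2$, but the $\mathcal{Q}$ and $\mathcal{C}$ pairings are of sizes $\varepsilon A^2$ and $\varepsilon^2A^2$ (only two of the three/four factors can be placed in $H^{N_2}$, the rest in $\dot W^{N_\infty,0}$), and these must be removed to reach the claimed rate.

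\emph{Step 3 (modified energies — the crux).} Because the gravity phase $|\xi|^{1/2}$ admits no nontrivial quadratic space--time resonances, the cubic-in-$U$ part of the identity above is an exact time-derivative $\tfrac{d}{dt}\mathcal{B}_3(U)$ modulo quartic terms, where $\mathcal{B}_3$ is a trilinear form with $|\mathcal{B}_3(U)|\lesssim\varepsilon A^2$: the relevant multiplier $\big(|\xi|^{1/2}\pm|\xi-\eta|^{1/2}\pm|\eta|^{1/2}\big)^{-1}$ is admissible because, after the good-unknown reduction, it is controlled relative to the growth of the symbol of $\mathcal{Q}$ (this step is essentially the choice of symmetrization). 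Setting $E^{(3)}:=\|U\|_{H^{N_2}}^2-\mathcal{B}_3(U)$, its time derivative is quartic and higher; the quartic part splits into a non-resonant piece, removed by an analogous quartic correction $\mathcal{B}_4$ with $|\mathcal{B}_4(U)|\lesssim\varepsilon^2A^2$, and a piece supported on the trivial cubic resonances (two opposite-sign frequencies cancelling). For this last piece one exploits that the effective interaction symbol on the resonant set is purely imaginary (cf. item (3) of Proposition \ref{normalprop}), so that its contribution to $\tfrac{d}{dt}E^{(3)}$ either vanishes by the reality of the weight $\langle\partial_x\rangle^{2N_2}$ or is again an exact time-derivative, hence absorbable; this is exactly what produces the rate $\varepsilon^3$ instead of the easier $\varepsilon^2$.

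\emph{Step 4 (conclusion).} Put $E(t):=\|U(t)\|_{H^{N_2}}^2-\mathcal{B}_3(U)-\mathcal{B}_4(U)$, so that $\big|E(t)-\|U(t)\|_{H^{N_2}}^2\big|\lesssim\varepsilon(t)A^2$ and, by construction, $\tfrac{d}{dt}E(t)\le C_{N_2}A^2\varepsilon(t)^3$. Integrating over $[t_1,t_2]$, using $\varepsilon\ll1$ to compare $E$ with $\|U\|_{H^{N_2}}^2$, and converting back from $U=h+i|\partial_x|^{1/2}\omega$ to $h+i|\partial_x|^{1/2}\phi$ (they differ by $|\partial_x|^{1/2}T_Bh$, hence agree up to $O(\varepsilon)$ relative errors absorbed into $C_{N_2}$), one obtains \eqref{MainGrowth}. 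I expect Step 3 to be the main obstacle: one must arrange the two normal-form corrections to be simultaneously bounded on $H^{N_2}$ with small norm despite the quasilinear derivative loss hidden in $G(h)$, exhaustive enough to kill every cubic and non-resonant quartic contribution, and then establish the structural cancellation of the resonant trivial cubic interactions. The large-torus bookkeeping (powers of $R$, the low-frequency $\dot H^{N,b}$ weights, paraproduct remainders) is routine but pervasive.
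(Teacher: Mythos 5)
You cannot really be checked against an in-paper argument here: Theorem \ref{IncremBound} is not proved in this paper at all; it is imported verbatim as a black box from the companion paper \cite{DIP} (``deterministic energy estimates''), and the present paper only uses it in Proposition \ref{mainboot}. So the only meaningful comparison is with the strategy of \cite{DIP}, and at the level of outline your route is the expected one and essentially the same: paralinearization with the good unknown $\omega=\phi-T_Bh$, skew-symmetry of the transport and dispersive paradifferential parts, a cubic modified energy exploiting the absence of nontrivial quadratic resonances for $|\xi|^{1/2}$, and then a further correction plus a structural cancellation on the Benjamin--Feir (cubic resonant) set to upgrade the increment from $A^2\varepsilon^2$ to $A^2\varepsilon^3$.

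As a proof, however, your Step 3 is an assertion of exactly the two facts that constitute the theorem, not a derivation of them. First, the statement that the resonant quartic contribution ``either vanishes by the reality of the weight $\langle\partial_x\rangle^{2N_2}$ or is again an exact time-derivative'' is the decisive structural cancellation (a quartic-integrability-type fact for 2D gravity waves, in the line of Zakharov--Dyachenko and Berti--Feola--Pusateri/Wu); nothing in your argument exhibits it, and without it one only gets the rate $\varepsilon^2$, which is a strictly weaker theorem. Note also that item (3) of Proposition \ref{normalprop}, which you invoke, concerns the symbol of the cubic term in the $w$-equation after two normal forms of the present paper, not the interaction symbol appearing in the energy identity for $U=h+i|\partial_x|^{1/2}\omega$, so it cannot be cited for this purpose. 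Second, the boundedness of $\mathcal{B}_3$ and $\mathcal{B}_4$ on $H^{N_2}$ with small norms is where the quasilinear difficulty lives: the corrections involve divisors $\big(|\xi|^{1/2}\pm|\xi-\eta|^{1/2}\pm|\eta|^{1/2}\big)^{-1}$ that degenerate at low frequencies (an issue on the large torus, where the $\dot H^{N,b}$/$\dot W^{N,b}$ low-frequency weights are precisely designed to track this) and, more seriously, the quadratic symbols lose derivatives, so one must build the corrections at the paradifferential level and verify that no commutator reinstates the loss; calling the relevant multiplier ``admissible'' and the bookkeeping ``routine'' leaves this unestablished. In short: correct skeleton, same approach as the cited companion paper, but the two claims you defer are the content of the theorem, so the proposal as written has a genuine gap rather than a complete proof.
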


\subsection{Propagation of randomness and main bootstrap}\label{SsecPR}
The key step in the proof of Theorem \ref{main} is the following result,
which shows propagation of randomness in the form of decay of an $L^\infty$-type norm:

\begin{prop}[Propagation of randomness]\label{apriori} 
Recall $T_1=\epsilon^{-8/3+\theta_0}$ and $\theta=\theta_0/100$. With probability $1-3e^{-R^{2\eta_0}}$ the following statement is true: assume $t\in[0,T_1]$ and $u=h+i|\partial_x|^{1/2}\phi\in C([0,t]: H^{N_0})$ is a solution of the system \eqref{ww0}, $w$ is defined as in \eqref{normal}, and
\begin{equation}\label{bootstrap}
\sup_{s\in[0,t]}\big[\|u(s)\|_{H^{N_0}}+\|w(s)\|_{H^{N_0-2}}\big]
  \leq \epsilon^{1-\theta} R^{1/2},\quad \sup_{s\in[0,t]} \big[\|u(s)\|_{\dot{W}^{6,0}} +\|w(s)\|_{\dot{W}^{4,0}}\big] \leq \epsilon^{1-\theta}.
\end{equation} 
Then
\begin{equation}\label{bootstrap2}
\sup_{s\in[0,t]} \|w(s)\|_{\dot{W}^{4,0}} \lesssim \epsilon^{1-\theta/2}.
\end{equation}
\end{prop}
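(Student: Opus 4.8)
The plan is to upgrade the $\dot W^{4,0}$ bound on the normal form variable $w$ from the bootstrap hypothesis $\epsilon^{1-\theta}$ to the stronger bound $\epsilon^{1-\theta/2}$ by analyzing the Duhamel formula for $w$ and exploiting the randomness of the data. First I would freeze the high-probability set: intersect the set $\Omega'$ from Proposition \ref{initialprop}(iii) with the event of Proposition \ref{apriori}, so that simultaneously the initial data bounds \eqref{initialenergy} hold and the conclusion of the (as-yet-unstated) Feynman-expansion propositions (referred to in the excerpt as Propositions \ref{propjr} and \ref{propdiff}) are available. On this set, integrating \eqref{ww1} gives
\begin{equation*}
w(t) = e^{-it|\partial_x|^{1/2}}w_{\mathrm{in}} + \int_0^t e^{-i(t-s)|\partial_x|^{1/2}}\Big(\sum_{r=3}^A\Nc_r(w)(s) + \Nc_{>A}(s)\Big)\,ds,
\end{equation*}
and the first term is controlled in $\dot W^{4,0}$ directly by \eqref{initialenergy} (with room to spare, since $\epsilon^{1-\theta/2}\ll\epsilon^{1-\theta}$ is false — note $\epsilon<1$ so $\epsilon^{1-\theta/2}<\epsilon^{1-\theta}$, i.e. the target is genuinely smaller), using the dispersive decay of the half-wave propagator on $\Tb_R$ together with the frequency-localized structure of $\dot W^{4,0}$.

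The main step is then to handle the nonlinear Duhamel term. Following the Deng--Hani strategy adapted in Section \ref{SecRan}, I would truncate $w$ in frequency at the level dictated by $R$ and $\theta_0$, writing $w = w_{\mathrm{tr}} + (w - w_{\mathrm{tr}})$. For $w_{\mathrm{tr}}$ one expands iteratively via the Duhamel formula into a sum over Feynman trees (with the two node types — interaction nodes and normal-form nodes — described in the introduction), truncating the expansion at order $N$ and estimating the error term; the main oscillatory-sum and combinatorial bounds on the trees, after the time-and-frequency-dependent renormalization, are exactly what Propositions \ref{propjr} and \ref{propdiff} (invoked from Section \ref{SecRan}) deliver, yielding an $L^\infty$-type bound of size $O(\epsilon^{1-\theta/2})$ for the renormalized profile and hence for $w_{\mathrm{tr}}$. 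The renormalization factor, being deterministic (depending only on $\psi$), is a pure phase and drops out of the $\dot W^{4,0}$ norm. For the high-frequency remainder $w - w_{\mathrm{tr}}$, one does not use randomness: the frequency projection above the truncation scale converts the high-Sobolev control $\|w(s)\|_{H^{N_0-2}}\le\epsilon^{1-\theta}R^{1/2}$ from \eqref{bootstrap} into a smallness gain in $\dot W^{4,0}$, since $N_0 - 2$ is taken much larger than $4$ and the truncation scale is a negative power of $R$ while $\epsilon\le R^{-\theta_0}$; this is where the relation $\epsilon\in[R^{-8/3+\theta_0},R^{-\theta_0}]$ and the choice $N_0\ge\theta^{-3}$ are used.

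The hard part is the estimate on the Feynman-tree expansion of $w_{\mathrm{tr}}$: one must show that the sum over trees of given order, after renormalization, does not grow faster than $\epsilon^{1-\theta/2}$ on the time interval $[0,T_1]$ with $T_1=\epsilon^{-8/3+\theta_0}$. This is precisely where the one-dimensional counting enters unfavorably (a ball of radius $\lambda$ contributes $\lambda$ rather than $\lambda^d$), forcing the lifespan to be $\epsilon^{-8/3+}$ rather than $\epsilon^{-4}$, and where the $C^{1/2}$ regularity of the cubic symbol $n_{3,++-}$ from \eqref{symbolcubic} is needed to produce the cancellation within irregular/broken chains. Since in the present statement we are only asked to propagate randomness up to $T_1$, all of this combinatorial work is packaged into the propositions cited from Section \ref{SecRan}; the proof here is therefore a matter of assembling the pieces — initial data term, truncated Feynman expansion, high-frequency remainder, and the error from truncating the expansion at order $N$ and order $A$ — and checking that each contributes at most $O(\epsilon^{1-\theta/2})$ in $\dot W^{4,0}$ uniformly for $s\in[0,t]\subseteq[0,T_1]$, which closes the bootstrap with the improved constant.
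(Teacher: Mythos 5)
There is a genuine gap in how you treat the difference $w-w_{\mathrm{tr}}$. You describe $w_{\mathrm{tr}}$ as a frequency truncation of the true solution, so that $w-w_{\mathrm{tr}}$ is a purely high-frequency tail which can be estimated deterministically from $\|w\|_{H^{N_0-2}}\leq \epsilon^{1-\theta}R^{1/2}$. But these two roles are incompatible. If $w_{\mathrm{tr}}=P_{\leq K_{\mathrm{tr}}}w$, then the tail estimate is trivial, but $P_{\leq K_{\mathrm{tr}}}w$ does not satisfy a closed equation (high frequencies of $w$ feed into its nonlinearity, and the quasilinear loss of derivatives reappears), so the Feynman--tree expansion and the renormalized profile bounds cannot be applied to it. In the paper $w_{\mathrm{tr}}$ is instead defined as the solution of the truncated evolution \eqref{wwapp} (truncated both in homogeneity and in frequency) with the truncated data \eqref{wtrin}; then the expansion applies, but the price is that $w-w_{\mathrm{tr}}$ is \emph{not} supported at high frequencies: it solves the forced linearized equation \eqref{error1.1'}, and controlling it in $\dot W^{4,0}$ requires the linearized-flow estimate \eqref{error1.2'}, i.e. the invertibility of $1-\Lc_0$ around the truncated solution. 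That estimate is itself probabilistic (it is proved in Section \ref{sec.rem} via a parametrix built from flower trees and the couple estimates), and it is combined with a bootstrap in Proposition \ref{properror1} to get $\|w-w_{\mathrm{tr}}\|_{\dot W^{4,0}}\leq \epsilon^{A/4}$. Your claim that ``one does not use randomness'' for this piece is exactly the step that fails; without the linearized stability input there is no way to transfer the pointwise bounds from the tree expansion to the actual solution $w$.

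Two secondary points. First, the linear term $e^{-it|\partial_x|^{1/2}}w_{\mathrm{in}}$ cannot be bounded in $\dot W^{4,0}$ by ``dispersive decay of the half-wave propagator on $\Tb_R$'': the propagator is not bounded on $L^\infty$-type norms and no useful dispersive decay is available on the torus at times of order $T_1$. In the paper this contribution is the rank-one tree, and its pointwise control (like that of $w_{\mathrm{app}}$ as a whole) comes from the second-moment bounds \eqref{jrest1} together with Gaussian hypercontractivity, Proposition \ref{initialprop}(i), uniformly on $[0,T_1]$. Second, before any of the expansion estimates can be quoted, the renormalization $\Gamma_1$ must actually be constructed: the paper does this by a fixed-point argument (Proposition \ref{fixedrenorm}) using the contraction bounds \eqref{fixedrenorm2}--\eqref{fixedrenorm3}; it is not enough to remark that the renormalization is a pure phase that drops out of the norm.
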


The proof of Proposition \ref{apriori} will occupy most of the the paper,
and will follow from the main steps described in Section \ref{SecRan}. We show now that Proposition \ref{apriori} can be used as a black box, together with Propositions \ref{normalprop}, \ref{initialprop} (ii) and Theorem \ref{IncremBound}, to obtain Theorem \ref{main}.

\begin{prop}[Main Bootstrap]\label{mainboot}
Assume $N_0, T_1, u=h+i|\partial_x|^{1/2} \phi$ are as in Theorem \ref{main} and  define $\omega$ as in \eqref{omega}. Let $N_1:=6$ and assume that initially $u(0)=u_{\mathrm{in}}$ is a random function satisfying \eqref{data_u}. Then, with probability at least $1-4e^{-R^{2\eta_0}}$, the following statement holds: assume $t\in[0,T_1]$ and $u=h+i|\partial_x|^{1/2}\phi\in C([0,t]:H^{N_0})$ is a solution of the system \eqref{ww0} satisfying
\begin{align}
\label{mainbootasE}
\sup_{t'\in[0,t]}
  \Big( \|(h+i|\partial_x|^{1/2}\omega)(t')\|_{H^{N_0+1/2}} +\|(h+i|\partial_x|^{1/2}\phi)(t')\|_{H^{N_0}} \Big)
  & \leq \epsilon^{1-\theta} R^{1/2},\\
\label{mainbootasD}
\sup_{t'\in[0,t]}
  \Big(\|(h+i|\partial_x|^{1/2}\omega)(t')\|_{\dot{W}^{N_1+1/2,0}}+ \|(h+i|\partial_x|^{1/2}\phi)(t')\|_{\dot{W}^{N_1,0}} \Big) & 
  \leq \epsilon^{1-\theta}.
\end{align}
Then
\begin{align}
\label{mainbootconcE}
\sup_{t'\in[0,t]}  \Big( \|(h+i|\partial_x|^{1/2}\omega)(t')\|_{H^{N_0+1/2}} +\|(h+i|\partial_x|^{1/2}\phi)(t')\|_{H^{N_0}} \Big)
  & \lesssim \epsilon^{1-3\theta/4} R^{1/2},\\
\label{mainbootconcD}
\sup_{t'\in[0,t]}
  \Big(\|(h+i|\partial_x|^{1/2}\omega)(t')\|_{\dot{W}^{N_1+1/2,0}}+ \|(h+i|\partial_x|^{1/2}\phi)(t')\|_{\dot{W}^{N_1,0}} \Big) & \lesssim\epsilon^{1-3\theta/4}.
\end{align}
In particular, Theorem \ref{main} holds.
\end{prop}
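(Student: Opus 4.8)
\textbf{Proof proposal for Proposition \ref{mainboot}.}

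The plan is to run a continuity/bootstrap argument on $[0,T_1]$, combining the deterministic energy increment of Theorem \ref{IncremBound}, the probabilistic decay of Proposition \ref{apriori}, the algebraic relation between the variables $u$, $w$, and $(h,\phi,\omega)$ provided by Propositions \ref{normalprop} and \ref{initialprop}, and a standard local existence theory for \eqref{ww0} in $H^{N_0}$. First I would fix the full-probability event: intersect the set $\Omega'$ from Proposition \ref{initialprop}(iii) with the events on which Propositions \ref{apriori} and \ref{initialprop}(ii) hold; by a union bound this has probability at least $1-4e^{-R^{2\eta_0}}$. On this event the initial data satisfies \eqref{initialenergy}, so in particular the hypotheses \eqref{mainbootasE}--\eqref{mainbootasD} hold with room to spare at $t=0$ (recall $\theta=\theta_0/100$, so $\epsilon^{1-\theta/2}\ll \epsilon^{1-\theta}$ for $R$ large). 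I would then let $T_\ast\in[0,T_1]$ be the supremum of times $t$ for which a solution $u\in C([0,t]:H^{N_0})$ exists and satisfies \eqref{mainbootasE}--\eqref{mainbootasD}, and argue $T_\ast=T_1$ by showing the improved bounds \eqref{mainbootconcE}--\eqref{mainbootconcD}, which are strictly stronger since $\epsilon^{1-3\theta/4}\ll\epsilon^{1-\theta}$.

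The improved $L^\infty$-type bound \eqref{mainbootconcD} comes essentially for free from Proposition \ref{apriori} once one converts between the $(h,\phi,\omega)$ description and the $(u,w)$ description. Concretely: from \eqref{mainbootasE}--\eqref{mainbootasD} and the paradifferential definition \eqref{omega} of $B$ and $\omega$, together with standard product/paraproduct estimates, one recovers control of $u=h+i|\partial_x|^{1/2}\phi$ in $H^{N_0}$ and $\dot W^{6,0}$ (note $N_1=6$) of size $\lesssim\epsilon^{1-\theta}R^{1/2}$ and $\lesssim\epsilon^{1-\theta}$; applying \eqref{nonlin2} of Proposition \ref{normalprop} with $(B_2,\varepsilon_\infty)=(\epsilon^{1-\theta}R^{1/2},\epsilon^{1-\theta})$ gives $\|w\|_{H^{N_0-2}}\lesssim\epsilon^{1-\theta}R^{1/2}$ and $\|w\|_{\dot W^{4,0}}\lesssim\epsilon^{1-\theta}$, so the hypothesis \eqref{bootstrap} of Proposition \ref{apriori} is met. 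Its conclusion \eqref{bootstrap2} gives $\|w\|_{\dot W^{4,0}}\lesssim\epsilon^{1-\theta/2}$; feeding this back through $u=w-\mathcal A_2(u)-\mathcal A_3(u)$ and the symbol bounds \eqref{symbolboundN} (a short fixed-point/iteration on $u$ in $\dot W^{6,0}$, using that the corrections are quadratic and higher in the small quantity $\epsilon^{1-\theta}$), and then converting back to $(h,\phi,\omega)$ via \eqref{omega}, yields \eqref{mainbootconcD} with the claimed power $\epsilon^{1-3\theta/4}$ (the loss from $\theta/2$ to $3\theta/4$ absorbs all the implicit constants and the finitely many iteration steps, for $R$ large).

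For the improved energy bound \eqref{mainbootconcE} I would apply Theorem \ref{IncremBound} with $N_2=N_0$, $N_\infty=N_1$, $t_1=0$, $t_2=t$, taking for the hypothesis \eqref{BootstrapFull} the constant $A=\epsilon^{1-\theta}R^{1/2}$ and the function $\varepsilon(s)$ equal to the just-improved $\dot W$-bound, i.e. $\varepsilon(s)\lesssim\epsilon^{1-3\theta/4}$. Then \eqref{MainGrowth} gives
\begin{equation*}
\|(h+i|\partial_x|^{1/2}\omega)(t)\|_{H^{N_0}}^2\lesssim \|(h+i|\partial_x|^{1/2}\omega)(0)\|_{H^{N_0}}^2+\big(\epsilon^{1-\theta}R^{1/2}\big)^2\cdot T_1\cdot\big(\epsilon^{1-3\theta/4}\big)^3.
\end{equation*}
The first term is $\lesssim\epsilon^{2-\theta}R$ by \eqref{initialenergy}, and the second is $\lesssim \epsilon^{2-2\theta}R\cdot \epsilon^{-8/3+\theta_0}\cdot\epsilon^{3-9\theta/4}$; since $\theta_0=100\theta$, the $\epsilon$-exponent here is $5-2\theta-8/3+100\theta-9\theta/4=7/3+\mathcal O(\theta)>2-\theta$, so for $R$ (hence $\epsilon^{-1}\leq R^{\theta_0}$, $\epsilon^{-1}\geq R^{\theta_0/(8/3)}$ by the range of $\epsilon$) large this term is $\ll\epsilon^{2-3\theta/2}R$. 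Hence $\|(h+i|\partial_x|^{1/2}\omega)(t)\|_{H^{N_0}}\lesssim\epsilon^{1-3\theta/4}R^{1/2}$; the half-derivative gain to $H^{N_0+1/2}$ on the $|\partial_x|^{1/2}\omega$ part and the control of $|\partial_x|^{1/2}\phi$ in $H^{N_0}$ follow from the structure of the energy in \cite{DIP} together with the paraproduct relation $\phi=\omega+T_Bh$ and \eqref{mainbootconcD}. This closes \eqref{mainbootconcE}. A separate (routine) local existence statement for \eqref{ww0} in $H^{N_0}$ shows the solution can be continued past any $t<T_1$ as long as the bootstrap bounds hold, so continuity of all norms in $t$ forces $T_\ast=T_1$. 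Finally, Theorem \ref{main} follows by noting that \eqref{initialenergy} places the data inside the bootstrap region at $t=0$, that the region is open and closed along the flow, and that the conclusions \eqref{mainbootconcE}--\eqref{mainbootconcD} imply \eqref{mainconc} after one more conversion from $\omega$ back to $\phi$ and from $(h,\phi)$ to $u$.

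\textbf{Main obstacle.} The delicate point is not any single estimate but the self-consistency of the exponent bookkeeping: one must check that every conversion $(h,\phi,\omega)\leftrightarrow(u,w)$ (via \eqref{omega}, \eqref{normal}, \eqref{nonlin2}, \eqref{symbolboundN}) loses only a controlled power of $\epsilon^{\theta}$ and no power of $R$, and that the cubic time integral in \eqref{MainGrowth} genuinely beats the initial energy $\epsilon^{2-\theta}R$ on the \emph{full} range $\epsilon\in[R^{-8/3+\theta_0},R^{-\theta_0}]$ — it is the lower endpoint $\epsilon\approx R^{-8/3+}$ together with $T_1=\epsilon^{-8/3+\theta_0}$ that makes this tight, and the precise choice $\theta=\theta_0/100$ and $N_0\geq\theta^{-3}$ is what gives the slack. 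Everything else is standard paradifferential calculus and a continuity argument.
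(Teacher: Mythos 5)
Your overall architecture is the same as the paper's: combine Theorem \ref{IncremBound} with Proposition \ref{apriori}, convert between $(h,\phi,\omega)$ and $(u,w)$ via \eqref{omega}, \eqref{normal}, \eqref{normalprod}, and close with a continuity argument (the paper actually runs the energy step first, using the \emph{unimproved} bound $\varepsilon(s)=\epsilon^{1-\theta}$ from \eqref{mainbootasD}, which already suffices since $\epsilon^{2-2\theta}R\cdot T_1\cdot\epsilon^{3-3\theta}=\epsilon^{7/3+\theta_0-5\theta}R\ll\epsilon^{2-\theta}R$; your reversed order also works but is not needed, and you should apply Theorem \ref{IncremBound} at level $N_2=N_0+1/2$ directly rather than at $N_0$ and then appeal vaguely to ``the structure of the energy'' for the extra half derivative).

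The genuine gap is in your derivation of \eqref{mainbootconcD}. Proposition \ref{apriori} only gives $\|w\|_{\dot W^{4,0}}\lesssim\epsilon^{1-\theta/2}$, i.e.\ four derivatives of $w$ in $L^\infty$, whereas the conclusion requires six derivatives of $u$ (and $6.5$ of $h+i|\partial_x|^{1/2}\omega$) in $L^\infty$. Your proposed ``fixed-point/iteration on $u$ in $\dot W^{6,0}$'' using $u=w-\mathcal A_2(u)-\mathcal A_3(u)$ cannot close as stated: the relation is regularity-preserving at best, and the input $w$ is simply not controlled in $\dot W^{6,0}$, so no iteration in that norm gets off the ground, and attributing the loss $\theta/2\to 3\theta/4$ to ``implicit constants and finitely many iteration steps'' misidentifies its source. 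The missing step is a frequency-splitting interpolation against the high Sobolev bound: first get $\|u\|_{\dot W^{4,0}}\lesssim\epsilon^{1-\theta/2}$ from the algebraic relation (this part is fine), then, having already established $\|u\|_{H^{N_0}}\lesssim\epsilon^{1-\theta/2}R^{1/2}$ from the energy step, split at a frequency $K_0$ with $2^{N_0K_0}\approx R^{1/2}$ and estimate
\begin{equation*}
\|u\|_{\dot W^{8,0}}\lesssim 2^{4K_0}\epsilon^{1-\theta/2}+2^{(9-N_0)K_0}\epsilon^{1-\theta/2}R^{1/2}\lesssim\epsilon^{1-3\theta/4},
\end{equation*}
where the last inequality uses $R^{5/N_0}\lesssim\epsilon^{-\theta_0/4}$, i.e.\ precisely the hypotheses $N_0\geq\theta^{-3}$ and $\epsilon\leq R^{-\theta_0}$. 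This interpolation is what produces the exponent $3\theta/4$ and bridges the derivative gap; after it, the bounds for $h+i|\partial_x|^{1/2}\omega$ in $\dot W^{N_1+1/2,0}$ follow from $\omega=\phi-T_Bh$ and the $B$-estimates \eqref{estBDIP}. Your ``main obstacle'' paragraph gestures at the role of $N_0\geq\theta^{-3}$, but the proof as written does not contain this step, and without it \eqref{mainbootconcD} is not reached.
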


\begin{proof} For convenience let us introduce
\begin{equation}\label{variables0}
v := h+i|\partial_x|^{1/2}\omega= u - i|\partial_x|^{1/2}T_B h,
\end{equation}
and recall that from \cite[Lemma 3.1]{DIP} we have
\begin{align}\label{estBDIP}
\|B(t')\|_{H^{N_0-1}} \lesssim \epsilon^{1-\theta} R^{1/2} , \qquad \|B(t')\|_{L^\infty}+\sum_{k\geq 0}2^{(N_1-1)k}\|P_kB(t')\|_{L^\infty} \lesssim \epsilon^{1-\theta},
\end{align}
for all $t'\in[0,t]$. Recall the definition \eqref{normal} of the variable $w = u + \mathcal{A}_2(u) + \mathcal{A}_3(u)$
and that the multilinear operators $\mathcal{A}_r$ have symbols satisfying \eqref{symbolboundN}. In view of Lemma \ref{touse} we have 
\begin{align}\label{normalprod}
\begin{split}
& {\| \mathcal{A}_r(u) \|}_{H^{N_0-r+1}} \lesssim R^{1/2}\epsilon^{r(1-\theta)}, \qquad
 {\| \mathcal{A}_r(u) \|}_{\dot{W}^{N_1-r+1,0}} \lesssim \epsilon^{r(1-\theta)}.
\end{split}
\end{align}
for $r=2,3$. 

The bounds \eqref{initialenergy} on $u_{in}$, the bounds \eqref{estBDIP}, and the definition $\omega:=\phi-T_Bh$ show that
\begin{equation}\label{hola1}
\|(h+i|\partial_x|^{1/2}\omega)(0)\|_{H^{N_0+2}}\lesssim \eps^{1-\theta/2}R^{1/2}.
\end{equation}
We apply now Theorem \ref{IncremBound}, use the bootstrap assumption \eqref{mainbootasE}--\eqref{mainbootasD}, and recall that $T_1=\epsilon^{-8/3+\theta_0}$ to see that
\begin{equation*}
\sup_{t'\in[0,t]}\|(h+i|\partial_x|^{1/2}\omega)(t')\|_{H^{N_0+1/2}}\lesssim \eps^{1-\theta/2}R^{1/2}.
\end{equation*}
Since $\phi=\omega+T_Bh$, it follows using \eqref{estBDIP} that $\||\partial_x|^{1/2}\phi(t')\|_{H^{N_0}}\lesssim \eps^{1-\theta/2}R^{1/2}$. Thus
\begin{equation}\label{hola2}
\sup_{t'\in[0,t]}  \Big( \|(h+i|\partial_x|^{1/2}\omega)(t')\|_{H^{N_0+1/2}} +\|(h+i|\partial_x|^{1/2}\phi)(t')\|_{H^{N_0}} \Big)\lesssim \epsilon^{1-\theta/2} R^{1/2},
\end{equation}
which, in particular, implies the desired bounds \eqref{mainbootconcE}.

To prove the pointwise bounds \eqref{mainbootconcD} we notice that $\|w(t')\|_{\dot{W}^{N_1-2,0}}\lesssim \epsilon^{1-\theta}$ and $\|w(t')\|_{H^{N_0-2}}\lesssim \epsilon^{1-\theta}R^{1/2}$, due to the assumptions \eqref{mainbootasD} and the bounds \eqref{normalprod}. The hypothesis \eqref{bootstrap} of Proposition \ref{apriori} is therefore satisfied, so we can apply the proposition to conclude that
\begin{equation}\label{hola3}
\sup_{t'\in[0,t]}\|w(t')\|_{\dot{W}^{4,0}}\lesssim\epsilon^{1-\theta/2}.
\end{equation}
Since $u=w-\mathcal{A}_2(u)-\mathcal{A}_3(u)$, it follows using also \eqref{normalprod} that
\begin{equation}\label{hola4}
\sup_{t'\in[0,t]}\|u(t')\|_{\dot{W}^{4,0}}\lesssim\epsilon^{1-\theta/2}.
\end{equation}
Using the inequality $\|u\|_{H^{N_0}}(t')\lesssim \epsilon^{1-\theta/2} R^{1/2}$ proved in \eqref{hola2} we can further estimate
\begin{equation*}
\begin{split}
\|u(t')\|_{\dot{W}^{8,0}}&=\sum_{k\in\overline{\Zb}}\|P_ku\|_{L^\infty}(2^{8k}+1)\\
&\lesssim\sum_{k\in\overline{\Zb},\,k\leq K_0}2^{4K_0}\|P_ku\|_{L^\infty}(2^{4k}+1)+\sum_{k\geq K_0}\|P_ku\|_{L^2}2^{N_0k}2^{(9-N_0)k}\\
&\lesssim 2^{4K_0}\epsilon^{1-\theta/2}+2^{(9-N_0)K_0}\epsilon^{1-\theta/2}R^{1/2},
\end{split}
\end{equation*}
for any $K_0\geq 0$. We fix now $K_0$ such that $2^{N_0K_0}\approx R^{1/2}$. Since $R^{5/N_0}\lesssim \epsilon^{-\theta_0/4}$ (recall the assumptions $\epsilon\leq R^{-\theta_0}$ and $N_0\geq \theta^{-3})$ it follows that $\|u\|_{\dot{W}^{8,0}}\lesssim\epsilon^{1-3\theta/4}$, which suffices to prove the desired bounds \eqref{mainbootconcD}.
\end{proof}

\section{Propagation of randomness: Proof of Proposition \ref{apriori}}\label{SecRan}
In this section we break down the proof of Proposition \ref{apriori} into several steps. 
It is here that we need to introduce 
probabilistic and combinatorial tools, including Feynman trees, couples, and renormalization.

We begin with a first approximation of the normal form variable $w$, which we denote by $w_{\mathrm{tr}}$, which is obtained by truncating the equations in homogeneity and projecting away all high-frequencies; 
see Proposition \ref{properror1}.

Then we introduce the renormalized interaction coefficients $(a_k)_{\mathrm{tr}}(t)$, see the definition \eqref{profile},
obtained by renormalizing the Fourier coefficients of $w_{\mathrm{tr}}$ by the phase $\Gamma$
(and applying some convenient time and amplitude rescaling).
The introduction of this renormalization is one of the main new ingredients in our proof. 
Note how at this initial stage $\Gamma$ is composed of a term $\Gamma_0=\Gamma_0(k)$ which takes into account 
the standard trivial resonances of the form $(k,k,k_1,k_1)$, 
and a yet-to-be-determined time-frequency dependent contribution $\Gamma_1=\Gamma_1(t,k)$. We note also that the functions $\Gamma$ and $\Gamma_1$ are deterministic, i.e. they do not depend on the choice of the random coefficients $g_k(\omega)$. 

Then, from the Duhamel's formula \eqref{profileeqn} associated to $(a_k)_{\mathrm{tr}}$ 
we construct our main approximation, denoted $w_{\mathrm{app}}$, by long Feynman trees expansions (depending on the parameter $\theta_0$), leading to the formulas \eqref{ansatzprofile}-\eqref{ansatz3} and \eqref{defofjt}.
Note that our definitions of trees and other combinatorial objects (as well as the estimates in later sections)
need to take into account the normal form transformation which, 
in particular, implies that the initial data for $w$ (and $w_{\mathrm{tr}}$)
is not a standard Gaussian sum as in the case of the original physical variable $u$.

In Subsection \ref{sectionkq} we introduce the expressions for couples, 
which will be the main object to be estimated,
and define the renormalizing phase $\Gamma_1$ by a fixed point identity.

Subsection \ref{mainproof} contains the statements of all the main estimates:
Proposition \ref{propjr} gives the main bounds for trees/couples, and for $\Gamma_1$,
while Proposition \ref{propdiff} estimates the difference between the approximate solution's
profile ${(a_k)}_{\mathrm{app}}$ and the truncated solution's profile ${(a_k)}_{\mathrm{tr}}$.

Finally in Subsection \ref{sec.L1est} we give an estimate for oscillatory integrals 
involving multilinear phases obtained from the modified time oscillations $|k|^{1/2}t + \Gamma(t,k)$ to be used in the rest of the arguments,
and in Subsection \ref{seccount} we provide some basic counting estimate for modified quadratic and cubic phases.

\smallskip
\subsection{Frequency and homogeneity truncation}\label{approxtr} 
For the rest of this paper we fix one more parameter $A:=\lfloor\theta^{-1}\rfloor$, and then we fix 
$$K_{\mathrm{tr}}:=\lfloor \log_2(R^{1/A})\rfloor.$$  
With the notation in Proposition \ref{normalprop}, we define $w_{\mathrm{tr}}$ as the solution to the Cauchy problem
\begin{equation}\label{wwapp}
(\partial_t+i|\partial_x|^{1/2})w_{\mathrm{tr}}=\sum_{r=3}^AP_{\leq K_{\mathrm{tr}}}\big[\Nc_r(w_{\mathrm{tr}})\big],\qquad w_{\mathrm{tr}}(0) = w_{\mathrm{in}}^{\mathrm{tr}},
\end{equation} 
where $P_{\leq K}$ denote the standard Littlewood-Paley projections (as defined in section \ref{SecNot}), and
\begin{align}\label{wtrin}
w_{\mathrm{in}}^{\mathrm{tr}} := P_{\leq K_{\mathrm{tr}}} u_{\mathrm{in}} + P_{\leq K_{\mathrm{tr}}}\Ac_2(P_{\leq K_{\mathrm{tr}}}u_{\mathrm{in}})
  +P_{\leq K_{\mathrm{tr}}}\Ac_3(P_{\leq K_{\mathrm{tr}}}u_{\mathrm{in}}).
\end{align}
The equation (\ref{wwapp}) is obtained by truncating the higher order nonlinearities, 
as well as all projecting away high frequencies.
The following proposition shows how to control the difference between the real solution $w$
and the truncated solution $w_{\mathrm{tr}}$:

\begin{prop}\label{properror1} 
Assume that $t\leq T_1$ and $u\in C([0,t]:H^{N_0})$ is a solution of the system \eqref{ww0} satisfying the assumption \eqref{bootstrap}
\begin{equation}\label{error1}
\sup_{s\in[0,t]}\big[\|u(s)\|_{H^{N_0}}+\|w(s)\|_{H^{N_0-2}}\big]
  \leq \epsilon^{1-\theta} R^{1/2},\quad \sup_{s\in[0,t]} \big[\|u(s)\|_{\dot{W}^{6,0}} +\|w(s)\|_{\dot{W}^{4,0}}\big] \leq \epsilon^{1-\theta}.
  \end{equation} 
Recall the nonlinearities $\mathcal{N}_r(w)=\mathcal{N}_r(w,\ldots,w)$ defined in \eqref{nonlin1} and define
\begin{align}\label{properr1not}
& \Nc_r^j(F)[G] = \Nc_r(F,\dots, G, \dots, F), \quad 0\leq j \leq r,
\end{align}
where the function $G$ appears in the $j-$th position and the other $r-1$ arguments are $F$. 

In addition to \eqref{error1}, we assume that any solution $W$ to the linear equation with forcing 
\begin{equation}\label{error1.1'}
(\partial_t+i|\partial_x|^{1/2})W = \sum_{r=3}^A\sum_{1\leq j\leq r}P_{\leq K_{\mathrm{tr}}}
   \Nc_r^j(w_{\mathrm{tr}})[W] 
   + \Rc
\end{equation} 
satisfies the bounds
\begin{equation}\label{error1.2'}
\| W \|_{L_t^\infty\dot{W}^{4,0}_x}
  \lesssim R^8(\| W(0)\|_{H^5} + \|\Rc\|_{L_t^\infty H^5_x} ).
\end{equation}
Then, as a conclusion we have that
\begin{equation}\label{error2'}
\sup_{t'\in[0,t]}\|w(t')-w_{\mathrm{tr}}(t')\|_{\dot{W}^{4,0}} \leq \epsilon^{A/4}.
\end{equation}


\end{prop}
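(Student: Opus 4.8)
The plan is to estimate the difference $d:=w-w_{\mathrm{tr}}$ by deriving the equation it satisfies, splitting it into a forcing term (capturing the parts of the $w$-equation that are discarded in the truncated equation \eqref{wwapp}) and a linear-in-$d$ term with coefficients built from $w_{\mathrm{tr}}$, and then invoking the hypothesized linear estimate \eqref{error1.2'}. Subtracting \eqref{wwapp} from \eqref{ww1} and inserting $w=w_{\mathrm{tr}}+d$ into each $\Nc_r$, the multilinearity gives
\begin{equation*}
(\partial_t+i|\partial_x|^{1/2})d=\sum_{r=3}^A\sum_{1\leq j\leq r}P_{\leq K_{\mathrm{tr}}}\Nc_r^j(w_{\mathrm{tr}})[d]+\Rc,
\end{equation*}
where $\Rc$ collects: (i) the high-frequency part $(1-P_{\leq K_{\mathrm{tr}}})\sum_{r=3}^A\Nc_r(w)$; (ii) the remainder $\Nc_{>A}$ from \eqref{ww1}; and (iii) the multilinear terms $P_{\leq K_{\mathrm{tr}}}[\Nc_r(w_{\mathrm{tr}}+d)-\Nc_r(w_{\mathrm{tr}})-\sum_j\Nc_r^j(w_{\mathrm{tr}})[d]]$ that are at least quadratic in $d$. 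The initial datum is $d(0)=w_{\mathrm{in}}-w_{\mathrm{in}}^{\mathrm{tr}}$, which by \eqref{data_w}, \eqref{wtrin} and the symbol bounds \eqref{symbolboundN} is supported (up to lower-order normal-form corrections) at frequencies $\gtrsim 2^{K_{\mathrm{tr}}}\approx R^{1/A}$, so $\|d(0)\|_{H^5}$ gains a large negative power of $R$ from the $H^{N_0+4}$ bound on $w_{\mathrm{in}}$ in \eqref{initialenergy}; concretely $\|d(0)\|_{H^5}\lesssim 2^{-(N_0-1)K_{\mathrm{tr}}}\epsilon^{1-\theta/2}R^{1/2}\le R^{-100}\epsilon$ say, which is far smaller than $\epsilon^{A/4}R^{-8}$.

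The core estimates are on $\|\Rc\|_{L_t^\infty H^5_x}$. For (i), the high-frequency truncation of $\Nc_r(w)$: since $w$ satisfies the Sobolev bound $\|w\|_{H^{N_0-2}}\lesssim\epsilon^{1-\theta}R^{1/2}$ from \eqref{error1} and the symbol bounds \eqref{symbolbound} together with Lemma \ref{touse}-type product estimates give $\|\Nc_r(w)\|_{H^{N_0-r-3/2}}\lesssim\epsilon^{(r-1)(1-\theta)}\cdot\epsilon^{1-\theta}R^{1/2}$, applying $(1-P_{\leq K_{\mathrm{tr}}})$ costs $2^{-(N_0-r-3/2-5)K_{\mathrm{tr}}}$; with $N_0\ge\theta^{-3}$ and $2^{K_{\mathrm{tr}}}\approx R^{1/A}=R^{\theta+}$ this produces a power $R^{-c/\theta^2}$, overwhelming $R^{1/2}$ and leaving something like $\epsilon^{r-1}\cdot\epsilon\cdot R^{-50}$, comfortably below $\epsilon^{A/4}$ once one notes $\epsilon\ge R^{-8/3}$ so $R^{-50}\le\epsilon^{\text{large}}$. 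For (ii), $\|\Nc_{>A}\|_{H^{N_0-A-5/2}}\lesssim\epsilon^{A(1-\theta)}\cdot\epsilon^{1-\theta}R^{1/2}$ by \eqref{nonlin2}, and the single power $R^{1/2}$ is absorbed since $\epsilon^{A\theta}R^{1/2}\le 1$ follows from $\epsilon\le R^{-\theta_0}$ and $A=\lfloor\theta^{-1}\rfloor$, $\theta=\theta_0/100$ (so $A\theta_0\gg 1/2$ after accounting — here one wants $A\theta\cdot\theta_0\cdot$ comparison; more honestly $\epsilon^{A(1-\theta)}R^{1/2}\lesssim R^{-A\theta_0/2}R^{1/2}$, and $A\theta_0=\lfloor\theta^{-1}\rfloor\theta_0\ge 50$, so this is $\le R^{-24}\ll\epsilon^{A/4}$ using $\epsilon^{A/4}\ge R^{-2A/3}$, check $2A/3$ vs $24$ — fine for $\theta_0$ small). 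For (iii), the higher-in-$d$ terms are estimated by putting the lowest power of $d$ in $H^5$ (or $\dot W^{4,0}$) via the bootstrap $\|d\|_{\dot W^{4,0}}\le\|w\|_{\dot W^{4,0}}+\|w_{\mathrm{tr}}\|_{\dot W^{4,0}}\lesssim\epsilon^{1-\theta}$ and the remaining factors of $w_{\mathrm{tr}}$ in high Sobolev; each such term is quadratic in $d$ and so contributes $\lesssim\epsilon^{2-2\theta}\cdot(\text{stuff})$, which is $\le R^{-8}\epsilon^{A/4}/2$ after absorbing with the smallness of $\epsilon$ — crucially these do not need the frequency gain because they are already of order $\epsilon^2$ relative to the target, but one does need a preliminary a priori control $\|d\|_{\dot W^{4,0}}\lesssim\epsilon^{A/4}$ on a maximal subinterval.

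This last point dictates the structure: I would run a continuity/bootstrap argument in $t$. On a maximal subinterval $[0,t']\subseteq[0,t]$ where $\|d\|_{L^\infty_s\dot W^{4,0}}\le\epsilon^{A/4}$, the quadratic-in-$d$ terms in $\Rc$ are $\lesssim\epsilon^{A/4}\cdot\epsilon^{(r-2)(1-\theta)}\cdot\|w_{\mathrm{tr}}\|_{H^5}^{\le r-2}$, hence $\le\tfrac12 R^{-8}\epsilon^{A/4}$ since each carries at least one more power of $\epsilon$; then \eqref{error1.2'} yields $\|d\|_{\dot W^{4,0}}\lesssim R^8(\|d(0)\|_{H^5}+\|\Rc\|_{L^\infty_sH^5_x})\le\tfrac12\epsilon^{A/4}$, strictly improving the bound and closing the bootstrap, which gives \eqref{error2'} on all of $[0,t]$. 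The main obstacle — and the reason the frequency truncation level $K_{\mathrm{tr}}\approx\log_2 R^{1/A}$ and the Sobolev level $N_0\ge\theta^{-3}$ are chosen as they are — is bookkeeping the competition between the single power $R^{1/2}$ (from the large-energy Sobolev norms, which is unavoidable here given the large total energy) and the frequency gain $2^{-(N_0-O(A))K_{\mathrm{tr}}}\approx R^{-(N_0-O(A))/A}$ plus the $\epsilon$-powers, uniformly over $3\le r\le A$ and $\epsilon\in[R^{-8/3+\theta_0},R^{-\theta_0}]$; one has to verify that in every term the net exponent of $R$ is $\le-8-A/4\cdot(8/3)$ or is compensated by a genuine power of $\epsilon^{A/4}$, which is the one place the precise numerology of $\theta,A,N_0,K_{\mathrm{tr}}$ must be pinned down rather than hand-waved. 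The probabilistic input is minimal and enters only through \eqref{initialenergy} for $\|w_{\mathrm{in}}\|_{H^{N_0+4}}$ — everything else is deterministic multilinear analysis.
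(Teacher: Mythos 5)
Your proposal is correct and follows essentially the same route as the paper: write $W=w-w_{\mathrm{tr}}$ as a solution of the linearized equation \eqref{error1.1'} with the same three-part remainder (the $\Nc_{>A}$ term, the high-frequency part $P_{>K_{\mathrm{tr}}}\Nc_r(w)$, and the at-least-quadratic-in-$W$ differences), estimate the initial datum $w_{\mathrm{in}}-w_{\mathrm{in}}^{\mathrm{tr}}$ through the frequency-truncation gain, bound $\|\Rc\|_{L^\infty_t H^5}$ under a $\dot{W}^{4,0}$ bootstrap, and close via the hypothesis \eqref{error1.2'}. The only point treated more lightly than in the paper is the quadratic-in-$W$ terms where the Sobolev norm must fall on a difference factor: there one needs $\|w-w_{\mathrm{tr}}\|_{H^{r+5}}$, which the paper controls by exploiting $w_{\mathrm{tr}}=P_{\leq K_{\mathrm{tr}}+2}w_{\mathrm{tr}}$ (Bernstein on the low frequencies plus the $H^{N_0}$ tail for $P_{>K_{\mathrm{tr}}+2}w$), a small but necessary step your sketch leaves implicit.
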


The above proposition has been formulated in a way to keep the logic of the proof simpler and separate the deterministic parts of the argument and the probabilistic ones.
However, the claim that the bounds \eqref{error1.2'} hold for all linear solutions of the equation \eqref{error1.1'} (which is an implicit claim about the function  $w_{tr}$) relies on 
probabilistic arguments, and therefore holds for a set of large probability.
The proof of \eqref{error1.2'} is given at the end of Section \ref{sec.rem}.

\begin{proof}[Proof of Proposition \ref{properror1}]
Recall that $w$ satisfies the equation \eqref{ww1} with $w(0)=w_{\mathrm{in}}$ satisfying the bounds in Proposition \ref{initialprop},
while $w_{\mathrm{tr}}$ solves the equation \eqref{wwapp}.
Then, their difference $W = w - w_{\mathrm{tr}}$ satisfies the equation \eqref{error1.1'} with
\begin{align}\label{errpr1'}
W(0) =  w_{\mathrm{in}} - w_{\mathrm{in}}^{\mathrm{tr}} 
\end{align}
and
\begin{align}\label{errpr3'}
\begin{split}
\mathcal{R} & := \mathcal{N}_{>A} (u) + \sum_{r=3}^A P_{> K_{\mathrm{tr}}} \mathcal{N}_r(w)
  + \sum_{r=3}^A P_{\leq K_{\mathrm{tr}}} \Big( \Nc_r(w) - \Nc_r(w_{\mathrm{tr}})
  - \sum_{1\leq j\leq r} \Nc_r^j \big(w_{\mathrm{tr}}\big) [W] \Big).
\end{split}
\end{align}

We bound first the contribution of the initial data, i.e. we show that
\begin{align}\label{errprconc1}
{\big\| w_{\mathrm{in}} - w_{\mathrm{in}}^{\mathrm{tr}}
  \big\|}_{H^5} \lesssim \epsilon^{2A/3}.
\end{align}
For this we estimate
\begin{align*}
{\big\| w_{\mathrm{in}} - w_{\mathrm{in}}^{\mathrm{tr}}
  \big\|}_{H^5}
  & \lesssim {\big\| P_{> K_{\mathrm{tr}}}u_{\mathrm{in}} \|}_{H^5} 
  + \sum_{r=2,3}{\big\|  \Ac_r(u_{\mathrm{in}}) - P_{\leq K_{\mathrm{tr}}} \Ac_r(P_{\leq K_{\mathrm{tr}}}u_{\mathrm{in}}) \big\|}_{H^5}.
\end{align*}
Since $\|u_{\mathrm{in}}\|_{H^{N_0}}\leq \epsilon^{1-\theta} R^{1/2}$
(see \eqref{error1}) we can estimate
\begin{align}\label{errprd1}
\begin{split}
{\big\| P_{> K_{\mathrm{tr}}}u_{\mathrm{in}} \|}_{H^5} 
  & \lesssim (R^{1/A})^{-(N_0-5)} {\| u_{\mathrm{in}} \|}_{H^{N_0}}
  \lesssim R^{-A} \epsilon^{1-\theta} R^{1/2}.
\end{split}
\end{align}
Moreover, by writing $\Ac_2(f) = \Ac_2(f,f)$ (with the natural definition of the bilinear operator $\Ac_2$)
and using Lemma \ref{level2T} (ii) and the bounds \eqref{symbolboundN}, we can estimate
\begin{align}\label{errprd2}
\begin{split}
& {\big\| \Ac_2(u_{\mathrm{in}}) - P_{\leq K_{\mathrm{tr}}} \Ac_2(P_{\leq K_{\mathrm{tr}}} u_{\mathrm{in}}) \big\|}_{H^5}
  \\
  & \lesssim {\big\| P_{> K_{\mathrm{tr}}} \Ac_2(u_{\mathrm{in}}, u_{\mathrm{in}})  \big\|}_{H^5}
  + {\big\| \Ac_2(P_{> K_{\mathrm{tr}}} u_{\mathrm{in}}, u_{\mathrm{in}})  \big\|}_{H^5}
  + {\big\| \Ac_2(P_{\leq K_{\mathrm{tr}}}u_{\mathrm{in}}, P_{> K_{\mathrm{tr}}}u_{\mathrm{in}}) \big\|}_{H^5}
  \\
  & \lesssim (R^{1/A})^{-(N_0-10)} 
  {\|u_{\mathrm{in}} \|}_{H^{N_0}} {\| u_{\mathrm{in}} \|}_{\dot{W}^{4,0}}
  \lesssim R^{-A} \epsilon^{2-2\theta} R^{1/2}.
\end{split}
\end{align}
A similar argument can be used to obtain similar bounds for the contribution of the cubic term $\Ac_3(u_{in})$. The desired bounds \eqref{errprconc1} follow.

Using now the assumption \eqref{error1.2'} we would like to prove the following bootstrap estimate:
if
\begin{equation}\label{errprboot}
\sup_{t'\in[0,t]}\|w(t')-w_{\mathrm{tr}}(t')\|_{\dot{W}^{4,0}} \leq 2\epsilon^{A/4}
\end{equation} 
then
\begin{equation}\label{errprbootconc}
\sup_{t'\in[0,t]}\|w(t')-w_{\mathrm{tr}}(t')\|_{\dot{W}^{4,0}} \leq \epsilon^{A/4}. 
\end{equation} 
We need to bound $\|\Rc\|_{L^\infty_tH^5_x}$. The first two terms in \eqref{errpr3'} can be directly estimated using the estimates in the last line of \eqref{nonlin2} and the apriori assumptions \eqref{error1}: for any $s\in[0,t]$
\begin{align}\label{errprconc2}
\big\| \mathcal{N}_{>A} (u(s)) \big\|_{H^5} 
\lesssim {\|w(s)\|}_{H^{N_0-2}} {\|w(s)\|}_{\dot{W}^{4,0}}^A 
\lesssim 
R^{1/2} \cdot (\epsilon^{1-\theta})^{A+1}  \lesssim \epsilon^{2A/3},
\end{align}
and
\begin{align}\label{errprconc2.5}
\begin{split}
\|P_{>K_{tr}}\mathcal{N}_r(w(s))\|_{H^5}\lesssim
 R^{-A/2}\|\mathcal{N}_r(w(s))\|_{H^{N_0/2}} \lesssim R^{-A/3}.
\end{split}
\end{align}

Finally, we estimate the last term in \eqref{errpr3'}. Observe that the expression
in the square parenthesis is a linear combination of terms
of the form $\Nc_r(w_1,w_2,\dots,w_r)$ such that there exists $1\leq i\neq j \leq r$ 
with $w_i = w_j = w - w_{\mathrm{tr}}$,
and, for $k\neq i,j$, $w_k \in \{w,w_{\mathrm{tr}},w-w_{\mathrm{tr}}\}$.
For each $r=3,\dots, A$, we can use \eqref{nonlin1}, the $S^\infty$ bounds \eqref{symbolbound}, 
and the product estimates in Lemma \ref{level2T} (ii)  to bound 
\begin{align}\label{errprd5}
\begin{split}
&\Big\| 
  P_{\leq K_{\mathrm{tr}}} \big[ \Nc_r(w(s)) - \Nc_r(w_{\mathrm{tr}}(s))
  - \sum_{1\leq j\leq r} \Nc_r^j \big(w_{\mathrm{tr}}(s),\dots, w(s)-w_{\mathrm{tr}}(s), 
  \dots, w_{\mathrm{tr}}(s) \big] \Big\|_{H^5}
  \\
  & \lesssim \big\| w(s) - w_{\mathrm{tr}}(s) \big\|_{\dot{W}^{4,0}} 
  \cdot \big( \| w(s) \|_{\dot{W}^{4,0}} + \| w_{\mathrm{tr}}(s) \|_{\dot{W}^{4,0}} \big)^{r-3}
  \\
  & \qquad \times \Big[ \big\| w(s) - w_{\mathrm{tr}}(s) \big\|_{\dot{W}^{4,0}}
  \big( \| w(s) \|_{H^{r+5}} + \| w_{\mathrm{tr}}(s) \|_{H^{r+5}} \big)
  \\
  & \qquad + 
  \big\| w(s) - w_{\mathrm{tr}}(s) \big\|_{H^{r+5}}
  \big( \| w(s) \|_{\dot{W}^{4,0}} + \| w_{\mathrm{tr}}(s) \|_{\dot{W}^{4,0}} \big) \Big]
  \\
  & \lesssim \epsilon^{A/4}\epsilon^{(1-\theta)(r-3)}\Big[ \epsilon^{A/4} \cdot (\epsilon^{1-\theta} R^{1/2} + \| w_{\mathrm{tr}}(s) \|_{H^{r+5}})
  + \epsilon^{1-\theta}\| w(s) - w_{\mathrm{tr}}(s) \|_{H^{r+5}} \Big].
\end{split}
\end{align}
Next, we use $w_{\mathrm{tr}} = P_{\leq K_{\mathrm{tr}}+2}w_{\mathrm{tr}}$ and the bounds \eqref{error1} and \eqref{errprboot}
to see that
\begin{align*}
\| w(s) - w_{\mathrm{tr}}(s) \|_{H^{r+5}} &\lesssim  \|P_{\leq K_{tr}+2}[w(s)-w_{\mathrm{tr}}(s)]\|_{H^{r+5}}+\|P_{>K_{tr}+2}w(s)\|_{H^{r+5}}\\
&\lesssim R^2\epsilon^{A/4}  
    + R^{-A} \epsilon^{1-\theta} R^{1/2}.
\end{align*}
In particular $\|w_{tr}(s)\|_{H^{r+5}}\lesssim  \epsilon^{1-\theta} R^{1/2}$ (since $\epsilon^{A/4}\leq R^{-25}$ due to the assumption $\epsilon\leq R^{-\theta_0}$), so the right-hand side of \eqref{errprd5} is bounded by $C\epsilon^{A/2}R^{2}$. 
Using also \eqref{errprconc2}--\eqref{errprconc2.5} it follows that $\|\Rc\|_{H^5}\lesssim \epsilon^{A/2}R^{2}$. 
Using the assumption \eqref{error1.2'} it follows that $\|w(s)-w_{\mathrm{tr}}(s)\|_{\dot{W}^{4,0}} \lesssim \epsilon^{A/2}R^{10}$ for any $s\in[0,t]$, which implies the desired bootstrap bounds \eqref{errprbootconc}.
\end{proof}

\subsection{Renormalization}\label{extrarenorm}We define the {\it renormalized profile}
\begin{equation}\label{profile}
(a_k)_{\mathrm{tr}}(s) = (R^{1/2} \epsilon)^{-1} \cdot
  e^{iT_1(|k|^{1/2}s+\Gamma(s,k))}\cdot\widehat{ w_{\mathrm{tr}}}(T_1s,k),
\end{equation} 
for $(s,k)\in[0,1]\times\Zb_R$, where $\Zb_R:=\Zb/R$ and $\Gamma\in C_s^0L_k^\infty([0,1]\times\Zb_R)$ is a (deterministic) renormalization factor to be defined below. The equation \eqref{wwapp} becomes
\begin{equation}\label{aex1}
\partial_s(a_k)_{\mathrm{tr}}(s)=iT_1\partial_s\Gamma(s,k)(a_k)_{\mathrm{tr}}(s)+(R^{1/2}\epsilon)^{-1}e^{iT_1(|k|^{1/2}s+\Gamma(s,k))}\cdot T_1\widehat{ \mathcal{N}_{\mathrm{tr}}}(T_1s,k),
\end{equation}
where $\mathcal{N}_{tr}$ is the nonlinearity in the right-hand side of \eqref{wwapp}. Using \eqref{nonlin1} we can write
\begin{equation*}
\begin{split}
\widehat{ \mathcal{N}_{\mathrm{tr}}}(T_1s,k)&=\sum_{r=3}^A\frac{\varphi_{\leq K_{tr}}(k)}{(2\pi R)^{r-1}}\sum_{\iota_j\in\{\pm\}}\sum_{k_1,\ldots,k_r\in\Zb_R,\,\iota_1k_1+\ldots+\iota_rk_r=k}\negmedspace n_{r,\iota_1\ldots\iota_r}(\iota_1k_1,\ldots,\iota_rk_r)\prod_{j=1}^r\widehat{w_{\mathrm{tr}}}(T_1s,k_j)^{\iota_j}\\
&=\sum_{r=3}^A\frac{\varphi_{\leq K_{tr}}(k)}{(2\pi R)^{r-1}}\sum_{\iota_1k_1+\ldots+\iota_rk_r=k}^\ast n_{r,\iota_1\ldots\iota_r}(\iota_1k_1,\ldots,\iota_rk_r)(R^{1/2}\epsilon)^r\\
&\qquad\qquad\prod_{j=1}^r\Big\{[(a_{k_j})_{\mathrm{tr}}(s)]^{\iota_j}e^{-iT_1\iota_j(|k_j|^{1/2}s+\Gamma(s,k_j))}\Big\},
\end{split}
\end{equation*}
where, for simplicity of notation, $\sum\limits_{\iota_1k_1+\ldots+\iota_rk_r=k}^\ast$ denotes the sum over all the signs $\iota_1,\ldots,\iota_r\in\{+,-\}$ and all $k_1,\ldots,k_r\in\Z_R$ satisfying $\iota_1k_1+\ldots+\iota_rk_r=k$. The formula \eqref{aex1} becomes
\begin{equation}\label{aex2}
\begin{split}
\partial_s(a_k)_{\mathrm{tr}}(s)&=iT_1\partial_s\Gamma(s,k)(a_k)_{\mathrm{tr}}(s)+\sum_{r=3}^A\varphi_{\leq K_{tr}}(k)(\epsilon R^{-1/2})^{r-1}T_1\\
&\times\sum_{\iota_1k_1+\ldots+\iota_rk_r=k}^\ast i\widetilde{n}_{\iota_1\ldots\iota_r}(k_1,\ldots,k_r)e^{iT_1\Omega_r(s,k;k_1,\ldots,k_r)}\prod_{j=1}^r[(a_{k_j})_{\mathrm{tr}}(s)]^{\iota_j},
\end{split}
\end{equation}
where
\begin{equation}\label{aex3}
\begin{split}
\widetilde{n}_{\iota_1\ldots\iota_r}(k_1,\ldots,k_r)&:=-in_{r,\iota_1\ldots\iota_r}(\iota_1k_1,\ldots,\iota_rk_r)(2\pi)^{-r+1},\\
\Omega_r(s,k;k_1,\ldots,k_r)&:=s\big[|k|^{1/2}-\sum_{j=1}^r\iota_j|k_j|^{1/2}\big]+\big[\Gamma(s,k)-\sum_{j=1}^r\iota_j\Gamma(s,k_j)\big].
\end{split}
\end{equation}

We are looking for renormalization factors $\Gamma$ of the form
\begin{align}\label{defGamma}
\Gamma({t},k)=\int_0^{t}(\epsilon^2\Gamma_0(k)+T_1^{-1}\cdot\Gamma_1({t}',k))\,\mathrm{d}{t}'=\epsilon^2{t}\cdot \Gamma_0(k)+T_1^{-1}\int_0^{t}\Gamma_1({t}',k)\,\mathrm{d}{t}'.
\end{align}
where, with $n_3:=\widetilde{n}_{++-}$, 
\begin{equation}
\label{defgauge}
\Gamma_0(k) := -R^{-1}\sum_{k_1\in\Zb_R}2n_3(k_1,k,k_1)\psi(k_1)\varphi_{\leq K_{\mathrm{tr}}}(k)\varphi_{\leq K_{\mathrm{tr}}}(k_1)^2,
\end{equation} 
and $\Gamma_1({t},k)\in  C_t^0L_k^\infty([0,1]\times\Zb_R)$ will be defined implicitly in Proposition \ref{fixedrenorm} below to achieve a higher order cancellation. Then 
\begin{equation}\label{profileeqn}
\begin{aligned}
&(a_k)_{\mathrm{tr}}({t})= R^{-1/2} \epsilon^{-1}\widehat{w_{\mathrm{in}}^{\mathrm{tr}}}(k)+i\sum_{r=3}^A(\epsilon R^{-1/2})^{r-1}T_1\\
&\quad\times\sum_{\iota_1k_1+\cdots+\iota_rk_r=k}^\ast\widetilde{n}_{\iota_1\ldots\iota_r}(k_1,\ldots,k_r)\varphi_{\leq K_{\mathrm{tr}}}(k)\int_0^{t} e^{iT_1\Omega_r(t',k;k_1,\ldots,k_r)}\prod_{j=1}^r(a_{k_j})_{\mathrm{tr}}({t}')^{\zeta_j}\,\mathrm{d}{t}'
\\
&\quad+iT_1\int_0^{t}(\epsilon^2\Gamma_0(k)+T_1^{-1}\Gamma_1({t}',k))(a_k)_{\mathrm{tr}}({t}')\mathrm{d}{t}',
\end{aligned}
\end{equation} 
where the resonance factors $\Omega_r$ in \eqref{profileeqn} are defined as
\begin{equation}\label{resfactor}
\begin{split}
\Omega_r({t},k;k_1,\cdots,k_r):=\Big[|k|^{1/2}-\sum_{j=1}^r\iota_j|k_j|^{1/2}&\Big]\cdot{t}+\epsilon^2\Big[\Gamma_0(k)-\sum_{j=1}^r\iota_j\Gamma_0(k_j)\Big]\cdot{t}\\
&+T_1^{-1}\int_0^{t}\Big[\Gamma_1({t}',k)-\sum_{j=1}^r\iota_j\Gamma_1({t}',k_j)\Big]\,\mathrm{d}{t}'.
\end{split}
\end{equation}

\medskip
\subsection{Duhamel expansion, trees and couples} 
We shall expand the solution $w_{\mathrm{tr}}$ to (\ref{wwapp}), or equivalently the solution $(a_k)_{\mathrm{tr}}(t)$ to (\ref{profileeqn}), using Duhamel iterations; the terms occurring in such expansions are naturally indexed by trees, and correlations between them naturally correspond to couples, which are two trees with their leaves paired. Therefore we start by introducing these combinatorial structures.
\begin{df}\label{deftree} Define a \emph{tree} $\Tc$ to be a rooted tree with two kinds of branching nodes: the interaction or I-branching nodes, and the normal form or N-branching nodes. We assume that each I-branching node has at least $3$ and at most $A$ children nodes; moreover each N-branching node has either 2 or 3 children nodes, and all of them are leaves. Define the order $n=n(\Tc)$ (resp. $n_I=n_I(\Tc)$ and $n_N=n_N(\Tc)$) to be the number of branching nodes (resp. number of I- and N-branching nodes) and the rank $r=r(\Tc)$ to be the number of leaves. We also refer to $n_I(\Tc)$ and $r(\Tc)$ as the \emph{order} and \emph{rank} of the tree $\Tc$.

For each  tree $\Tc$, we also choose a sign $\zeta_\nf\in\{\pm\}$ for each node $\nf\in\Tc$. We require that for any (I- or N-) branching node $\nf$, the children nodes of $\nf$ that have the same sign as $\nf$ must occur to the left of those that have opposite sign as $\nf$. The sign of the root $\rf$ of $\Tc$ is called the sign of $\Tc$.

We may also turn certain leaves of a given tree $\Tc$ into pairs, such that each pair contains two leaves of opposite sign; the resulting structure is called a \emph{paired tree}, and we denote the pairing structure by $\Pc$. Finally, given two trees $(\Tc^+,\Tc^-)$ such that $\Tc^{\pm}$ has sign $\pm$ and the total number of leaves is even, we may \emph{partition} the set of leaves of $\Tc^\pm$ into two-element opposite-sign pairs, to form a \emph{couple} $\Qc$. Note that a couple $\Qc$ naturally turns each tree $\Tc^{\pm}$ into a paired tree.

For any tree, paired tree or couple, we shall use the letter $\rf$, $\nf$, $\mf$ and $\lf$ to respectively denote the root(s), arbitrary nodes, arbitrary branching nodes and arbitrary leaves. For any branching node $\mf$, denote all its children nodes by $\mf_1,\cdots,\mf_r$ from left to right. The sets $\Bc$, $\Ic$, $\Nc$ and $\Lc$ denote the sets of branching nodes, I- and N-branching nodes, and leaves.
\end{df}

\begin{df}[Admissibility]\label{defadm} 
Given a paired tree $\Tc$, we say $\Tc$ is \emph{admissible} if for any I-branching node $\nf$ 
with exactly $3$ children nodes and any two children nodes $\nf_1,\nf_2$ of $\nf$ with opposite signs, 
the subtrees rooted as $\nf_1$ and $\nf_2$ {\it must not} have their leaves completely paired.
\end{df}

\begin{df}\label{defwick} Given complex Gaussian random variables $(X_1,\cdots,X_m)$ and $\zeta_j\in\{\pm\}$ for $1\leq j\leq m$, suppose that any two $X_j$ are either equal or independent, define (inductively) the renormalized product
\begin{equation}\label{renorm}\mathfrak{R}(X_1^{\zeta_1}\cdots X_m^{\zeta_m}):=X_1^{\zeta_1}\cdots X_m^{\zeta_m}-\sum_{\mathcal{P}\neq\varnothing}\prod_{\{i,j\}\in\mathcal{P}}\mathbb{E}(X_i^{\zeta_i}X_j^{\zeta_j})\cdot\mathfrak{R}\bigg(\prod_{j\in O}X_j^{\zeta_j}\bigg),
\end{equation} where $\mathcal{P}$ runs over all pairing structures of $\{1,\cdots,m\}$, i.e. collections of disjoint two-element subsets, such that $\zeta_i=-\zeta_j$ for each $\{i,j\}\in\mathcal{P}$, and $O$ is the set of elements in $\{1,\cdots,m\}$ not in any pairing in $\mathcal{P}$. For example, we define
\[
\begin{aligned}\mathfrak{R}(XY\overline{Z}\overline{W})&=XY\overline{Z}\overline{W}-\mathbb{E}(X\overline{Z})Y\overline{W}-\mathbb{E}(X\overline{W})Y\overline{Z}-\mathbb{E}(Y\overline{W})X\overline{Z}-\mathbb{E}(Y\overline{Z})X\overline{W}\\
&+2\mathbb{E}(X\overline{Z})\mathbb{E}(Y\overline{W})+2\mathbb{E}(X\overline{W})\mathbb{E}(Y\overline{Z}).
\end{aligned}\]
\end{df}
\begin{df}\label{defdec} For any paired tree $\Tc$ or couple $\Qc$, define a \emph{decoration} of it to be a vector $(k_\nf)$, where $\nf$ runs over all nodes of $\Tc$ or $\Qc$, such that $k_{\lf}=k_{\lf'}$ for any two paired leaves $\lf$ and $\lf'$, and
\begin{equation}\label{defdec0}\iota_\mf k_\mf=\sum_{j}\iota_{\mf_j}k_{\mf_j}\end{equation} for any branching node $\mf$ of $\Tc$ or $\Qc$, where $\mf_j$ are all children nodes of $\mf$.  We call it a $k$-decoration if $k_\rf=k$ where $\rf$ is the root. For any decoration $(k_\nf)$ and any I-branching node $\mf$ with children nodes $\mf_j\,(1\leq j\leq r)$, and any time variable ${t}_\mf$ associated to $\mf$, define
\begin{equation}\label{ddefomegan}\Omega_\mf=\Omega_\mf(t_\mf,k_{\mf}; k_{\mf_1},\ldots, k_{\mf_r}):=\zeta_\mf\cdot\Omega_r({t}_{\mf},k_\mf;k_{\mf_1},\ldots,k_{\mf_r})
\end{equation} where $\Omega_r$ is defined in (\ref{resfactor}) but with the corresponding signs $\zeta_j$ replaced by $\zeta_{\mf_j}\cdot\zeta_\mf$.
\end{df}
\begin{df} For any decorated tree $\Tc$ and any decorated couple $\mathcal{Q}$ we define the integral
\begin{equation}\label{moredef1}
\begin{split}
&\mathcal{I}_{\mathcal{G}}(t,(k_\nf)_\nf):=\int_{\mathcal{D}_\mathcal{G}}\prod_{\mf\in\Ic} e^{iT_1\Omega_\mf(t_\mf)}\,\mathrm{d}{t}_\mf,\\
&\Dc_\mathcal{G}:=\big\{(t_\mf)_{\mf\in\Ic}:0<{t}_{\mf'}<{t}_\mf<{t}\text{ whenever }\mf'\mathrm{\ is\ a\ child\ of\ }\mf\big\},
\end{split}
\end{equation}
where $\mathcal{G}\in\{\mathcal{T},\mathcal{Q}\}$. For each paired tree $\Tc$ with pairing structure $\mathcal{P}$, define the expression
\begin{equation}\label{defofjt}
\begin{aligned}(\Jc_\Tc)_k({t})&:=(\epsilon R^{-1/2})^{r(\Tc)-1}(T_1)^{n_I(\Tc)}\sum_{(k_\nf)}\prod_{\nf\in\Tc}\varphi_{\leq K_{\mathrm{tr}}}(k_\nf)\prod_{\lf\in\Pc}^{(+)}\psi(k_{\lf})\prod_{\hf\in\Lc\backslash\Pc}\psi(k_\hf)^{1/2}\Re\Big(\prod_{\hf\in\Lc\backslash\Pc}g_{k_{\hf}}^{\zeta_{\hf}}\Big)
\\
&\times\prod_{\mf\in\Ic}[i\widetilde{n}_{\zeta_{\mf_1}\ldots\zeta_{\mf_r}}(k_{\mf_1},\ldots,k_{\mf_r})]^{\zeta_\mf}\prod_{\mf\in\Nc} 
\widetilde{a}_{\zeta_{\mf_1}\ldots\zeta_{\mf_r}}(k_{\mf_1},\ldots,k_{\mf_r})^{\zeta_\mf}\mathcal{I}_{\Tc}(t,(k_\nf)_\nf),
\end{aligned}
\end{equation} 
where
\begin{equation}\label{moredef2}
\widetilde{a}_{\zeta_{\mf_1}\ldots\zeta_{\mf_r}}(k_{\mf_1},\ldots,k_{\mf_r}):=a_{r,\zeta_{\mf_1}\ldots\zeta_{\mf_r}}(\zeta_{\mf_1}k_{\mf_1},\ldots,\zeta_{\mf_r}k_{\mf_r}).
\end{equation}
Here $n_I(\Tc)$ and $r(\Tc)$ are the order and rank of $\Tc$ as in Definition \ref{deftree}, $(k_\nf)$ runs over all $k$-decorations of $\Tc$, $\nf$, $\lf$ and $\hf$ run over all nodes, all paired leaves of sign $+$, and all unpaired leaves respectively, $\Ic$ and $\Nc$ denote the sets of I-branching and N-branching nodes of $\Tc$, and the Wick product $\Re$ is as in Definition \ref{defwick}.
\end{df}

\medskip
\subsection{The approximate solution}\label{Ssecwapp}
We now define the approximate solution $w_{\mathrm{app}}$, 
which is an approximation to $w_{\mathrm{tr}}$ and approximately solves the equation \eqref{wwapp} as follows: let 
\begin{equation}\label{ansatz}
(a_k)_{\mathrm{app}}({t}) = \sum_\Tc(\Jc_\Tc)_k({t}),
\end{equation} 
where $\Tc$ runs over all admissible\footnote{We emphasize that it is important in the proof that the sum is taken only over the admissible trees, see Definition \ref{defadm}. The main role of the renormalization coefficient $\Gamma_1$ is to eliminate the higher order contributions of the paired trees that are not admissible.} paired trees of rank $r(\Tc)\leq N:=\lfloor\theta^{-2}\rfloor$ that have sign $+$, and $\Jc_\Tc$ is defined in \eqref{defofjt}. Let
\begin{align}\label{ansatzprofile}
\begin{split}
& w_{\mathrm{app}}(t,x):= \frac{1}{2\pi R}\sum_{k\in\Zb_R}e^{ik\cdot x}\widehat{w_{\mathrm{app}}}(t,k),
\\
& (a_k)_{\mathrm{app}}(t) = R^{-1/2} \epsilon^{-1} \cdot
  e^{iT_1(|k|^{1/2}t + \Gamma(t,k))}\cdot\widehat{ w_{\mathrm{app}}}(T_1t,k).
\end{split}
\end{align}
(which is the same relation between $(a_k)_{\mathrm{tr}}$ and $w_{\mathrm{tr}}$ in \eqref{profile}).
We also define
\begin{equation}\label{ansatz2}
(\Jc_r)_k({t})=\sum_{r(\Tc)=r}(\Jc_\Tc)_k({t}),
\end{equation} where the sum is taken over all admissible paired trees of rank $r$ that have sign $+$, so we have
\begin{equation}\label{ansatz3}
(a_k)_{\mathrm{app}}({t})=\sum_{r=1}^N(\Jc_r)_k({t}).
\end{equation}

We remark that all these definitions depend on the choice of the second renormalization $\Gamma_1=\Gamma_1(t,k)$ defined in Section \ref{extrarenorm}; in fact, this $\Gamma_1(t,k)$ will be constructed by a fixed point equation, see Proposition \ref{fixedrenorm} below.

\subsection{Expressions $\Kc_\Qc$ for couples $\Qc$}\label{sectionkq} 
A major part of the proof in this paper is to estimate the second moment 
$\Eb|(\Jc_r)_k(t)|^2$, which naturally leads to the study of the quantity $\Kc_\Qc$ for couples $\Qc$. Namely, for any couple $\Qc$ we define
\begin{equation}\label{defofkq}
\begin{aligned}\Kc_\Qc(t,k)&:=(\epsilon R^{-1/2})^{r(\Qc)-2}(T_1)^{n_I(\Qc)}\sum_{(k_\nf)}\prod_{\lf\in\Lc}^{(+)}\psi(k_{\lf})\prod_{\mf\in\Ic}
[i\widetilde{n}_{\zeta_{\mf_1}\ldots\zeta_{\mf_r}}(k_{\mf_1},\ldots,k_{\mf_r})]^{\zeta_\mf}
\\
& \times\prod_{\nf\in\Qc}\varphi_{\leq K_{\mathrm{tr}}}
(k_\nf)\prod_{\mf\in\Nc} \widetilde{a}_{\zeta_{\mf_1}\ldots\zeta_{\mf_r}}(k_{\mf_1},\ldots,k_{\mf_r})^{\zeta_\mf}
\mathcal{I}_\Qc(t,(k_\nf)_\nf).
\end{aligned}
\end{equation}
Here $n_I(\Qc)$ and $r(\Qc)$ are the order and rank of $\Qc$,  and $(k_\nf)$ runs over all $k$-decorations of $\Qc$. Moreover $\nf$ and $\lf$ run over all nodes and all leaves with sign $+$ respectively, and $\mathcal{I}_\Qc$ is defined as in \eqref{moredef1}. Note that the definition (\ref{defofkq}) depends on the choice of $\Gamma_1$ in Section \ref{extrarenorm}; the following proposition allow us to fix a unique choice of this $\Gamma_1$.

\begin{prop}\label{fixedrenorm}
There exists a unique $\Gamma_1\in C_t^0L_k^\infty([0,1]\times \Zb_R)$ with $\|\Gamma_1\|_{C_t^0L_k^\infty}\leq 1$, 
such that
\begin{equation}\label{fixedrenormeqn}
  \Gamma_1(t,k)=\epsilon^2T_1\cdot R^{-1}\sum_{k_1\in\Zb_R}(-2)n_3(k_1,k,k_1) 
  \varphi_{\leq K_{\mathrm{tr}}}(k)\varphi_{\leq K_{\mathrm{tr}}}(k_1)^2\sum_{\Qc}\Kc_\Qc(t,k_1),
\end{equation} 
where $\Kc_\Qc$ is defined as in (\ref{defofkq}) (which depends on $\Gamma_1$ itself), 
and the summation is taken over all nontrivial admissible couples with the rank of each tree being at most $N$.
\end{prop}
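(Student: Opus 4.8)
The plan is to solve the fixed-point equation \eqref{fixedrenormeqn} by a contraction mapping argument in the Banach space $X := \{\Gamma_1 \in C^0_t L^\infty_k([0,1]\times\Zb_R) : \|\Gamma_1\|_{C^0_tL^\infty_k} \le 1\}$, where the right-hand side of \eqref{fixedrenormeqn} defines a map $\Phi$ on $X$. The first step is to verify that $\Phi$ is well-defined and maps the unit ball to itself: one must show that, for each fixed $\Gamma_1 \in X$, the series $\sum_\Qc \Kc_\Qc(t,k_1)$ over nontrivial admissible couples with tree rank $\le N$ converges, and in fact is bounded by a small quantity (some negative power of $R$, or at least something $\ll 1$ after multiplication by the prefactor $\epsilon^2 T_1 R^{-1}\sum_{k_1}|n_3(k_1,k,k_1)|\varphi_{\le K_{\mathrm{tr}}}(k_1)^2$). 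This is exactly the content of the deterministic counting/combinatorial estimates of Proposition \ref{propjr}: since there are only finitely many topological shapes of couples with rank $\le N$ (the rank and order are bounded in terms of $\theta$), and each $\Kc_\Qc$ obeys a bound of the form $\Kc_\Qc(t,k)\lesssim C^{n(\Qc)}\epsilon^{\delta\cdot n_I(\Qc)}$ for some $\delta>0$ coming from the gain $\epsilon R^{-1/2}$ per leaf weighed against the $T_1$ factors and the counting of decorations, summing over the finitely many shapes gives the required smallness once $R$ is large.

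The second step is the contraction estimate. Here one must show that $\Phi$ is Lipschitz with constant $<1$ on $X$. The key observation is that $\Gamma_1$ enters $\Kc_\Qc$ only through the oscillatory integrals $\mathcal{I}_\Qc$ — specifically through the modified resonance factors $\Omega_\mf$ in \eqref{ddefomegan}, which contain the term $T_1^{-1}\int_0^{t_\mf}[\Gamma_1(t',k_\mf)-\sum_j \iota_j\Gamma_1(t',k_{\mf_j})]\,\mathrm{d}t'$. Since the phases $e^{iT_1\Omega_\mf}$ appear inside a time integral over the simplex $\Dc_\Qc$, and $T_1 \cdot T_1^{-1} = 1$, the difference $\mathcal{I}_\Qc[\Gamma_1] - \mathcal{I}_\Qc[\Gamma_1']$ is controlled by $\|\Gamma_1 - \Gamma_1'\|_{C^0_tL^\infty_k}$ times the same kind of combinatorial/counting bound (one loses, at worst, a factor $n_I(\Qc)$ from the product rule over I-nodes, which is harmless since $n_I(\Qc) \le N$ is bounded). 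The Lipschitz constant of $\Phi$ is then bounded by the same small quantity as in the self-mapping step times a harmless combinatorial factor, hence $<1$ for $R$ large. Banach's fixed point theorem then yields the unique $\Gamma_1 \in X$.

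A technical point that must be handled carefully is \emph{continuity in $t$}: one needs $\Phi(\Gamma_1) \in C^0_t$, i.e. that $t\mapsto \Kc_\Qc(t,k)$ is continuous. This follows because the only $t$-dependence in $\Kc_\Qc$ is through the integration domain $\Dc_\Qc$ and the phases $e^{iT_1\Omega_\mf(t_\mf)}$ (the latter through the upper limit $t$ in $\Omega_r$ via \eqref{resfactor}), and both depend continuously — indeed smoothly — on $t$; the decoration sum is finite since frequencies are restricted to $|k_\nf| \lesssim 2^{K_{\mathrm{tr}}}$ by the cutoffs $\varphi_{\le K_{\mathrm{tr}}}(k_\nf)$, so dominated convergence applies. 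One should also note that the fixed-point construction is genuinely implicit and self-referential in the sense flagged in the statement, but there is no circularity: the map $\Phi$ is well-defined on all of $X$ by the definition \eqref{defofkq} (which takes $\Gamma_1$ as an input), and the fixed point is then the consistent choice.

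\textbf{Main obstacle.} The crux of the argument is \emph{not} the abstract contraction scheme but the quantitative bound $\sum_\Qc |\Kc_\Qc(t,k)| \ll 1$ and its Lipschitz analogue — that is, Proposition \ref{propjr}, which requires the full combinatorial analysis of couples and molecules (Sections \ref{sec.irre}--\ref{sec.molecule}), including the irregular/broken chain cancellations and the $C^{1/2}$ regularity of $n_3$. Modulo that input, however, the proof of Proposition \ref{fixedrenorm} itself is a short and essentially formal application of the contraction principle; the right reading is that this proposition \emph{defines} $\Gamma_1$ so that the later estimates can be bootstrapped, and its proof should be deferred to (or cited from) the point in the paper where the requisite couple estimates are available. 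Concretely, I would state the proof as: ``The map $\Phi$ defined by the right-hand side of \eqref{fixedrenormeqn} is, by Proposition \ref{propjr} (whose proof does not use Proposition \ref{fixedrenorm} for a \emph{fixed} admissible $\Gamma_1 \in X$), a contraction on the unit ball of $C^0_tL^\infty_k([0,1]\times\Zb_R)$; apply Banach's fixed point theorem.''
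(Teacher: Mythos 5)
Your proposal matches the paper's proof: Proposition \ref{fixedrenorm} is obtained exactly as you describe, by noting that the map $\Gamma_1\mapsto\Xc[\Gamma_1]$ given by the right-hand side of \eqref{fixedrenormeqn} satisfies the self-mapping and contraction bounds \eqref{fixedrenorm2}--\eqref{fixedrenorm3} of Proposition \ref{propjr} (which is stated for an \emph{arbitrary} $\Gamma_1$ with $\|\Gamma_1\|_{C_t^0L_k^\infty}\leq 1$, so there is no circularity), and then applying the Banach fixed point theorem on the unit ball of $C_t^0L_k^\infty$. Your identification of the real work as the couple/molecule estimates behind Proposition \ref{propjr}, with the fixed-point step being formal, is precisely how the paper organizes the argument.
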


\subsection{Main estimates}\label{mainproof} 
To prove Proposition \ref{apriori} 
we will need estimates of the approximate solution $w_{\mathrm{app}}$, as well as its relation to the actual solution $w$.
As Proposition \ref{properror1} already provides the estimate of $w-w_{\mathrm{tr}}$, now we only need to control the error $w_{\mathrm{tr}}-w_{\mathrm{app}}$ (or equivalently $(a_k)_{\mathrm{tr}}-(a_k)_{\mathrm{app}}$). These estimates are summarized in the following propositions.

\begin{prop}\label{propjr} 
Suppose $\Gamma_1$ is arbitrary with $\|\Gamma_1\|_{C_t^0L_k^\infty}\leq 1$.
Then, for any $1\leq r\leq N$ and any ${t}\in[0,1]$, we have
\begin{equation}\label{jrest1}
R^{-1}\sum_{k\in\Zb_R}\langle k\rangle^{10} \Eb|(\Jc_r)_k({t})|^2
  \leq \epsilon^{-2}T_1^{-1}\cdot\epsilon^{\theta r/8}.
\end{equation} 
Moreover, for any choice of $\Gamma_1$, denote the right hand side of (\ref{fixedrenormeqn}) by $\Xc[\Gamma_1]$,
then we have that
\begin{equation}\label{fixedrenorm2}
\|\Xc[\Gamma_1]\|_{C_t^0L_k^\infty}\leq \epsilon^{\theta/8} \quad \mathrm{if}\quad\|\Gamma_1\|_{C_t^0L_k^\infty}\leq 1;
\end{equation} and that
\begin{equation}\label{fixedrenorm3}
\|\Xc[\Gamma_1]-\Xc[\Gamma_1']\|_{C_t^0L_k^\infty}\leq \epsilon^{\theta/8}
  \|\Gamma_1-\Gamma_1'\|_{C_t^0L_k^\infty}\quad 
  \mathrm{if}\quad \|\Gamma_1\|_{C_t^0L_k^\infty},\|\Gamma_1'\|_{C_t^0L_k^\infty}\leq 1.
\end{equation}
\end{prop}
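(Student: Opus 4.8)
The plan is to compute the second moment $\Eb|(\Jc_r)_k(t)|^2$ by expanding it into \emph{couples}, to reduce it to a per-couple estimate for the quantities $\Kc_\Qc$ of \eqref{defofkq}, and then to resum. By \eqref{ansatz2} we write $(\Jc_r)_k(t)=\sum_{r(\Tc)=r}(\Jc_\Tc)_k(t)$ over admissible paired trees $\Tc$ of sign $+$, whence $\Eb|(\Jc_r)_k(t)|^2=\sum_{\Tc^+,\Tc^-}\Eb\big[(\Jc_{\Tc^+})_k(t)\,\overline{(\Jc_{\Tc^-})_k(t)}\big]$ with $r(\Tc^\pm)=r$. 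Each $(\Jc_\Tc)_k$ carries its leaf-Gaussians inside the Wick-renormalized product of Definition \ref{defwick}, so computing the Gaussian expectation by Isserlis' theorem kills every pairing internal to a single tree and leaves only the pairings that match an unpaired leaf of $\Tc^+$ with an unpaired leaf of $\Tc^-$; together with the leaf-pairings $\Pc^\pm$ already present in $\Tc^\pm$, this is exactly the data of a couple $\Qc$ over $(\Tc^+,\Tc^-)$, while each $\Eb(g_{k}\overline{g_{k'}})=\mathbf{1}_{\{k=k'\}}$ collapses a pair of $\psi(k_\hf)^{1/2}$ factors into the $\psi(k_\lf)$ of \eqref{defofkq}. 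Collecting terms gives $\Eb|(\Jc_r)_k(t)|^2=\sum_\Qc c_\Qc\,\Kc_\Qc(t,k)$, the sum over admissible couples $\Qc$ of rank $r$ and $|c_\Qc|\le 1$ symmetry factors.

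The analytic core is then a single-couple estimate, uniform in $t\in[0,1]$, of the form
\[
R^{-1}\sum_{k\in\Zb_R}\langle k\rangle^{10}\,\big|\Kc_\Qc(t,k)\big|\ \le\ \epsilon^{-2}T_1^{-1}\cdot\frac{(C\epsilon^{\delta})^{\,r(\Qc)}}{n_I(\Qc)!}\,,\qquad \delta=\delta(\theta_0)>0,
\]
the factor $1/n_I!$ arising from the volume of the time-ordered simplex $\Dc_\Qc$ in \eqref{moredef1}. This is the content of Proposition \ref{propcong_2}: one passes to the \emph{molecule} of $\Qc$ (Section \ref{sec.molecule}), counts the $k$-decorations subject to the linear constraints \eqref{defdec0} weighted by the $S^\infty$ bounds \eqref{symbolbound}, \eqref{symbolboundN} on $\widetilde n,\widetilde a$ and by the $\psi$-decay at the leaves, and estimates the oscillatory integral $\Ic_\Qc$ by the $L^1$ bound of Subsection \ref{sec.L1est} for the modified phases $|k|^{1/2}t+\Gamma(t,k)$. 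Since the number of admissible couples of rank $r$ grows at most like $C^r$ times a factorial in the order, and that factorial is exactly offset by $1/n_I!$ (the $n_I$ interaction nodes can be time-ordered in $n_I!$ ways), the sum over $\Qc$ of fixed rank $r$ converges and is bounded by $\epsilon^{-2}T_1^{-1}(C\epsilon^{\delta})^{r}\le\epsilon^{-2}T_1^{-1}\epsilon^{\theta r/8}$ once $R$ is large, which is \eqref{jrest1}. The weight $\langle k\rangle^{10}$ on the root frequency is harmless, since the leaf-weight $\psi$, with the strong decay \eqref{databoun}, controls any fixed polynomial in the root frequency.

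For the renormalization estimates, \eqref{fixedrenormeqn} expresses $\Xc[\Gamma_1](t,k)$ as $\epsilon^2T_1R^{-1}\sum_{k_1\in\Zb_R}(-2)n_3(k_1,k,k_1)\varphi_{\leq K_{\mathrm{tr}}}(k)\varphi_{\leq K_{\mathrm{tr}}}(k_1)^2\sum_{\Qc}\Kc_\Qc(t,k_1)$, the inner sum over nontrivial admissible couples with each tree of rank $\le N$. Bounding $|n_3(k_1,k,k_1)|$ by a fixed power of $2^{K_{\mathrm{tr}}}$ on the cutoff support (via the homogeneity and $C^{1/2}$ bounds of Proposition \ref{normalprop}(3)), applying the single-couple estimate above at root frequency $k_1$, and using the weight $\langle k_1\rangle^{10}$ together with $R^{-1}\sum_{k_1}$ to turn the $k_1$-sum into a bounded average, one finds $\sup_{t,k}|\Xc[\Gamma_1](t,k)|\lesssim \epsilon^2T_1\cdot\epsilon^{-2}T_1^{-1}\sum_\Qc(C\epsilon^\delta)^{r(\Qc)}\le\epsilon^{\theta/8}$, using $\epsilon^2T_1\cdot\epsilon^{-2}T_1^{-1}=1$, the fact that nontrivial couples have rank $\ge 4$ (so each carries a gain $(C\epsilon^\delta)^4$), and $R$ large; this is \eqref{fixedrenorm2}. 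For \eqref{fixedrenorm3}, note that $\Gamma_1$ enters $\Kc_\Qc$ only through the resonance factors $\Omega_\mf$ inside $\Ic_\Qc$, and there only through $\int_0^{t_\mf}\big[\Gamma_1(\cdot,k_\mf)-\sum_j\iota_j\Gamma_1(\cdot,k_{\mf_j})\big]$, since the prefactor $T_1$ in $e^{iT_1\Omega_\mf}$ cancels the $T_1^{-1}$ in \eqref{resfactor}. Telescoping $\prod_\mf e^{iT_1\Omega_\mf[\Gamma_1]}-\prod_\mf e^{iT_1\Omega_\mf[\Gamma_1']}$ over the $n_I$ interaction nodes and using $|e^{i\alpha}-e^{i\beta}|\le|\alpha-\beta|$ replaces one oscillatory factor by an amplitude of modulus $\le (r+1)\|\Gamma_1-\Gamma_1'\|_{C_t^0L_k^\infty}$; rerunning the molecule estimate on this ``differentiated'' couple reproduces the same $(C\epsilon^\delta)^{r}$ gain, and hence $\|\Xc[\Gamma_1]-\Xc[\Gamma_1']\|_{C_t^0L_k^\infty}\lesssim \epsilon^{\theta/8}\|\Gamma_1-\Gamma_1'\|_{C_t^0L_k^\infty}$.

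The principal obstacle is the single-couple estimate (Proposition \ref{propcong_2}). In one space dimension the decoration count is far less favorable than in dimension $\ge 2$ --- a frequency interval of length $\lambda$ holds $\sim\lambda R$ lattice points, not $\sim(\lambda R)^d$ --- so the gain at each interaction node is only a small power of $\epsilon$, and it is delicate that the cumulative gain $(C\epsilon^\delta)^{r}$ survives at all; indeed Appendix \ref{SecDiv} exhibits diagrams (the ``broken chains'') that diverge for $T_1\gg\epsilon^{-8/3}$. Generic chains are handled by the modified quadratic and cubic counting of Subsection \ref{seccount}, but for the resonant irregular chains there is no off-diagonal decay, and the required cancellation must be extracted from the $C^{1/2}$ Hölder regularity of $n_{3,++-}$ in \eqref{symbolcubic} (Section \ref{sec.irre}), in the spirit of \cite{DeHa1} where the symbol was constant. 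Making the double time-simplex integration, the renormalization-subtracted resonance factors $\Omega_r$ of \eqref{resfactor}, and these cancellations all fit together is the technical heart of the paper.
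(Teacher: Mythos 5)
Your overall architecture matches the paper's: expand $\Eb|(\Jc_r)_k(t)|^2$ into couple expressions $\Kc_\Qc$ from \eqref{defofkq}, estimate them via the molecule counting of Section \ref{sec.molecule} together with the $L^1$ time-integral bound of Lemma \ref{timeintest} and the irregular-chain cancellation of Section \ref{sec.irre}, and read off \eqref{fixedrenorm2}--\eqref{fixedrenorm3} from the same machinery. However, your central displayed claim is not the right intermediate statement. You assert a \emph{single-couple} bound on $R^{-1}\sum_k\langle k\rangle^{10}|\Kc_\Qc(t,k)|$ and attribute it to Proposition \ref{propcong_2}; but that proposition bounds $\Ic(t,k)=\sum_{\Qc'\equiv\Qc}\Kc_{\Qc'}(t,k)$, i.e.\ the sum over a full congruence class of twists (Definition \ref{defcong}) \emph{before} taking absolute values. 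This is not cosmetic: for couples containing long small-gap irregular chains, the per-step count from \eqref{2vec1} is $\sim R\,T_1^{-1/2}|h|^{-1/2}$ with $|h|$ possibly as small as $R^{-1}$, and in part of the admissible range of $\epsilon$ (e.g.\ $\epsilon$ near $R^{-\theta_0}$) this exceeds the per-step budget $R\epsilon^{-2}T_1^{-1}$; only the sum over twists, via the $C^{1/2}$ cancellation \eqref{C1/2bound}--\eqref{irrechainbound}, recovers the factor $|h|^{q/2}$ that makes the chain affordable. You acknowledge this cancellation in your last paragraph, but it is incompatible with the absolute-value, per-couple estimate you build the resummation on. Relatedly, the $1/n_I!$ "offset" you invoke is neither produced by the time integration (Lemma \ref{timeintest} yields the summability \eqref{timeintest2} with a $(\log R)^{r}$ loss, not a factorial gain) nor needed: the rank is capped by $N=\lfloor\theta^{-2}\rfloor$, so the number of couples and congruence classes is a constant depending only on $\theta$, absorbed by the $\epsilon^{\theta r/8}$-type gain since $\epsilon\leq R^{-\theta_0}$ and $R$ is large.

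There is also a gap in your argument for the contraction bound \eqref{fixedrenorm3}. After telescoping, you bound $|e^{iT_1\Omega_{\mf_0}[\Gamma_1]}-e^{iT_1\Omega_{\mf_0}[\Gamma_1']}|\leq C\|\Gamma_1-\Gamma_1'\|_{C_t^0L_k^\infty}$ and then "rerun the molecule estimate". But once you replace that node's factor by a non-oscillatory amplitude, you can no longer localize its resonance factor $\Omega_{\mf_0}$ to a window of size $T_1^{-1}$ at only logarithmic cost, and the counting at the corresponding atom degrades by a factor that can be as large as $T_1$, destroying the $\epsilon^{\theta r/8}$ gain. The correct route (the paper's, see \eqref{timeintest1'} and the discussion around \eqref{IBPdiff}--\eqref{IBPbdr'}) is to note that the $\Gamma_1$-dependence sits only in the $C^1$-bounded amplitudes $\Psi_\mf$, so the difference $e^{i\Psi_\mf[\Gamma_1]}-e^{i\Psi_\mf[\Gamma_1']}$ is kept as an amplitude multiplying the intact oscillation $e^{i\alpha^0_\mf t_\mf}$; integrating by parts then yields both the $\langle\alpha^0_\mf\rangle^{-1}$ decay (hence the unit-window localization of every resonance factor) and the small factor $\|\Gamma_1-\Gamma_1'\|_{C_t^0L_k^\infty}$, after which the counting argument applies verbatim.
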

\begin{proof} 

This is proved in Section \ref{sec.molecule}, using the preparations in Subsection \ref{sec.L1est}
and Section \ref{sec.irre}.
\end{proof}

\begin{prop}\label{propdiff} 
Assume that the functions $u$ and $w$ satisfy the bootstrap assumptions \eqref{bootstrap} on the time interval $[0,t_0]$, $t_0\leq T_1$, and define the coefficients $(a_k)_{\mathrm{tr}}$ and $(a_k)_{\mathrm{app}}$ as before. Then, with probability $\geq 1-2e^{-R^{2\eta_0}}$, for any $t'\in[0,t_0/T_1]$ we have
\begin{align*}
R^{-1}\sum_{k\in\Zb_R}\langle k\rangle^{10}
  |(a_k)_{\mathrm{tr}}({t}')-(a_k)_{\mathrm{app}}({t}')|\leq\epsilon^{N\theta/12}
\end{align*}
Moreover, the function $w_{\mathrm{tr}}$ has the property that the bounds \eqref{error1.2'} hold for solutions of the linear equation \eqref{error1.1'}.
\end{prop}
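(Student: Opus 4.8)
The plan is to treat $(a_k)_{\mathrm{app}}=\sum_{r=1}^N(\Jc_r)_k$ as an approximate solution of the renormalized Duhamel equation \eqref{profileeqn}, to bound the error it produces when substituted back in (this is where Proposition \ref{propjr} is used as a black box), and then to run a continuity argument for the difference $b_k(t'):=(a_k)_{\mathrm{tr}}(t')-(a_k)_{\mathrm{app}}(t')$. The linear bound \eqref{error1.2'} is not a side remark but the engine of that last step --- it is precisely the assertion that the linear propagator governing $b$ (equivalently $W:=w_{\mathrm{tr}}-w_{\mathrm{app}}$, a solution of \eqref{error1.1'}) is stable up to a crude polynomial-in-$R$ loss --- so it will be proved in tandem. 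Substituting $(a_k)_{\mathrm{app}}$ into the right-hand side of \eqref{profileeqn} and using the construction of $(\Jc_\Tc)_k$ from the tree recursion reproduces the sum over all admissible paired trees: the rank-$\le N$ part comes back verbatim, the Duhamel iteration creates leftover trees of rank in $(N,AN]$ (an error $\Ec^{(1)}_k$), the contributions of \emph{non}-admissible trees are cancelled to leading order by the $\Gamma_1$-term --- this is exactly the purpose of the fixed-point equation \eqref{fixedrenormeqn} of Proposition \ref{fixedrenorm} --- leaving a further high-order error $\Ec^{(2)}_k$, and the contraction terms generated by the Wick renormalization inside $(\Jc_\Tc)_k$ are absorbed by $\Gamma_0$ for trivial resonances and by $\Gamma_1$ otherwise. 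Thus $(a_k)_{\mathrm{app}}$ solves \eqref{profileeqn} with an extra inhomogeneity $\Ec_k=\Ec^{(1)}_k+\Ec^{(2)}_k$, and on subtracting, $b_k$ satisfies the linearization of \eqref{profileeqn} about $(a_\cdot)_{\mathrm{app}}$ with forcing $-\Ec_k$ plus terms quadratic and higher in $b$.

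Next I would estimate $\Ec_k$, which (like $(a_\cdot)_{\mathrm{tr}}$ and $(a_\cdot)_{\mathrm{app}}$) is supported in $|k|\lesssim 2^{K_{\mathrm{tr}}}\approx R^{1/A}$. Each of its terms is a tree-type expression of rank $>N$ (or of correspondingly high order) with intermediate nodes carrying $(a_\cdot)_{\mathrm{app}}$, so Proposition \ref{propjr} applies directly: summing $R^{-1}\sum_k\langle k\rangle^{10}\Eb|(\Jc_r)_k(t')|^2\le\epsilon^{-2}T_1^{-1}\epsilon^{\theta r/8}$ over $r>N$ and over the finitely many error terms, and then using Cauchy--Schwarz in $k$ --- which, thanks to the frequency truncation, costs only the bounded factor $R^{-1}\sum_{|k|\lesssim R^{1/A}}\langle k\rangle^{10}\lesssim 1$ --- gives $L^2_\omega$ control of $R^{-1}\sum_k\langle k\rangle^{10}|\Ec_k(t')|$ with a gain $\epsilon^{\theta N/8}$ over the natural scaling of $(a_\cdot)_{\mathrm{tr}}$. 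I would then pass from these $L^2_\omega$ bounds to pointwise-in-$k$, high-probability bounds via hypercontractivity (Proposition \ref{initialprop}(i)) and a union bound over the $\lesssim R^{1+1/A}$ relevant frequencies; this costs one further $e^{-R^{\eta_0}}$-type exceptional event, which together with the set from Proposition \ref{initialprop}(iii) produces the probability $1-2e^{-R^{2\eta_0}}$ in the statement.

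To close, I would assume the continuity hypothesis $R^{-1}\sum_k\langle k\rangle^{10}|b_k(s)|\le 2\epsilon^{N\theta/12}$ on $[0,t']$ and improve it. Un-renormalizing, $W=w_{\mathrm{tr}}-w_{\mathrm{app}}$ solves \eqref{error1.1'} with data $W(0)$ and forcing $\Rc$ controlled in $H^5$ by the initial-data and error estimates above together with the quadratic-in-$W$ corrections; applying \eqref{error1.2'} shows $W$, hence $b$, is dominated by $\Rc$ up to the factor $R^8$, and because $\Rc$ carries the gain $\epsilon^{\theta N/8}$ while $N=\lfloor\theta^{-2}\rfloor$ and $\epsilon\le R^{-\theta_0}$, the polynomial loss is absorbed and the hypothesis improves to $\epsilon^{N\theta/12}$. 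It remains to establish \eqref{error1.2'}: I would fully Duhamel-iterate a solution $W$ of \eqref{error1.1'}, writing it as a sum over ``chains'', i.e. products of $w_{\mathrm{tr}}$-factors terminating in $W(0)$ or $\Rc$, then substitute $w_{\mathrm{tr}}=w_{\mathrm{app}}+(w_{\mathrm{tr}}-w_{\mathrm{app}})$, expand each $w_{\mathrm{app}}$-factor into trees, and estimate the resulting multilinear Gaussian expressions (now indexed by decorated chains-of-trees) using the same couple and molecule bounds that underlie Proposition \ref{propjr}, in particular Proposition \ref{propcong_2}. This makes the double series over chain length and tree structure converge, with $R^8$ absorbing all polynomial-in-$R$ losses, and passing to $L^\infty_x$ once more uses hypercontractivity and a union bound.

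The main obstacle is the linear estimate \eqref{error1.2'}. The linear propagator of the difference equation is a priori badly unstable over the long time $T_1$ --- a naive Gronwall bound would be of size $\exp(R^{c})$ for some $c>0$ --- so taming it is not a soft matter: it requires the full apparatus of Sections \ref{sec.irre}--\ref{sec.molecule}, namely the couple/molecule estimates and the cancellation within irregular and ``broken'' chains, and Appendix \ref{SecDiv} shows the underlying structures genuinely diverge for times past $\epsilon^{-8/3-}$, so the bound is essentially sharp. A secondary subtlety lies in the first step: one must match the non-admissible trees against the $\Gamma_1$-renormalization exactly, via the fixed-point characterization of Proposition \ref{fixedrenorm}, and verify that the residual errors $\Ec^{(1)}_k,\Ec^{(2)}_k$ have rank or order high enough for Proposition \ref{propjr} to deliver a gain sufficient to beat the $R^8$ loss and recover the exponent $\epsilon^{N\theta/12}$.
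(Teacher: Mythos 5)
Your overall architecture matches the paper's: show that $(a_k)_{\mathrm{app}}$ approximately solves \eqref{profileeqn} with the non-admissible trees matched exactly against the $\Gamma_0,\Gamma_1$ terms (this is the paper's Proposition \ref{proofapprox}), estimate the resulting remainder of rank $>N$ by the couple/molecule machinery plus hypercontractivity, and close the difference estimate through a stability bound for the linearized operator, from which \eqref{error1.2'} is obtained by comparing the linearizations around $w_{\mathrm{tr}}$ and $w_{\mathrm{app}}$. Two caveats on the remainder step: Proposition \ref{propjr} as stated covers only admissible trees of rank $\leq N$, so it does not apply ``directly''; the paper reruns the estimate of Proposition \ref{propcong_2} on the rank-$>N$ trees, accepting $O(R^2)$--$O(R^4)$ losses from the at-most-two non-admissible instances near the roots and from shortening the irregular chains so they avoid the root. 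This is a fixable bookkeeping point and you partially acknowledge it.

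The genuine gap is in your proof of the linear bound. First, only $w_{\mathrm{app}}$ has a tree expansion; $w_{\mathrm{tr}}$ is implicit, so ``expand each $w_{\mathrm{tr}}$-factor'' only works after writing $w_{\mathrm{tr}}=w_{\mathrm{app}}+(w_{\mathrm{tr}}-w_{\mathrm{app}})$ and invoking the very smallness of the difference you are proving; the paper avoids this circularity by proving the resolvent bound \eqref{est_L0} for $\Lc_0$, the linearization around the \emph{explicit} profile $a_{\mathrm{app}}$ (Proposition \ref{prop.linear}), solving for $b$ by a fixed point, and only then deducing \eqref{error1.2'} from the smallness of $\Lc-\Lc_0$. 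Second, and more seriously, your ``fully Duhamel-iterate \ldots the double series over chain length and tree structure converges'' is precisely the step that fails: each additional link of the chain is not small in operator norm (this is the $\exp(R^c)$ Gronwall instability you yourself flag), and the per-rank gain $\epsilon^{\theta r/8}$ from the couple estimates cannot be summed to infinite order because the number of couples grows factorially in the rank. The missing idea is the paper's finite-rank parametrix: a sum $\Xc$ over paired \emph{flower} trees of total rank $\leq N$, with the algebraic identities $\Xc(1-\Lc_0)-1$ and $(1-\Lc_0)\Xc-1$ expressed as flower couples of rank $>N$, shown to have operator norm $O(\epsilon)$ by the same molecule counting, so that a Neumann series gives $\|(1-\Lc_0)^{-1}\|\lesssim R^8$. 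The $R^8$ there comes from the kernel bounds of the finite parametrix (summation over the truncated frequency range and the unit time interval), not from ``absorbing'' a divergent chain sum; without the truncation-plus-parametrix-identity structure, your argument for \eqref{error1.2'} does not close.
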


\begin{proof} 
This is proved in Section \ref{sec.rem}.
\end{proof}

With Propositions \ref{propjr} and \ref{propdiff}, we can now prove Proposition \ref{fixedrenorm}. 
Then combining with Proposition \ref{properror1}, we can prove Proposition \ref{apriori}.

\begin{proof}[Proof of Proposition \ref{apriori}] Notice first that Proposition \ref{fixedrenorm} follows from Proposition \ref{propjr} by a fixed point argument. Then we notice that we are making the bootstrap assumptions \eqref{bootstrap}, which match with \eqref{error1}, so that the conclusion \eqref{error2'} of Proposition \ref{properror1} gives us
\begin{equation}\label{prapriori3}
\sup_{t'\in[0,t]}\|w(t')-w_{\mathrm{tr}}(t')\|_{\dot{W}^{4,0}} \leq \epsilon^{A/4}.
\end{equation}
Notice that the assumption \eqref{error1.2'} on solutions of \eqref{error1.1'}, which is needed to obtain
\eqref{error2'} is proved in Subsection \ref{ssecprerror1.2'}. 
Moreover, by the definitions \eqref{ansatz3} and \eqref{ansatzprofile}, we have, for $t\in[0,1]$,
\begin{align*}
(1-\partial_x^2)^p w_{\mathrm{app}}(T_1 t,x) = \frac{\epsilon}{2\pi R^{1/2}} \sum_{k \in \Zb_R} 
  \langle k \rangle^{2p} (a_k)_{\mathrm{app}}(t) e^{iT_1(|k|^{1/2}t + \Gamma(t,k))}
\end{align*}
which is a multilinear Gaussian sum, 
and, by \eqref{ansatz3} and the main estimates \eqref{jrest1}, we know that
\begin{align*}
& \Eb \big| (1-\partial_x^2)^p w_{\mathrm{app}}(T_1 t,x) \big|^2
\lesssim \frac{\epsilon^2}{R} \Eb \Big| \sum_{k \in \Zb_R} 
  \langle k \rangle^{5} (a_k)_{\mathrm{app}}(t) e^{ik\cdot x} e^{iT_1(|k|^{1/2}t + \Gamma(t,k))} \Big|^2
\\
& \lesssim \frac{\epsilon^2}{R} \Eb \sum_{k \in \Zb_R} 
  \sum_{r=0}^N \langle k \rangle^{10} \big| (\Jc_r)_k(t) \big|^2 
  = \frac{\epsilon^2}{R} \sum_{k \in \Zb_R} \sum_{\Qc} 
  \langle k \rangle^{10} \Kc_\Qc(t,k) \lesssim T_1^{-1},
\end{align*}
where the sums are over all possible admissible trees and couples.
Using the hyper-contractivity estimates of Proposition \ref{initialprop} (i), we obtain,
with probability $\geq 1 - e^{-R^{\eta_0}}$, 
for all $t \leq T_1$,
\begin{align}\label{prapriori4}
\sup_{t' \leq t}\| w_{\mathrm{app}}(t') \|_{\dot{W}^{4,0}} \lesssim \epsilon^{1-\theta/2} + 
  \epsilon^{-\theta} T_1^{-1/2}.
\end{align}

Finally, we recall the definitions \eqref{ansatzprofile} and \eqref{profile},
and use Proposition \ref{propdiff} to obtain, for all $t \leq T_1$,
\begin{align*}
\sup_{ t'\in[0,t]} \big\| w_{\mathrm{app}}(t') - w_{\mathrm{tr}}(t') \big\|_{\dot{W}^{4,0}}
  & \lesssim 
  \frac{\epsilon}{R^{1/2}} 
  \sup_{ t'\in[0,t/T_1]} \sum_{k \in \Zb_R} \langle k\rangle^{5} |(a_k)_{\mathrm{app}}(t') - (a_k)_{\mathrm{tr}}(t')|
  \lesssim \epsilon^{N\theta/12}.
\end{align*}
Putting this together with \eqref{prapriori4} and \eqref{prapriori3}, and in view of our choice of parameters, 
we obtain \eqref{bootstrap2} thus completing the proof of Proposition \ref{apriori} and of Theorem \ref{main}.
\end{proof}

\smallskip
\subsection{An $L^1$ estimate on time integration}\label{sec.L1est} 
In this section, we estimate the time integrals $\mathcal{I}_\Gc$ defined in \eqref{moredef1}. Note that the integrals depends on the functions $\Omega_\mf(t_\mf)$ for all I-branching nodes $\mf$, which are defined as in (\ref{ddefomegan}) and (\ref{resfactor}).
In particular, they depend on the values
\begin{equation}\label{defalpha0}
\alpha_\mf^0:=T_1\Big[\zeta_\mf|k_\mf|^{1/2}-\sum_{j=1}^r\zeta_{\mf_j}|k_{\mf_j}|^{1/2}\Big]+\epsilon^2T_1\cdot\Big[\zeta_\mf\Gamma_0(k_\mf)-\sum_{j=1}^r\zeta_{\mf_j}\Gamma_0(k_{\mf_j})\Big]
\end{equation} 
as well as the function $\Gamma_1(k,t)$.

\begin{lem}\label{timeintest} For any function $\Gamma_1$ that satisfies $\|\Gamma_1\|_{C_t^0L_k^\infty}\leq 1$, regardless of the exact choice of $\Gamma_1$, we always have
\begin{equation}\label{timeintest1}
|\Ic_\Gc(t,(k_\nf)_\nf)|\leq \Ac\big((\alpha_\mf^0)_{\mf}\big),
\end{equation} 
where $\Ac$ is an explicit function depending on $\alpha_\mf^0$ only, and satisfying the bounds
\begin{equation}\label{timeintest2}
\sum_{q_\mf^0\in \Sc_\mf}\sup_{|\alpha_\mf^0-q_\mf^0|\leq 1}\big|\Ac\big((\alpha_\mf^0)_{\mf}\big)\big|\leq (\log R)^{r(\Qc)},
\end{equation} 
where each $\Sc_\mf$ is a subset of $\Zb$ of at most $R^{20}$ elements possibly depending on $\mf$.
\end{lem}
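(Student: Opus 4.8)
The plan is to estimate the iterated time integral $\mathcal{I}_\Gc$ by expanding the phases $\Omega_\mf(t_\mf)$ into their leading oscillatory part plus a slowly varying remainder, and then integrating out the time variables one branching node at a time, from the leaves up to the root. First I would write, for each I-branching node $\mf$, $T_1\Omega_\mf(t_\mf) = \alpha_\mf^0 \cdot t_\mf + \mathcal{E}_\mf(t_\mf)$, where $\alpha_\mf^0$ is the frequency defined in \eqref{defalpha0} (the part of the phase linear in $t_\mf$ coming from $|k|^{1/2}$ and $\Gamma_0$), and $\mathcal{E}_\mf(t_\mf) := \int_0^{t_\mf}[\Gamma_1(s,k_\mf) - \sum_j \zeta_{\mf_j}\zeta_\mf \Gamma_1(s,k_{\mf_j})]\,\mathrm{d}s$ collects the $\Gamma_1$-dependent contribution. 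The key point is that $\|\Gamma_1\|_{C_t^0L_k^\infty}\leq 1$ forces $|\mathcal{E}_\mf(t_\mf)|\leq (r+1)t_\mf \leq A+1 = O(1)$ uniformly, and similarly its time derivative is $O(1)$; so $\mathcal{E}_\mf$ is a bounded $C^1$ perturbation of the linear phase, and crucially it does \emph{not} depend on the precise choice of $\Gamma_1$, only on the a priori bound.

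Next I would integrate the innermost time variables (those $t_\mf$ with no further I-branching descendants), using the standard one-dimensional oscillatory integral bound $\big|\int_0^{\tau} e^{i\alpha s + i\mathcal{E}(s)}\,\mathrm{d}s\big| \lesssim \min(\tau, (1+|\alpha|)^{-1})\cdot(1 + \|\mathcal{E}'\|_{L^\infty})$ — valid since $\mathcal{E}'$ is bounded, by integration by parts — and then propagate this outward: when integrating $t_\mf$ one inherits from the children the factor $\prod_{\mf'\,\text{child of }\mf}\min(1,(1+|\alpha_{\mf'}^0|)^{-1})$ times a polynomially controlled accumulation of $\log$'s, and one continues by the same one-variable estimate now with a phase whose effective frequency involves sums of the children frequencies (this is where the nested domain $\mathcal{D}_\Gc$ matters: the endpoint of the $t_{\mf'}$-integrals becomes $t_\mf$, producing boundary terms that recombine the frequencies). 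Carrying this through all $n_I(\Gc)$ nodes yields $|\mathcal{I}_\Gc| \leq \mathcal{A}((\alpha_\mf^0)_\mf)$ with $\mathcal{A}$ an explicit product over I-branching nodes of factors of the form $\min(1,(1+|\text{(partial sum of }\alpha^0\text{'s)}|)^{-1})$ (up to harmless $\log$ and combinatorial constants), and this $\mathcal{A}$ depends on the $\alpha_\mf^0$ alone — the $\Gamma_1$-dependence having been absorbed into uniform constants.

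For the summability bound \eqref{timeintest2}, I would fix the discrete sets $\Sc_\mf$: since $k_\mf$ ranges over frequencies truncated at $K_{\mathrm{tr}}=\lfloor\log_2 R^{1/A}\rfloor$ and lives in $\Zb/R$ on a torus of period $2\pi R$, and $\alpha_\mf^0 = T_1(\zeta_\mf|k_\mf|^{1/2}-\dots) + O(\epsilon^2 T_1)$, the quantity $\alpha_\mf^0$ takes at most $\lesssim R^{20}$ distinct values within any unit window once one also accounts for the $|k|^{1/2}$ scaling and $T_1\leq \epsilon^{-3}\leq R^{8}$; this pins down $\Sc_\mf$. Then one bounds $\sum_{q_\mf^0\in\Sc_\mf}\sup_{|\alpha_\mf^0-q_\mf^0|\leq 1}|\mathcal{A}|$ by summing the product over I-branching nodes, using $\sum_{q\in\Zb,\,|q|\leq R^{20}}(1+|q|)^{-1}\lesssim \log R$ for each factor — the product structure of $\mathcal{A}$ over the $n_I(\Gc)\leq r(\Qc)$ I-branching nodes then gives $(\log R)^{n_I(\Gc)}\leq (\log R)^{r(\Qc)}$, possibly after absorbing the combinatorial and $\log$ constants into a slightly larger (still $\leq r(\Qc)$) exponent, which is the stated bound.

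The main obstacle I anticipate is the bookkeeping of the recombination of frequencies under the nested integration domain $\mathcal{D}_\Gc$: the boundary terms generated when integrating a child variable up to its parent's time variable do not simply factor, and one must track how partial sums $\sum_{\mf' \preceq \mf}\alpha_{\mf'}^0$ appear as the effective frequencies at each stage — getting the explicit form of $\mathcal{A}$ right, and ensuring it genuinely depends only on the $\alpha_\mf^0$ and not on hidden pieces of the decoration, is the delicate part. A secondary technical point is verifying uniformity of all constants in $\Gamma_1$ (so that the \emph{same} $\mathcal{A}$ works for every admissible choice), which is needed for the later fixed-point argument defining $\Gamma_1$ in Proposition \ref{fixedrenorm}; this follows from the $C^1$ a priori bound on $\Gamma_1$ but must be threaded carefully through every integration by parts.
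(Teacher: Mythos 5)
Your proposal follows essentially the same route as the paper: split each phase as $\alpha_\mf^0 t_\mf$ plus a $\Gamma_1$-induced perturbation that is uniformly bounded in $C^1_t$, integrate the time variables from the leaves upward by parts so that boundary terms shift the parent's frequency by the child's, obtain $\Ac$ as a sum of products of factors $\langle \alpha_\mf^0+(\text{subset sum of descendant }\alpha^0\text{'s})\rangle^{-1}$, and then sum over unit windows (in the order opposite to the integration, uniformly in the accumulated shifts) to collect a $\log R$ per branching node. The bookkeeping you flag as the main obstacle is exactly what the paper resolves by keeping the boundary terms as explicit oscillatory factors in $t_\qf$ (rather than taking absolute values) and by freezing and averaging the bulk term, so your argument is correct in substance.
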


\begin{proof} 
Using (\ref{ddefomegan}) and (\ref{resfactor}), we can write the integral $\mathcal{I}_\Gc$ as
\begin{equation}\label{defI2}
\Ic_{\Gc}=\int_{\Dc_\Qc}\prod_{\mf} e^{i\alpha_\mf^0 t_\mf}\cdot e^{i\Psi_\mf(t_\mf)}\,dt_\mf,
\end{equation} 
where for each $\mf$ the function $\Psi_\mf$ is defined by
\begin{equation}\label{defPsi}
\Psi_\mf(t) = \int_0^t\Big[\zeta_\mf\Gamma_1(t',k_\mf)-\sum_{j=1}^r\zeta_{\mf_j}\Gamma_1(t',k_{\mf_j})\Big]\,dt'.
\end{equation} By our assumption on $\Gamma_1$, we may assume that $\Psi_\mf$ is supported on $[-2,2]$, and is uniformly bounded in $C_t^1$.

Now in (\ref{defI2}) we start by integrating in $t_\mf$ for some branching node $\mf$ whose children are all leaves. 
With all the other $t_{\mf'}$ fixed, we have an integral of form
\begin{equation}\label{IBP}
\int e^{i\alpha_\mf^0 t_\mf}\cdot e^{i\Psi_\mf(t_\mf)}\,\mathrm{d}t_\mf
=\frac{-1}{\alpha_\mf^0}\int e^{i\alpha_\mf^0 t_\mf}\cdot\Psi_\mf'(t_\mf)e^{i\Psi_\mf(t_\mf)}\,\mathrm{d}t_\mf
+\frac{1}{i\alpha_\mf^0}e^{i\alpha_\mf^0t_\mf}\cdot e^{i\Psi_\mf(t_\mf)}\bigg|_{\mathrm{boundary\ values}}.
\end{equation} 
Here the boundary values are either an absolute constant, or the value $t_{\qf}$ 
for the parent node $\qf$ of $\mf$. We may assume $|\alpha_\mf^0|\geq 1$ 
as the integral in $t_\mf$ is trivially bounded in $\alpha_\mf^0$ and the integrability in $\alpha_\mf^0$ 
on bounded sets follows trivially; then in the above expression, 
the boundary terms are either constant or has the form
\begin{equation}\label{IBPbdr}
\frac{1}{i\alpha_\mf^0}e^{i\alpha_\mf^0t_{\qf}}\cdot e^{i\Psi_\mf(t_{\qf})}
\end{equation}
which will be carried to the next integration involving $t_{\qf}$, but with a decay factor $(i\alpha_\mf^0)^{-1}$.

As for the bulk term in \eqref{IBP}, since we already have the $(\alpha_\mf^0)^{-1}$ factor,
we can just fix the value of $t_\mf^0$ and view the expression 
$e^{i\alpha_\mf^0 t_\mf}\cdot\Psi_\mf'(t_\mf)e^{i\Psi_\mf(t_\mf)}$ 
as a bounded constant, then integrate in the other variables and finally take average over $t_{\mf}$.

In either cases, we have reduced the integral $\Ic$ to $(i\alpha_\mf^0)^{-1}$ times another integral 
involving one fewer time variables, with possibly the additional factor 
$e^{i\alpha_\mf^0t_{\qf}} e^{i \Psi_\mf(t_{\qf})}$ inserted into the integrand. 
But this factor only shifts the phase 
$e^{i\alpha_{\qf}^0t_{\qf}}$ to $e^{i(\alpha_{\qf}^0+\alpha_{\mf}^0)t_{\qf}}$; 
moreover the additional factor 
$e^{i\Psi_\qf(t_\mf)}$ is changed to $e^{i(\Psi_\qf(t_\qf)+\Psi_\mf(t_\qf))}$, 
but the function $\Psi_\mf+\Psi_\qf$ is still supported on $[-2,2]$ and uniformly bounded in $C^1$.

As such, we can keep integrating in the order of time variables from bottom to top (i.e. each time we integrate in some variable $t_\mf$ where $\mf$ is one of the lowest nodes that still remains), so that for each integration in $t_\mf$ the boundary only contains $t_\qf$ for $\qf$ being the parent node of $\mf$. Moreover, each time we get a denominator $(\alpha_\mf^0)^{-1}$, where this $\alpha_\mf^0$ may also be shifted by some linear combination of previous $\alpha_\rf^0$ variables. Putting together (again we may ignore the cases where $|\alpha_\mf^0|\leq 1$ etc.), we get that
\[|\Ic_\Gc(t,(k_\nf)_\nf)|\lesssim \sum_{(\beta_\mf^0)}\prod_\mf\langle \beta_\mf^0\rangle^{-1},\] where $\beta_\mf^0$ is defined from $\alpha_\mf^0$ by the following rule: 
\begin{itemize}
\item If $\mf$ has no child that is a branching node, then $\beta_\mf^0=\alpha_\mf^0$;
\item Otherwise we have $\beta_{\mf}^0=\alpha_\mf^0+\sum_{j=1}^r\iota_j\beta_{\mf_j}^0$, where $\mf_j$ are children nodes of $\mf$ and $\iota_j\in\{0,1\}$ are chosen arbitrarily.
\end{itemize}
Now, for fixed $\alpha_\mf^0$ there are only finitely many choices for $\beta_\mf^0$, and for each choice we have that
\[\sum_{q_\mf^0\in \Sc_\mf}\sup_{|\alpha_\mf^0-q_\mf^0|\leq 1}\prod_{\mf}\langle \beta_{\mf}^0\rangle^{-1}\lesssim(\log R)^{r(\Qc)},\] by taking supremum and summation $\alpha_\mf^0$ in the order of $\mf$ that is opposite from the integration in $t_\mf$, and using the elementary inequality that
\[\sup_{\#\Sc\leq R^{20},\,\Sc\subset\Zb}\sup_\beta\sum_{q\in\Sc}\sup_{|\alpha-q|\leq 1}\frac{1}{\langle \alpha+\beta\rangle}\lesssim \log R.\] This completes the proof, where we note also that $r(\Qc)$ is the total number of branching nodes $\mf$ in $\Qc$, that is, the total number of variables $\alpha_\mf^0$.
\end{proof}

\smallskip
\subsection{Base counting estimates}\label{seccount}
For $r\in\{2,3\}$ and $\zeta_j\in\{\pm\}\,(1\leq j\leq r)$, $k^0\in\mathbb{Z}_R^r$, $k_*\in\mathbb{Z}_R$, and $\beta\in\mathbb{R}$, consider the sets
\begin{equation}\label{countingset}\Sigma_r:=\bigg\{(k_1,\cdots,k_r)\in(\mathbb{Z}_R)^r:|k_j-k_j^0|\leq 1,\sum_{j=1}^r\zeta_jk_j=k_*,\bigg|\sum_{j=1}^r\zeta_j(|k_j|^{1/2}+\epsilon^2\Gamma_0(k_j))-\beta\bigg|\leq T_1^{-1}\bigg\}.
\end{equation} Then we have the following estimates for $\#\Sigma_r$:

\begin{prop}[$2$ vector counting]\label{2vecprop}
Suppose $r=2$ and $\zeta_1=-\zeta_2$, then we have
\begin{equation}\label{2vec1}\#\Sigma_2\lesssim R\langle k^0\rangle^2\cdot\min(1,T_1^{-1}\cdot |k_*|^{-1}).
\end{equation}
Suppose $r=2$ and $\zeta_1=\zeta_2$, then we have
\begin{equation}\label{2vec2}\#\Sigma_2\lesssim R\langle k^0\rangle^2T_1^{-1/2}.
\end{equation} \end{prop}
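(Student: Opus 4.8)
The plan is to reduce the counting of $\Sigma_2$ to a one‑dimensional lattice‑point problem for a modulation function, and then to exploit the monotonicity (when $\zeta_1=-\zeta_2$), respectively the curvature (when $\zeta_1=\zeta_2$), of the dispersion relation $|\cdot|^{1/2}$, carrying the renormalization term $\epsilon^2\Gamma_0$ along as a perturbation controlled by the $C^{1/2}$ bound \eqref{symbolcubic}. Concretely, I would first solve $\zeta_1k_1+\zeta_2k_2=k_*$ for $k_2$; since $k_2\in\Zb_R$ is then automatic, $\Sigma_2$ is in bijection with the set of $k_1\in\Zb_R$ with $|k_1-k_1^0|\le1$, $|k_2-k_2^0|\le1$, and $|\Phi(k_1)-\beta|\le T_1^{-1}$, where $\Phi=\Psi+\epsilon^2\Gamma$, $\Psi(x):=|x|^{1/2}+\zeta_1\zeta_2\,|k_*-\zeta_1x|^{1/2}$ is the dispersive part, and $\epsilon^2\Gamma$ is the associated shift of $\Gamma_0$. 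Nonemptiness of $\Sigma_2$ forces $|\zeta_1k_1^0+\zeta_2k_2^0-k_*|\lesssim1$, hence $\langle k^0\rangle\gtrsim|k_*|$ and $|x|,|k_*-\zeta_1x|\lesssim\langle k^0\rangle$ on the window. Since the window carries $\lesssim R\le R\langle k^0\rangle^2$ lattice points, the claimed bounds are automatic unless $\min(1,T_1^{-1}|k_*|^{-1})$, resp. $T_1^{-1/2}$, is much smaller than $R^{-1}\langle k^0\rangle^{-2}$, which I may therefore assume; in particular $T_1^{-1}$ is then a very small tolerance.

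For the dispersive part I would split the window at the two points $x\in\{0,\zeta_1k_*\}$ where $\Psi$ is non‑smooth and, in the case $\zeta_1=\zeta_2$, also at the unique critical point of $\Psi$ (which is strictly concave, or convex, on each smooth piece), obtaining $O(1)$ subintervals. On each subinterval: if $\zeta_1=-\zeta_2$ then $\Psi'(x)=\tfrac12\operatorname{sgn}(x)|x|^{-1/2}+\tfrac{\zeta_1}{2}\operatorname{sgn}(k_*-\zeta_1x)|k_*-\zeta_1x|^{-1/2}$ never vanishes (since $k_*\neq0$, the case $k_*=0$ being trivial), $\Psi$ is monotone, and $|\Psi'|\gtrsim|k_*|\langle k^0\rangle^{-3/2}$; if $\zeta_1=\zeta_2$ then $|\Psi''|\gtrsim\langle k^0\rangle^{-3/2}$, so $\Psi$ is monotone with $|\Psi'(x)|\gtrsim\langle k^0\rangle^{-3/2}\,\mathrm{dist}(x,x_{\mathrm{cr}})$ away from the critical point $x_{\mathrm{cr}}$, and $|\Psi(x)-\Psi(x_{\mathrm{cr}})|\gtrsim\langle k^0\rangle^{-3/2}|x-x_{\mathrm{cr}}|^2$ near it. This dichotomy produces the $\min(1,T_1^{-1}|k_*|^{-1})$ gain in the first case, and (via counting lattice points near a non‑degenerate parabola) the $T_1^{-1/2}$ gain in the second.

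For the counting proper, for two distinct $k_1,k_1'\in\Sigma_2$ in the same subinterval, $|\Phi(k_1)-\Phi(k_1')|\le2T_1^{-1}$ gives
\[
|\Psi(k_1)-\Psi(k_1')|\le2T_1^{-1}+\epsilon^2|\Gamma(k_1)-\Gamma(k_1')|\lesssim T_1^{-1}+\epsilon^2\langle k^0\rangle^2|k_1-k_1'|^{1/2},
\]
where the last bound uses that \eqref{symbolcubic}, together with the summation over the auxiliary variable in the definition \eqref{defgauge} of $\Gamma_0$, yields $|\Gamma_0(x)-\Gamma_0(y)|\lesssim\langle k^0\rangle^2|x-y|^{1/2}$ on the relevant range — which is where the factor $\langle k^0\rangle^2$ in the statement originates. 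In the monotone regime, combined with $|\Psi(k_1)-\Psi(k_1')|\ge\mu|k_1-k_1'|$ this confines $|k_1-k_1'|$ to $\lesssim\mu^{-1}T_1^{-1}+\mu^{-2}\epsilon^4\langle k^0\rangle^4$, so the number of admissible $k_1$ (lattice spacing $R^{-1}$) is $\lesssim1+R\mu^{-1}T_1^{-1}+R\mu^{-2}\epsilon^4\langle k^0\rangle^4$; with $\mu\gtrsim|k_*|\langle k^0\rangle^{-3/2}$ the term $R\mu^{-1}T_1^{-1}$ is $\lesssim R\langle k^0\rangle^{3/2}T_1^{-1}|k_*|^{-1}$, within the claimed bound. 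In the quadratic regime one argues analogously with a dyadic decomposition in $|x-x_{\mathrm{cr}}|$, producing the $R\,(T_1\langle k^0\rangle^{-3/2})^{-1/2}$ contribution; the short subintervals abutting $\{0,\zeta_1k_*\}$, or the small singular locus of $\Gamma_0$, are treated by a direct count since they carry few lattice points.

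The main obstacle is the interplay between the genuinely $C^{1/2}$ (not $C^1$) regularity of $\Gamma_0$ and the lattice count: the perturbative terms such as $R\mu^{-2}\epsilon^4\langle k^0\rangle^4$ above must be reabsorbed into the claimed bounds. This is where one must use the precise range $\epsilon\in[R^{-8/3+\theta_0},R^{-\theta_0}]$, the choice $A=\lfloor\theta^{-1}\rfloor$ with $K_{\mathrm{tr}}=\lfloor\log_2(R^{1/A})\rfloor$ (so that $\langle k^0\rangle\lesssim R^{1/A}$ and every power of $\langle k^0\rangle$ is $R^{o(1)}$), and — when $|k_*|$ is close to the threshold $T_1^{-1}$ — a separate argument bounding the total variation of $\Psi$ across the window against $T_1^{-1}$. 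I expect reconciling the $C^{1/2}$ symbol bound with the counting to be the main source of technical work, the rest of the argument being a routine, if slightly involved, case analysis.
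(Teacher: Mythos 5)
Your skeleton is the same as the paper's: solve for $k_2$, reduce to a one--dimensional count for $k_1$ in a unit window, and use monotonicity of the phase when $\zeta_1=-\zeta_2$ (derivative $\gtrsim|k_*|\langle k^0\rangle^{-2}$) and convexity when $\zeta_1=\zeta_2$ (second derivative $\gtrsim\langle k^0\rangle^{-2}$), converting the measure of the sublevel set $\{|\Phi-\beta|\le T_1^{-1}\}$ into a lattice count by multiplying by $R$. The difference, and the genuine gap, is your treatment of the renormalization term $\epsilon^2\Gamma_0$: you propose to use \emph{only} a H\"older-$1/2$ modulus $|\Gamma_0(x)-\Gamma_0(y)|\lesssim\langle k^0\rangle^2|x-y|^{1/2}$ inferred from \eqref{symbolcubic}. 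This input is not sufficient, and the ``reabsorption'' you defer to the parameters cannot be carried out. Quantitatively, in the monotone case your count is $1+R\mu^{-1}T_1^{-1}+R\mu^{-2}\epsilon^4\langle k^0\rangle^4$ with $\mu\sim|k_*|\langle k^0\rangle^{-3/2}$; the last term is below the claimed bound $R\langle k^0\rangle^2T_1^{-1}|k_*|^{-1}$ only when $|k_*|\gtrsim\epsilon^4T_1\langle k^0\rangle^5\sim\epsilon^{4/3+\theta_0}\langle k^0\rangle^5$, while the trivial bound $\#\Sigma_2\lesssim R$ only covers $|k_*|\lesssim T_1^{-1}\langle k^0\rangle^{3/2}$. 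Since $T_1^{-1}=\epsilon^{8/3-\theta_0}\ll\epsilon^{4/3}$, there is a nonempty intermediate range of $|k_*|$ that is not covered, and the loss there is a negative power of $\epsilon$, not a power of $\langle k^0\rangle$, so the choices of $A$, $K_{\mathrm{tr}}$, $\epsilon\in[R^{-8/3+\theta_0},R^{-\theta_0}]$ cannot save it (the alternative pairing $|\Gamma_0(k_1)-\Gamma_0(k_1-k_*)|\lesssim\langle k^0\rangle^2|k_*|^{1/2}$ does not close the gap either). Worse, the statement is actually \emph{false} for a generic $C^{1/2}$ perturbation of size $\epsilon^2\langle k^0\rangle^2$: when $|k_*|\langle k^0\rangle^{-3/2}\lesssim\epsilon^2\langle k^0\rangle^2$ such a perturbation can cancel the drift of $|k_1|^{1/2}-|k_1-k_*|^{1/2}$ on a set of measure $O(1)$, producing $\sim R$ solutions where \eqref{2vec1} demands far fewer. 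So the $C^{1/2}$ bound alone is an inadequate hypothesis, not merely a source of technical work.

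What the paper's proof uses (``recalling the definition \eqref{defgauge}'') is the regularity of $\Gamma_0$ itself, not of the symbol $n_3$ at a single point: $\Gamma_0(k)$ is an average over $k_1\in\Zb_R$ of $n_3(k_1,k,k_1)\psi(k_1)$ against smooth cutoffs, hence differentiable in $k$ with derivative (and its increments) bounded by powers of $\langle k\rangle$. Consequently one can differentiate the \emph{full} phase in \eqref{countingset}: in the $(+,-)$ case the $\epsilon^2$-correction to $\partial_{k_1}$ is $O(\epsilon^2\langle k^0\rangle^{C}|k_*|)$ thanks to the difference structure, hence negligible against $|k_*|\langle k^0\rangle^{-2}$, and similarly at the level of second derivatives in the $(+,+)$ case; the measure bounds $T_1^{-1}|k_*|^{-1}\langle k^0\rangle^2$ and $T_1^{-1/2}$ then follow at once and no $C^{1/2}$-versus-lattice interplay ever arises. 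Relatedly, your remark that the $\langle k^0\rangle^2$ in \eqref{2vec1}--\eqref{2vec2} ``originates'' from the H\"older constant of $\Gamma_0$ is a misreading: it comes from the lower bounds on the first and second derivatives of the dispersive part of the phase. The rest of your outline (splitting at the kinks and at the critical point, the dyadic treatment near the degenerate point, the trivial treatment of short subintervals) is fine and matches the paper up to these points.
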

\begin{proof} Consider (\ref{2vec1}). First trivially $\#\Sigma_2\lesssim R$ because $k_1$ has only $\approx R$ choices and $k_2$ is uniquely determined by $k_1$. Now, suppose $k_*\neq 0$ and (say) $(\zeta_1,\zeta_2)=(+,-)$, then we have
\[|(|k_1|^{1/2}+\epsilon^2\Gamma_0(k_1))-(|k_1-k_*|^{1/2}+\epsilon^2\Gamma_0(k_1-k_*))-\beta|\leq T_1^{-1}.\] Recalling the definition \eqref{defgauge} we can estimate
\begin{equation*}
\big|\partial_{k_1}\{(|k_1|^{1/2}+\epsilon^2\Gamma_0(k_1))-(|k_1-k_*|^{1/2}+\epsilon^2\Gamma_0(k_1-k_*))\}
	\big|\gtrsim |k_*|\langle k^0\rangle^{-2}.
\end{equation*} 
Therefore (\ref{countingset}) implies that $k_1$ belongs to a set of measure $\lesssim T_1^{-1}\cdot |k_*|^{-1}\langle k^0\rangle^2$ (which is a union of finitely many intervals), hence we also have $\#\Sigma_2\lesssim RT_1^{-1}|k_*|^{-1}\langle k^0\rangle^{2}$.

Now consider (\ref{2vec2}), assume $\zeta_1=\zeta_2=+$, then we have
\[|(|k_1|^{1/2}+\epsilon^2\Gamma_0(k_1))+(|k_*-k_1|^{1/2}+\epsilon^2\Gamma_0(k_*-k_1))-\beta|\leq T_1^{-1}.\] Moreover $k_1$ also belongs to a fixed unit interval, and the second derivative
\[\big|\partial_{k_1}^2\{(|k_1|^{1/2}+\epsilon^2\Gamma_0(k_1))+(|k_*-k_1|^{1/2}+\epsilon^2\Gamma_0(k_*-k_1))\}\big|\gtrsim \langle k^0\rangle^{-2}.\] This implies that $k_1$ belongs a set of measure $\lesssim T_1^{-1/2}$ which is a union of finitely many intervals, and therefore $\#\Sigma_2\lesssim RT_1^{-1/2}\langle k^0\rangle^2$.
\end{proof}

\begin{prop}[$3$ vector counting]\label{3vecprop} Suppose $r=3$, then we have
\begin{equation}\label{3vec}\#\Sigma_3\lesssim R^2\langle k^0\rangle^2T_1^{-1}\log R.
\end{equation}
\end{prop}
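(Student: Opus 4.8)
\emph{Proof plan.} The starting point is the trivial bound $\#\Sigma_3\les R^2$: two of the three frequencies range freely over the $O(R)$ points of $\Zb_R$ in a unit interval, and the third is then pinned by $\sum_j\zeta_jk_j=k_*$. The gain $T_1^{-1}\log R$ has to come from the modulation constraint, and the plan is to extract it by integrating out one frequency and invoking the two-vector count of Proposition~\ref{2vecprop}. Concretely: after relabelling we may assume $\zeta_1=+$; for each of the $O(R)$ admissible values of $k_1$ the pair $(k_2,k_3)$ ranges over a set of the form \eqref{countingset} with $r=2$, with $k_*$ replaced by $k_*-k_1$ and $\beta$ by $\beta-(|k_1|^{1/2}+\epsilon^2\Gamma_0(k_1))$, so Proposition~\ref{2vecprop} applies slice by slice and it remains to sum over $k_1$. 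Two cases occur, according to whether $\zeta_2=-\zeta_3$ or $\zeta_2=\zeta_3$.

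\emph{Opposite-sign slices.} If $\zeta_2=-\zeta_3$, then \eqref{2vec1} bounds each frozen slice by $\les R\langle k^0\rangle^2\min\bigl(1,T_1^{-1}|k_*-k_1|^{-1}\bigr)$. Setting $m:=k_*-k_1$, which runs over $O(R)$ lattice points in the unit interval centred at $c:=k_*-k_1^0$: if $|c|\gtrsim1$ then $|m|\gtrsim1$ throughout and $\sum_{k_1}\les RT_1^{-1}$; if $|c|\les1$ one decomposes $|m|\approx 2^j$ dyadically over $-\log_2R\les j\les0$, observes that the shell $|m|\approx2^j$ carries $\les R2^j$ points each weighted by $\min(1,T_1^{-1}2^{-j})$ — hence contributes $\les\min(R2^j,RT_1^{-1})\le RT_1^{-1}$ — and that there are $O(\log R)$ such shells. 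In either case summing over $k_1$ gives $\#\Sigma_3\les R^2\langle k^0\rangle^2T_1^{-1}\log R$, which is \eqref{3vec}; the logarithm is exactly the dyadic sum over the ``resonant'' frequency near $k_*$.

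\emph{Equal-sign slices.} If $\zeta_1=\zeta_2=\zeta_3$, a frozen slice is a same-sign two-vector set, for which \eqref{2vec2} only saves $T_1^{-1/2}$; instead one keeps $(k_1,k_2)$ free, sets $k_3=\zeta_1(k_*-k_1-k_2)$, and works directly with $\phi(k_1,k_2):=\sum_{j=1}^3\bigl(|k_j|^{1/2}+\epsilon^2\Gamma_0(k_j)\bigr)$. With $f(k)=|k|^{1/2}$ one has $\partial_{k_1}\phi=f'(k_1)-f'(k_3)+O(\epsilon^2)$ and $\partial_{k_1}^2\phi=f''(k_1)+f''(k_3)+O(\epsilon^2)$, the latter of a fixed sign and $\gtrsim\langle k^0\rangle^{-3/2}$ in modulus, while $\partial_{k_1}\phi$ vanishes only at $k_1=(k_*-k_2)/2$. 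Hence, for each fixed $k_2$, the section $\{k_1:|\phi-\beta|\le T_1^{-1}\}$ contains $\les R\min\bigl(T_1^{-1}\langle k^0\rangle^{3/2}\delta(k_2)^{-1},\,T_1^{-1/2}\langle k^0\rangle^{3/4}\bigr)+O(1)$ lattice points, where $\delta(k_2)$ is the distance from $(k_*-k_2)/2$ to the admissible $k_1$-interval; since that abscissa moves at unit speed in $k_2$, the numbers $\delta(k_2)$ are dyadically spread over $[R^{-1},O(1)]$, and the bookkeeping of the previous step again yields \eqref{3vec}. (If the $k_j$ are not all of one real sign, then $|\partial_{k_1}\phi|\gtrsim\langle k^0\rangle^{-1/2}$ everywhere and the estimate is immediate.)

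\emph{The main obstacle.} The delicate point — and the one genuinely tied to the one-dimensional geometry, in the spirit of the divergence examples of Appendix~\ref{SecDiv} — is the degenerate locus $m=k_*-k_1=0$ of the opposite-sign case, equivalently the stationary abscissa $(k_*-k_2)/2$ landing inside the admissible $k_1$-interval in the equal-sign case, on which $k_2=k_3$ (resp. $k_1=k_3$): there the modulation constraint decouples from the remaining free frequency, so that a priori an $O(R)$ family of solutions survives. Reconciling this slice with \eqref{3vec} is where the argument is tightest: I would handle it by a direct count rather than through Proposition~\ref{2vecprop}, using the width of the $T_1^{-1}$-window together with the dispersion lower bound $\partial_k\bigl(|k|^{1/2}+\epsilon^2\Gamma_0(k)\bigr)\gtrsim\langle k^0\rangle^{-1/2}$ — hence the explicit form of $\Gamma_0$ in \eqref{defgauge} — and, in the applications of this bound, the admissibility restriction of Definition~\ref{defadm}, which precisely forbids the fully paired diagonal at a three-valent interaction node. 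This is also the step most sensitive to the admissible ranges of $\epsilon$ and $T_1$.
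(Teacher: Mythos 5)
Your opposite-sign case is essentially the paper's own argument (freeze one frequency, apply \eqref{2vec1} to the remaining opposite-sign pair, and sum the weights $\min(1,T_1^{-1}|k_*-k_1|^{-1})$ dyadically to produce the logarithm), and that half is fine, up to the cosmetic point that the pattern $\zeta_1=+$, $\zeta_2=\zeta_3=-$ must be relabelled so that the frozen frequency carries the repeated sign. The genuine gap is in the equal-sign case. Your bookkeeping rests on the claim that the distances $\delta(k_2)$ from the stationary abscissa $(k_*-k_2)/2$ to the admissible $k_1$-interval are ``dyadically spread over $[R^{-1},O(1)]$'', and this is false in general: the abscissa sweeps an interval of length $\sim 1/2$ while the admissible $k_1$-interval has length $2$, so for a typical configuration (e.g. $k_1^0=k_2^0=k_3^0\approx k_*/3$) one has $\delta(k_2)=0$ for essentially \emph{every} admissible $k_2$. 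On each such slice your bound is only the second-derivative count $\les R\,T_1^{-1/2}\langle k^0\rangle^{3/4}$, and summing over the $\sim R$ values of $k_2$ gives $\les R^2T_1^{-1/2}$, which misses \eqref{3vec} by the factor $T_1^{1/2}/\log R$. What the slicing in $k_1$ alone cannot see is that in the degenerate regime the remaining frequency is \emph{also} pinned: if all pairwise distances are $\les L$, then $|k_j-k_*/3|\les L$ for every $j$. The paper's proof exploits exactly this: it decomposes dyadically in $L:=\max_{i\neq j}|k_i-k_j|$, notes that for fixed $k_3$ the bound $|\partial_{k_1}\{(|k_1|^{1/2}+\epsilon^2\Gamma_0(k_1))+(|k_*-k_3-k_1|^{1/2}+\epsilon^2\Gamma_0(k_*-k_3-k_1))\}|\gtrsim L\langle k^0\rangle^{-2}$ yields $\les RT_1^{-1}L^{-1}\langle k^0\rangle^2$ pairs $(k_1,k_2)$, while $k_3$ has only $\les RL$ admissible values; each dyadic scale therefore contributes $\les R^2T_1^{-1}\langle k^0\rangle^2$, and the sum over $O(\log R)$ scales gives the logarithm. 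Your approach could in principle be repaired by observing that when $k_1$ lies within $\rho$ of the stationary point the value of the phase at that point pins $k_2$ to a set of measure $\les T_1^{-1}+\rho^2\langle k^0\rangle^{-3/2}$ and running a two-parameter dyadic argument, but that coupling is precisely the missing content, and it is what the $L$-decomposition delivers in one step.

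The patch you propose for the tight spot does not work as described. Proposition~\ref{3vecprop} is a free-standing lattice-counting statement about \eqref{countingset}, for arbitrary $\zeta_j$, $k^0$, $k_*$, $\beta$; it is used as a black box in the molecule estimates (e.g. in Proposition~\ref{comp_est_0} for arbitrary restricted decorations), so the admissibility condition of Definition~\ref{defadm}, which is a property of paired trees, is simply not available here and cannot be used to discard the diagonal. Nor does the lower bound $|\partial_k(|k|^{1/2}+\epsilon^2\Gamma_0(k))|\gtrsim\langle k^0\rangle^{-1/2}$ help: the degeneracy comes from the cancellation $f'(k_1)-f'(k_3)\to 0$ as $k_1\to k_3$, not from the size of $f'$ itself. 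And the near-diagonal region should not be excluded at all: it genuinely contributes $\approx R^2T_1^{-1}$ (an annulus of area $\sim T_1^{-1}$ in the $(k_1,k_2)$-plane around $k_1=k_2=k_3=k_*/3$), so the bound \eqref{3vec} is essentially saturated there and the region must be counted, with all three variables constrained simultaneously. Finally, the exactly resonant slice you flag in the opposite-sign case ($k_*-k_1=0$, where the pair constraint degenerates to $k_2=k_3$ and an $O(R)$ family can survive) is a real subtlety that neither your ``direct count'' sketch nor the quoted two-vector bound resolves; be aware that the paper's own write-up passes over this single slice as well, so if you want a complete argument you must either control it separately or note that it forces an additional restriction (effectively $T_1\les R\log R$) or an additive term in the statement.
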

\begin{proof} First assume the signs $\zeta_1,\zeta_2,\zeta_3$ are not all the same, say $\zeta_1=-\zeta_2$. Then with $k_3$ fixed, the number of choices for $(k_1,k_2)$ is bounded by $R\langle k^0\rangle^2T_1^{-1}|k'|^{-1}$ due to (\ref{2vec1}), where $k'$ is a shift of $k_3$ by a constant vector (either $k_*$ or $-k_*$). Summing over $k_3$ then yields (\ref{3vec}), as
\[\sum_{k_3}|k_3\pm k_*|^{-1}\lesssim\log R\] for summation in $k_3$ over any fixed unit interval.

Now assume (say) $\zeta_1=\zeta_2=\zeta_3=+$, then we have $k_1+k_2+k_3=k_*$ and
\[(|k_1|^{1/2}+\epsilon^2\Gamma_0(k_1))+(|k_2|^{1/2}+\epsilon^2\Gamma_0(k_2))+(|k_3|^{1/2}+\epsilon^2\Gamma_0(k_3))=\beta+O(T_1^{-1}).\] Let $\max(|k_1-k_2|,|k_2-k_3|,|k_1-k_3|)\approx L$, and assume $|k_1-k_2|\approx L$. Then by the same argument as in the proof of (\ref{2vec2}) in Proposition \ref{2vecprop}, and using the bound
\[\big|\partial_{k_1}\{(|k_1|^{1/2}+\epsilon^2\Gamma_0(k_1))+(|k_*-k_3-k_1|^{1/2}+\epsilon^2\Gamma_0(k_*-k_3-k_1))\}\big|\gtrsim L\langle k^0\rangle^{-2},\] we see that for fixed $k_3$, the number of choices for $(k_1,k_2)$ is bounded by $RT_1^{-1}L^{-1}\langle k^0\rangle^{2}$. Moreover by assumption we also have $|k_3-k_*/3|\lesssim L$, so the number of choices of $(k_1,k_2,k_3)$ with fixed $L$ is $\lesssim RT_1^{-1}L^{-1}\langle k^0\rangle^{2}\cdot RL=R^2T_1^{-1}\langle k^0\rangle^{2}$. 
Finally summing over $L$ yields an extra logarithmic factor, which proves (\ref{3vec}).
\end{proof}


\medskip
\section{Irregular chains}\label{sec.irre} 
In this section we define and analyze the \emph{irregular chains}, 
which were first introduced in \cite{DeHa1}. 
In particular we will exhibit an important cancellation between these structures.
In addition to the arguments in \cite{DeHa1} we need to use a $C^{1/2}$ regularity property of 
the symbols of the water waves equations after our normal form transformations,
which is quite non-trivial to verify; see \eqref{symbolcubic} and the proof of Proposition \ref{normalprop}.

\begin{df}\label{defirrechain} An \emph{irregular chain} $\Hc$ is 
a sequence of branching nodes $(\nf_0,\cdots,\nf_{q})$, such that
\begin{itemize}
\item Each $\nf_j\,(0\leq j\leq q)$ is an I-branching node of exactly $3$ children nodes, and $\nf_{j+1}$ is a child node of $\nf_j$;
\item Each $\nf_{j}$ has a child node $\mf_j$, which has opposite sign with $\nf_{j+1}$, and is paired with a child node $\pf_{j+1}$ of $\nf_{j+1}$ as leaves.
\end{itemize}
\end{df}
  \begin{figure}[h!]
  \includegraphics[scale=0.45]{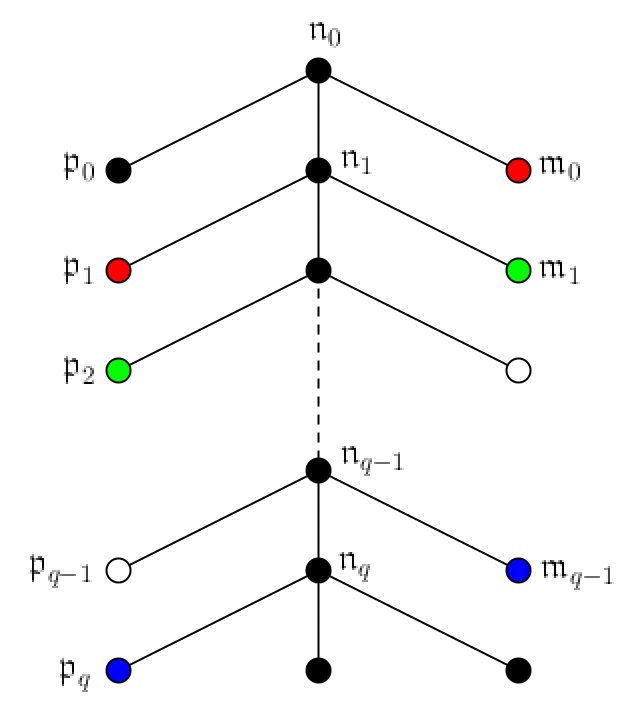}
  \caption{An example of an irregular chain (Definition \ref{defirrechain}).}
  \label{fig:irrechain}
\end{figure}

\begin{df}[Congruence relation]\label{defcong} 
Let $\Hc$ be an irregular chain as in Figure \ref{fig:irrechain}. 
By definition, we know that $\pf_j$ is a child of $\nf_j$ 
that has the same sign with $\nf_j$, thus it is either the first (left) child or the second (middle) child.
Let us fix these relative positions; then, let the sign $\zeta_{\nf_j}=\zeta_j$. 
It is clear that, given the sign $\zeta_0$ and the relative positions of $\pf_j$, 
the structure of $\Hc$ is uniquely determined by the signs $\zeta_j\,(1\leq j\leq q)$ 
(this is because, if we fix $\zeta_{j+1}/\zeta_j\in\{\pm\}$, 
then this fixes the relative position of $\nf_{j+1}$ as a child of $\nf_j$). See Figure \ref{fig:defcong}.

We define two irregular chains to be \emph{congruent} (denoted $\Hc'\equiv \Hc$), 
if the sign $\zeta_0$ and the relative position of each $\pf_j$ as a child of $\nf_j$ 
is the same for these two irregular chains. In other words, given a congruence class, 
the irregular chains $\Hc$ in this congruence class is uniquely determined by the signs 
$\zeta_j\,(1\leq j\leq q)$. For two congruent irregular chains, we also call them \emph{twists} of each other.
Finally, suppose $\Hc_j\,(1\leq j\leq r)$ are disjoint irregular chains in a given couple $\Qc$, 
then we can perform twists separately and arbitrarily at each $\Hc_j$, starting from $\Qc$; 
we define any couple $\Qc'$ formed in this way to be \emph{congruent} to $\Qc$ 
(relative to the irregular chains $\Hc_j$), denoted $\Qc'\equiv\Qc$.

\begin{figure}[h!]
\includegraphics[scale=0.22]{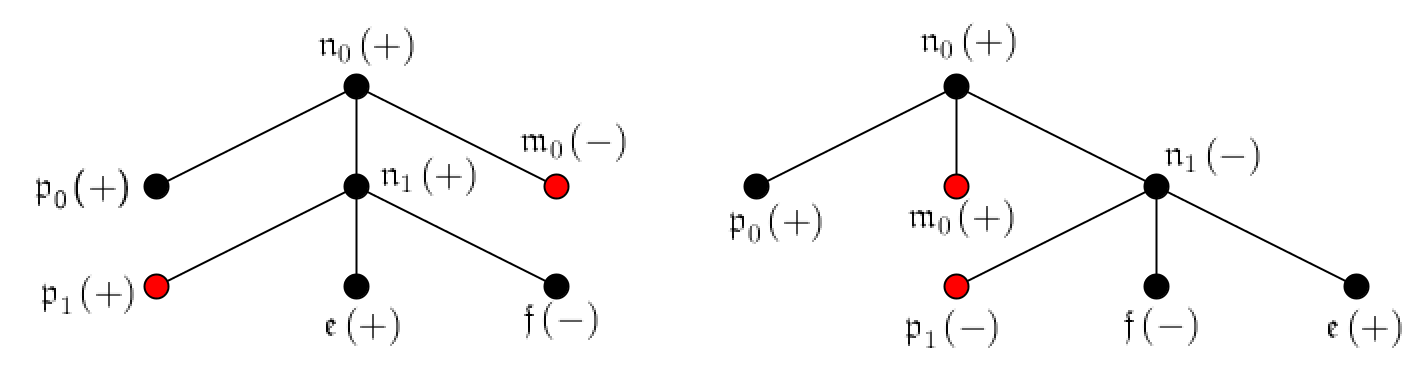}
\caption{An example of two congruent irregular chains (Definition \ref{defcong}) which are twists of each other. 
Here the signs of the involved nodes are shown in the picture.}
\label{fig:defcong}
\end{figure}
\end{df}

Now we fix one irregular chain $\Hc$ in a couple $\Qc$, and consider a decoration of $\Qc$. 
Then the restriction of this decoration to nodes in $\Hc$ can be uniquely determined by the $2q+4$ vectors $(k_j,\ell_j):0\leq j\leq q+1$, which are defined as follows: for each $0\leq j\leq q$ we have $\{k_j,\ell_j\}=\{k_{\nf_j},k_{\pf_j}\}$, and $(k_j,\ell_j)=(k_{\nf_j},k_{\pf_j})$ if $\zeta_j=+$, and $(k_j,\ell_j)=(k_{\pf_j},k_{\nf_j})$ if $\zeta_j=-$. 
For $j=q+1$ we have $(k_{q+1},\ell_{q+1})=(k_\mathfrak{e},k_{\mathfrak{f}})$ where $\mathfrak{e}$ and $\mathfrak{f}$ are the two other children nodes of $\nf_q$ other than $\pf_q$, such that $\mathfrak{e}$ has sign $+$ and $\mathfrak{f}$ has sign $-$. In fact we have the following equalities:
\begin{equation}\label{commdiff}
k_0-\ell_0=k_1-\ell_1=\cdots =k_{q+1}-\ell_{q+1}:=h,
\end{equation} 
and
\begin{equation}\label{commdiff2}
\begin{aligned}
&T_1\Omega_{\nf_j}(t)=\alpha_j^0\cdot t+G_j(t),\\&\alpha_j^0
	= T_1\cdot\big(|k_j|^{1/2}-|k_j-h|^{1/2}-|k_{j+1}|^{1/2}+|k_{j+1}-h|^{1/2}\big)
	\\
&\qquad\qquad\qquad\quad+\epsilon^2T_1
	\cdot\big(\Gamma_0(k_j)-\Gamma_0(k_j-h)-\Gamma_0(k_{j+1})+\Gamma_0(k_{j+1}-h)\big);
\\
&G_j(t)=\int_{0}^{t}\big(\Gamma_1(t',k_j)-\Gamma_1(t',k_j-h)-\Gamma_1(t',k_{j+1})+\Gamma_1(t',k_{j+1}-h)\big)\,\mathrm{d}t'.
\end{aligned}\end{equation} 
Note that these are independent of $\zeta_j$, i.e. 
they depend only on the congruence class which is invariant under twists.

\medskip
With the above discussions, we can consider an irregular chain $\Hc$ in a couple $\Qc$ and all its twists. Note that if we twist $\Hc$, then this twist can be trivially extended to $\Qc$, as long as we keep the remaining part of the couple $\Qc$ unchanged (note that the positions of $\ef$ and $\ff$ may be switched, but the sub-trees attached at $\ef$ and $\ff$ are unchanged). Consider the expression $\Kc_\Qc$ defined in (\ref{defofkq}) and the associated sub-expression where we restrict the sum to only those $k_\nf$ for $\nf$ in $\Hc$ (but excluding $\nf\in\{\nf_0,\pf_0,\ef,\ff\}$) and restrict the integration to only those $t_\mf$ for $\mf$ in $\Hc$ (but excluding $\mf=\nf_0$); denote this sub-expression by $\Kc_\Hc$. 
We will assume $\zeta_{\nf_0}=+$ without loss of generality. Then, the combination
\[\sum_{\Hc'\equiv\Hc}\Kc_{\Hc'},\] where the sum is taken over all twists $\Hc'$ of $\Hc$, can be rearranged into the following form:
\begin{align}\label{twistformula}
\begin{split}
& \sum_{\Hc'\equiv\Hc}\Kc_{\Hc'} =: \mathcal{G}\big( (k_0,\ell_0,k_{q+1},\ell_{q+1}),({t}_{\nf_0},{t}_{\ef_0},{t}_{\ff_0}) \big)
\\
& \qquad = \int_{\max({t}_\ef,t_\ff)<{t}_q<\cdots<{t}_1<{t}_{\nf_0}}
	\sum_{k_1,\cdots,k_q}\prod_{j=0}^q e^{i(\alpha_j^0\cdot t_j+G_j(t_j))} \,\mathrm{d}{t}_1\cdots\mathrm{d}{t}_q
	\cdot in_3(\ell_0,k_1,\ell_1)
\\
& \qquad \times \prod_{j=1}^q\big[ \psi(\ell_j)
	\varphi_{\leq K_{\mathrm{tr}}}(k_j) in_{3}(\ell_j,k_{j+1},\ell_{j+1})-\psi(k_j)\varphi_{\leq K_{\mathrm{tr}}}(\ell_j)in_{3}(k_j,\ell_{j+1},k_{j+1})\big].
\end{split}
\end{align}
Here note that, in the case $\zeta_{\nf_0}=-$, the factor $n_3(\ell_0,k_1,\ell_1)$ in \eqref{twistformula}
should be replaced by $n_3(k_0,\ell_1,k_1)$, but this does not affect the proof of Proposition \ref{prop.cancellation} below.

\begin{prop}[Explicit statement of the cancellation estimate]\label{prop.cancellation} 
Note that the function
\[\mathcal{G}:=\sum_{\Hc'\equiv\Hc}\Kc_{\Hc'}\] defined in \eqref{twistformula}
is a function of $(k_0,\ell_0,k_{q+1},\ell_{q+1})$ and $(t,t',t''):=(t_{\nf_0},t_\ef,t_\ff)$, where \[(t,t',t'')\in O:=\{(t,t',t''):0<\max(t',t'')<t<1\}.\] 
We may also assume each of $(k_0,\ell_0,k_{q+1},\ell_{q+1})$ belongs to a fixed unit ball.

Then there exists an extension of $\Gc$ that equals $\Gc$ on $O$, which we still denote by $\Gc$, such that
\begin{equation}\label{irrechainbound}
\big\| \mathcal{F}\big(\mathcal{G}\cdot e^{-iT_1\Omega_*(t)}\big) \big\|_{L_{(\xi,\xi',\xi'')}^1\ell_{(k_0,\ell_0,k_{q+1},\ell_{q+1})}^\infty}\leq \epsilon^{2q/3}(R\epsilon^{-2}T_1^{-1})^q,
\end{equation} where $\widehat{\mathcal{G}}$ is the Fourier transform of $\mathcal{G}$ in the $(t,t',t'')$ variables with the corresponding Fourier variables being $(\xi,\xi',\xi'')$. 
The function $\Omega_*(t)$ is defined by 
\begin{equation}\label{defOmega'}
\begin{aligned}
&T_1\Omega_*(t)=\alpha_*^0\cdot t+G_*(t),\\&\alpha_*^0
	= T_1\cdot\big(|k_0|^{1/2}-|k_0-h|^{1/2}-|k_{q+1}|^{1/2}+|k_{q+1}-h|^{1/2}\big)
	\\
&\qquad\qquad\qquad\quad+\epsilon^2T_1\cdot\big(\Gamma_0(k_0)-\Gamma_0(k_0-h)
	-\Gamma_0(k_{q+1})+\Gamma_0(k_{q+1}-h)\big);
\\
&G_*(t)=\int_{0}^{t}\big(\Gamma_1(t',k_0)-\Gamma_1(t',k_0-h)-\Gamma_1(t',k_{q+1})+\Gamma_1(t',k_{q+1}-h)\big)\,\mathrm{d}t'.
\end{aligned}\end{equation}
Finally, the bound (\ref{irrechainbound}) 
is uniform in the choice of the unit balls containing each of 
the quartets $(k_0,\ell_0,k_{q+1},\ell_{q+1})$.
\end{prop}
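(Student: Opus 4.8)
The plan is to take the closed form \eqref{twistformula} for $\mathcal{G}=\sum_{\Hc'\equiv\Hc}\Kc_{\Hc'}$ as the starting point, in which the sum over the $2^q$ twists has already been carried out and has produced the product of ``difference factors'' $\prod_{j=1}^q\big[\psi(\ell_j)\varphi_{\leq K_{\mathrm{tr}}}(k_j)\,in_3(\ell_j,k_{j+1},\ell_{j+1})-\psi(k_j)\varphi_{\leq K_{\mathrm{tr}}}(\ell_j)\,in_3(k_j,\ell_{j+1},k_{j+1})\big]$, the head factor $in_3(\ell_0,k_1,\ell_1)$, and the product of internal oscillations $\prod_{j=0}^qe^{i(\alpha_j^0t_j+G_j(t_j))}$ integrated over the simplex $\max(t',t'')<t_q<\cdots<t_1<t$. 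Since $k_j-\ell_j=h$ for every $j$ by \eqref{commdiff}, each $\alpha_j^0$ and $G_j$ depends on $(k_j,k_{j+1})$ only through a difference $g(k_j)-g(k_{j+1})$ with $g(\kappa):=|\kappa|^{1/2}-|\kappa-h|^{1/2}+\epsilon^2(\Gamma_0(\kappa)-\Gamma_0(\kappa-h))$ (and analogously through $\Gamma_1$), so the telescoping identities $\sum_{j=0}^q\alpha_j^0=\alpha_*^0$ and $\sum_{j=0}^qG_j(\tau)=G_*(\tau)$ hold for all $\tau$. Using these to absorb $e^{-iT_1\Omega_*(t)}$ gives $\prod_{j=0}^qe^{i\alpha_j^0t_j}\cdot e^{-i\alpha_*^0t}=\prod_{j=1}^qe^{i\alpha_j^0(t_j-t)}$ (the $j=0$ factor is trivial since $t_0=t$) together with the analogous identity for the $G_j$, so that the head oscillation disappears and each remaining phase is a $C^1$, compactly supported perturbation of $e^{i\alpha_j^0(t_j-t)}$ on the simplex. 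Next I would integrate out $t_1,\dots,t_q$ one at a time, from the deepest node $\nf_q$ upward, repeating the integration-by-parts argument of Lemma \ref{timeintest}: each step yields a gain $\langle\,\alpha_j^0+(\text{a subset-sum of descendant }\alpha^0\text{-variables})\,\rangle^{-1}$ plus a boundary term propagating a phase $e^{i(\cdots)t_\qf}$ to the parent node $\qf$. Before taking the Fourier transform I would fix the extension of $\mathcal{G}$ off $O$ by replacing the sharp endpoints $\max(t',t'')$ and $t$ in the $t_\mf$-integrations with smooth compactly supported cut-offs in $(t,t',t'')$; this is precisely the freedom granted by the phrase ``there exists an extension of $\Gc$ that equals $\Gc$ on $O$'' in the statement, and it is what makes the ensuing Fourier transform integrable, the naive extension leaving a $\max(t',t'')$ inside an integration limit whose Fourier transform is not $L^1$. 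After these steps $\mathcal{G}\cdot e^{-iT_1\Omega_*(t)}$ is a finite sum of terms $c(k_0,\ell_0,k_{q+1},\ell_{q+1})\cdot e^{i\beta t}\,\Theta(t,t',t'')\cdot\prod_{j=1}^q\langle\beta_j\rangle^{-1}$, where $\beta,\beta_j$ are $\alpha^0$-combinations (shifted by $-\alpha_*^0$) as in Lemma \ref{timeintest}, $\Theta$ is smooth, bounded, and compactly supported, and $c$ collects the symbols $\psi,\varphi_{\leq K_{\mathrm{tr}}},n_3$ and the internal-frequency sum; since $\|\mathcal{F}_{(t,t',t'')}(e^{i\beta t}\Theta)\|_{L^1}\lesssim 1$ uniformly in $\beta$ and in the unit balls, \eqref{irrechainbound} reduces to the ``modulus'' estimate $\sum_{(\beta_j)}\prod_j\langle\beta_j\rangle^{-1}\cdot|c|\lesssim\epsilon^{2q/3}(R\epsilon^{-2}T_1^{-1})^q$.

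The core of the modulus estimate is the cancellation hidden in each difference factor. Writing it as
\[
\big[\psi(\ell_j)\varphi_{\leq K_{\mathrm{tr}}}(k_j)-\psi(k_j)\varphi_{\leq K_{\mathrm{tr}}}(\ell_j)\big]\,n_3(\ell_j,k_{j+1},\ell_{j+1})+\psi(k_j)\varphi_{\leq K_{\mathrm{tr}}}(\ell_j)\,\big[n_3(\ell_j,k_{j+1},\ell_{j+1})-n_3(k_j,\ell_{j+1},k_{j+1})\big],
\]
and observing that the two triples inside $n_3$ differ by $(-h,h,-h)$ while their entries are all $\lesssim 2^{K_{\mathrm{tr}}}$, the $C^{1/2}$ bound \eqref{symbolcubic} gives $\lesssim|h|^{1/2}2^{2K_{\mathrm{tr}}}$ for the second bracket, while the Lipschitz bound and polynomial decay of $\psi$ and the smoothness of $\varphi$ give $\lesssim\min(1,|h|)$ times a rapidly decaying weight in $k_j$ for the first; together with $|n_3|\lesssim 2^{5K_{\mathrm{tr}}/2}$ on the support, each difference factor is $\lesssim\min(1,|h|^{1/2})\cdot 2^{O(K_{\mathrm{tr}})}$ times a weight decaying in $k_j$, and since $2^{K_{\mathrm{tr}}}=R^{1/A}$ with $A=\lfloor\theta^{-1}\rfloor$ large the accumulated loss $2^{O(qK_{\mathrm{tr}})}=R^{O(q/A)}$ is a negligible power of $R$. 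I would then run the sum over $k_1,\dots,k_q$ as in Subsection \ref{seccount}: for fixed $k_{j+1}$ and $h$ the relation $\alpha_j^0=T_1(g(k_j)-g(k_{j+1}))$ is a ``$2$-vector'' constraint on $k_j$ with detuning frequency $h$, so by the argument behind Proposition \ref{2vecprop}, inequality \eqref{2vec1}, the window $|\alpha_j^0|\lesssim 1$ holds for $k_j$ in a set of size $\lesssim R\langle k_j\rangle^2\min(1,T_1^{-1}|h|^{-1})$; weighting by $\langle\beta_j\rangle^{-1}$ (a $\log R$ loss per step, as in Lemma \ref{timeintest}), reinstating the prefactor per chain step carried by $\Kc_\Hc$, and running a short dyadic case distinction over $|h|\in[R^{-1},R^{1/A}]$ (note $h\neq 0$, since the difference factors vanish identically when $h=0$) using $T_1=\epsilon^{-8/3+\theta_0}$, $\epsilon\in[R^{-8/3+\theta_0},R^{-\theta_0}]$ and $A\gg\theta_0^{-1}$, one should find that each of the $q$ steps contributes $\leq\epsilon^{2/3}R\epsilon^{-2}T_1^{-1}$; the product over the steps, with the head factor $n_3(\ell_0,k_1,\ell_1)$ and the $t',t''$-boundary at $\nf_q$ contributing only harmless $R^{O(1/A)}$ factors, then yields \eqref{irrechainbound}.

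The step I expect to be the main obstacle is precisely this final balancing. Unlike the cubic NLS situation of \cite{DeHa1}, where the potentially-resonant cubic symbol is the constant $1$ and each difference factor is literally $\psi(\ell_j)-\psi(k_j)$ with the clean Lipschitz gain $\min(1,|h|)$, here $n_3$ is only $C^{1/2}$ (the content of \eqref{symbolcubic}, itself a nontrivial consequence of the two normal-form transformations of Proposition \ref{normalprop}), so the gain per factor degrades to $\min(1,|h|^{1/2})$, half as strong. One must verify that this halved gain still beats the growth of the chain; as the ``broken chain'' example of Section \ref{SecDiv} shows, it does so only because the lifespan $T_1\approx\epsilon^{-8/3+}$ is short enough, the analogous quantity diverging for $T_1\approx\epsilon^{-8/3-}$, so there is essentially no slack. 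Carrying the many $R^{O(1/A)}$ losses against the tight target $\epsilon^{2q/3}(R\epsilon^{-2}T_1^{-1})^q$, uniformly over the unit balls containing $(k_0,\ell_0,k_{q+1},\ell_{q+1})$, and therefore simultaneously over the small-$|h|$ regime, where the cancellation dominates, and the large-$|h|$ regime, where the resonance counting does, is the delicate part of the argument.
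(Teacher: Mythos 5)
Your route is the same as the paper's in its main components: you start from the twisted sum \eqref{twistformula}, factor out $e^{-iT_1\Omega_*(t)}$ by the telescoping identities $\sum_j\alpha_j^0=\alpha_*^0$, $\sum_jG_j=G_*$, integrate the internal times by parts as in Lemma \ref{timeintest}, extract $|h|^{1/2}$ per link from the $C^{1/2}$ bound \eqref{symbolcubic}, count each link by Proposition \ref{2vecprop}, and close with $T_1\leq\epsilon^{-8/3}$. However, two steps as written would fail. The first is your claim that, after smoothing the extension off $O$, the amplitude $\Theta(t,t',t'')$ is smooth with $\|\mathcal{F}(e^{i\beta t}\Theta)\|_{L^1}\lesssim 1$ uniformly. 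The time dependence of the integrand is not only through the simplex cutoff: it enters through $e^{iG_j(s_j+t)}$ and, in the bulk terms produced by the integration by parts, through $G_j'(s_j+t)$, where $G_j$ is built from $\Gamma_1\in C_t^0L_k^\infty$ and is therefore only $C^1$ in time. No choice of extension removes this limited regularity, and a compactly supported amplitude with only $\langle\xi\rangle^{-1}$-type Fourier decay in each variable is not in $L^1_{(\xi,\xi',\xi'')}$. This is exactly why the paper's proof splits at $\max(|\xi|,|\xi_1|,|\xi_2|)=R^K$: for low frequencies the decay \eqref{upperbound} is integrated over the truncated region at the cost of $(\log R)^C$, while for high frequencies one pays $R^{10N}$ to fix $(k_j,h)$, expands each $e^{iG_j}$ using $\|\langle\lambda\rangle^{1/4}\widehat{G_j}\|_{L^1_\lambda}\lesssim1$ (Plancherel and Cauchy--Schwarz for the compactly supported $C^1$ function $G_j$), and integrates by parts twice in $t$ to gain $R^{-K/8}$. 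This high-frequency argument is missing from your proposal and cannot be bypassed by the choice of extension.

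The second problem is quantitative. With $T_1=\epsilon^{-8/3+\theta_0}$ the per-link slack between what the counting gives, $RT_1^{-1/2}$, and the per-link target, $\epsilon^{2/3}R\epsilon^{-2}T_1^{-1}$, is only $\epsilon^{\theta_0/2}\leq R^{-\theta_0^2/2}$; your crude bounds $|n_3|\lesssim 2^{5K_{\mathrm{tr}}/2}$ and the factor $(|\underline{\xi}|+|\underline{\eta}|)^2\lesssim 2^{2K_{\mathrm{tr}}}$ in \eqref{symbolcubic} cost $R^{c/A}=R^{c\theta_0/100}$ per link, which is \emph{not} absorbed by this slack for small $\theta_0$, so the bound \eqref{irrechainbound} as stated would not close with your accounting. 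The fix is what the paper does before invoking the cancellation (see \eqref{C1/2bound}): use the rapid decay \eqref{databoun} of $\psi$ to first confine each internal frequency $k_j$ to a unit interval, after which all symbol factors are $O(1)$ on the effective support and each difference factor is genuinely $\lesssim|h|^{1/2}$ with no power of $R$ (it also helps to use the symmetry of $n_3$ in its first two arguments, so the two triples differ only in one slot by $h$). With that correction, and noting that no dyadic decomposition in $|h|$ is needed since $\min(1,|h|^{1/2})\cdot R\min(1,T_1^{-1}|h|^{-1})\leq RT_1^{-1/2}$ for all $h$, your counting reproduces the paper's conclusion, the residual $(\log R)^{O(q)}$ losses from the $\langle\beta_j\rangle^{-1}$ sums being absorbed by the $\epsilon^{q\theta_0/2}$ slack.
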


\begin{proof} We may assume $\zeta_{\nf_0}=+$ (the case $\zeta_{\nf_0}=-$ is the same; 
note that the twisting does not affect $\zeta_{\nf_0}$, see Definition \ref{defcong}). 
Let $h$ be defined as in (\ref{commdiff}), and let $\psi_{O}$ 
be a fixed compactly supported smooth cutoff function that equals $1$ on $O$. 
Also fix $K\gg N_0$ to be a sufficiently large constant, and note that $q\leq N_0$. 
We may form an extension of $\Gc$ by multiplying by 
$\psi_O$; 
after this we make the linear change of variables and define $\Uc(t,t_1,t_2)=\Gc(t,t+t_1,t+t_2)$. 
Then, by making the substitution $t_j=t+s_j$ in \eqref{twistformula} and using that $\alpha_*^0=\sum_j \alpha_j^0$, we have
\begin{align}\label{irrechain_0}
\begin{split}
\Uc(t,t_1,t_2) & \cdot e^{-i(\alpha_*^0\cdot t+G_*(t))} = 
	\int_{\max(t_1,t_2)<{s}_q<\cdots<{s}_1<0}
	\sum_{k_1,\cdots,k_q}\prod_{j=1}^q e^{i(\alpha_j^0\cdot s_j+G_j(s_j+t))} \mathrm{d}{s}_1\cdots\mathrm{d}{s}_q
	\\
	& \times in_3(\ell_0,k_1,\ell_1) \cdot e^{i(G_0(t)-G_*(t))}
\\
& \times \prod_{j=1}^q\big[ \psi(\ell_j)\varphi_{\leq K_{\mathrm{tr}}}(k_j) in_{3}(k_{j+1},\ell_{j+1},\ell_j)
	- \psi(k_j)\varphi_{\leq K_{\mathrm{tr}}}(\ell_j) in_{3}(\ell_{j+1},k_{j+1},k_j)\big].
\end{split}
\end{align}
Define the left hand side of \eqref{irrechain_0} to be $\Vc(t,t_1,t_2)$, then we have 
\begin{align}\label{irrechain1}
\begin{split}
\widehat{\Vc}(\xi,\xi_1,\xi_2) & = \sum_{k_1,\cdots,k_q}\prod_{j=1}^q\big[ \psi(\ell_j)\varphi_{\leq K_{\mathrm{tr}}}(k_j)
	in_{3}(k_{j+1},\ell_{j+1},\ell_j) - \psi(k_j)\varphi_{\leq K_{\mathrm{tr}}}(\ell_j) in_{3}(\ell_{j+1},k_{j+1},k_j)\big]
	\\
&\times in_3(\ell_0,k_1,\ell_1)\int_{t_1,t_2<s_q<\cdots<s_1<0;\,t\in\Rb}e^{i(\xi t+\xi_1t_1+\xi_2t_2)}\cdot\prod_{j=1}^qe^{i\alpha_j^0s_j}\cdot\prod_{j=1}^q e^{iG_j(s_j+t)}\,
\\
&\times e^{i(G_0(t)-G_*(t))}\cdot \psi \cdot \mathrm{d}t\mathrm{d}t_1\mathrm{d}t_2\prod_{j=1}^q\mathrm{d}s_j.
\end{split}
\end{align}

First assume $\max(|\xi|,|\xi_1|,|\xi_2|)\leq R^K$. In (\ref{irrechain1}), we can integrate by parts first in $(s_q,\cdots,s_1)$ in this order, then in $(t_1,t_2)$, and finally in $t$; by a similar argument to the proof of Proposition \ref{timeintest}, we can bound the absolute value of the integral in (\ref{irrechain1}) by a sum of at most $2^q$ terms which are bounded by
\begin{equation}\label{upperbound}
\prod_{j=1}^q\langle \beta_j\rangle^{-1}\cdot \langle \xi_1+\gamma_1\rangle^{-1}\cdot\langle\xi_2+\gamma_2\rangle^{-1}\cdot\langle\xi+\gamma\rangle^{-1},
\end{equation} 
where each $\beta_j$ equals $\alpha_j^0$ plus an algebraic sum of the $(\alpha_k^0)_{k<j}$, and similarly $\gamma_1$ equals an algebraic sum of $(\alpha_j^0)$, and $\gamma_2$ equals an algebraic sum of $(\alpha_j^0)$ and $\gamma_1$, and $\gamma_3$ equals an algebraic sum of $(\alpha_j^0)$ and $(\gamma_1,\gamma_2)$. Here we note that previous integration by parts may lead to factors of form $G_j'(s_j+t)$ for certain $j$ (if we choose the bulk term instead of the boundary term), but these terms come with integration in $s_j$, so while integrating by parts in $t$ we can choose the vector field $\partial_t-\sum_j\partial_{s_j}$ instead of $\partial_t$ to avoid derivatives falling on these $G_j'$ factors. In the end this will not affect the validity of (\ref{upperbound}).

Now, for each $\alpha_j^0$, we may fix $q_j^0\in\Zb$ such that $|\alpha_j^0-q_j^0|\leq 1$, and $|q_j^0|\leq R^2$; this allows us to decompose $\widehat{\Vc}=\sum_{(q_j^0)}\Wc_{(q_j^0)}$, where $\Wc_{(q_j^0)}$ is defined as in (\ref{irrechain1}) but with the vector $(k_j,h)$ restricted to the set where $|\alpha_j^0-q_j^0|\leq 1$ for each $j$. For fixed $(q_j^0)$, by (\ref{irrechain1}) and (\ref{upperbound}) we then have
\begin{align}\label{irrechain2}
\begin{split}
& \|\Wc_{(q_j^0)}\|_{L_{(\xi,\xi_1,\xi_2)}^1L_{(k_0,k_{q+1},h)}^\infty}
\\
& \qquad \lesssim
\prod_{j=1}^q\langle \beta_j\rangle^{-1}\cdot\int_{\max(|\xi|,|\xi_1|,|\xi_2|)\leq R^K}
\langle \xi_1+\gamma_1\rangle^{-1}\langle \xi_2+\gamma_2\rangle^{-1}\langle \xi+\gamma\rangle^{-1}
\, \mathrm{d}\xi\mathrm{d}\xi_1\mathrm{d}\xi_2
\\
& \qquad \times\sum_{\substack{(k_1,\cdots,k_q)\\|\alpha_j^0-q_j^0|\leq 1}}
\prod_{j=1}^q \Big| \psi(\ell_j)\varphi_{\leq K_{\mathrm{tr}}}(k_j)
	\cdot n_{3}(k_{j+1},\ell_{j+1},\ell_j) - \psi(k_j)
	\varphi_{\leq K_{\mathrm{tr}}}(\ell_j)\cdot n_{3}(\ell_{j+1},k_{j+1},k_j) \Big|;
\end{split}
\end{align} 
in \eqref{irrechain2} the $\beta_j$ can also be replaced by an algebraic sum of $(q_j^0)$. 
Now in (\ref{irrechain2}), the integral is bounded by $(\log R)^C$, 
and then the factor $\prod_{j=1}^q\langle \beta_j\rangle^{-1}$ depends only on $(q_j^0)$, 
and the summation of this factor over all choices of $(q_j^0)$ gives another $(\log R)^C$. 
As such, to bound (\ref{irrechain2}) we only need to bound the final sum in $(k_j)$.

Consider now the sum in $(k_j)$ in (\ref{irrechain2}); 
in view of the decay of $\psi$, we may restrict each $k_j$ to a unit interval $|k_j-k_j^0|\leq 1$;
then we claim that 
\begin{align}\label{C1/2bound}
\big| \psi(\ell_j) \varphi_{\leq K_{\mathrm{tr}}}(k_j)
  \cdot n_{3}(k_{j+1},\ell_{j+1},\ell_j) - \psi(k_j)\varphi_{\leq K_{\mathrm{tr}}}(\ell_j)
  \cdot n_{3}(\ell_{j+1},k_{j+1},k_j)\big|\lesssim |h|^{1/2}.
\end{align}
To see this, it suffices to recall that we are assuming that $\psi$ satisfies \eqref{databoun}, and that the symbol $n_3$ 
satisfies the bound in \eqref{symbolcubic}, 
together with the fact that $k_j-\ell_j = h$.

Putting all these together, we can bound the summation in $(k_j)$ in (\ref{irrechain2}) by $|h|^{q/2}\cdot\Af$,
where $\Af$ is the number of choices for $(k_1,\cdots,k_q)$ such that $|\alpha_j^0-q_j^0|\leq 1$ for each $j$.
Finally, to bound this $\Af$, we note that $(k_0,\ell_0)$ has been fixed; for each $j$, 
we inductively construct $(k_j,\ell_j=k_j-h)$ using the condition that $\alpha_j^0$ belongs to a fixed unit interval. 
Then, Proposition \ref{2vecprop} gives that, for each fixed $(k_{j-1},\ell_{j-1})$, 
the number of choices for $(k_j,\ell_j)$ is bounded by $R\cdot T_1^{-1/2}|h|^{-1/2}$. 
By iterating this for each $j$, we get the upper bound
\[|h|^{q/2}\cdot\Af\lesssim (R\cdot T_1^{-1/2})^{q}\lesssim (R\epsilon^{-2}T_1^{-1})^q\epsilon^{2q/3},\] 
using that $T_1\leq \epsilon^{-8/3}$, which completes the proof of (\ref{irrechainbound}) 
by summing over all $(q_j^0)$ in (\ref{irrechain2}).

Finally, assume $\max(|\xi|,|\xi_1|,|\xi_2|)\geq R^K$. In this case we can first fix the values of $(k_j,h)$ at a loss of $R^{10N}\ll R^M$, using the trivial bound
\[\|\Wc\|_{L_{(\xi,\xi_1,\xi_2)}^1L_{(k_j,h)}^\infty}
	\leq \|\Wc\|_{L_{(\xi,\xi_1,\xi_2)}^1L_{(k_j,h)}^1}=\|\Wc\|_{L_{(k_j,h)}^1L_{(\xi,\xi_1,\xi_2)}^1}
	\leq R^{10N}\|\Wc\|_{L_{(k_j,h)}^\infty L_{(\xi,\xi_1,\xi_2)}^1}\] 
for any function $\Wc$.

Note that with fixed $k_j$, the function $G_j(t)$, see \eqref{defOmega'},
 is compactly supported on $\Rb$ (by suitably extending it for $t \geq 1$)
and belongs to $C_t^1$; by Plancherel and Cauchy-Schwartz, we know that
\[\|\langle \lambda\rangle^{1/4}\widehat{G_j}(\lambda)\|_{L^1_\lambda}\lesssim1,\] 
thus we can write $G_j(t)$ as a linear superposition of functions of form $\langle\lambda\rangle^{-1/4} e^{i\lambda t}$ with summable coefficients in $\lambda$. Applying this for each $G_j$ and $(G_0,G_*)$, we can get rid of these $e^{iG_j(s_j+t)}$ factors in (\ref{irrechain1}). 
Then, by fixing all the $s_j\,(j\neq q)$, integrating in $(t_1,t_2)$, then in $s_q$ and finally integrating by parts in $t$ twice, we get that
\[|\widehat{\Vc}|\leq R^{10N}\cdot \langle \xi_1\rangle^{-1}\langle \xi_2\rangle^{-1}\cdot\langle \xi_1+\xi_2+\alpha_q^0+\gamma_1\rangle^{-1}\cdot\langle \xi+\gamma_2\rangle^{-2}\cdot\max(\langle \gamma_1\rangle,\langle \gamma_2\rangle)^{-1/4}.\] Using that $\max(|\xi|,|\xi_1|,|\xi_2|)\geq R^K$, we can easily bound the above in $L_{(\xi,\xi_1,\xi_2)}^1$ norm by $R^{-K/8}$, which overcomes the loss $R^{10N}$ and is sufficient to prove (\ref{irrechainbound}). Therefore, the proof of (\ref{irrechainbound}) is complete in all cases.
\end{proof}
\begin{df}[Splicing irregular chains]\label{defsplice} Let $\Qc$ be a couple and $\Hc$ be an irregular chain. We define the new couple $\Qc_{\mathrm{sb}}$ as follows. As in Definition \ref{defirrechain}, in $\Qc_{\mathrm{sb}}$ we remove all the nodes in Figure \ref{fig:irrechain} except $\nf_0$, $\pf_0$ and the two children nodes of $\nf_q$ other than $\pf_q$. Then we turn these two children nodes of $\nf_q$ into children nodes of $\nf_0$ (the left children node of $\nf_0$ is still $\pf_0$), and connect these four nodes to the rest of the couple as in $\Qc$. We call this the \emph{splicing} of $\Qc$ at $\Hc$. Similarly, for any set of disjoint irregular chains $\Hc_j$, we can define the splicing at these chains $\Hc_j$.
\end{df}

\begin{prop}\label{irrereduce}
Let $\Qc$ be a couple and $\Hc_j\,(1\leq j\leq r)$ be disjoint irregular chains; for each $j$ we have two nodes $\nf_{0j}$ and $\pf_{0j}$ as in Definition \ref{defirrechain}. 
Let $\Qc'\equiv \Qc$ run over all couples that are congruent to $\Qc$ relative to $\Hc_j$ (Definition \ref{defcong}). 
Moreover, in the definition of $\Kc_{\Qc'}$ in (\ref{defofkq}), we restrict (by inserting indicator functions of the vector variables $k_\nf$) that for each $j$ we have $|k_{\nf_{0j}}-k_{\pf_{0j}}|\leq \epsilon^{2-\theta}$; clearly this restriction is not affected by twisting. Let $\Qc_{\mathrm{sb}}$ be the result of splicing the irregular chains $\Hc_j$, as in Definition \ref{defsplice}. Then we have
\begin{equation}
\label{reducekq}
\begin{aligned}
\sum_{\Qc'\equiv\Qc}\Kc_{\Qc'} = \epsilon^{2q_{\mathrm{tot}}/3}
(\epsilon R^{-1/2})^{r(\Qc_{\mathrm{sb}})-2}(T_1)^{n_I(\Qc_{\mathrm{sb}})}\sum_{(k_\nf)}
\prod_{\lf\in\Lc_{\mathrm{sb}}}^{(+)}\widetilde{\psi}(k_{\lf})
\prod_{\nf\in\Qc_{\mathrm{sb}}}\varphi_{\leq K_{\mathrm{tr}}}(k_\nf)
\\
 \times\int_{\Ec_{\mathrm{sb}}}\Gc\cdot\prod_{\mf\in\Ic_{\mathrm{sb}}} e^{iT_1\Omega_\mf(t_\mf)}\,\mathrm{d}{t}_\mf.
\end{aligned}
\end{equation} 

Here $(k_\nf)$ is a $k$-decoration of $\Qc_{\mathrm{sb}}$, and all notions $(\Lc_{\mathrm{sb}},\Ic_{\mathrm{sb}},\Ec_{\mathrm{sb}})$ and $\Omega_\mf$ etc. are the same as in (\ref{defofkq}) 
but associated with the couple $\Qc_{\mathrm{sb}}$. 
Moreover $q_{\mathrm{tot}}$ is the sum of the lengths $q_j$ of the irregular chains $\Hc_j$ (cf. Definition \ref{defirrechain}).
Finally the function $\Gc$ is a function of all vector variables $(k_{\Qc_{\mathrm{sb}}})$ and time variables $(t_{\Ic_{\mathrm{sb}}})$, such that:
\begin{enumerate}
\item let $\widehat{\Gc}$ be the time Fourier transform of $\Gc$, then the norm 
\begin{equation}\label{allchains}\|\widehat{\Gc}\|_{L_{\xi_{\Ic_{\mathrm{sb}}}}^1L_{k_{\Qc_{\mathrm{sb}}}}^\infty}\lesssim 1,\end{equation} where $\xi_{\Ic_{\mathrm{sb}}}$ are the Fourier variables corresponding to $t_{\Ic_{\mathrm{sb}}}$;
\item For each irregular chain $\Hc_j$ in $\Qc$, after splicing, we get two nodes $\nf_{0j}$ and $\pf_{0j}$ in $\Qc_{\mathrm{sb}}$, see Definition \ref{defsplice}. Then, in the support of $\Gc$, we have that $|k_{\nf_{0j}}-k_{\pf_{0j}}|\leq\epsilon^{2-\theta}$ for each $j$;

\item The $\widetilde{\psi}$ is another rapidly decaying function which may be different from the $\psi$ in \eqref{defofkq}.

\end{enumerate}

\end{prop}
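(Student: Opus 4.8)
\emph{Strategy.} This is the combinatorial repackaging of Proposition~\ref{prop.cancellation}; essentially all the analytic content already sits there, and the task is to apply that cancellation estimate to each of the pairwise disjoint chains $\Hc_j$ separately and then reconcile the bookkeeping. The only genuinely delicate step is to check that the decoration sum and the time-ordered integral defining $\Kc_{\Qc'}$ factor through each chain. Concretely, fix a decoration of $\Qc$. Since the $\Hc_j$ are disjoint, each chain meets the rest of $\Qc$ only through its top node $\nf_{0j}$ with partner $\pf_{0j}$, its two bottom nodes $\ef_j,\ff_j$ (the children of $\nf_{q_j,j}$ other than $\pf_{q_j,j}$), and the associated time variables. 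The telescoping identities \eqref{commdiff}--\eqref{commdiff2}, obtained by iterating \eqref{defdec0} down the chain, show that: (i) the common difference $h_j:=k_{\nf_{0j}}-k_{\pf_{0j}}=\cdots=k_{\ef_j}-k_{\ff_j}$ depends only on the congruence class, so the imposed restriction $|h_j|\le\epsilon^{2-\theta}$ is twist-invariant and commutes with the twist sum; (ii) the interior momenta $k_{\nf_{1j}},\dots,k_{\nf_{q_j,j}}$ run freely subject only to the $\varphi_{\le K_{\mathrm{tr}}}$ cutoffs; (iii) the remaining momenta, after identifying $(k_{\nf_{0j}},k_{\pf_{0j}},k_{\ef_j},k_{\ff_j})$ with four momenta of the spliced couple $\Qc_{\mathrm{sb}}$ of Definition~\ref{defsplice}, form exactly a $k$-decoration of $\Qc_{\mathrm{sb}}$; moreover the momentum conservation imposed at $\nf_{0j}$ in $\Qc_{\mathrm{sb}}$ is the telescoped chain constraint, so that $\Omega_{\nf_{0j}}$ computed in $\Qc_{\mathrm{sb}}$ coincides with the function $\Omega_*$ of \eqref{defOmega'} (indeed $\alpha_*^0=\sum_j\alpha_j^0$, and similarly for the $\Gamma_1$-parts). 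Correspondingly $\Dc_{\Qc'}$ factors: the interior chain times are nested between $t_{\nf_{0j}}$ above and $\max(t_{\ef_j},t_{\ff_j})$ below, so integrating them out turns $\Dc_{\Qc'}$ into $\Ec_{\mathrm{sb}}$. Since twisting one chain only permutes $\ef_j\leftrightarrow\ff_j$ (together with their subtrees) and touches no other chain or non-chain node, the sum over all $\Qc'\equiv\Qc$ decouples into a product over $j$ of the interior twist sums $\sum_{\Hc_j'\equiv\Hc_j}\Kc_{\Hc_j'}$, times the part of $\Kc_\Qc$ attached to the nodes of $\Qc_{\mathrm{sb}}$.

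\emph{Applying the cancellation estimate.} For each $j$, Proposition~\ref{prop.cancellation} in the form \eqref{twistformula} identifies $\sum_{\Hc_j'\equiv\Hc_j}\Kc_{\Hc_j'}$ with a function $\Gc_j$ of the four boundary momenta and the boundary times $(t_{\nf_{0j}},t_{\ef_j},t_{\ff_j})$, whose detuned version $\Gc_j\,e^{-iT_1\Omega_{*j}}$ satisfies, by \eqref{irrechainbound},
\[
\big\|\mathcal{F}\big(\Gc_j\,e^{-iT_1\Omega_{*j}}\big)\big\|_{L^1_\xi L^\infty_k}\le \epsilon^{2q_j/3}(R\epsilon^{-2}T_1^{-1})^{q_j},
\]
uniformly over unit balls containing the boundary momenta, and is supported where $|h_j|\le\epsilon^{2-\theta}$. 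In passing, the interior leaf weights $\psi(\ell_j),\psi(k_j)$ have been swallowed into $\Gc_j$ and the surviving boundary weight $\psi(k_{\pf_{0j}})=\psi(k_{\nf_{0j}}-h_j)$ appears with shifted argument, which (after bounding $\psi(\cdot-h_j)$ by a slightly fatter rapidly decaying function) accounts for property~(3), i.e.\ the replacement $\psi\rightsquigarrow\widetilde\psi$. Peeling off the factor $e^{iT_1\Omega_{*j}}=e^{iT_1\Omega_{\nf_{0j}}}$ and absorbing it into $\prod_{\mf\in\Ic_{\mathrm{sb}}}e^{iT_1\Omega_\mf(t_\mf)}$, and collecting the cancellation gains $\prod_j\Gc_j\,e^{-iT_1\Omega_{*j}}$ together with the symbols $\widetilde n,\widetilde a$ of the nodes of $\Qc_{\mathrm{sb}}$ not produced by a splicing, defines the integrand $\Gc$ in \eqref{reducekq}.

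\emph{Bookkeeping of weights and verification of (1)--(2).} Splicing a chain of length $q_j$ deletes $q_j$ I-branching nodes and the $2q_j$ leaves $\mf_0,\dots,\mf_{q_j-1},\pf_1,\dots,\pf_{q_j}$, hence $n_I(\Qc)=n_I(\Qc_{\mathrm{sb}})+q_{\mathrm{tot}}$ and $r(\Qc)=r(\Qc_{\mathrm{sb}})+2q_{\mathrm{tot}}$, so
\[
(\epsilon R^{-1/2})^{r(\Qc)-2}(T_1)^{n_I(\Qc)}=(\epsilon R^{-1/2})^{r(\Qc_{\mathrm{sb}})-2}(T_1)^{n_I(\Qc_{\mathrm{sb}})}\cdot(\epsilon^2R^{-1}T_1)^{q_{\mathrm{tot}}}.
\]
Matching this against the target prefactor $\epsilon^{2q_{\mathrm{tot}}/3}(\epsilon R^{-1/2})^{r(\Qc_{\mathrm{sb}})-2}(T_1)^{n_I(\Qc_{\mathrm{sb}})}$ leaves a residual scalar $(\epsilon^{4/3}R^{-1}T_1)^{q_{\mathrm{tot}}}$ to be carried by $\Gc$; since the $\Gc_j$ involve disjoint time and momentum variables, the $L^1_\xi L^\infty_k$ norm of the time-Fourier transform of the product factorizes and
\[
\|\widehat{\Gc}\|_{L^1_\xi L^\infty_k}\lesssim (\epsilon^{4/3}R^{-1}T_1)^{q_{\mathrm{tot}}}\prod_{j}\epsilon^{2q_j/3}(R\epsilon^{-2}T_1^{-1})^{q_j}=\big(\epsilon^{4/3}R^{-1}T_1\cdot\epsilon^{-4/3}RT_1^{-1}\big)^{q_{\mathrm{tot}}}=1,
\]
where one uses $T_1=\epsilon^{-8/3+\theta_0}\le\epsilon^{-8/3}$; the symbols of the non-chain nodes contribute only bounded amounts in view of the cutoffs $\varphi_{\le K_{\mathrm{tr}}}$ and the bounds of Proposition~\ref{normalprop}, and Lemma~\ref{timeintest} is used to convert the integrands into the $L^1_\xi$ bound. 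This is property~(1); property~(2) is simply the support condition on $|h_j|$ carried along from the first step. (One also notes $\Qc_{\mathrm{sb}}$ is admissible whenever $\Qc$ is, since the opposite-sign pair $\{\ef_j,\ff_j\}$ of children of $\nf_{0j}$ inherits the non-complete-pairing property from admissibility of $\Qc$ at $\nf_{q_j,j}$.)

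\emph{Main obstacle.} The step requiring care is the first one: confirming that the decoration sum and the nested time integral pass cleanly through the four boundary momenta and three boundary time variables of each chain, so that what remains is literally a $k$-decoration of $\Qc_{\mathrm{sb}}$ carrying the correct resonances $\Omega_{\nf_{0j}}=\Omega_{*j}$, and that twisting is compatible with this factorization. This rests entirely on the telescoping identities \eqref{commdiff}--\eqref{commdiff2} and on disjointness of the chains; once these are in place, the remainder is the quantitative content of Proposition~\ref{prop.cancellation} together with the power-counting identity displayed above.
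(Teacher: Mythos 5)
Your proposal follows essentially the same route as the paper: freeze the variables corresponding to $\Qc_{\mathrm{sb}}$, use the telescoping identities \eqref{commdiff}--\eqref{commdiff2} to see that the gap restriction and the boundary resonance are twist-invariant and that $\Omega_{\nf_{0j}}$ in $\Qc_{\mathrm{sb}}$ is exactly $\Omega_*$, factor the congruence sum into the per-chain sums $\sum_{\Hc_j'\equiv\Hc_j}\Kc_{\Hc_j'}$, apply Proposition \ref{prop.cancellation} to each, tensor the $L^1_\xi L^\infty_k$ bounds, and match powers via $r(\Qc)=r(\Qc_{\mathrm{sb}})+2q_{\mathrm{tot}}$, $n_I(\Qc)=n_I(\Qc_{\mathrm{sb}})+q_{\mathrm{tot}}$; the residual-exponent computation showing the prefactors cancel exactly against the right-hand side of \eqref{irrechainbound} is correct (and needs no appeal to $T_1\le\epsilon^{-8/3}$ at this stage).

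One bookkeeping point to fix: you place the symbols $\widetilde n,\widetilde a$ of the surviving nodes into $\Gc$ and assert they are ``bounded in view of the cutoffs $\varphi_{\le K_{\mathrm{tr}}}$''; since $2^{K_{\mathrm{tr}}}\approx R^{1/A}$, the cutoff only bounds each symbol by a power of $R$, which would destroy the uniform bound \eqref{allchains}. The paper instead absorbs all these polynomially growing symbols into the rapidly decaying leaf weights $\psi(k_\lf)$ (using \eqref{symbolbound} and momentum conservation), and this absorption — not merely the shift $\psi(k_{\nf_{0j}}-h_j)$ — is what produces the modified weight $\widetilde\psi$ in property (3), while $\Gc$ carries only the detuned chain factors from \eqref{irrechainbound}. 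With that reallocation (and dropping the unnecessary reference to Lemma \ref{timeintest}, which is only needed later when the time integrals are actually estimated), your argument coincides with the paper's proof.
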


\begin{proof} 
We start from the expression $\sum_{\Qc'\equiv\Qc}\Kc_{\Qc'}$ 
in (\ref{reducekq}), where $\Kc_\Qc$ is as in (\ref{defofkq}) with the restriction $|k_{\nf_{0j}}-k_{\pf_{0j}}|\leq \epsilon^{2-\theta}$ for each $j$ described as above. We first sum and integrate in the variables associated with each irregular chain $\Hc_j$ (i.e. those occurring in the expressions of $\Kc_{\Hc_j}$ as in (\ref{twistformula})), and treat the other variables as fixed; note that these frozen variables are in one-to-one correspondence with the variables $(k_\nf)_{\nf\in \Qc_{\mathrm{sb}}}$ and $(t_\mf)_{\mf\in\Ic_{\mathrm{sb}}}$. Then, for each $\Hc_j$ we obtain the factor which is
\[\sum_{\Hc_j'\equiv \Hc_j}\Kc_{\Hc_j'},\] and by (\ref{irrechainbound}) in Proposition \ref{prop.cancellation}, we can reduce it to $e^{iT_1\Omega_*(t_{\nf_{0j}})}$ multiplied by some function whose time Fourier transform is bounded in the $L_\xi^1L_k^\infty$ norm, The desired expression (\ref{reducekq}) then follows by putting together these factors for each $j$. Here we note that:
\begin{enumerate}
\item All the 
symbols $\widetilde{n}(\cdots)$ and $\widetilde{a}(\cdots)$ can be absorbed by the rapidly decaying factors $\psi(k_\lf)$ 
in view of (\ref{symbolbound}), at the price of replacing $\psi$ with another rapidly decaying function $\widetilde{\psi}$.
\item The $\Gc$ factor in (\ref{reducekq}) is the product of all the factors coming from application of (\ref{irrechainbound}) for each $j$, and the bound (\ref{allchains}) follows from the tensor product of the bounds (\ref{irrechainbound}).
\item The support condition $|k_{\nf_{0j}}-k_{\pf_{0j}}|\leq\epsilon^{2-\theta}$ follows from the extra restrictions defined above.
\item Finally, it is easy to see that
\[r(\Qc)=r(\Qc_{\mathrm{sb}})+2q_{\mathrm{tot}},\quad n_I(\Qc)=n_I(\Qc_{\mathrm{sb}})+q_{\mathrm{tot}},\] so the powers of $\epsilon R^{-1/2}$ and $T_1$ in (\ref{reducekq}) and (\ref{defofkq}) exactly match in view of the right hand side of (\ref{irrechainbound}).
\end{enumerate}
The proof of Proposition \ref{irrereduce} is now complete.
\end{proof}

\section{Molecules and counting algorithm}\label{sec.molecule}
In this section we prove an estimate for the expression \eqref{reducekq}, 
which is a key step in the proof of Proposition \ref{propjr}. 
The first main goal is to reduce this estimate 
to a counting problem associated with combinatorial structures called \emph{molecules},
which are essentially directed graphs with weights and linear equations
associated to their vertices (called atoms) and edges (called bonds).
Molecules have been introduced and used in the context of the cubic NLS in \cite{DeHa1}
and then used in a series of subsequent works.
We define molecules and related concepts in Subsection \ref{ssecmolecule}.
In Subsection \ref{choiceofchain} we first form molecules from couples 
after splicing irregular chains (see Section \ref{sec.irre}),
and then, in Proposition \ref{countingred1},
we give an upper bound for the main quantity that need to be estimated in Proposition \ref{propjr}.
This upper bound depends on combinatorial/graph theoretical quantities associated to the molecule 
(such as its {\it circuit rank}),
and on a {\it counting problem} for approximate resonance equations at each atom.
To efficiently carry out this complex counting problem we use a cutting operation,
described in Subsection \ref{sec.cutting}, and perform a cutting algorithm
which decomposes molecules in relatively simpler objects, keeping track of how the counting
problem changes at each step. 
Eventually we complete the necessary counting estimates in Subsection \ref{sseccountmol}
which is the more technical part of the proof, by `removing' atoms one by one from a molecule until it becomes trivial.
The proof of the main Proposition \ref{propjr} is then completed at the end of the section.


\smallskip
\subsection{Basic definitions and notations}\label{ssecmolecule}
We begin with the general definition of a molecule and related concepts.

\begin{df}[Molecules]\label{defmol} 
A \emph{molecule} $\Mb$ is a directed graph, formed by vertices, called \emph{atoms}, 
and edges, called \emph{bonds}, where multiple and self-connecting bonds are allowed. 
The atoms are classified into two types, the I-atoms and N-atoms; 
denote by $\Mb_I$ and $\Mb_N$ the set of all I-atoms and N-atoms. 
We will write $v\in \Mb$ and $\ell\in\Mb$ for atoms $v$ and bonds $\ell$ in $\Mb$, 
and write $\ell\sim v$ if $v$ is one of the two endpoints of $\ell$. 
For a molecule $\Mb$ we define $V$ (resp. $V_I$ and $V_N$) to be the number of atoms 
(resp. I- and N-atoms), $E$ the number of bonds and $F$ the number of components. 
Define $$\chi:=E-V+F$$ 
to be its \emph{circuit rank}.
For any atom $v \in \Mb$ we define its degree $d(v)$ as the number of bonds $\ell\sim v$.
\end{df}

\begin{df}[Molecules from couples]\label{defcplmol} Given a couple $\Qc$, define the molecule $\Mb=\Mb(\Qc)$ associated with $\Qc$, as follows. The atoms of $\Mb$ are the branching nodes $\mf\in\Bc$ of $\Qc$ (including both I- and N-branching nodes). For any two atoms $\mf_1$ and $\mf_2$, we connect them by a bond if either (i) $\mf_1$ is the parent of $\mf_2$, 
or (ii) a child of $\mf_1$ is paired to a child of $\mf_2$ as leaves. We fix the direction of each bond as follows: in case (i) the bond should go from $\mf_1$ to $\mf_2$ if $\mf_2$ has sign $-$, and from $\mf_2$ to $\mf_1$ if $\mf_2$ has sign $+$; 
in case (ii) the bond should go from the $\mf_j$ whose paired child has sign $-$ to the one whose paired child has sign $+$.
Note that in case (ii) we allow $\mf_1 = \mf_2$ hence a self-connecting bond in $\Mb$.
Finally, we add one bond from the root with sign $+$ to the root with sign $-$; 
if one of the roots is a leaf then this bond should be between the other root 
and the parent node of the root paired with the leaf.
\end{df}

\begin{df}[Decorations of molecules and restricted decorations]\label{defdecmol}
Let $\Mb$ be a molecule, suppose we also fix the vectors 
$c_v\in\Zb_R$ for each $v\in\Mb$. we then define a $(c_v)$-\emph{decoration} (or just a decoration) of $\Mb$ to be a set of vectors $(k_\ell)$ for all bonds $\ell\in\Mb$, such that $k_\ell\in\Zb_R$ and
\begin{equation}\label{decmole1}
\sum_{\ell\sim v}\zeta_{v,\ell}k_\ell=c_v
\end{equation} for each atom $v\in\Mb$. Here the sum is taken over all bonds $\ell\sim v$, and $\zeta_{v,\ell}$ equals $1$ if $\ell$ is outgoing from $v$, and equals $-1$ otherwise. 
For each such decoration and each I-atom $v$, define also the {\it resonance factor}
\begin{equation}\label{defomegadec}
\Gamma_v = \sum_{\ell\sim v}\zeta_{v,\ell}(|k_\ell|^{1/2}+\epsilon^2\Gamma_0(k_\ell)).
\end{equation}
Suppose $\Mb=\Mb(\Qc)$ comes from a couple $\Qc=(\Tc^+,\Tc^-)$ as in Definition \ref{defcplmol}. Let $\rf_\pm$ be the root of $\Tc_\pm$,  we define a $k$-decoration of $\Mb$ to be a $(c_v)$-decoration where $c_v=0$ for all $v$, and $k_{\ell_0}=k$ where $\ell_0$ is the bond connecting two roots (cf. Definition \ref{defmol}).

Given any $k$-decoration of $\Qc$ in the sense of Definition \ref{defdec}, 
define a $k$-decoration of $\Mb(\Qc)$, such that for each bond $\ell$:

\begin{itemize}

\item If $\ell$ corresponds to a leaf pair in $\Qc$ between leaves $\lf$ and $\lf'$, then define $k_\ell:=k_\lf=k_{\lf'}$.

\item If $\ell$ corresponds to a branching node $\nf$ which is a child of another branching node $\mf$,
then define $k_\ell:=k_\nf$;

\item If $\ell$ corresponds to the two roots, the define $k_\ell=k$.
\end{itemize} 
It is easy to verify that this $k_\ell$ is well-defined, and gives a one-to-one correspondence between 
$k$-decorations of $\Qc$ and $k$-decoration of $\Mb(\Qc)$. Moreover for such decorations we have
\begin{equation}\label{molegammav}
\Gamma_v=-\zeta_{\nf(v)}\Omega_{\nf(v)}
\end{equation} 
for each I-atom $v$ (see Remark \ref{remG0vsG} below for the slight abuse of notation that we are making here).

Finally, given $\beta_v\in\Rb$ for each $v\in\Mb_I$ and $k_\ell^0\in\Zb_R$
for each $\ell\in\Mb$, we define a decoration $(k_\ell)$ to be \emph{restricted by} 
$(\beta_v)$ and/or $(k_\ell^0)$, if we have $|\Gamma_v-\beta_v|\leq T_1^{-1}$ 
for each I-atom $v$ and/or $|k_\ell-k_\ell^0|\leq 1$ for each bond $\ell$.
In what follows we will assume, without loss of generality, that $|k^0_\ell| \lesssim 1$
by using the fast decay of the functions $\psi=\psi(k_\lf)$ (see \eqref{databoun} and \eqref{defofkq}).
Similarly, we can also disregard the factors of $\langle k^0 \rangle$ which appear in the counting estimates
\eqref{2vec1}-\eqref{2vec2} and \eqref{3vec}.
\end{df}

\begin{df}[Counting Problem]\label{defC}
For any molecule $\Mb$, define the counting problem (abbreviated CP) denoted by $\mathfrak{C}$, to be the 
supremum in the parameters $\beta_v$ and $k_{\ell}^0$, of the number of restricted decorations 
in the sense of Definition \ref{defdecmol}. 
\end{df}

\begin{rem}[Resonance factors]\label{remG0vsG}
Note how we are only using $\epsilon^2 \Gamma_0$ to modify the frequencies of oscillations in 
\eqref{defomegadec}, instead of the full factor in the definition of $\Gamma$ in \eqref{defGamma}. 
Using only $\epsilon^2 \Gamma_0$ 
is equivalent to using the whole $\Gamma$, since the size of $\Gamma(t) - \epsilon^2 t \Gamma_0$ 
is less than $T_1^{-1}$ (see also Proposition \ref{fixedrenorm}), and therefore it does not impact the oscillation
and counting estimates.
In the rest of the argument in this section, e.g. in \eqref{molegammav},
we are then dropping the distinction between $\Gamma(t)$ and $\epsilon^2 t\Gamma_0$;
cfr. \eqref{molegammav} with \eqref{resfactor} and \eqref{ddefomegan}.
\end{rem}

\begin{rem}[Restriction on decoration after splicing]\label{remextra}
For the molecule $\Mb_{\mathrm{sb}}=\Mb(\Qc_{\mathrm{sb}})$ 
which is the result of splicing any set of irregular chains $\Hc_j$ as in Definition \ref{defsplice}, 
where the couple $\Qc_{\mathrm{sb}}$ is defined as in Definition \ref{defsplice}, 
in the definition of $\Cf$ above we shall further restrict $|k_{\nf_{0j}}-k_{\pf_{0j}}|\leq \epsilon^{2-\theta}$, 
as in Proposition \ref{irrereduce}. 
Here we note that, under the one-to-one correspondence between decorations of $\Qc_{\mathrm{sb}}$ and $\Mb_{\mathrm{sb}}$, 
the condition $|k_{\nf_{0j}}-k_{\pf_{0j}}|\leq \epsilon^{2-\theta}$
is translated to $|k_{\ell_1}-k_{\ell_2}|\leq \epsilon^{2-\theta}$ 
for some two bonds $\ell_1$ and $\ell_2$ of opposite sign at 
the I-atom $v_j\in\Mb_{\mathrm{sb}}$ corresponding to the I-branching node $\nf_{0j}\in\Qc_{\mathrm{sb}}$. 
For more details see Definition \ref{defgapchain}.
\end{rem}


\smallskip
\subsection{Choice of the irregular chains $\Hc_j$}
\label{choiceofchain} To prove the estimate for (\ref{reducekq}), 
in this subsection we will choose a set of disjoint irregular chains $\Hc_j$ in $\Qc$, and splice them as in Definition \ref{defsplice}, to form the new couple $\Qc_{\mathrm{sb}}$. 
Then, using Proposition \ref{irrereduce}, we can reduce (\ref{reducekq}) to an expression of 
the form (\ref{reducekq}) which involves the couple $\Qc_{\mathrm{sb}}$ instead of $\Qc$. 
We start with a lemma concerning chains of double bonds in molecules.

\begin{lem}\label{doublebond} 
Let $\Qc$ be a couple, and $\Mb=\Mb(\Qc)$ be the corresponding molecule as in Definition \ref{defcplmol}. Suppose there exists a chain of double bonds in $\Mb$, 
i.e. a chain of molecules $v_1,\cdots, v_q$ \emph{of degree 4}, 
and each $v_j$ and $v_{j+1}$ are connected by a double bond. Then exactly one of the followings is true:

\begin{itemize}
\item[(i)] The atoms $\nf_j\,(1\leq j\leq q)$ form an irregular chain in the sense of Definition \ref{defcong}; 
or
\item[(ii)] The atoms $\nf_j\,(1\leq j\leq r)$ and the atoms $\nf_j\,(r+1\leq j\leq q)$ 
form two irregular chains, and $\nf_r$ and $\nf_{r+1}$ each has two children nodes that form two leaf pairs.
\end{itemize}
We shall call the chain in case (i) an irregular chain of \emph{type (CL)}
and the chain in case (ii) an irregular chain of \emph{type (CN)}. 
Note that if we remove all the type (CN) chains from $\Mb(\Qc)$, then this molecule still remains connected.
\end{lem}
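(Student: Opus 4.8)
The plan is to exploit the rigidity forced by the degree-$4$ hypothesis. First I would record the local structure of $\Mb=\Mb(\Qc)$ at an atom: for a branching node $v$ of $\Qc$, each child of $v$ contributes exactly one bond at $v$ (a parent--child bond if the child is itself a branching node, a leaf-pairing bond if the child is a leaf, since in a couple every leaf is paired), and there is one further bond at $v$ coming from the parent of $v$, or, when $v$ is a root, from the root bond. (Two leaf-children of $v$ paired with each other give a self-loop, which lowers this count; such atoms cannot be interior to the chain, since an interior atom has all four of its bonds used up by its two neighbours.) Consequently a degree-$4$ atom has exactly three children, hence is an I-branching node or an N-branching node with three children, and all its bonds are accounted for by its parent/root bond and its three children. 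Next I would observe that a double bond between two atoms consists of two bonds of which at most one is a parent--child bond (a node having a unique parent); so every such double bond is of type $(\mathrm{P})$ -- one parent--child bond plus one leaf-pairing bond -- or of type $(\mathrm{L})$ -- two leaf-pairing bonds.

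The key step is a propagation statement along $v_1,\dots,v_q$. For an interior atom $v_j$ ($2\le j\le q-1$) all four bonds go to $v_{j-1}$ and $v_{j+1}$ (two each), so its parent/root bond is one of them; say it points to $v_{j-1}$. Then the bond $v_{j-1}v_j$ is of type $(\mathrm{P})$ with $v_{j-1}$ the parent of $v_j$ (the second bond between them can only be a leaf-pairing, $v_{j-1}$ being $v_j$'s parent and hence not its child), so $v_{j-1}$ has a leaf-child paired with a leaf-child of $v_j$; moreover $v_{j-1}$ is then \emph{not} a child of $v_j$ (being its parent), whence the parent of $v_{j-1}$, if $j-1\ge2$, must be $v_{j-2}$ and the bond $v_{j-2}v_{j-1}$ is again of type $(\mathrm{P})$. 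On the other side, the bond $v_jv_{j+1}$ is of type $(\mathrm{P})$ with $v_j$ the parent of $v_{j+1}$, or of type $(\mathrm{L})$; in the first case $v_j$'s three children are the branching node $v_{j+1}$ and two leaves paired, respectively, with leaf-children of $v_{j-1}$ and of $v_{j+1}$, in the second case all three children of $v_j$ are such leaves.

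Iterating this from the endpoints yields the dichotomy. If no bond of the chain is of type $(\mathrm{L})$, the parent--child relations are consistently oriented, so after relabelling $v_i$ is the parent of $v_{i+1}$ for every $i$, each $v_i$ with $i<q$ is an I-branching node (it has the branching-node child $v_{i+1}$), and each $v_i$ has a leaf-child paired with a leaf-child of $v_{i+1}$: this is exactly the definition of an irregular chain $(\nf_1,\dots,\nf_q)=(v_1,\dots,v_q)$ (Definition~\ref{defirrechain}), i.e.\ type (CL). If some bond $v_rv_{r+1}$ is of type $(\mathrm{L})$, then, both bonds between $v_r$ and $v_{r+1}$ being leaf-pairings, the parent of $v_r$ (if $r\ge2$) is $v_{r-1}$ and the parent of $v_{r+1}$ (if $r+1\le q-1$) is $v_{r+2}$, and propagating as above shows that all remaining bonds are of type $(\mathrm{P})$, oriented with $v_1$ and $v_q$ at the top of their respective sub-chains; thus $(v_1,\dots,v_r)$ and $(v_q,\dots,v_{r+1})$ are two irregular chains and $v_r,v_{r+1}$ each supply the two children that form the two leaf pairs of the $(\mathrm{L})$-bond: this is type (CN), and the $(\mathrm{L})$-bond is the unique bond of its type. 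Since a chain has zero or exactly one bond of type $(\mathrm{L})$, cases (i) and (ii) are mutually exclusive and exhaustive.

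For the last assertion, let $C=\{v_1,\dots,v_q\}$ be a type-(CN) chain. By the structure just obtained, the only atoms of $C$ with bonds to atoms outside $C$ are the two sub-chain tops $v_1$ and $v_q$; moreover $C$ together with its leaf-children is precisely the union of the subtree of $\Tc^+\cup\Tc^-$ rooted at $v_1$ and that rooted at $v_q$ -- two disjoint subtrees, neither containing the other. Deleting $C$ from $\Mb(\Qc)$ therefore amounts, on the level of the trees $\Tc^\pm$, to removing at most two disjoint full subtrees (plus some leaves, which only sever leaf-pairings); removing a full subtree from a tree leaves it connected as long as its root survives, and the two surviving pieces stay joined by the root bond because $v_1$ and $v_q$ are each the root of at most one of $\Tc^\pm$, and if one of them is a root then the other tree keeps its own root -- unless all branching nodes of $\Qc$ lie in $C$, a degenerate case in which the claim is vacuous. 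Doing this for all the disjoint type-(CN) chains at once keeps $\Mb(\Qc)$ connected. The main obstacle is the propagation step of the second paragraph: one has to use the degree-$4$ constraint and uniqueness of parents to pin down, from a single $(\mathrm{L})$-bond, the orientation of every other bond and hence the global shape; the bookkeeping of which children are branching nodes versus leaves, and of a few degenerate configurations (a root inside the chain, self-paired leaves or an N-branching node at a chain end), is where the care is needed.
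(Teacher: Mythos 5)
Your proof is correct in its main thrust, but note that the paper does not actually write out a proof of this lemma: it simply cites the corresponding statement in \cite{DeHa2}, remarking that since every atom in the chain has degree $4$, hence exactly three children, the argument there applies verbatim. Your proposal is therefore a genuinely different, self-contained route: you classify each double bond as either parent--child plus leaf-pairing or two leaf-pairings (using that two atoms can share at most one parent--child bond), and then use uniqueness of parents together with the degree-$4$ constraint (all four bonds of an interior atom go to its two neighbours) to propagate the orientation along the chain, so that at most one two-leaf-pair bond can occur; zero such bonds gives case (i), exactly one gives case (ii), and the final connectivity claim follows because a (CN) chain together with its leaf children is a union of at most two full subtrees, whose removal leaves the parent--child skeleton plus the root bond connected. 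This is almost certainly the same combinatorial mechanism as in \cite{DeHa2}, and making it explicit is a plus; what the citation buys instead is brevity and inheriting that reference's handling of the boundary conventions.

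Two loose ends, which you flag at the end but do not close, deserve mention because they are exactly where your written argument, taken literally, would break. First, the molecule has a third kind of bond, the bond joining the two roots (Definition \ref{defcplmol}), and this bond can be one of the two bonds of a double bond (the two root atoms may also share a leaf pairing); in that configuration your dichotomy ``every double bond is of type (P) or (L)'' fails, and the step ``say the parent/root bond points to $v_{j-1}$; then $v_{j-1}$ is the parent of $v_j$'' is not valid. (In such a configuration the conclusion as literally stated can also fail, so this is really an imprecision of the statement/convention inherited from \cite{DeHa2}; in the applications the chains come from irregular chains, where this situation does not arise.) Second, a degree-$4$ atom may also be an N-branching node with three leaf children; your own propagation shows such an atom can only sit at the bottom of a sub-chain, but then the corresponding terminal node would not be an I-branching node as Definition \ref{defirrechain} requires, so strictly one must either rule this out or read the conclusion accordingly (Proposition \ref{countingred2} only ever feeds double bonds between degree-$4$ I-atoms into the lemma). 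Neither point affects how the lemma is used, and the paper's one-line citation does not address them either, but if you write the proof out you should state the conventions for the root bond and for N-atoms explicitly rather than deferring them to ``bookkeeping''.
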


\begin{proof} 
This is proved in \cite{DeHa2} 
which actually only deals with the case where all branching nodes in $\Qc$ have 3 children, 
but since all the branching nodes $\nf_j$ considered here do have 3 children, the proof is the same.
\end{proof}

\begin{rem}[Structure of chains in molecules]\label{irrechaindouble} 
By Lemma \ref{doublebond} we know that (i) each irregular chain in $\Qc$ corresponds 
to a chain of double bonds in $\Mb=\Mb(\Qc)$; and (ii) each chain of double bond in $\Mb$ corresponds to either one irregular chain or two irregular chains connected by two leaf pairs. 
Moreover, for any irregular chain $\Hc$ 
and the corresponding double bond chain $\Hb$ in $\Mb$, 
if $\Qc_{\mathrm{sb}}$ is the result of splicing $\Hc$ as in Definition \ref{defsplice}, then $\Mb_{\mathrm{sb}}=\Mb(\Qc_{\mathrm{sb}})$ can be formed from $\Mb$ by merging all atoms in $\Hb$ to a single atom.
\end{rem}

\begin{df}\label{defV} Given a couple $\Qc$ and the corresponding molecule $\Mb=\Mb(\Qc)$, define $\Vs=\{\Hc_j\}$ to be the set of all maximal irregular chains (i.e. those that are not proper subsets of any other irregular chain) in $\Qc$. Note that these $\Hc_j$ must be disjoint. Subsequently, define the congruence relation $\equiv$ as in Definition \ref{defcong} associated with these $\Hc_j$.
\end{df}
Using Definition \ref{defV}, we can state the main estimate in this section as follows:

\begin{prop}\label{propcong_2} 
Consider the expression
\begin{equation}\label{defI}
\Ic(t,k) := \sum_{\Qc' \equiv \Qc} \Kc_{\Qc'}(t,k)
\end{equation} 
as in (\ref{reducekq}), 
where the congruence relation is defined using the set of irregular chains $\Vs=\{\Hc_j\}$
in Definition \ref{defV}. Then we have the following estimate
\begin{equation}\label{est_total_I}
R^{-1}\sum_k\langle k\rangle^{10}|\Ic(t,k)|\lesssim \epsilon^{-2}T_1^{-1} \cdot \epsilon^{\theta r/8}
\end{equation} 
uniformly in $t$, where $r=r(\Qc)$ is the rank of $\Qc$ (cf. Definition \ref{deftree}), 
and we assume $\Qc$ is nontrivial.
\end{prop}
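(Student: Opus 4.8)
The plan is to remove the twisting freedom by splicing the maximal irregular chains of $\Qc$, reduce the resulting expression to a purely combinatorial counting problem for an associated molecule, and then bound that counting problem by the cutting algorithm, processing the atoms one by one. Concretely, I would first take $\Vs=\{\Hc_j\}$ to be the set of maximal irregular chains of $\Qc$ (Definition \ref{defV}), with lengths $q_j$ and $q_{\mathrm{tot}}=\sum_j q_j$, and let $\Qc_{\mathrm{sb}}$ be the splicing of $\Qc$ at these chains (Definition \ref{defsplice}). Applying Proposition \ref{irrereduce}, the expression $\Ic(t,k)=\sum_{\Qc'\equiv\Qc}\Kc_{\Qc'}(t,k)$ is rewritten in the form \eqref{reducekq}: a sum over $k$-decorations of $\Qc_{\mathrm{sb}}$, carrying the gain $\epsilon^{2q_{\mathrm{tot}}/3}$, with all the symbols $\widetilde n$, $\widetilde a$ absorbed into rapidly decaying leaf weights $\widetilde\psi$ (via \eqref{symbolbound}), with the amalgamated factor $\Gc$ whose time Fourier transform is bounded in $L^1_\xi\ell^\infty_k$ by \eqref{allchains}, and with the extra gap restriction $|k_{\nf_{0j}}-k_{\pf_{0j}}|\le\epsilon^{2-\theta}$ at each spliced atom. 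By Remark \ref{irrechaindouble} the molecule $\Mb_{\mathrm{sb}}=\Mb(\Qc_{\mathrm{sb}})$ now contains no chain of double bonds, and it is still nontrivial. Since $r(\Qc)=r(\Qc_{\mathrm{sb}})+2q_{\mathrm{tot}}$ and $\epsilon^{2q_{\mathrm{tot}}/3}$ dominates $\epsilon^{\theta\cdot 2q_{\mathrm{tot}}/8}$, it is enough to establish \eqref{est_total_I} with $r$ replaced by $r(\Qc_{\mathrm{sb}})$ for this spliced expression.

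Next, in \eqref{reducekq} the time integral $\int_{\Ec_{\mathrm{sb}}}\Gc\cdot\prod_{\mf\in\Ic_{\mathrm{sb}}}e^{iT_1\Omega_\mf(t_\mf)}\,\mathrm{d}t_\mf$ is handled exactly as in Lemma \ref{timeintest}: integrating by parts from the bottom of the tree upward, and using the $L^1_\xi$ bound on $\widehat\Gc$ in place of the hypotheses on $\Gamma_1$, produces a product of denominators $\prod_{\mf}\langle\beta_\mf^0\rangle^{-1}$ in the quantities $\alpha_\mf^0$ of \eqref{defalpha0}, up to a harmless factor $(\log R)^{O(r(\Qc_{\mathrm{sb}}))}$; summing over the at most $R^{20}$ admissible integer values near each $\alpha_\mf^0$ costs another $(\log R)^{O(r(\Qc_{\mathrm{sb}}))}$, and via \eqref{molegammav} and Remark \ref{remG0vsG} leaves the counting problem $\Cf(\Mb_{\mathrm{sb}})$ of Definition \ref{defC}, with the additional gap restriction of Remark \ref{remextra}. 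The polynomial weight $\langle k\rangle^{10}$ and the leaf weights are harmless here because $\varphi_{\le K_{\mathrm{tr}}}$ confines every frequency to $|k_\nf|\lesssim R^{1/A}$. This reduction is exactly what Proposition \ref{countingred1} records, and the problem becomes: bound $R^{-1}\sum_k(\epsilon R^{-1/2})^{r(\Qc_{\mathrm{sb}})-2}(T_1)^{n_I(\Qc_{\mathrm{sb}})}\,\epsilon^{2q_{\mathrm{tot}}/3}\,\Cf(\Mb_{\mathrm{sb}})$ by $\epsilon^{-2}T_1^{-1}\epsilon^{\theta r(\Qc)/8}$, up to logarithms.

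It remains to bound $\Cf(\Mb_{\mathrm{sb}})$, and here I would run the cutting algorithm of Subsection \ref{sec.cutting}: starting from $\Mb_{\mathrm{sb}}$, repeatedly select and ``cut'' an atom, at each step using the $2$-vector and $3$-vector counting estimates of Propositions \ref{2vecprop} and \ref{3vecprop}, as refined in Subsection \ref{sseccountmol}, to count the bond variables localized at that atom. A degree-$3$ I-atom with an active resonance constraint yields the crucial $T_1^{-1/2}$ or $T_1^{-1}\log R$ gain; N-atoms, atoms of degree $\ge 5$, and---most delicately---degree-$2$ atoms (which can be created by the splicing or by earlier cuts) require their own sub-cases; and the circuit rank $\chi=E-V+F$ of $\Mb_{\mathrm{sb}}$ counts precisely how many of the atomic resonance relations are genuinely independent and can be cashed in. The gap conditions $|k_{\ell_1}-k_{\ell_2}|\le\epsilon^{2-\theta}$ inherited from the splicing (see Definition \ref{defgapchain}) are used both for a direct $\epsilon$-gain and to keep the counting sharp at the spliced atoms. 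Collecting the per-atom factors together with the prefactors $(\epsilon R^{-1/2})^{r(\Qc_{\mathrm{sb}})-2}(T_1)^{n_I(\Qc_{\mathrm{sb}})}$ and the $R^{-1}\sum_k$ normalization, one checks that the exponents combine so that each branching node contributes a net factor at most a fixed power of $\min(\epsilon^{1/3},R^{-1})$; using $T_1\le\epsilon^{-8/3}$ and $r(\Qc_{\mathrm{sb}})\asymp n_I(\Qc_{\mathrm{sb}})$ (which holds since each I-branching node has at least three children) this gives the bound $\epsilon^{-2}T_1^{-1}\epsilon^{\theta r(\Qc_{\mathrm{sb}})/8}$, which together with the first step is \eqref{est_total_I}; this also completes the proof of Proposition \ref{propjr}.

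I expect the main obstacle to be the molecular counting of this last step. The genuinely hard points are: (i) handling low-degree atoms, especially the degree-$2$ atoms produced by splicing or by earlier cuts, where the naive count is too weak and one must exploit both the resonance structure and the gap conditions; (ii) turning the circuit rank into exactly the right number of usable $T_1^{-1}$-type gains, with no double counting when several atomic resonance relations are dependent; and (iii) carrying the whole estimate out essentially sharply, since the ``broken chain'' configurations of Appendix \ref{SecDiv} diverge for $T_1$ slightly beyond $\epsilon^{-8/3}$, leaving no slack.
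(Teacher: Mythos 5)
Your overall route --- splice the irregular chains via Proposition \ref{irrereduce}, reduce to the restricted-decoration counting problem for $\Mb(\Qc_{\mathrm{sb}})$ as in Proposition \ref{countingred1}, then bound that counting problem by a cutting/atom-removal scheme built on Propositions \ref{2vecprop} and \ref{3vecprop} --- is the same as the paper's. However, there is a genuine gap at the very first step. You apply Proposition \ref{irrereduce} to \emph{all} maximal irregular chains and assert that the resulting expression \eqref{reducekq} carries the gap restriction $|k_{\nf_{0j}}-k_{\pf_{0j}}|\le\epsilon^{2-\theta}$ at every spliced atom. But Proposition \ref{irrereduce} is stated, and is only usable downstream, with that small-gap restriction inserted by hand into $\Kc_{\Qc'}$. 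The paper therefore first inserts a partition of unity fixing each chain of $\Vs$ to be SG or LG (the gap being twist-invariant), redefines the congruence relative to the SG subset $\Vs_0$, and splices \emph{only} the SG chains. The LG contributions, which your proposal never addresses, are not spliced at all: for them the twisting cancellation is not invoked, the chains survive as chains of double bonds in the molecule (Lemma \ref{doublebond}), and the gain comes instead from the large gap in the $2$-vector counting together with the tame/LG dichotomy that drives Steps 2--3 of the cutting algorithm (Definition \ref{defgapchain}, Propositions \ref{countingred2}, \ref{prop_m0}, \ref{comp_est_0}). Splicing LG chains instead would break the structural facts you later rely on: for instance, the proof of Proposition \ref{countingred2} (1c), that SG double bonds created by merging have tame endpoints, uses precisely the SG restriction at the spliced atoms, and the Step 1 count $R^2\lambda$ with $\lambda\le\epsilon^{2-\theta}$ is unavailable. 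So, as written, the large-gap part of $\Ic(t,k)$ is simply unaccounted for, and this is a structural branch of the proof, not a technicality.

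Two smaller inaccuracies in the same direction: after splicing, $\Mb_{\mathrm{sb}}$ does still contain double bonds --- type (CN) configurations (two irregular chains joined by two leaf pairs) leave a double bond between the two spliced atoms --- and these are exactly the bonds collected in $\Yc$ and removed in Step 1, with connectivity guaranteed by Proposition \ref{countingred2} (1d); your claim that no chains of double bonds remain, and an atom-by-atom removal without this preliminary step and without the $\alpha$/$\beta$-cut bookkeeping of Definition \ref{defcut} and Proposition \ref{prop_cutincr}, would have to be repaired. Also, I-atoms here have degree at least $4$ (an I-branching node has at least three children), so the ``degree-$3$ I-atom'' you credit with the $T_1^{-1/2}$ or $T_1^{-1}\log R$ gain should be a degree-$4$ I-atom (or a degree-$3$ N-atom, which only yields the weaker $T_1^{-1/2}$ factor).
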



\begin{df}[Gaps and tame atoms]\label{defgapchain} 
Given any irregular chain $\Hc\subset\Qc$ and \emph{any decoration} $(k_\nf)$ of $\Qc$, define the \emph{gap} of $\Hc$ to be $r:=|k_{\nf_0}-k_{\pf_0}|$ in the notations of Definition \ref{defirrechain}. We say $\Hc$ is \emph{small gap} (SG) or \emph{large gap} (LG) with respect to this decoration, if $r\leq \epsilon^{2-\theta}$ or $r>\epsilon^{2-\theta}$. Note also that $r=|k_{\ell_1}-k_{\ell_2}|$ where $\ell_1$ and $\ell_2$ are the two bonds at one endpoint of $\Hb$ other than the double bond (and $\Hb$ is the double bond chain in $\Mb$ corresponding to $\Hc$), cf. Remark \ref{remextra}.

More generally, let $\Mb$ be a molecule, $v$ be an atom and $(\ell_1,\ell_2)$ 
be two bonds at $v$ with opposite directions. Given a decoration $(k_\ell)$ of $\Mb$, 
we define the \emph{gap} of the atom $v$ at the two bonds $(\ell_1,\ell_2)$ relative to this decoration, to be
\[\rho:=|k_{\ell_1}-k_{\ell_2}|.\] 
We say the gap is \emph{large} (large gap, or LG) 
if $|\rho|\geq \epsilon^{2-\theta}$, and is \emph{small} (small gap, or SG) if $|\rho|<\epsilon^{2-\theta}$. 
If $v$ is a degree 4 I-atom, we say $v$ is \emph{tame} if $v$ is SG \emph{in two different ways}; 
that is, if the gap at $v$ at the bonds $(\ell_1,\ell_2)$ is small, then there exists $\ell_3 \sim v$
such that the gap at the bonds $(\ell_1,\ell_3)$ or at the bonds $(\ell_2,\ell_3)$ is also small.
\end{df}

\medskip
Using Definition \ref{defgapchain}, we can further reduce the quantity $\Ic$ defined in Proposition \ref{propcong_2} as follows. First, the definition of $\Ic$ in (\ref{defI}) involves summation over all decorations $(k_\nf)$ of couples $\Qc'$ where $\Qc'\equiv\Qc$; by inserting a partition of unity, for each irregular chain $\Hc_j\in\Vs$ we may fix it to be either small gap (SG) or large gap (LG) in the sense of Definition \ref{defgapchain}. Let $\Vs_0\subset\Vs$ be the set of irregular chains that are SG, then the sum (\ref{defI}) over all congruent couples can be divided into finitely many sub-sums, such that each sub-sum is taken over all couples that are congruent \emph{with respect to the irregular chains in $\Vs_0$}. We shall keep this new notation for congruence for the rest of the proof.

Then, using Proposition \ref{irrereduce}, we can reduce the quantity $\Ic$
in (\ref{defI}) to a finite sum of terms that have the same form as the right hand side of (\ref{reducekq}).
The next proposition studies such quantities and reduce their estimate 
to the counting problem for restricted decorations of the molecule $\Mb_{\mathrm{sb}}:=\Mb(\Qc_{\mathrm{sb}})$,
as defined in Definitions \ref{defdecmol} and \ref{defC}

\begin{prop}\label{countingred1} 
The quantity $\Ic$ defined in \eqref{defI} satisfies the bound
\begin{equation}\label{molbound}
R^{-1}\sum_k\langle k\rangle^{10}|\Ic(t,k)|\leq \epsilon^{-2}T_1^{-1}\cdot\epsilon^{2q_{\mathrm{tot}}/3}\cdot (R\epsilon^{-2}T_1^{-1})^{-\chi}\cdot T_1^{-(E-2V+V_N)}\cdot (\log R)^{C}\cdot \Cf,
\end{equation} 
where the relevant quantities $E,V,\chi$ etc. are as in Definitions \ref{decmole1}-\ref{defcplmol} 
and are all associated with the molecule $\Mb_{\mathrm{sb}}$, 
and $\Cf$ is (the supremum of) the number of restricted decorations for $\Mb_{\mathrm{sb}}$ 
as in Definition \ref{defdecmol}, with the extra restrictions as in Remark \ref{remextra}.
\end{prop}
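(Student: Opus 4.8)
The plan is to start from the reduced expression for $\Ic$ coming from Proposition \ref{irrereduce}, i.e. a finite sum of terms of the shape \eqref{reducekq} associated with the spliced couple $\Qc_{\mathrm{sb}}$, and to pass to the molecule $\Mb_{\mathrm{sb}}=\Mb(\Qc_{\mathrm{sb}})$ via the one-to-one correspondence between $k$-decorations of $\Qc_{\mathrm{sb}}$ and $k$-decorations of $\Mb_{\mathrm{sb}}$ (Definition \ref{defdecmol}). First I would bound the time integral $\int_{\Ec_{\mathrm{sb}}}\Gc\cdot\prod_{\mf\in\Ic_{\mathrm{sb}}}e^{iT_1\Omega_\mf(t_\mf)}\,\mathrm{d}t_\mf$ using the $L^1$ estimate of Lemma \ref{timeintest} together with the $L^1_\xi L^\infty_k$ control \eqref{allchains} on $\widehat{\Gc}$: since $\Gc$ contributes a tensorized bounded Fourier weight, convolving it into the oscillatory bound only changes the frequencies $\alpha_\mf^0$ by bounded shifts and costs at most $(\log R)^C$, so the time integral is bounded by $\Ac((\alpha_\mf^0)_\mf)$ as in \eqref{timeintest1}–\eqref{timeintest2}, after localizing each $\alpha_\mf^0$ to a unit interval around an integer $q_\mf^0\in\Sc_\mf$. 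This reduces the estimate of $R^{-1}\sum_k\langle k\rangle^{10}|\Ic(t,k)|$ to: (a) counting $k$-decorations of $\Mb_{\mathrm{sb}}$ whose resonance factors $\Gamma_v=-\zeta_{\nf(v)}\Omega_{\nf(v)}$ are restricted to $|\Gamma_v-\beta_v|\le T_1^{-1}$ (this is exactly $\Cf$, by \eqref{molegammav} and Remark \ref{remG0vsG}); and (b) accounting for the explicit prefactors.

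The bookkeeping of prefactors is where the exponents $\chi$, $E-2V+V_N$, and $q_{\mathrm{tot}}$ enter. From \eqref{reducekq} the overall scalar prefactor is $\epsilon^{2q_{\mathrm{tot}}/3}(\epsilon R^{-1/2})^{r(\Qc_{\mathrm{sb}})-2}T_1^{n_I(\Qc_{\mathrm{sb}})}$, and the $\ell^1$ over the root frequency $k$ together with the rapidly decaying factors $\widetilde\psi(k_\lf)$ localizes all leaf frequencies to unit balls (allowing us, per Definition \ref{defdecmol}, to drop the $\langle k^0\rangle$ factors). I would then translate tree data into molecule data using the standard identities relating a couple to its molecule: the number of bonds $E$, atoms $V=V_I+V_N$, components $F$, and the relations between $r(\Qc_{\mathrm{sb}})$, $n_I(\Qc_{\mathrm{sb}})$ and $(E,V_I,V_N)$ (each I-branching node has $\ge 3$ children, each N-branching node $2$ or $3$, leaves are paired, and $2E = \sum_v d(v) = $ total number of child-slots plus pairings). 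Combining these gives $(\epsilon R^{-1/2})^{r-2}T_1^{n_I}$ rewritten as $\epsilon^{-2}T_1^{-1}\cdot(R\epsilon^{-2}T_1^{-1})^{-\chi}\cdot T_1^{-(E-2V+V_N)}$ up to the $\epsilon^{2q_{\mathrm{tot}}/3}$ factor, where $\chi=E-V+F$ is the circuit rank; the point is that the "naive" per-atom counting gain of $R$ (free choices of frequency) minus one resonance constraint (gain $T_1^{-1}$) per I-atom, run over a spanning structure, exactly produces the power of $R\epsilon^{-2}T_1^{-1}$ indexed by the circuit rank, and the leftover $T_1$-powers are collected into the exponent $-(E-2V+V_N)$. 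Finally the uniform-in-$t$ claim is immediate since all the above bounds were uniform in $t\in[0,1]$ and in the parameters $\beta_v$, $k_\ell^0$, which is precisely what the supremum defining $\Cf$ absorbs.

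The main obstacle I expect is the precise prefactor accounting: matching $(\epsilon R^{-1/2})^{r(\Qc_{\mathrm{sb}})-2}T_1^{n_I(\Qc_{\mathrm{sb}})}$ against $\epsilon^{-2}T_1^{-1}(R\epsilon^{-2}T_1^{-1})^{-\chi}T_1^{-(E-2V+V_N)}$ requires carefully chasing the combinatorial identities linking $(r,n_I)$ of the spliced couple to $(E,V,V_I,V_N,F)$ of $\Mb_{\mathrm{sb}}$ — in particular keeping track of self-connecting bonds, the extra root-to-root bond, and the fact that N-atoms carry no resonance factor (hence the $V_N$ correction and the appearance of $E-2V+V_N$ rather than $E-2V$). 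A secondary technical point is ensuring that after convolving with $\widehat\Gc$ the shifted frequencies $\alpha_\mf^0\pm(\text{bounded})$ still lie in sets of size $\le R^{20}$ so that Lemma \ref{timeintest} applies verbatim; this follows from \eqref{allchains} and the support properties in Proposition \ref{irrereduce} but needs to be stated. Once these are in place, the inequality \eqref{molbound} follows by collecting the $(\log R)^C$ losses from Lemma \ref{timeintest}, the convolution with $\Gc$, and the sum over the localizing integers $(q_\mf^0)$.
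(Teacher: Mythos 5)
Your route is the same as the paper's: expand $\Gc$ via \eqref{allchains} into frequency-modulated pieces with summable coefficients, apply Lemma \ref{timeintest} (uniformly in the modulations) at a $(\log R)^C$ cost, use the decay of $\widetilde\psi$ to absorb $\langle k\rangle^{10}$ and localize to unit balls, invoke the decoration correspondence of Definition \ref{defdecmol} to bound the sum by $\Cf$, and then match prefactors. The issue is that the prefactor matching — which you yourself flag as the main obstacle — is exactly the nontrivial content of the proposition, and your sketch does not actually close it. The required identity is
\begin{equation*}
(\epsilon R^{-1/2})^{r(\Qc_{\mathrm{sb}})-2}\,T_1^{\,n_I(\Qc_{\mathrm{sb}})}
= R\epsilon^{-2}T_1^{-1}\cdot (R\epsilon^{-2}T_1^{-1})^{-\chi}\cdot T_1^{-(E-2V+V_N)},
\end{equation*}
and it is proved by two elementary facts: $n_I(\Qc_{\mathrm{sb}})=V_I=V-V_N$ versus $\chi-1-E+2V-V_N=F-1+V-V_N$ for the $T_1$ powers, and $r(\Qc_{\mathrm{sb}})-2=\sum_{\nf\in\Bc}(r(\nf)-1)=\sum_v(d(v)-2)=2E-2V=2\chi-2F$ for the $R,\epsilon$ powers. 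Both matchings only work when $F=1$.

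That connectedness is the genuinely missing idea in your proposal. It is not automatic that $\Mb_{\mathrm{sb}}$ is connected: if it were disconnected, each of the two trees forming $\Qc_{\mathrm{sb}}$ would have all its leaves paired internally, forcing $k_\rf=k=0$; the paper then discards these configurations because the symbols $\widetilde n_{\zeta_{\mf_1}\cdots\zeta_{\mf_r}}$ and $\widetilde a_{\zeta_{\mf_1}\cdots\zeta_{\mf_r}}$ (absorbed into the $\Gc$ factor) vanish at zero output frequency, as stated in Proposition \ref{normalprop}. Without this vanishing argument, your bookkeeping with $\chi=E-V+F$ would simply not reproduce \eqref{molbound}, since the exponents shift by $F-1$ and $2(F-1)$ respectively. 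Your heuristic explanation ("one $R$ gain per atom minus one resonance constraint per I-atom over a spanning structure") describes the counting problem $\Cf$, not the purely algebraic identity above, so it does not substitute for the verification. A minor further inaccuracy: the modulations $\lambda_\nf$ coming from $\widehat\Gc$ are not bounded shifts; what saves you (and what the paper uses) is that the bounds \eqref{timeintest1}--\eqref{timeintest2} are uniform in these shifts, so no additional restriction on the sets $\Sc_\mf$ is needed.
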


\begin{proof}
As discussed above, to estimate $\Ic$ we only need to estimate each term that has the form of the right hand side of (\ref{reducekq}). First, using the bound (\ref{allchains}) for $\Gc$, we can expand $\Gc$ in (\ref{reducekq}) as a linear combination, with summable coefficients, of functions of form
\[\Gc_1(k_{\Qc_{\mathrm{sb}}})\cdot \prod_{\nf\in\Ic_{\mathrm{sb}}}e^{i\lambda_\nf\cdot t_\nf},\] where $\lambda_\nf\in\Rb$ and $\Gc_1$ is a uniformly bounded function of the $(k_\nf)_{\nf\in\Qc_{\mathrm{sb}}}$ variables. Then we integrate in the time variables $(t_\nf)_{\nf\in \Ic_{\mathrm{sb}}}$ using the bounds (\ref{timeintest1})--(\ref{timeintest2}); note that these bounds do not change with the frequency shifts $e^{i\lambda_\nf t_\nf}$ and are uniform in $(\lambda_\nf)$, which is easily seen by examining the proof of Proposition \ref{timeintest}.

Now, we may replace the integral in the $(t_\nf)_{\nf\in \Ic_{\mathrm{sb}}}$ 
variables by $\Ac((\alpha_\nf^0)_\nf)$ in \eqref{timeintest1}; 
using \eqref{timeintest2}, with a loss of at most $(\log R)^C$,
we can restrict the value of each $(\alpha_\nf^0)_\nf$ to a unit interval, which means that each 
$\Omega_\nf$ is restricted to a fixed interval of length $T_1^{-1}$. 
Moreover, using the decay of $\widetilde{\psi}(k_\lf)$ in $k_\lf$ 
in (\ref{reducekq}), we may absorb the weight $\langle k\rangle^{10}$ 
and restrict the value of $k_\lf$ for each leaf $\lf$ to a unit ball; 
subsequently we may also restrict $k_\nf$ to a unit ball for each node $\nf\in\Qc$.

Then, by Definition \ref{defdecmol}, we then know that the decoration 
$(k_\nf)_{\nf\in\Qc_{\mathrm{sb}}}$ is in one-to-one correspondence with a decoration $(k_\ell)$ of $\Mb_{\mathrm{sb}}$ which is a \emph{restricted} decoration in the sense of Definition \ref{defdecmol}. Therefore the summation of all the decorations $(k_\nf)$ and integration in all the time variables $(t_\mf)$ in (\ref{reducekq}) contributes at most $\Cf$ as in Definition \ref{defdecmol}. Here note that summing over all $k$-decorations and the summing over all $k$ is equal to summing over all decorations.

Finally we calculate the powers of $R$, $\epsilon$ and $T_1$ in \eqref{reducekq} 
and match them with the powers in \eqref{molbound};
in view of the definition \eqref{defI} it suffices to show that
\begin{equation}\label{matchpower}
(\epsilon R^{-1/2})^{r(\Qc_{\mathrm{sb}})-2}(T_1)^{n_I(\Qc_{\mathrm{sb}})} = R\epsilon^{-2}T_1^{-1}
  \cdot (R\epsilon^{-2}T_1^{-1})^{-\chi}\cdot T_1^{-(E-2V+V_N)}.
\end{equation} 
To prove (\ref{matchpower}), first note that the molecule $\Mb_{\mathrm{sb}}$ is connected (i.e. $F=1$), 
since otherwise each of the two trees forming $\Qc_{\mathrm{sb}}$ must have all its leaves paired by themselves, 
which implies that $k_\rf=k=0$ for the root $\rf$ of each tree; 
but this implies that the contribution of this term to the right hand side of 
(\ref{reducekq}) must vanish due to the symbols $\widetilde{a}_{\zeta_{\mf_1}\cdots\zeta_{\mf_r}}$ and 
$\widetilde{n}_{\zeta_{\mf_1}\cdots\zeta_{\mf_r}}$ in (\ref{defofkq}) (which enter into the $\Gc$ factor in (\ref{reducekq})) 
and the fact that they vanish for $k_\rf=k=0$ as stated in Proposition \ref{normalprop}. 
We then notice that the powers of $T_1$ match in \eqref{matchpower} since
the power on the left hand-side is $n_I(\Qc_{\mathrm{sb}}) = V_I = V-V_N$ 
and the one on the right-hand side is $\chi-1-E+2V-V_N = F-1 +V-V_N$.

To match the powers of $R$ and $\epsilon$, it suffices to show that 
\begin{equation}\label{identity_1}
r(\Qc_{\mathrm{sb}})-2 = 2\chi-2. 
\end{equation} 
To verify \eqref{identity_1} observe that, by induction, we have
\[r(\Qc_{\mathrm{sb}})-2 = \sum_{\nf\in\Bc}(r(\nf)-1)\]
where $r(\nf)$ is the number of children nodes of the branching node $\nf$;
then, note that $r(\nf)=d(v)-1$ where $v=v(\nf)$ is the atom corresponding to the branching node $\nf$,
and that 
\[ \sum_{\nf\in\Bc}(d(v)-2) = 2E - 2V = 2\chi - 2F.\]
The equality \eqref{matchpower} then follows from these observations, and  this completes the proof of \eqref{molbound}.
\end{proof}

\smallskip
Recall that we denote by $\Mb_{\mathrm{sb}}=\Mb(\Qc_{\mathrm{sb}})$ 
the molecule obtained from the couple $\Qc_{\mathrm{sb}}$, which in turn is obtained from $\Qc$ 
by splicing the irregular chains $\Hc_j$, see Definition \ref{defsplice} and Proposition \ref{irrereduce}. 
In the next proposition, we list the properties of this molecule which will be used in the proof below.

\begin{prop}[Properties of $\Mb_{\mathrm{sb}}$]\label{countingred2} 
Assume that the couple $\Qc$ occurring in the definition (\ref{defI}) of $\Ic$ is admissible
according to Definition \ref{defadm}. 
Then, the molecule $\Mb_{\mathrm{sb}}=\Mb(\Qc_{\mathrm{sb}})$ is connected and satisfies the following properties:

\smallskip
\begin{enumerate}
\item (Double bonds)
For each double bond in $\Mb_{\mathrm{sb}}$ that is SG under the decoration (see Definition \ref{defgapchain}), 
it satisfies one of the following $4$ properties:

\begin{enumerate}

\smallskip
\item One of its endpoints is an N-atom;

\smallskip
\item One of its endpoints is an I-atom \emph{of degree at least 5};

\smallskip
\item At each endpoint $v$ of this double bond, we can find two bonds of 
opposite direction that is also SG, such that \emph{exactly one} of these two bonds belong to the given double bond;
in other words each endpoint $v$ is `tame', according to Definition \ref{defgapchain}.

\smallskip
\item This double bond belongs to a set $\Yc$ of double bonds 
such that the molecule $\Mb_{\mathrm{sb}}$ \emph{remains connected} after removing all double bonds in $\Yc$.
\end{enumerate}

\smallskip
\item (Connectivity property)
For each degree 4 atom $v$ and two bonds $(\ell_1,\ell_2)$ of opposite signs at $v$, 
\emph{any other atom} must be connected to $v$ by a path that \emph{does not include $\ell_1$ or $\ell_2$}.


\end{enumerate}
\end{prop}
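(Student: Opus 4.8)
The plan is to reduce every assertion about $\Mb_{\mathrm{sb}}$ to the structure of the couple $\Qc_{\mathrm{sb}}$ and, through the splicing operation of Definition~\ref{defsplice}, to the admissible couple $\Qc$ we started from. Connectedness is obtained exactly as in the proof of Proposition~\ref{countingred1}: were $\Mb_{\mathrm{sb}}$ disconnected, each of the two trees of $\Qc_{\mathrm{sb}}$ would have all of its leaves paired among themselves, forcing $k_\rf=0$ at both roots and hence a vanishing contribution by the root-vanishing of the symbols in Proposition~\ref{normalprop}; alternatively, by Remark~\ref{irrechaindouble}, splicing a type~(CL) chain merges a connected sub-chain of atoms of $\Mb(\Qc)$ into one atom while a type~(CN) configuration leaves one surviving double bond, and $\Mb(\Qc)$ is connected by the final statement of Lemma~\ref{doublebond}.

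\emph{Property (1).} Fix an SG double bond $b$ with endpoints $v_1,v_2$. If some $v_i$ is an N-atom, alternative~(a) holds; if some $v_i$ is an I-atom of degree $\geq 5$, alternative~(b) holds. Otherwise both $v_1,v_2$ are degree-$4$ I-atoms, so $b$ lies on a maximal chain of degree-$4$ double bonds, which Lemma~\ref{doublebond}, applied now to $\Qc_{\mathrm{sb}}$, realises either as one irregular chain (type~(CL)) or as two irregular chains joined by two leaf pairs (type~(CN)), \emph{inside $\Qc_{\mathrm{sb}}$}. The structural heart of the matter is to show, by unwinding Definition~\ref{defsplice}, that because $\Qc_{\mathrm{sb}}$ was obtained by splicing \emph{all} maximal irregular chains of $\Qc$, every irregular chain of $\Qc_{\mathrm{sb}}$ has length $1$, and every type~(CN) configuration of $\Qc_{\mathrm{sb}}$ is exactly the residue, after splicing, of two maximal irregular chains $\Hc_i,\Hc_j$ of $\Qc$ that had been joined by two leaf pairs. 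Collecting all these residual double bonds into $\Yc$, and invoking the final statement of Lemma~\ref{doublebond} (deletion of all type~(CN) chains leaves the molecule connected, a property stable under splicing), gives alternative~(d). In the remaining case $b$ realises a length-$1$ irregular chain $(\nf_0,\nf_1)$ of $\Qc_{\mathrm{sb}}$: here one uses that $b$ is SG, i.e.\ the gap $|k_{\nf_0}-k_{\pf_0}|\leq\epsilon^{2-\theta}$, together with the decoration relation~\eqref{decmole1} at the degree-$4$ I-atom $v_1$, to produce a second pair of oppositely-directed bonds at $v_1$ with $k$-difference $\leq\epsilon^{2-\theta}$, and symmetrically at $v_2$; this is precisely tameness in the sense of Definition~\ref{defgapchain}, i.e.\ alternative~(c).

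\emph{Property (2).} Let $v$ be a degree-$4$ atom and $(\ell_1,\ell_2)$ two oppositely-directed bonds at it; after their deletion $v$ keeps two bonds. Translating bond directions into node signs via Definition~\ref{defcplmol}, if $v$ is a $3$-child I-branching node then $(\ell_1,\ell_2)$ is either the pair of child bonds of two children $\nf_1,\nf_2$ of $\nf=v$ of opposite sign, or one child bond together with the parent (or root) bond. In the first case admissibility of $\Qc$ --- which passes to $\Qc_{\mathrm{sb}}$ at $\nf$, since no splicing changes the subtrees at $\nf_1,\nf_2$ other than by internal merging --- forbids the leaves of the subtrees at $\nf_1,\nf_2$ from being completely paired among themselves, so some leaf of one of these subtrees pairs to a leaf outside; the resulting bond, together with the two bonds of $v$ not deleted (the third child bond and the parent/root bond), exhibits a path from $v$ to every atom that avoids $\ell_1,\ell_2$. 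In the second case, and likewise when $v$ is an N-atom or a spliced atom $v_j$, one argues analogously, replacing the admissibility input by the observations that a couple's molecule stays connected after deleting any single parent-type bond and that the children's subtrees at a degree-$4$ atom are linked to the rest of the molecule by leaf pairings; both properties are again stable under splicing.

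The main obstacle is the structural step in Property~(1): proving that no irregular chain of length $\geq 2$ survives splicing and that every type~(CN) configuration of $\Qc_{\mathrm{sb}}$ is a residue of two joined maximal chains of $\Qc$, and then that SG implies tameness at a degree-$4$ I-atom via the decoration relations. This is the combinatorial core, and it is complicated --- relative to the corresponding lemmas in \cite{DeHa1,DeHa2} --- precisely by the presence of N-atoms and of I-atoms of degree $\geq 5$, which is why the clean dichotomy there is here replaced by the four alternatives (a)--(d).
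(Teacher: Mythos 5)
Your treatment of Property (1) has genuine gaps. First, the premise of your ``structural heart'' is false: $\Qc_{\mathrm{sb}}$ is \emph{not} obtained by splicing all maximal irregular chains of $\Qc$, but only those in $\Vs_0$, i.e.\ the ones fixed to be SG by the partition of unity (this is exactly why Proposition \ref{irrereduce} carries the restriction $|k_{\nf_{0j}}-k_{\pf_{0j}}|\leq\epsilon^{2-\theta}$, and why LG non-tame degree~4 atoms still appear later in Proposition \ref{prop_m0}). Consequently your claims that every irregular chain of $\Qc_{\mathrm{sb}}$ has length $1$ and that every type (CN) configuration is a residue of two spliced chains are both wrong: LG chains of arbitrary length survive untouched, and (CN) double bonds (two leaf pairs between two branching nodes) can already be present in $\Qc$ with no splicing involved. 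The paper's dichotomy is different: either the SG double bond already existed in $\Mb(\Qc)$ and is preserved --- in which case, since the gap of a CL-type double bond equals the gap of its chain (cf.\ \eqref{commdiff}), it cannot be of CL type (the chain would be SG, hence in $\Vs_0$ and spliced), so it is of CN type and goes into $\Yc$, with connectivity after removal coming from the surviving parent--child and root bonds --- or the double bond is \emph{newly created by merging} a spliced chain, and then case (c) follows from the extra restriction $|k_{\nf_{0j}}-k_{\pf_{0j}}|\leq\epsilon^{2-\theta}$ at the merged atom (Remark \ref{remextra}), which supplies an SG pair containing exactly one bond of the double bond. You never use this restriction, and your substitute argument for tameness fails: at a degree~4 atom, \eqref{decmole1} together with SG of the double bond only yields smallness of the \emph{complementary} pair $(\ell_3,\ell_4)$ (and only when directions cooperate); that pair shares no bond with the double bond, so it gives neither property (c) as stated nor tameness in the sense of Definition \ref{defgapchain}. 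Moreover, the case you try to handle this way (a preserved SG CL double bond) is precisely the case the paper shows cannot occur.

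For Property (2) your outline is in the right spirit but is not a proof: asserting that one exterior leaf pairing ``exhibits a path from $v$ to every atom'' avoiding $\ell_1,\ell_2$ does not follow, and the N-atom/spliced-atom cases are dismissed with unproved ``stability'' claims. The paper argues by contradiction with a cut-set: if some atom were unreachable avoiding $\ell_1,\ell_2$, summing \eqref{decmole1} over the reachable set forces $k_{\ell_1}=k_{\ell_2}$ for \emph{every} decoration; since leaf-pair decorations are free and determine all others, this forces two opposite-sign children of the corresponding $3$-child node to have completely paired subtrees, contradicting admissibility (Definition \ref{defadm}). You should rework (1) around the preserved-versus-merged dichotomy and the merged-atom SG restriction, and replace the path-construction in (2) by the decoration/cut-set argument.
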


\begin{proof} 
{\it Proof of (1)}. Suppose we have an SG double bond $(\ell,\ell')$ between two atoms $v_1$ and $v_2$, 
such that both $v_1$ and $v_2$ are I-atoms of degree 4 (otherwise we are in case (a) or (b)). 
Now, suppose this same double bond $(\ell,\ell')$ that exists in $\Mb_{\mathrm{sb}}$ 
\emph{does not exist} in the original molecule $\Mb$ before merging the double bond chains 
(cf. Remark \ref{irrechaindouble}), then the two bonds $(\ell,\ell')$ 
must belong to two different ends of the double bond chain before merging, 
which then implies that we are in case (c), that is, $\ell$ must be SG with a bond other than $\ell'$
(see Definition \ref{defgapchain}).
Suppose instead that this double bond $(\ell,\ell')$ 
exists in the original molecule $\Mb$ and is preserved under merging the double bond chains. 
Then we shall put this double bond into the set $\Yc$, 
and it suffices to prove that $\Mb_{\mathrm{sb}}$ remains connected after removing the double bonds in $\Yc$. 
The proof is basically the same as in \cite{DeHa2};
note that by Lemma \ref{doublebond} and our choice of $\Vs_0$, any SG double bond in $\Yc$ 
must be coming from two leaf pairs (it is of type CN) as in Lemma \ref{doublebond}.
If we remove all double bonds in $\Yc$,
then the remaining molecule is still connected because all the bonds corresponding 
to parent-child relations are still present, as well as the bond connecting the two roots (cf. Definition \ref{defcplmol}).

\smallskip
{\it Proof of (2)}.
Suppose $(v,\ell_1,\ell_2)$ are as stated, and define $Z$ to be the set of atoms other than $v$ that are connected to $v$ by a path not including $\ell_1$ nor $\ell_2$. If $Z\neq\Mb_{\mathrm{sb}}$, then for \emph{any decoration}, by adding up the equations (\ref{decmole1}) for all atoms in $Z\cup\{v\}$,  we conclude that $k_{\ell_1}=k_{\ell_2}$. Since this equality holds for \emph{any decoration}, which may come from any decoration of $\Qc_{\mathrm{sb}}$, and note also that the branching node $\nf$ corresponding to $v$ has exactly 3 children, we then see that: there exist two children nodes $\nf',\nf''$ of $\nf$ with opposite signs, such that $k_{\nf'}=k_{\nf''}$ for all decorations. As the decorations of leaf pairs are free variables and the decorations of branching nodes are completely determined by them as in Definition \ref{defdec}, we conclude that the leaves in the subtrees at $\nf'$ and $\nf''$ must be completely paired, which contradicts the admissibility assumption, cf. Definition \ref{defadm}.
\end{proof}

\begin{rem}[Trees admissibility and molecule connectivity]\label{remadm1}
We saw in the proof of Proposition \ref{countingred2}  that the connectivity property in (2)
holds in view of the `admissibility' of the trees that are forming our couples (hence the molecules).
This property is important in what follows to avoid unfavorable scenarios in the cutting algorithm
and counting estimates in the rest of the section. 
Importantly, the `admissibility' of the tress is a consequence of our renormalization procedure
which involves the `gauge' $\Gamma_1$ in \eqref{fixedrenorm} (see also \eqref{defGamma}-\eqref{profileeqn}).
\end{rem}

\smallskip
\begin{lem}[Degrees and power of $T_1$]\label{lempowT1}
For each atom $v\in\Mb_{\mathrm{sb}}$ we have $d(v)\geq 4$ if $v$ is I-atom, 
and $d(v)\geq 3$ if $v$ is N-atom, where $d(v)$ is the degree of $v$ in $\Mb_{\mathrm{sb}}$. 
Moreover, for the quantity $T_1^{-(E-2V+V_N)}$ in (\ref{molbound}), we have
\begin{equation}\label{decompose_sum}
T_1^{-(E-2V+V_N)} =  \prod_{v\in \Mb_I} T_1^{-(d(v)-4)_+/2} \prod_{v\in \Mb_N} T_1^{-1/2-(d(v)-3)_+/2}.
\end{equation}  
\end{lem}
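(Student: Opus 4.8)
The plan is to prove the two claims separately, starting with the degree bounds and then deriving the identity \eqref{decompose_sum} by a purely arithmetic manipulation.

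\textbf{Step 1: Degree bounds.} First I would recall from Definition \ref{defcplmol} how bonds at an atom $v$ arise: if $v$ corresponds to a branching node $\nf$ with $r(\nf)$ children, then $\nf$ contributes one bond for its parent relation (or, for a root, the extra root-connecting bond) and one bond for each of its $r(\nf)$ children — each child is either a branching node (giving a parent-child bond) or a leaf (which, in a couple, is paired, giving a leaf-pair bond). Hence $d(v) = r(\nf) + 1$ exactly, counting the bond toward the parent (or the root bond). For an I-branching node we have $r(\nf) \geq 3$ by Definition \ref{deftree}, so $d(v) \geq 4$; for an N-branching node we have $r(\nf) \geq 2$, so $d(v) \geq 3$. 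I should be slightly careful about self-connecting bonds (case (ii) of Definition \ref{defcplmol} with $\mf_1 = \mf_2$) and the merging of double-bond chains from splicing (Remark \ref{irrechaindouble}): merging a chain of degree-$4$ atoms into a single atom preserves the degree-$4$ lower bound since the two ``end'' bonds survive and the merged atom picks up the external bonds at both ends. A self-connecting bond contributes $2$ to the degree of its single endpoint, which only helps. So $d(v) \geq 4$ for I-atoms and $d(v) \geq 3$ for N-atoms in $\Mb_{\mathrm{sb}}$.

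\textbf{Step 2: The exponent identity.} For \eqref{decompose_sum} it suffices to verify the equality of exponents:
\begin{equation*}
E - 2V + V_N = \sum_{v\in \Mb_I} \frac{(d(v)-4)_+}{2} + \sum_{v\in\Mb_N}\Big(\frac{1}{2} + \frac{(d(v)-3)_+}{2}\Big).
\end{equation*}
Using Step 1, for I-atoms $(d(v)-4)_+ = d(v)-4$ and for N-atoms $(d(v)-3)_+ = d(v)-3$. The right-hand side then becomes
\begin{equation*}
\frac{1}{2}\sum_{v\in\Mb_I}(d(v)-4) + \frac{1}{2}\sum_{v\in\Mb_N}(d(v)-3) + \frac{V_N}{2} = \frac{1}{2}\sum_{v\in\Mb}d(v) - 2V_I - \frac{3}{2}V_N + \frac{1}{2}V_N.
\end{equation*}
Since $\sum_{v\in\Mb} d(v) = 2E$ (handshake lemma, with self-loops counted twice, consistent with how $d(v)$ was defined) and $V = V_I + V_N$, this equals $E - 2V_I - 2V_N + \frac{1}{2}V_N \cdot 2 \cdot \tfrac{1}{1}$ — more carefully, $-2V_I - \tfrac32 V_N + \tfrac12 V_N = -2V_I - V_N = -2V + V_N$, giving exactly $E - 2V + V_N$. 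This closes the identity.

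\textbf{Main obstacle.} The arithmetic in Step 2 is entirely routine once the degree bounds are in place; the only genuine point requiring care is Step 1 in the spliced molecule $\Mb_{\mathrm{sb}}$, specifically checking that the merging operation from splicing irregular chains (Definition \ref{defsplice}, Remark \ref{irrechaindouble}) does not create an I-atom of degree $< 4$ or an N-atom of degree $< 3$. The relevant observation is that splicing removes an interior double-bond chain and identifies its two endpoints; the resulting atom retains at least the two bonds that were ``external'' to the chain at each end, plus the bond structure forced by $\nf_0$ having three children (from Definition \ref{defsplice}, the spliced atom $\nf_0$ ends up with $\pf_0$ and the two former children of $\nf_q$ as children, i.e. three children, hence degree $4$). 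Verifying this bookkeeping carefully — and noting that N-branching nodes are never interior to an irregular chain (Definition \ref{defirrechain} requires all chain nodes to be I-branching with exactly $3$ children) so N-atom degrees are inherited directly from $\Mb(\Qc)$ — is the one place where one must be attentive rather than mechanical.
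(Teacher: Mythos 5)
Your proof is correct and follows essentially the same route as the paper: the degree bounds come from the observation that $d(v)$ equals the number of children of the corresponding branching node in $\Qc_{\mathrm{sb}}$ plus one (I-branching nodes have at least $3$ children, N-branching nodes at least $2$, and the spliced node $\nf_0$ retains exactly $3$ children), after which \eqref{decompose_sum} follows by dropping the positive parts and using the handshake identity $\sum_v d(v)=2E$. The only blemish is the garbled intermediate expression in your Step 2, which you immediately correct to $-2V_I-\tfrac32 V_N+\tfrac12 V_N=-2V+V_N$, so the computation stands.
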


\begin{proof}
The inequality $d(v)\geq 4$ for I-atoms and $d(v)\geq 3$ for N-atoms follow directly because if $v$ 
corresponds to the branching node $\nf$ in $\Mb_{\mathrm{sb}}$, 
then $d(v)$ is equal to the number of children nodes of $\nf$ plus $1$.
In the equality \eqref{decompose_sum} we may then remove the positive part symbol; 
\eqref{decompose_sum} 
then follows from the fact that $\sum_v d(v)=2E$ and, therefore,
$E-2V+V_N = E - 2V_I - V_N = (1/2)\sum_v d(v) - 2\sum_{v \in \Mb_I}1 - \sum_{v \in \Mb_N}1$.
\end{proof}

\smallskip
\begin{df}[The main Counting Quantity]\label{defAA}
For the rest of this section we define the main Counting Quantity (abbreviated CQ) associated to a molecule 
$\Mb$ contained in $\Mb_{\mathrm{sb}}$ as
\begin{equation}\label{defA}
\Af := \Cf \cdot( R\epsilon^{-2}T_1^{-1})^{-\chi}\cdot \prod_{v\in \Mb_I}
  T_1^{-(d_0(v)-4)_+/2} \prod_{v\in \Mb_N}T_1^{-1/2-(d_0(v)-3)_+/2},
\end{equation}
where $d_0(v)$ is the (initial) degree of the atom $v$ in $\Mb_{\mathrm{sb}}$; 
in other words, we understand that even if the degree of $v$ changes in subsequent operations 
(e.g. cutting of some bonds or removal of some atoms, see later sections) 
the value of $d_0(v)$ will stay the same. $\Cf$ is the counting problem from Definition \ref{defC}
associated to $\Mb$.
\end{df}

With the above definition, in view of Lemma \ref{lempowT1} and \eqref{molbound}, we have (recall \eqref{defI})
\begin{equation}\label{molbound'}
R^{-1} \sum_k\langle k\rangle^{10} |\Ic(t,k)| \leq \epsilon^{-2}T_1^{-1} \cdot \epsilon^{2q_{\mathrm{tot}}/3}
   (\log R)^{C} \cdot \Af_{sb};
\end{equation} 
consistently with Proposition \ref{countingred1}, here $\Af_{\mathrm{sb}}$ 
is the CQ associated to the molecule $\Mb_{\mathrm{sb}}$.

\begin{rem}[About the symbols]\label{remsymbols}
Notice that in \eqref{molbound'} we are disregarding the sizes of the symbols of the interactions associated
to each node, that is, the $\widetilde{n}$ in the formula \eqref{defofkq}, and the $\mathcal{G}$ in \eqref{reducekq}.
This is natural in view of the restrictions on decorations in Definition \ref{defdecmol}, and of \eqref{allchains}.
At the same time, when the size of the frequencies is smaller than $1$, the symbols $\widetilde{n}$ can 
potentially help the estimates in view of \eqref{symbolbound}.
We are in fact going to use this property for the cubic symbol $n_{++-}$ in the proof of Proposition \ref{prop_cutincr} below.
However, we are not going to carry explicitly the dependence on this symbol throughout the proof, as 
it does not play a role elsewhere.
\end{rem}

\subsection{The cutting operation and cutting algorithm}\label{sec.cutting} 
We now define the operation of \emph{cutting}.

\begin{df}[$\alpha$- and $\beta$-cuts]\label{defcut} 
Let $\Mb$ be a molecule $v$ be an atom of degree 4. Note that $v$ must have two incoming bonds and two outgoing bonds.
Let $(\ell_1,\ell_2)$ be two bonds at $v$ and $(\ell_3,\ell_4)$ be the other two bonds.

We define the \emph{cut} at the atom $v$, relative to $(\ell_1,\ell_2)$, to be the following operation: 
we split $v$ into two atoms $v_1$ and $v_2$. Then we connect $v_1$ to the two atoms that are end points of $\ell_1$ and $\ell_2$ other than $v$, and connect $v_2$ to the two atoms that are end points of $\ell_3$ and $\ell_4$ other than $v$.
The rest of the molecule remains unchanged.
In particular in the resulting molecule, both $v_1$ and $v_2$ have degree $2$. See Figure \ref{fig:cut} for an illustration.

We define this cut to be an \emph{$\alpha$-cut}, and call the resulting atoms \emph{$\alpha$-atoms}, 
if this cutting operation does not create a new connected component. 
In this case we \emph{further require that $(\ell_1,\ell_2)$ (and thus $(\ell_3,\ell_4)$) 
are in opposite directions and $v$ is SG with respect to them.} 

If this cutting creates a new connected component, then we define it to be \emph{$\beta$-cut}, 
and call the resulting atoms \emph{$\beta$-atoms}. 
In this latter case we do not make any restrictions on the directions $\ell_j$ and the SG/LG of the atom $v$.
\end{df}

  \begin{figure}[h!]
  \includegraphics[scale=0.55]{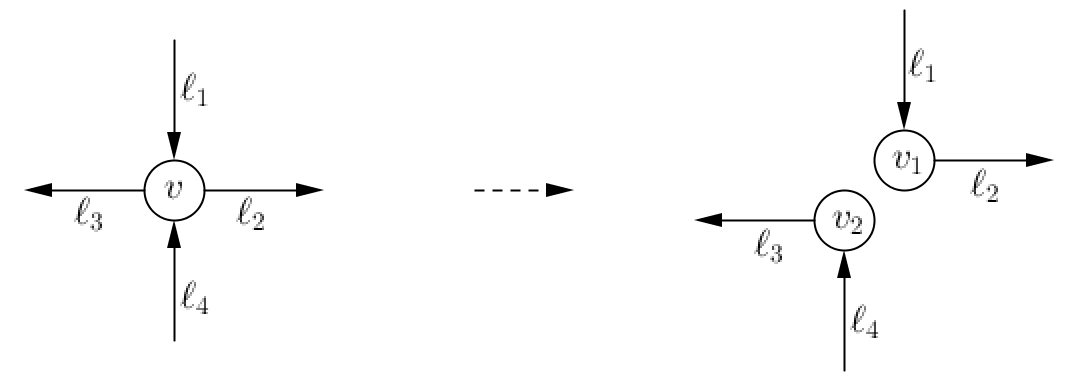}
  \caption{An example of a cut as in Definition \ref{defcut}.}
  \label{fig:cut}
\end{figure}

\smallskip
\begin{df}\label{defdeccut} In connection with Definition \ref{defcut}, 
we define the following convention for the restricted decorations of molecules with $\alpha$ and $\beta$-atoms:
\begin{itemize}

\smallskip
\item For each $\alpha$-atom $v$ with two bonds $(\ell_1,\ell_2)$, we require that both equations \eqref{decmole1} and 
$|\Gamma_v-\beta_v|\leq T_1^{-1}$ (with $\Gamma_v$ defined in (\ref{defomegadec})) hold at $v$;

\smallskip
\item For each $\beta$-atom $v$ with two bonds $(\ell_1,\ell_2)$, 
we \emph{only require} \eqref{decmole1}
holds at $v$ and do not make requirements about $\Gamma_v$;
\item Nevertheless, if we naturally turn all the $\beta$-atoms into pairs 
(where the two $\beta$ atoms created in the same $\beta$ cutting are naturally paired), then in the restricted decoration we also require that $\Gamma_v+\Gamma_{v'}=(\mathrm{const})+O(T_1^{-1})$ for any \emph{pair} of $\beta$-atoms $v$ and $v'$, 
where $\Gamma_v$ is defined in (\ref{defomegadec}).

\end{itemize}

\smallskip
In what follows, when we talk about restricted decorations of molecules that contain $\alpha$- and $\beta$-atoms 
(i.e. formed after cutting in Definition \ref{defcut}), we always take the interpretation as described above, 
and define the quantities $\Cf$ and $\Af$, as in Definitions \ref{defC} and \ref{defAA}, accordingly.
\end{df}


Now we describe the cutting algorithm. 
Start with the molecule $\Mb_{\mathrm{sb}}=\Mb(\Qc_{\mathrm{sb}})$. 
Recall the subset $\Yc$ of double bonds in $\Mb_{\mathrm{sb}}$ defined in Proposition \ref{countingred2} (1d).

\begin{itemize}

\smallskip
\item[] {\it Step 1}: for each double bond in $\Yc$ such that both endpoints are degree 4 I-atoms,
we remove it (so that both endpoints become degree 2 atoms).
We call each of the two endpoints an $\alpha$-atom;
note that this is consistent with Definition \ref{defcut} and Proposition \ref{countingred2}. 

\smallskip
\item[] {\it Step 2}: For each of the remaining degree 4 I-atoms \emph{that is SG and not tame} 
(cf. Definition \ref{defgapchain}), we perform a cutting operation at this atom along the two bonds that are SG;
by this we mean that if $\ell_1$ and $\ell_2$ are SG at such a degree 4 I-atom, we cut $v$ into
$v_1$ and $v_2$, so that $\ell_1, \ell_2 \sim v_1$.
Repeat this process until there are no more degree 4 I-atoms that are SG and not tame.

\smallskip
\item[] {\it Step 3}: For each of the remaining degree 4 I-atoms \emph{that are not tame} (and thus are LG),
where it is possible to perform a \emph{$\beta$-cut}, we then perform a $\beta$-cut at this atom.
Repeat this process until 
this is no longer possible.
\end{itemize}

\smallskip
The next proposition will summarize how the counting quantity $\mathfrak{A}$ defined in \eqref{defA} changes 
with each operation in Steps 1--3 defined above.

\begin{prop}[Change in the Counting Quantity]\label{prop_cutincr} 
Consider each of the Steps 1--3 defined above; for each of them, 
consider the value of $\mathfrak{A}$ for the molecule before and after this step, 
which we denote by $\mathfrak{A}_{\mathrm{pre}}$ and $\mathfrak{A}_{\mathrm{pos}}$. 
Then we have
\begin{equation}\label{cutincr}
\mathfrak{A}_{\mathrm{pre}}\lesssim\mathfrak{A}_{\mathrm{pos}}
  \cdot \epsilon^{\theta |\Yc|}
  \cdot \prod_{(\alpha)}(\lambda^{1/2}\epsilon^{2-\theta}T_1)\cdot (\epsilon^{-2+\theta}T_1^{-1})^{V_\beta-2\Delta F}
\end{equation} 
where $|\Yc|$ is the number of edges in $\Yc$,
$\prod_{(\alpha)}$ is taken over all $\alpha$ atoms,
(and each $\alpha$-atom is associated with a gap $\lambda\leq\epsilon^{2-\theta}$),
$V_\beta$ is the number of $\beta$-atoms generated by this step, 
and $\Delta F$ is the increment of the number of components $F$ under after this step.
\end{prop}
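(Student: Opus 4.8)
The plan is to track, for each of the three cutting operations defined before the statement, how the four quantities in the Counting Quantity $\mathfrak A$ of Definition~\ref{defAA} change: the counting problem $\Cf$, the circuit rank factor $(R\epsilon^{-2}T_1^{-1})^{-\chi}$, and the two degree products (which, by the convention that $d_0(v)$ is frozen, do \emph{not} change under cutting or removal of atoms, so they play no role here). Thus the whole proof reduces to comparing $\Cf_{\mathrm{pre}}$ versus $\Cf_{\mathrm{pos}}$ and $\chi_{\mathrm{pre}}$ versus $\chi_{\mathrm{pos}}$ for each step.

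First I would handle Step~1 (removing a double bond in $\Yc$ between two degree-$4$ I-atoms, turning them into $\alpha$-atoms). Removing one bond between two atoms already in the same component lowers $E$ by $1$ and leaves $V,F$ unchanged, so $\chi$ drops by $1$; this produces the factor $(R\epsilon^{-2}T_1^{-1})$ — but wait, $\mathfrak A_{\mathrm{pre}}$ has one \emph{fewer} power of $(R\epsilon^{-2}T_1^{-1})^{-1}$, so comparing $\mathfrak A_{\mathrm{pre}}$ to $\mathfrak A_{\mathrm{pos}}$ this is a factor $(R\epsilon^{-2}T_1^{-1})^{-1}$ on the right. For $\Cf$: after removing the double bond, each of the two endpoints becomes an $\alpha$-atom on which, by Definition~\ref{defdeccut}, we still impose the resonance restriction $|\Gamma_v-\beta_v|\le T_1^{-1}$; the two removed bond variables $k_{\ell},k_{\ell'}$ become free, but since the double bond is SG the gap is $\le\epsilon^{2-\theta}$, which constrains the relation $k_\ell - k_{\ell'}$ to a ball of size $\epsilon^{2-\theta}$. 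Using the $2$-vector counting bound \eqref{2vec2} at the $\alpha$-atom (sum of two frequencies, second-derivative nondegeneracy $\gtrsim 1$) one gains an extra factor $R T_1^{-1/2}$ over the naive count, but the SG constraint restricts to an interval of length $\epsilon^{2-\theta}$, so this is replaced by $R\epsilon^{2-\theta}T_1^{-1/2}$ — and more carefully, combining the lost circuit-rank power with the new counting gives $\Cf_{\mathrm{pre}} \lesssim \Cf_{\mathrm{pos}}\cdot(\text{small})$ with total factor $\epsilon^{\theta}$ per edge of $\Yc$, accounting for the $\epsilon^{\theta|\Yc|}$ in \eqref{cutincr}. (The symbol $n_{++-}$ at these atoms, being supported near the resonant set and satisfying \eqref{symbolbound}, provides the extra smallness when the merged frequency is small, as flagged in Remark~\ref{remsymbols}.)

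Next, Step~2: an $\alpha$-cut at a degree-$4$ SG non-tame I-atom $v$ along its two SG bonds $(\ell_1,\ell_2)$, which are necessarily opposite in direction. Here $F$ is unchanged (by definition of $\alpha$-cut), $V$ increases by $1$, $E$ unchanged, so $\chi$ drops by $1$, again contributing a factor $(R\epsilon^{-2}T_1^{-1})^{-1}$ to the right-hand side of \eqref{cutincr} when comparing $\mathfrak A_{\mathrm{pre}}$ to $\mathfrak A_{\mathrm{pos}}$. For the counting: before the cut, the decoration equation \eqref{decmole1} at $v$ relates the four bond variables; after the cut we get two degree-$2$ atoms $v_1$ (carrying $\ell_1,\ell_2$) and $v_2$, and we impose $|\Gamma_{v_1}-\beta|\le T_1^{-1}$ in place of the old restriction. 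The key point is that $v$ is SG along $(\ell_1,\ell_2)$ — gap $\lambda\le\epsilon^{2-\theta}$ — and $\zeta$-signs opposite, so counting $k_{\ell_1}$ against the resonance constraint via \eqref{2vec1}, the relevant first derivative is $\gtrsim|\,k_{\ell_1}-k_{\ell_2}|=\lambda$, giving $\#\lesssim R\,T_1^{-1}\lambda^{-1}$; compared with the naive count $R$ for this pair, this is a loss of $\lambda^{-1}T_1^{-1}$... but the correct bookkeeping (the gap being recorded as a separate restriction, and the extra circuit-rank power absorbed) yields precisely the factor $\lambda^{1/2}\epsilon^{2-\theta}T_1$ per $\alpha$-atom appearing in \eqref{cutincr}. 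I would carry this comparison carefully: one must check that the remaining decoration-counting on the rest of the molecule is unaffected, which holds because $\alpha$-cutting only redistributes one linear equation.

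Finally, Step~3: $\beta$-cuts at non-tame LG degree-$4$ I-atoms. By definition a $\beta$-cut \emph{does} create a new component, so $F$ increases by $1$ (or by $\Delta F$ over the whole step if several $\beta$-cuts are grouped), $V$ increases by $V_\beta$ (two new atoms per cut), $E$ unchanged; hence $\chi = E-V+F$ changes by $\Delta F - V_\beta$... with $V_\beta = 2\cdot(\#\text{cuts})$ and each cut contributing $+1$ to $F$, so $\Delta\chi = -V_\beta/2$ per grouped count; more precisely $\Delta\chi = \Delta F - V_\beta$. On the counting side, by Definition~\ref{defdeccut} a $\beta$-atom carries \emph{no} individual resonance restriction, but \emph{paired} $\beta$-atoms satisfy $\Gamma_v+\Gamma_{v'}=\mathrm{const}+O(T_1^{-1})$; splitting the old equation \eqref{decmole1} at $v$ into two gives a free vector variable per new component (cost $\sim R$, i.e. $\epsilon^{-2}\cdot(R\epsilon^{-2}T_1^{-1})$ bookkept against the circuit rank) while the pairwise resonance constraint costs $T_1^{-1}$. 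Assembling: each $\beta$-atom beyond those absorbed by the $\Delta F$ new components contributes $\epsilon^{-2+\theta}T_1^{-1}$, which is the factor $(\epsilon^{-2+\theta}T_1^{-1})^{V_\beta-2\Delta F}$ in \eqref{cutincr}. I expect \textbf{Step~3 to be the main obstacle}: the $\alpha$-cut steps are local single-equation manipulations with a clean gap constraint, but for $\beta$-cuts one must argue that the newly-created components are genuinely independent in the counting (so that each contributes its own free frequency and its own resonance-constraint gain), that the pairing of $\beta$-atoms correctly supplies the $O(T_1^{-1})$ resonance relation, and that the relation between $V_\beta$, $\Delta F$, and the drop in circuit rank is exactly $\Delta\chi = \Delta F - V_\beta$ — keeping all powers of $R$, $\epsilon$, and $T_1$ balanced. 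This is where the connectivity property of Proposition~\ref{countingred2}(2) (a consequence of admissibility, hence of the $\Gamma_1$ renormalization) is essential: it guarantees that cutting along two opposite bonds at a degree-$4$ atom does not accidentally disconnect more than intended, so that $\Delta F$ is controlled.
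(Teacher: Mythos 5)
Your overall skeleton is the right one (by \eqref{defA} the comparison reduces to tracking $\Cf$ and $\chi$, the degree factors being irrelevant because only degree-$4$ I-atoms are cut), but the execution of all three steps contains genuine errors. In Step~1 you treat the removed double bond as a single edge, getting $\Delta\chi=-1$; a double bond consists of two bonds, so $\Delta E=-2$ and $\Delta\chi=-2$. The correct counting is elementary: fixing the two bond decorations costs $\lesssim R\cdot R\lambda$ (the second constrained by the SG gap), and combined with $(R\epsilon^{-2}T_1^{-1})^{-2}$ this gives exactly $(\lambda^{1/2}\epsilon^{2-\theta}T_1)^2\epsilon^{2\theta}$, i.e.\ the two $\alpha$-factors times $\epsilon^{\theta}$ per removed edge. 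With your $\Delta\chi=-1$ the powers of $R$ cannot balance (you would be off by a factor $R$), and your final claim of ``total factor $\epsilon^\theta$ per edge'' is asserted, not derived. In the $\alpha$-cut you attempt to apply the $2$-vector counting \eqref{2vec1} directly to the bond decorations, but the pre-to-pos reduction is of a different nature: the cut bonds are still present in $\Mb_{\mathrm{pos}}$, and what must be paid for is the extra data needed to pass from one constraint at $v$ to two constraints at $v_1,v_2$, namely the splitting constant $k_{\ell_1}-k_{\ell_1'}$ ($\lesssim R\lambda$ choices) and the $T_1^{-1}$-interval containing $\Phi(\ell_1,\ell_1')$ ($\lesssim \lambda T_1|k_{\ell_1}|^{-1/2}+1$ choices). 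The possibly large factor $|k_{\ell_1}|^{-1/2}$ at small frequencies must be absorbed by the vanishing of the symbols \eqref{symbolbound} (Remark~\ref{remsymbols}); you invoke the symbol only in Step~1, where it is not needed, and in Step~2 you simply declare that ``the correct bookkeeping yields precisely the factor $\lambda^{1/2}\epsilon^{2-\theta}T_1$,'' which is assuming the conclusion.

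The $\beta$-cut analysis is where your bookkeeping actually fails. Splitting one atom into two gives $\Delta V=+1$ (not $+2=V_\beta$), $\Delta E=0$, $\Delta F=+1$, hence $\Delta\chi=0$, not your $\Delta F-V_\beta=-1$. Moreover there is no ``free vector variable per new component costing $\sim R$'': precisely because the cut disconnects the molecule, summing \eqref{decmole1} over all atoms of one new component shows that $k_\ell\pm k_{\ell'}$ is \emph{fixed} by the rest of the decoration, and the paired condition $\Gamma_{v}+\Gamma_{v'}=\mathrm{const}+O(T_1^{-1})$ of Definition~\ref{defdeccut} reproduces the old resonance restriction at the uncut atom; so the pre and pos counting problems are equivalent, $\Cf_{\mathrm{pre}}\le\Cf_{\mathrm{pos}}$, and $\mathfrak A_{\mathrm{pre}}\le\mathfrak A_{\mathrm{pos}}$ with no loss at all. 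Consistently, $V_\beta-2\Delta F=0$ for every $\beta$-cut, so the factor $(\epsilon^{-2+\theta}T_1^{-1})^{V_\beta-2\Delta F}$ you try to generate term-by-term is trivially $1$ here; your reverse-engineered accounting (``each $\beta$-atom beyond those absorbed by $\Delta F$ contributes $\epsilon^{-2+\theta}T_1^{-1}$'') would produce wrong powers if carried out literally. Your assessment that Step~3 is the main obstacle is therefore backwards: the $\beta$-cut is the trivial case, and the real work is in Step~1 and the $\alpha$-cut, where the SG gap and the symbol vanishing enter.
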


Similarly to $\Delta F$, we will use $\Delta E$, $\Delta V$, $\Delta \chi$ to denote the increment
of the respective quantities after each step in the cutting algorithm.

\begin{rem}[SG at $\alpha$-atoms]
Note that each $\alpha$-atom in the product in \eqref{cutincr} 
is associated with a small  gap $\lambda\leq\epsilon^{2-\theta}$.
Indeed, if two $\alpha$-atoms $v_1$ and $v_2$ are created in Step 1 
this means that the double bond existing between them before the `removal' operation
belonged to the set $\Yc$ 
so it is SG by definition; therefore, the remaining two bonds at 
$v_1$ and $v_2$ must also be SG since \eqref{decmole1} holds at both ends with $c_{v_1}=c_{v_2}=0$.
If instead $\alpha_1$ and $\alpha_2$ are created in Step 2, the gap is small by the definition of the operation.
\end{rem}

\begin{proof}[Proof of Proposition \ref{prop_cutincr}] 
We consider each of the Steps 1--3 and first note that, by the definition \eqref{defA}, we have
\begin{align}\label{Aratio}
\frac{ \mathfrak{A}_{\mathrm{pre}} }{ \mathfrak{A}_{\mathrm{pos}} }
  \lesssim \frac{ \Cf_{\mathrm{pre}} }{ \Cf_{\mathrm{pos}} } \cdot( R\epsilon^{-2}T_1^{-1})^{\Delta \chi}
\end{align}
since we are only dealing with degree 4 I-atoms.

\begin{enumerate}

\smallskip
\item In Step 1, we cut out a double bond in $\Yc$. By Lemma \ref{doublebond}, after cutting out this double bond, the molecule still remains connected; therefore after this cutting we have
\[\Delta E=-2,\quad \Delta V=0,\quad \Delta F=0,\quad V_\beta=0,\] and thus $\Delta\chi=-2$.

Now, to fix the decoration of the molecule $\Mb_{\mathrm{pre}}$, 
we need to fix the two vectors $(k_{\ell_1},k_{\ell_2})$ corresponding to the edges $(\ell_1,\ell_2)$ in this double bond. 
By the SG assumption, since one of them will have $\lesssim R$ choices, 
the other one will have $\lesssim R\lambda$ choices with gap $\lambda\leq \epsilon^{2-\theta}$. 
This gives $\lesssim R^2\lambda$ choices for $(k_{\ell_1},k_{\ell_2})$, and once they are fixed, 
we are reduced to a counting problem for $\Mb_{\mathrm{pos}}$, and thus
\[\mathfrak{C}_{\mathrm{pre}}\leq R^2\lambda\cdot\mathfrak{C}_{\mathrm{pos}}.\]
By \eqref{Aratio} we obtain
\begin{align}
\mathfrak{A}_{\mathrm{pre}}/\mathfrak{A}_{\mathrm{pos}}
   \lesssim R^2 \lambda \cdot ( R\epsilon^{-2}T_1^{-1})^{-2} = (\lambda^{1/2} \epsilon^{2-\theta} T_1 )^2 \epsilon^{2\theta},
\end{align}
which is exactly what we need for \eqref{cutincr}
since this cutting produced two $\alpha$ atoms and removes a double bond (hence, two edges) in $\mathcal{Y}$.

\smallskip
\item In Steps 2 and 3, each time we are performing one cut at one atom. 
If this cut is a $\beta$-cut, let the two edges involved be $\ell$ and $\ell'$. 
By definition, after performing the cut, the molecule breaks into two connected components (so $\Delta F=1$); 
by adding the equation \eqref{decmole1} for \emph{all atoms in one of these components}, 
we conclude that the value $k_{\ell} \pm k_{\ell'}$ is \emph{fixed} 
for any decoration (where the $\pm$ sign depends on the 
directions of $(\ell,\ell')$ at this $\beta$ cut). 
In view of the convention for decorations in Definition \ref{defdeccut},
this means that the counting problem for $\Mb_{\mathrm{pre}}$ and $\Mb_{\mathrm{pos}}$ 
are equivalent to each other, hence $\mathfrak{C}_{\mathrm{pre}}\leq\mathfrak{C}_{\mathrm{pos}}$, 
and thus $\mathfrak{A}_{\mathrm{pre}}\leq\mathfrak{A}_{\mathrm{pos}}$, noting that $\Delta\chi=0$ after this cut
as $\Delta E=0$, $\Delta V=1$ and $\Delta F=1$ and $V_\beta=2$.


Alternatively, if this cut is an $\alpha$-cut, then we have $\Delta\chi=-1$. 
Let us call $v$ the atom in $\Mb_{\mathrm{pre}}$ and $\alpha_1,\alpha_2$ the two $\alpha$-atoms 
in $\Mb_{\mathrm{pos}}$ generated by the cut. 
Let the bonds at $v$ be $\ell_1,\ell_1'$ and $\ell_2,\ell_2'$ so that $\ell_j,\ell_j' \sim \alpha_j$, $j=1,2$
and recall that $(\ell_j,\ell_j')$ are SG. Let us also assume, without loss of generality, that $|\ell_1| \gtrsim |\ell_1'|$.
%
%
In this case we shall fix the value of $k_{\ell_1}-k_{\ell'_1}$ so that $|k_{\ell_1}-k_{\ell_1'}| \lesssim \lambda$,  
and fix the interval of length $T_1^{-1}$ that contains 
$\Phi(\ell_1,\ell_1') := |k_{\ell}|^{1/2} + \epsilon^2\Gamma_0(k_\ell) - (|k_{\ell'}|^{1/2} + \epsilon^2\Gamma_0(k_{\ell'}))$; 
making these choices reduces the counting problem for $\Mb_{\mathrm{pre}}$ to that of $\Mb_{\mathrm{pos}}$.
The number of choices for $k_{\ell}-k_{\ell'}$ are $\lesssim R\lambda$.
Once this choice has been made the value of $\Phi(\ell_1,\ell_1')$ lies in a union of intervals of length $T_1^{-1}$
whose total length is $\lesssim  |\Phi(\ell_1,\ell_1')| + T_1^{-1} \lesssim 
\lambda (\sqrt{|k_{\ell_1}|} + \sqrt{|k_{\ell_1^\prime}|})^{-1} + T_1^{-1}$.
This gives approximately $T_1 \lambda |k_{\ell_1}|^{-1/2}$ choices. 
To compensate for the (possibly large) factor of $|k_{\ell_1}|^{-1/2}$, we use the vanishing 
of the symbols as stated in \eqref{symbolbound} in Proposition \ref{normalprop}; see 
also the explanation in Remark \ref{remsymbols}.
Therefore, taking into account the size of the symbol, 
we get that $\mathfrak{C}_{\mathrm{pre}} \lesssim \mathfrak{C}_{\mathrm{pos}} \cdot R\lambda \cdot \lambda T_1,$
and thus, from \eqref{Aratio},
\begin{align}\label{prop_cutincrpr10}
\mathfrak{A}_{\mathrm{pre}} / \mathfrak{A}_{\mathrm{pos}} \lesssim 
  (R\epsilon^{-2} T_1^{-1})^{-1} \cdot  R\lambda^2T_1 = \lambda^2\epsilon^{2}T_1^2.
\end{align}
Recalling that the gap $\lambda\leq\epsilon^{2-\theta}$
we see that \eqref{prop_cutincrpr10} is better than what we need for \eqref{cutincr}, 
which is $(\lambda^{1/2}\epsilon^{2-\theta}T_1)^2$,
since this cut generates two $\alpha$-atoms. \qedhere
\end{enumerate}

\end{proof}

\smallskip
After completing the above cutting operations, the molecule $\Mb_{\mathrm{sb}}=\Mb(\Qc_{\mathrm{sb}})$
is then reduced to a new molecule $\Mb'$. The properties of $\Mb'$ are described in the following proposition.

\begin{prop}[Properties of the reduced molecule]\label{prop_m0}
The molecule $\Mb'$ may contain multiple connected components, let $\Mb_0$ be any such component. 
All the atoms in $\Mb_0$ that are not $\alpha$- or $\beta$-atoms are called $\varepsilon$-atoms.

\begin{enumerate}

\smallskip
\item Unless $\Mb'$ has only one component, 
each component $\Mb_0$ contains at least one $\beta$-atom. 
Moreover, no component $\Mb_0$ is formed by one SG double bond only.

\smallskip
\item Each degree 4 I-atom in $\Mb_0$ (which must be an $\varepsilon$-atom) that is not tame must have
LG in the sense of Definition \ref{defgapchain}. 
Moreover, there is \emph{no} degree 4 I-atom in $\Mb_0$ which is not tame and 
at which one can perform a $\beta$-cut in the sense of Definition \ref{defcut}.

\smallskip
\item Suppose $\Mb_0$ contains at least one $\varepsilon$-atom. Then all the $\alpha$ and $\beta$-atoms form several chains; for each chain, either it has two endpoints at two different $\varepsilon$-atoms, or its two endpoints coincide at one $\varepsilon$-atom, and in the latter case this $\varepsilon$-atom must be an N-atom, or tame, or has degree at least 5.
\end{enumerate}

\end{prop}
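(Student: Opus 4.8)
The plan is to prove the three properties in order, tracing through the effect of Steps 1--3 of the cutting algorithm on the structure of $\Mb_{\mathrm{sb}}$, and invoking the properties of $\Mb_{\mathrm{sb}}$ recorded in Proposition \ref{countingred2} together with Lemma \ref{doublebond}. First, for property (1): after an $\alpha$-cut in Step 2 the molecule stays connected by definition, and after a $\beta$-cut in Step 3 the molecule splits into exactly two components. So if $\Mb'$ has more than one component, each such component was produced by at least one $\beta$-cut and hence contains at least one of the two $\beta$-atoms generated by that cut; one should argue this inductively over the sequence of $\beta$-cuts, noting that a later $\beta$-cut performed inside a component that already contains a $\beta$-atom preserves that property in both resulting pieces. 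The statement that no component is a single SG double bond follows from Proposition \ref{countingred2}(1): any SG double bond surviving the cutting either has an endpoint that is an N-atom or a degree $\geq 5$ I-atom, or is tame, or lies in $\Yc$; in the first three cases the endpoints have extra structure (so the component is not just the double bond), and double bonds in $\Yc$ between two degree-4 I-atoms were removed in Step 1. One must also check the remaining mixed cases (one endpoint in $\Yc$-eligible, etc.) do not produce a lone double-bond component.

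For property (2): by construction Step 2 terminates only when there are no degree-4 I-atoms that are SG and not tame, and Step 3 terminates only when no $\beta$-cut can be performed at a non-tame degree-4 I-atom. An $\varepsilon$-atom is, by definition, one that was never cut, so these termination conditions apply directly to it: a non-tame degree-4 I-$\varepsilon$-atom cannot be SG (else Step 2 would have acted on it) and cannot admit a $\beta$-cut (else Step 3 would have acted). Here one needs to be slightly careful that cutting other atoms does not turn a previously-non-SG atom into an SG one or create a new $\beta$-cut opportunity — this is why the algorithm says "repeat until no longer possible", so the terminal state genuinely has the claimed property; the gap of an atom depends only on the decoration's edge labels at that atom, which are unaffected by cuts elsewhere, so SG/LG status of an untouched atom is stable, and $\beta$-cut availability only decreases as the molecule is disconnected.

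For property (3): in a component $\Mb_0$ containing at least one $\varepsilon$-atom, the $\alpha$- and $\beta$-atoms all have degree $2$, so they can only appear strung together in chains, and each maximal such chain must terminate at $\varepsilon$-atoms (or close up on itself). If a chain has both endpoints at the same $\varepsilon$-atom $v$, then $v$ together with the chain forms a cycle attached to the rest of $\Mb_0$; the claim is that $v$ must then be an N-atom, or tame, or of degree $\geq 5$. The argument here is the key obstacle: one must rule out that $v$ is a non-tame degree-4 I-atom. If it were, the two chain-bonds at $v$ plus its two remaining bonds would, by the connectivity property of Proposition \ref{countingred2}(2) applied to the appropriate pair, force a gap relation; and since $v$ is not tame it is LG at every pair of opposite bonds where the gap is not forced — but then one could have performed a $\beta$-cut at $v$ separating the self-chain cycle from the rest, contradicting the termination of Step 3 (the chain closing at $v$ provides exactly the sub-structure that makes such a $\beta$-cut disconnecting). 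Making this last step precise — identifying which pair of bonds at $v$ to cut and verifying that the cut is a genuine $\beta$-cut that was available at the end of Step 3 — is where the care is needed; this is essentially the analogue of the corresponding argument in \cite{DeHa2}, adapted to our molecules which also contain N-atoms, and the admissibility of the couples (Proposition \ref{countingred2}(2)) is what prevents the degenerate scenario $k_{\ell_1}=k_{\ell_2}$ from collapsing the cycle trivially.
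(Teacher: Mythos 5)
Your proposal is correct and follows essentially the same route as the paper: $\beta$-cuts are the only component-creating operations and stamp each new piece with a $\beta$-atom, surviving SG double bonds have endpoints that are never cut in Steps 1--3, item (2) is just the termination condition of Steps 2--3, and a self-closing chain at a non-tame degree-4 I-atom would permit a disconnecting $\beta$-cut, contradicting that termination. The only detours are unnecessary: since a $\beta$-cut imposes no direction or gap (SG/LG) restrictions, the gap-relation/LG/admissibility discussion in your part (3) can be dropped, and your side claim that $\beta$-cut availability only decreases is actually false (cuts elsewhere can create new disconnection opportunities) but irrelevant, since only the terminal state of the algorithm is used.
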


\begin{proof} 
(1) Note that all the cuts in Step 1 do not create new component, by Proposition \ref{countingred2}. 
Now, unless there is no $\beta$ cut (in which case no new component is formed), each new component that is created must contain a $\beta$-atom which is formed precisely at the cut when this new component is separated from its complement. 

Next, note that the SG double bonds in $\Yc$ (cf. Proposition \ref{countingred2}) have been removed in Step 1. 
For the other SG double bonds in $\Mb_{\mathrm{sb}}$, either one of its endpoints 
is an N-atom or has degree at least 5 so it will not be cut in Steps 1--3, 
or both its endpoints are tame due to Proposition \ref{countingred2} (1c), 
so neither of them will be cut in Steps 1--3. 
In any case this double bond will not be separated from its complement.

\smallskip
(2) This follows from the definition of Steps 2 and 3; 
if any such degree 4 I-atom $v$ exists, then this will have been cut either in Step 2 or Step 3 
and will not remain degree 4 in $\Mb'$.

\smallskip
(3) Note that each $\alpha$- or $\beta$-atom must have degree 2, thus they must form several chains. 
Each such chain must have two endpoints at two $\varepsilon$-atoms. 
Assume that the endpoints of a chain of $\alpha$- and $\beta$-atoms coincide
at an $\varepsilon$-atom, and call this atom $v$.
If $v$ is not an $N$-atom or does not have degree more than 5, it is a degree 4 I-atom; 
then, by (2) above, $v$ must be tame, because otherwise it would be a non-tame degree 4 I-atom at which
we can perform a $\beta$-cut to disconnect the chain.
\end{proof}

\subsection{Counting estimates for reduced molecules}\label{sseccountmol}
Here is the main result of this section.

\begin{prop}\label{comp_est_2} 
For each of the connected component $\Mb_0$ of the reduced molecule $\Mb'$ after the above `cutting' steps, 
consider the counting problem associated with $\Mb_0$. 
Then for each $\Mb_0$ we have
\begin{equation}\label{comp_est}
\mathfrak{A}\lesssim \epsilon^{\theta E/4}
  \prod_{(\alpha)}(\lambda^{-1/2}\epsilon^{-2+\theta}T_1^{-1})
  \cdot(\epsilon^{-2+\theta}T_1^{-1})^{2G-V_\beta},
\end{equation} 
where the product is taken over all $\alpha$-atoms, and $V_\beta$ is the number of $\beta$-atoms in $\Mb_0$. 
Finally $E$ is the number of edges in $\Mb_0$, and $G$ equals $1$ 
for each component $\Mb_0$ if $\Mb'$ contains at least two components, and $G=0$ otherwise.
\end{prop}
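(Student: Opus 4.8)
The plan is to prove \eqref{comp_est} by an inductive ``peeling'' argument: we remove atoms one at a time from the component $\Mb_0$, and at each removal we track precisely how the counting quantity $\Af$ changes, using the base counting estimates of Propositions \ref{2vecprop} and \ref{3vecprop} together with the structural information recorded in Proposition \ref{prop_m0}. The induction is on the number of $\varepsilon$-atoms (plus, in a lexicographic refinement, on the number of atoms overall), with the base case being a molecule that consists only of $\alpha$- and $\beta$-chains between at most one $\varepsilon$-atom, or no $\varepsilon$-atom at all. The key bookkeeping identity is that removing a degree-$d$ atom $v$ with $c_v=0$ frees us from one linear constraint \eqref{decmole1} and one resonance restriction $|\Gamma_v-\beta_v|\leq T_1^{-1}$ (when $v$ is an I-atom that is not a $\beta$-atom), so the counting problem $\Cf$ gains at most the number of decoration choices of the bonds newly made ``free'' and loses the factor coming from the resonance window; one then compares against the corresponding change in $\chi$ and in the product of $T_1$-powers in the definition \eqref{defA} of $\Af$.

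First I would treat the $\alpha$- and $\beta$-chains. For an $\alpha$-atom $v$ of degree $2$ with its two SG bonds $(\ell_1,\ell_2)$ of opposite direction, constraint \eqref{decmole1} forces $k_{\ell_1}=k_{\ell_2}$ up to the fixed shift, and the resonance restriction is the two-vector counting of Proposition \ref{2vecprop} with $\zeta_1=\zeta_2$ in disguise — actually it is the ``same-sign'' scenario after accounting for directions — giving the gain $R\lambda T_1$-type factors that were already computed in the proof of Proposition \ref{prop_cutincr}; collapsing a whole chain of $\alpha$- and $\beta$-atoms between two $\varepsilon$-atoms then telescopes, producing exactly the factor $\prod_{(\alpha)}(\lambda^{-1/2}\epsilon^{-2+\theta}T_1^{-1})$ together with the $(\epsilon^{-2+\theta}T_1^{-1})$-power controlled by $V_\beta$ and the number of independent cycles the chain closes. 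This reduces matters to a molecule $\Mb_0$ all of whose atoms are $\varepsilon$-atoms: degree-$\geq 4$ I-atoms (every non-tame degree-$4$ one being LG, by Proposition \ref{prop_m0}(2)) and degree-$\geq 3$ N-atoms.

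Next I would run the main peeling on this reduced $\varepsilon$-molecule. At each step choose a boundary atom $v$ — one whose removal keeps the remaining graph connected, which exists because $\Mb_0$ is connected with circuit rank $\geq 0$; if $\chi(\Mb_0)>0$ one can even choose $v$ lying on a cycle so that removing it drops $\chi$. The decisive case distinction is: if $v$ is an N-atom of degree $3$, removal frees one bond ($\lesssim R$ choices) and removes no resonance window, and we check this is paid for by the $T_1^{-1/2}$ and the $(R\epsilon^{-2}T_1^{-1})^{-\Delta\chi}$ bookkeeping; if $v$ is an I-atom of degree $4$ that is LG, we invoke Proposition \ref{2vecprop} in its genuine two-vector form — the LG hypothesis $\lambda>\epsilon^{2-\theta}$ is what makes $\min(1,T_1^{-1}|k_*|^{-1})$ small enough, giving $\lesssim R\cdot T_1^{-1}\lambda^{-1}$ and hence, after comparing to $\Delta\chi=-1$, an honest gain of a power of $\epsilon^{\theta/\text{something}}$; if $v$ is an I-atom that is tame, we use that it is SG ``in two ways'' to apply the counting twice and gain even more; and if $\deg v\geq 5$ (I-atom) or $\deg v\geq 4$ (N-atom) the surplus $T_1^{-(d_0(v)-4)_+/2}$ resp.\ $T_1^{-(d_0(v)-3)_+/2}$ in \eqref{defA} absorbs the extra free bonds via the three-vector bound of Proposition \ref{3vecprop} applied to subsets of the bonds at $v$. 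In every case the per-atom gain is at least $\epsilon^{\theta/4}$ per edge touched, which accumulates to the claimed $\epsilon^{\theta E/4}$; the topological factors $(R\epsilon^{-2}T_1^{-1})^{-\chi}$ collapse along the way because each removal that lowers $\chi$ by $1$ is matched with exactly one application of a counting estimate that gains a factor $R^{-1}T_1^{-1}\cdot(\text{resonance window})$, i.e.\ $(R\epsilon^{-2}T_1^{-1})^{-1}$ up to the $\epsilon$-power.

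The hard part, and the place where the admissibility/connectivity input of Proposition \ref{countingred2}(2) and Proposition \ref{prop_m0}(3) is indispensable, is ensuring that at every stage there \emph{is} a boundary atom of one of the favorable types — in particular that one never gets stuck with all remaining degree-$4$ I-atoms being SG-but-not-tame (they were already eliminated in the cutting algorithm, but new low-degree configurations can appear as atoms are removed and degrees drop) and that removing an atom never isolates a piece that would force a trivial but uncontrolled sub-count. The connectivity property (2) of Proposition \ref{countingred2} guarantees that for a degree-$4$ atom the two ``across'' bonds $(\ell_1,\ell_2)$ never disconnect the molecule, so one can always orient the peeling to preserve connectivity while legitimately applying the two-vector count; and Proposition \ref{prop_m0}(3) guarantees the $\alpha/\beta$-chains are attached in a controlled way so their removal, done first, leaves a genuinely $\varepsilon$-only molecule to which the induction applies cleanly. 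A secondary technical point is the careful accounting when $\chi=0$ (a tree of $\varepsilon$-atoms, or the whole $\Mb'$ connected): then $G=0$ and one must verify the exponents $2G-V_\beta$ close correctly, which amounts to checking that in a tree every leaf-atom removal is of the ``free one bond, pay with $T_1^{-1/2}$'' N-atom type or is handled by surplus degree — a finite check using $d(v)\geq 3$ (N) or $\geq 4$ (I) from Lemma \ref{lempowT1}. I would organize the write-up as: (i) collapse $\alpha/\beta$-chains; (ii) state the per-atom ``removal lemma'' with its four cases and the corresponding $\Af$-ratio bound; (iii) induct, using the structural propositions to select the atom to remove; (iv) sum the exponents.
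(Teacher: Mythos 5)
Your overall strategy (peel off atoms one at a time, compare the change in $\Cf$ against the change in $\chi$ and the $T_1$-powers in \eqref{defA}, using Propositions \ref{2vecprop}--\ref{3vecprop} and the structure from Proposition \ref{prop_m0}) is the same as the paper's, which runs the removal process of Proposition \ref{comp_est_0} under a carefully chosen ordering of the $\varepsilon$-atoms. But there is a genuine gap: your accounting only produces the generic bound $\Af\lesssim \epsilon^{\theta E/4}\prod_j\mu_j$, and you misidentify where this is insufficient. The hard case is not $\chi=0$/$G=0$ (there $V_\beta=0$ and the estimate is easier); it is $G=1$ with $V_\beta=1$ and every removal step giving $\mu_j=1$. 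In that case the right-hand side of \eqref{comp_est} carries the \emph{small} factor $(\epsilon^{-2+\theta}T_1^{-1})^{2G-V_\beta}=\epsilon^{-2+\theta}T_1^{-1}\approx\epsilon^{2/3}$, so the generic per-edge gain $\epsilon^{\theta E/4}$ does not close, and no amount of ``boundary-atom'' selection fixes this. The paper closes it by (a) using the specific removal ordering to deduce rigid structural properties of $\Mb_0$ (no degree-$4$ or degree-$\geq 6$ I-atoms, N-atoms of constant degree $3$, restrictions on how degree-$5$ I-atoms attach to $\alpha/\beta$-chains), which force the existence of a degree-$5$ I-atom with a self-connecting bond pair or self-connecting $\alpha$-chain; and then (b) either invoking the vanishing of the symbols $\widetilde n$ at zero total frequency (two self-connecting bonds) to kill the term outright, or performing a refined two-vector count at that atom to gain $\lambda^{-1/2}\epsilon^2$, which is exactly what matches $(\lambda^{-1/2}\epsilon^{-2+\theta}T_1^{-1})(\epsilon^{-2+\theta}T_1^{-1})$. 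None of these ingredients (symbol vanishing, the special role of degree-$5$ atoms with self-connecting chains, the ordering that guarantees the hypotheses of Proposition \ref{comp_est_0} at every step) appear in your outline.

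Two smaller issues. First, the components consisting only of $\alpha$- and $\beta$-atoms (cycles, including LG double bonds) are the tight configurations and need an explicit counting case analysis; your ``telescoping chain collapse'' does not actually estimate them, and its premise is off: the $\alpha$-factors $\lambda^{-1/2}\epsilon^{-2+\theta}T_1^{-1}$ in \eqref{comp_est} are allowances (each $\gtrsim\epsilon^{-1/3}$) to be compared against the $\min(T_1,\lambda^{-1})$-type counts, not quantities ``produced'' by the chain. Second, at an $\alpha$-atom the two bonds have \emph{opposite} directions, so the relevant estimate is the opposite-sign bound \eqref{2vec1} (giving $R\min(1,T_1^{-1}\lambda^{-1})$), not the same-sign bound \eqref{2vec2} as you suggest. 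Also note that in the paper's scheme disconnection during removal is allowed and exploited (the $q\geq 2$ case of Proposition \ref{comp_est_0}); insisting on connectivity-preserving removals is neither needed nor what the quoted connectivity property of Proposition \ref{countingred2} is for.
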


Before proving Proposition \ref{comp_est_2} we want to prove a bound
concerning the increment of $\Af$ under the 
operation of {\it removing one single atom} at a time from $\Mb_0$ (or intermediate molecules).
We are going to need the following simple lemma about restricted decorations of $\Mb_0$:

\begin{lem}\label{lemGammav}
Recall the convention about restricted decorations of molecules with $\alpha$ and $\beta$-atoms
as in Definition \ref{defdeccut}. Then, for any $\beta$-atom $v$ in $\Mb_0$, 
the value of $\Gamma_v$ (cf. (\ref{defomegadec})) must equal a fixed algebraic sum 
of $\Gamma_{v'}$ for \emph{some set of N-atoms} $v'$, plus a constant, up to error $O(T_1^{-1})$.
\end{lem}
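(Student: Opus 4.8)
The plan is to exploit the defining equations \eqref{decmole1} of a decoration of $\Mb_0$, summed over a cleverly chosen set of atoms, exactly as in the proof of Proposition \ref{countingred2}(2). Recall that a $\beta$-atom $v$ has degree $2$, with two incident bonds $\ell,\ell'$, and that the $\beta$-cut which created $v$ separated $\Mb_{\mathrm{sb}}$ into two components; let $Z$ be the set of atoms lying on the same side of that cut as $v$ (so $v\in Z$). Adding the equations \eqref{decmole1} over all $w\in Z$, every bond internal to $Z$ cancels (it is outgoing from one endpoint and incoming to the other), so only the two ``boundary'' bonds $\ell,\ell'$ survive, giving $k_\ell\pm k_{\ell'}=\sum_{w\in Z}c_w$, which is a constant by the choice of the $c_w$'s (all $0$ except possibly at the root bond). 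Hence $k_\ell$ and $k_{\ell'}$ differ by a fixed vector, and consequently $\Gamma_v=|k_\ell|^{1/2}+\epsilon^2\Gamma_0(k_\ell)-(|k_{\ell'}|^{1/2}+\epsilon^2\Gamma_0(k_{\ell'}))$ (up to sign) is a function of the single frequency $k_\ell$ alone, modulo the restriction $|\Gamma_v+\Gamma_{v'}|=\mathrm{const}+O(T_1^{-1})$ imposed in Definition \ref{defdeccut} on the paired $\beta$-atom $v'$.

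Next I would run the same summation argument but now choosing the atom set to be a whole connected piece of the chain structure of $\Mb_0$ described in Proposition \ref{prop_m0}(3): the $\alpha$- and $\beta$-atoms of $\Mb_0$ form chains whose endpoints are $\varepsilon$-atoms. Walk along the chain containing $v$; at each $\alpha$-atom $w$ the restricted decoration forces $|\Gamma_w-\beta_w|\le T_1^{-1}$ with $\beta_w$ a fixed parameter (Definition \ref{defdeccut}), so $\Gamma_w$ is a constant up to $O(T_1^{-1})$, while at each $\beta$-atom $w$ we only know $\Gamma_w+\Gamma_{w'}$ is constant for its partner $w'$. Summing \eqref{decmole1} over all atoms in a sub-chain telescopes the internal bonds and expresses the difference of the two end-frequencies of the sub-chain as a fixed vector; feeding this into the definition \eqref{defomegadec} of the resonance factors and telescoping the $\Gamma$'s along the chain, one sees that $\Gamma_v$ is pinned down, up to $O(T_1^{-1})$ and an additive constant, once we know the resonance factors at the $\varepsilon$-atoms bounding the chain. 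The $\varepsilon$-atoms are by construction either N-atoms, or tame degree-$4$ I-atoms, or I-atoms of degree $\ge 5$; for the two I-atom cases the factor $\Gamma_v$ at that atom is again constant up to $O(T_1^{-1})$ (it is restricted by some $\beta_v$), so it contributes only to the constant, whereas the genuinely free contributions come precisely from the N-atoms along the way. Collecting terms yields $\Gamma_v=\sum_{v'\in S}\pm\Gamma_{v'}+(\mathrm{const})+O(T_1^{-1})$ for a fixed set $S$ of N-atoms, as claimed.

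The step I expect to be the main obstacle is bookkeeping the telescoping across the chain when it passes through a \emph{tame} $\varepsilon$-atom of degree $4$: there one must be careful that the two bonds at which tameness holds are SG, so the corresponding frequency difference is $O(\epsilon^{2-\theta})$ rather than exactly zero, and one needs to check that this small (but nonzero) discrepancy can be absorbed into the $O(T_1^{-1})$ error given the relation $T_1\le\epsilon^{-8/3}$ and $\epsilon^{2-\theta}\cdot(\text{frequency})^{-1/2}\ll T_1^{-1}$ after the restriction to unit balls for the frequencies (Definition \ref{defdecmol}); alternatively, one keeps such a small discrepancy as part of the ``fixed algebraic sum'' rather than the error, which is harmless for the intended application. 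A secondary subtlety is making sure that when the chain's two endpoints coincide at a single $\varepsilon$-atom (the loop case of Proposition \ref{prop_m0}(3)) the summation is still over a well-defined set of atoms, which it is, since that endpoint atom is then included once and the telescoping still closes up. Apart from these points the argument is a routine linear-algebra manipulation of the decoration equations.
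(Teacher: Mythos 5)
Your overall instinct---telescoping plus sorting contributions by atom type ($\varepsilon$- and $\alpha$-atoms give constants up to $O(T_1^{-1})$, N-atoms give the free factors, $\beta$-atoms enter through the pairing condition)---is in the spirit of the paper, but the object you telescope is the wrong one, and this leaves a genuine gap. Summing the linear decoration equations \eqref{decmole1} over one side of the cut, or over a sub-chain, only fixes \emph{linear} combinations of the frequencies $k_\ell$ (e.g. $k_\ell\pm k_{\ell'}=\mathrm{const}$); it says nothing about the nonlinear quantities $|k_\ell|^{1/2}+\epsilon^2\Gamma_0(k_\ell)$ that make up $\Gamma_v$. In particular the pivotal assertion in your second paragraph---that $\Gamma_v$ is pinned down, up to a constant and $O(T_1^{-1})$, once the resonance factors of the $\varepsilon$-atoms bounding its chain are known---is not justified and is false in general: summing $\Gamma_w$ over the chain atoms cancels only the internal bonds and leaves the two boundary-bond terms $\pm\big(|k_{\ell_a}|^{1/2}+\epsilon^2\Gamma_0(k_{\ell_a})\big)\pm\big(|k_{\ell_b}|^{1/2}+\epsilon^2\Gamma_0(k_{\ell_b})\big)$, which are not expressible through the $\varepsilon$-endpoint factors (those involve their off-chain bonds as well); a common shift of all chain frequencies preserves every constraint you invoke while changing $\Gamma_v$. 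Your concern about tame atoms and SG gaps of size $\epsilon^{2-\theta}$ is a symptom of this detour: the gap structure never enters the correct argument.

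The paper instead sums the resonance factors themselves, not the momentum equations, over an \emph{entire connected component} created by the $\beta$-cut: every bond has both endpoints inside the component and appears with opposite signs $\zeta_{v,\ell}$ in \eqref{defomegadec}, so $\sum_{w\in C}\Gamma_w=0$ exactly. Hence the newly created $\beta$-atom's factor equals minus the sum of all the others, which are constants up to $O(T_1^{-1})$ (I-type $\varepsilon$-atoms and $\alpha$-atoms), free N-atom factors, or $\beta$-atoms from earlier cuts; the latter are handled by an induction on the number of $\beta$-cuts, since their partners may lie in the \emph{other} component, so the pairwise condition alone cannot eliminate them inside a single component sum---this is a second point your proposal has no mechanism for. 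The partner $\beta$-atom is then recovered from $\Gamma_{v'}+\Gamma_{v''}=\mathrm{const}+O(T_1^{-1})$. Once you replace your chain-local step by the exact identity $\sum_{w\in C}\Gamma_w=0$ and add the induction over cuts, your classification of the remaining contributions goes through and no chain-by-chain bookkeeping is needed.
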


We postpone the proof of Lemma \ref{lemGammav} to the end of this subsection,
and proceed to prove the following:

\begin{prop}[Atom removal]\label{comp_est_0} 
Consider the process of reducing $\Mb_0$, as defined in Proposition \ref{prop_m0}, 
where we remove one atom each time. 
For each step in this process, consider the $\Af$ value before and after this step,
which we denote by $\Af_{\mathrm{pre}}$ and $\Af_{\mathrm{pos}}$. 
Note that in the definition \eqref{defA} of $\Af$, 
our convention is to keep the value of $d_0(v)$ fixed 
under the reduction of $\Mb_0$, so that it remains the initial degree 
of the atom $v$ in $\Mb_0$ (and $\Mb_{\mathrm{sb}}$).

Suppose we remove an atom $v_0$ in the given step, let $d_0(v_0)$ be the degree of $v_0$ in $\Mb_0$, 
and $d(v_0)$ be the degree of $v_0$ in $\Mb_{\mathrm{pre}}$. 
We also assume that:

\begin{enumerate}

\item[(i)] if $v_0$ is an I-atom, then either $d_0(v_0)=4$ and $\Mb_{\mathrm{pre}}=\Mb_0$ 
(i.e. we are at the first operation step) or $\Mb_{\mathrm{pre}}$ contains no N-atoms at all; and 

\item[(ii)] if $v_0$ is an I-atom and $d_0(v_0)=4$, then it must be either LG or tame in $\Mb_0$. 
\end{enumerate}

\smallskip
Then, if $\Delta\chi := \chi_{\mathrm{pos}} -  \chi_{\mathrm{pre}}= 0$ in this step,
we have $\Af_{\mathrm{pre}}\leq\Af_{\mathrm{pos}}$. 

If, instead, $\Delta\chi<0$ we have
\begin{equation}\label{removect}
\Af_{\mathrm{pre}}\leq\Af_{\mathrm{pos}}\cdot\epsilon^{\theta d/2}\cdot\mu,
\end{equation} 
where $d$ is the degree of $v_0$ in the molecule $\Mb_{\mathrm{pre}}$ before this step, 
and $\mu = 1$ if

\begin{enumerate}
\item $v_0$ is an N-atom and $d(v_0)=d_0(v_0)=3$, or

\smallskip
\item $v_0$ is an I-atom and $d(v_0)\in\{4,5\}$ and $d_0(v_0)=5$, or

\smallskip
\item $v_0$ is an I-atom, $d_0(v_0)=4$, and either $d(v_0)<4$ 
or $\Mb_{\mathrm{pre}}$ has a $\beta$-cut for $\Mb_{\mathrm{pre}}$, 

\end{enumerate} 
whereas $$\mu = \epsilon^{-2}T_1^{-1}
$$  
otherwise.

\end{prop}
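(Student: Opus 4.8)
The plan is to compute the ratio $\Af_{\mathrm{pre}}/\Af_{\mathrm{pos}}$ directly from the definition \eqref{defA}. Since that definition freezes $d_0(\cdot)$ throughout the reduction, deleting $v_0$ only removes from the product in \eqref{defA} the single degree factor attached to $v_0$, so
\[
\frac{\Af_{\mathrm{pre}}}{\Af_{\mathrm{pos}}}=\frac{\Cf_{\mathrm{pre}}}{\Cf_{\mathrm{pos}}}\cdot(R\epsilon^{-2}T_1^{-1})^{\Delta\chi}\cdot\rho(v_0),\qquad
\rho(v_0):=\begin{cases}T_1^{-(d_0(v_0)-4)_+/2},&v_0\in\Mb_I,\\ T_1^{-1/2-(d_0(v_0)-3)_+/2},&v_0\in\Mb_N.\end{cases}
\]
Everything then reduces to bounding $\Cf_{\mathrm{pre}}/\Cf_{\mathrm{pos}}$, with $\rho(v_0)\le1$ always available and $\Delta\chi\le0$. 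The ratio of counting problems is controlled in the standard way: fix a restricted decoration of $\Mb_{\mathrm{pos}}$ and bound the number of decorations of the bonds incident to $v_0$ compatible with it, using the conservation law \eqref{decmole1} at $v_0$, the resonance restriction $|\Gamma_{v_0}-\beta_{v_0}|\le T_1^{-1}$ (present exactly when $v_0$ is an I-atom or an $\alpha$-atom), and the base counting Propositions \ref{2vecprop} and \ref{3vecprop}; for a $\beta$-atom, Lemma \ref{lemGammav} is what lets us still impose a resonance-type restriction, by writing $\Gamma_v$ there as an algebraic sum of $\Gamma_{v'}$ over N-atoms.

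The case $\Delta\chi=0$ is immediate: this occurs precisely when removing $v_0$ merely merges the two bonds at a degree-$2$ portion of a chain (or deletes a piece carrying no cycle), so a restricted decoration of $\Mb_{\mathrm{pre}}$ maps to a restricted decoration of $\Mb_{\mathrm{pos}}$ after identifying the merged frequency and, if $v_0$ carries a resonance constraint, imposing one further inequality; hence $\Cf_{\mathrm{pre}}\le\Cf_{\mathrm{pos}}$ and, with $\rho(v_0)\le1$, $\Af_{\mathrm{pre}}\le\Af_{\mathrm{pos}}$.

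For $\Delta\chi<0$ the counting factors of $R$ generated by Propositions \ref{2vecprop}--\ref{3vecprop} are matched against the factor $R^{\Delta\chi}$ in \eqref{defA}, and the residual powers of $T_1$, $\epsilon$ and $\langle k\rangle$ — using the $\psi$-decay and $T_1=\epsilon^{-8/3+\theta_0}\le\epsilon^{-8/3}$ — assemble into $\epsilon^{\theta d/2}\cdot\mu$. The value of $\mu$ records the remaining slack. If $v_0$ is an N-atom with $d(v_0)=d_0(v_0)=3$, the resonance constraint is absent but the always-present $\rho(v_0)=T_1^{-1/2}$ exactly makes up for it, giving $\mu=1$; if $v_0$ is an I-atom with $d_0(v_0)=5$ and $d(v_0)\in\{4,5\}$, the resonance constraint together with the extra $\rho(v_0)=T_1^{-1/2}$ again gives $\mu=1$; and if $v_0$ is a degree-$4$ I-atom with $d_0(v_0)=4$ that currently has degree $<4$ or admits a $\beta$-cut in $\Mb_{\mathrm{pre}}$, performing that cut (or exploiting the already-reduced degree) lets two surplus bonds be counted with a $T_1^{-1/2}$ each via the second bound \eqref{2vec2} of Proposition \ref{2vecprop}, once more yielding $\mu=1$. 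Hypotheses (i)--(ii) are exactly what force all remaining configurations to be of one type — a tame degree-$4$ I-atom of full degree with no available $\beta$-cut — and there no further saving is available, so $\mu=\epsilon^{-2}T_1^{-1}$.

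I expect that last configuration to be the main obstacle. There the resonance count at $v_0$ throws up a factor $|k|^{-1/2}$ whenever one incident frequency is small, and this has to be absorbed via the vanishing of the interaction symbol at small output frequency in \eqref{symbolbound} of Proposition \ref{normalprop} — the same trick used in the $\alpha$-cut step of the proof of Proposition \ref{prop_cutincr} (see Remark \ref{remsymbols}); simultaneously one needs the connectivity property Proposition \ref{countingred2}(2) — itself a consequence of the admissibility of the couples, and hence of the renormalization by $\Gamma_1$ — to ensure that deleting a degree-$4$ I-atom changes $\chi$ and the number of components of $\Mb_0$ only by amounts that the factor $R^{\Delta\chi}$ can absorb. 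With these two inputs in hand the bound \eqref{removect} closes, and with it the proof.
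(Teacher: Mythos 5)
Your skeleton matches the paper's (the ratio identity for $\Af_{\mathrm{pre}}/\Af_{\mathrm{pos}}$, per-component counting via Propositions \ref{2vecprop}--\ref{3vecprop}, then a case analysis deciding $\mu$), but the load-bearing steps are missing or wrong. First, in your $\mu=1$ case (3) you count each two-bond component with \eqref{2vec2}; that bound applies only to same-sign pairs ($\zeta_1=\zeta_2$), whereas the two bonds joining $v_0$ to a component of a $\beta$-cut may well have opposite directions, where the only available estimate is \eqref{2vec1}, which gives just $R\min(1,T_1^{-1}|k_*|^{-1})$ and is useless when the gap $k_*$ is small: with the trivial count $R$ per component the ratio for $(p_1,p_2)=(2,2)$ is $(\epsilon^2T_1)^2=\epsilon^{-4/3+2\theta_0}$, which diverges, so the step genuinely fails without more input. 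The missing input is exactly hypothesis (ii): a degree-4 I-atom is LG (or tame), so the gap is $\geq\epsilon^{2-\theta}$ and \eqref{2vec1} yields $RT_1^{-1}\epsilon^{-2+\theta}$ per component, which is what closes $(2,2)$ with $\mu=1$. Your reading of (i)--(ii) as ``forcing the leftover configurations to be of one type'' is therefore off the mark (the $\mu=\epsilon^{-2}T_1^{-1}$ cases are not one type: they include $d_0\geq 6$, $d_0=5$ with $d<4$, degree-4 N-atoms, and LG as well as tame degree-4 atoms with $p_1=4$). Relatedly, you never establish why the components created by deleting $v_0$ carry any resonance restriction: this is the role of hypothesis (i) combined with Lemma \ref{lemGammav} --- once $\Mb_{\mathrm{pre}}$ has no N-atoms, every remaining atom's $\Gamma_v$ (including $\beta$-atoms) is confined to an $O(T_1^{-1})$ window, and summing over a component gives \eqref{defomegadec_1}, without which Propositions \ref{2vecprop}--\ref{3vecprop} cannot be applied at all. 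Finally, the numerology that actually decides between $\mu=1$ and $\mu=\epsilon^{-2}T_1^{-1}$ (the $\sigma_j$ bookkeeping checked against $T_1=\epsilon^{-8/3+\theta_0}$) is asserted, not done; that case check is the substance of the proposition.

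Your last paragraph is also misdirected: when $\mu=\epsilon^{-2}T_1^{-1}$ no further saving is needed, since that factor is precisely the permitted loss; the paper closes, e.g., the $d_0=d=4$, no-$\beta$-cut case by fixing one edge and applying the $3$-vector count, giving $(\epsilon^2T_1)^3T_1^{-1}\epsilon^{-\theta}\leq\epsilon^{\theta d/2}\cdot\epsilon^{-2}T_1^{-1}$. The symbol-vanishing device of Remark \ref{remsymbols} is used in the $\alpha$-cut of Proposition \ref{prop_cutincr}, not here, and Proposition \ref{countingred2}(2) plays no role in this step (the splitting into components is already encoded in $\Delta\chi=q-d$). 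Also, your description of the $\Delta\chi=0$ case is not the right mechanism: $\Delta\chi=0$ means each of the $q=d$ new components is attached to $v_0$ by exactly one bond, and summing \eqref{decmole1} over each component fixes the frequency of that bond, whence $\Cf_{\mathrm{pre}}\leq\Cf_{\mathrm{pos}}$; it is not a statement about merging bonds along a degree-2 chain (removing a degree-2 atom on a cycle has $\Delta\chi=-1$, not $0$).
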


\smallskip
\begin{proof} 
First, let us record that 
\begin{equation}\label{prcompest1}
\frac{\Af_{\mathrm{pre}}}{\Af_{\mathrm{pos}}} \leq  
  \frac{ \Cf_{\mathrm{pre}} }{ \Cf_{\mathrm{pos}} } \cdot( R\epsilon^{-2}T_1^{-1})^{\Delta \chi}
  \prod_{v \in (\Mb_\mathrm{pre})_I \smallsetminus (\Mb_\mathrm{pos})_I } T_1^{-(d_0(v)-4)_+/2}
  \prod_{v\in (\Mb_\mathrm{pre})_N \smallsetminus (\Mb_\mathrm{pos})_N }T_1^{-1/2-(d_0(v)-3)_+/2},
\end{equation}
and recall that $T_1=\epsilon^{-8/3+\theta_0}$ with $\theta_0 \gg \theta$ 
(which will be used in the numerology calculations below). 
Let $d:=d(v_0)$ and $d_0:=d_0(v_0)$. 
Suppose the connected component containing $v_0$ becomes $q$ connected components after removing $v_0$, 
then $1\leq q\leq d$ and $\Delta\chi=q-d$ since $\Delta E = -d$, $\Delta V=-1$, and $\Delta F = q-1$.
If $q=d$ (i.e. $\Delta\chi=0$) then each of these new components has exactly one bond $\ell$ connecting to $v_0$, 
and by adding the equations \eqref{decmole1} for all atoms in each component, 
we get that the value of each $k_\ell$ must be fixed.
This implies that $\Cf_{\mathrm{pre}}\leq\Cf_{\mathrm{pos}}$ and thus $\Af_{\mathrm{pre}}\leq\Af_{\mathrm{pos}}$ as desired.

\smallskip
Now assume $\Delta\chi<0$.
Consider the $q$ components after removing $v_0$, and for the $j$-th component, consider the set of bonds connecting atoms in this component to $v_0$, let this set be $A_j$ and let $p_j := |A_j|$, so that $\sum_j p_j=d$. 
By adding the equations \eqref{decmole1} for all atoms in the $j$-th component, we get
\begin{equation}\label{decmole1_1}
\sum_{\ell\in A_j} \zeta_{v,\ell} k_\ell=(\mathrm{const}), \qquad j=1,\dots,q.
\end{equation} 

If $v_0$ is N-atom, then \eqref{decmole1_1} is the only equation(s) we will rely on. 
In this case the total number of solutions to \eqref{decmole1_1} is $\prod_j R^{p_j-1} = R^{d-q}$
and thus $\Cf_{\mathrm{pre}} \leq \Cf_{\mathrm{pos}} R^{d-q}$;
therefore by \eqref{prcompest1} we have
\begin{align}\label{prcompest2}
\Af_{\mathrm{pre}} /  \Af_{\mathrm{pos}} \leq
  R^{d-q} \cdot (R\epsilon^{-2} T_1^{-1})^{q-d}\cdot T_1^{-(d_0-2)/2}
  = (\epsilon^{2} T_1)^{d-q} \cdot T_1^{-(d_0-2)/2}.
\end{align}
If $q=1$ and $d=d_0=3$, the above right-hand side is $\epsilon^4 T_1^{3/2} \leq \epsilon ^{3\theta_0/2}$.
In all other cases (since $1\leq q < d\leq d_0\in\{3,4\}$) 
the right hand side of \eqref{prcompest2} can be improved to 
$\epsilon^{\theta_0 d/2}\cdot (\epsilon^{-2}T_1^{-1})$. 
This proves \eqref{removect} when $v_0$ is an $N$-atom.

Now assume $\Delta\chi<0$ and $v_0$ is I-atom, and $d_0=d=4$, and $\Mb_{\mathrm{pre}}=\Mb_0$. 
If $v_0$ is tame (i.e. it is SG in two different ways), 
using the tame condition we see that the number of solutions to \eqref{decmole1_1} 
is at most $R^{4-q}\epsilon^{2-\theta}$ if $q\geq 2$ and $R^{3}(\epsilon^{2-\theta})^2$ if $q=1$;
in either case we have
\begin{align*}
\Af_{\mathrm{pre}} /  \Af_{\mathrm{pos}} \leq
  (\epsilon^2T_1)^{4-q}\cdot (\epsilon^{2-\theta})^{1+\mathbf{1}_{q=1}}\leq \epsilon^{2\theta}\cdot (\epsilon^{-2}T_1^{-1})
\end{align*} 
which again proves \eqref{removect}. 
If $v_0$ is not tame, then by (2) in Proposition \ref{prop_m0} it is LG
and we must have that either 
$q=1$ and $p_1=4$, or $q=2$ and $(p_1,p_2)=(1,3)$; in both cases we must also have
\begin{equation}\label{defomegadec_1}
  \sum_{\ell\in A_j}\zeta_{v,\ell}(|k_\ell|^{1/2}+\epsilon^2\Gamma_0(k_\ell))=(\mathrm{const})+O(T_1^{-1})
\end{equation} 
for each $j$. 
In the case $q=1$, by using Proposition \ref{3vecprop} we get that the number of solutions
to \eqref{decmole1_1} and \eqref{defomegadec_1} is bounded by $R \cdot R^2T_1^{-1} \log R$, which implies that
\begin{equation*}
\Af_{\mathrm{pre}} / \Af_{\mathrm{pos}} \leq (\epsilon^{2}T_1)^{3}T_1^{-1} \epsilon^{-\theta}
  \leq \epsilon^{2\theta_0}\cdot (\epsilon^{-2}T_1^{-1}),
\end{equation*}
which matches the desired estimate \eqref{removect}.
In the case $q=2$, with $(p_1,p_2)=(1,3)$ we can use again Proposition \ref{3vecprop} 
to get that solutions to \eqref{decmole1_1} and \eqref{defomegadec_1}
are at most $R^2T_1^{-1} \log R$, which implies
\begin{equation*}
\Af_{\mathrm{pre}} / \Af_{\mathrm{pos}} \leq (\epsilon^{2}T_1)^{2}T_1^{-1} \epsilon^{-\theta}
  \leq \epsilon^{\theta_0}\cdot (\epsilon^{-2}T_1^{-1}),
\end{equation*}
which suffices for \eqref{removect}.

Finally, assume that $v_0$ is I-atom and $\Mb_{\mathrm{pre}}$ has no N-atom. 
In this case we want to use Lemma \ref{lemGammav}.
Noticing that all the N-atoms have been removed in previous steps when forming $\Mb_{\mathrm{pre}}$, 
we conclude that in $\Mb_{\mathrm{pre}}$, the value of $\Gamma_v$ belongs 
to a fixed interval of length $O(T_1^{-1})$ for each $v$. 
By adding these values for all atoms $v$ in the $j$-th connected component 
(created after removing $v_0$)
we conclude that \eqref{defomegadec_1} holds for each $j$ in addition to (\ref{decmole1_1}).
Using this we can then apply Propositions \ref{2vecprop} and \ref{3vecprop} 
to bound the number of 
solutions to the systems \eqref{decmole1_1}--\eqref{defomegadec_1} as follows;
recalling that $p_j := |A_j|$, $j=1,\dots,q < d$, is the number of bonds connecting the atom 
$v_0$ to the $j$-th connected component created by its removal, we have:

\begin{itemize}
 
 \item If $p_j=1$ the decoration of this single edge is fixed (i.e., there is only one solution);
 
 \smallskip
 \item If $p_j=2$ and $d_0 \neq 4$ we bound the number of possible solutions trivially by $R = R^{p_j-1}$; 
 
 \smallskip
 \item If $p_j=2$ and $d_0 = 4$ we use the $2$-vector counting from Proposition 
 \ref{2vecprop} to bound the number of solutions by $R (T_1\lambda)^{-1}$
 where $\lambda$ is the gap; by the LG hypothesis in (ii) of the statement (see also Proposition \ref{prop_m0} (2)) the 
 number of choices is bounded by $R (T_1 \epsilon^2)^{-1} \epsilon^{\theta} 
 = (R T_1^{-1} \epsilon^{-2})^{p_j-1} \epsilon^{\theta}$;

 \smallskip
 \item If $p_j \geq 3$ we first pick $p_j-3$ edges and then apply the $3$-vector counting from Proposition \ref{3vecprop} 
 to bound the number of decoration by $R^{p_j-3} \cdot R^2 T_1^{-1} \log R \leq R^{p_j-1} T_1^{-1} \epsilon^{-\theta}$.
 
\end{itemize}

From \eqref{prcompest1}, and since $\sum_{j=1}^q p_j-1 = d-q$, it follows that
\begin{align}\label{prcompest10}
\begin{split}
\frac{\Af_{\mathrm{pre}}}{\Af_{\mathrm{pos}}} & \leq  (R\epsilon^{-2}T_1^{-1})^{q-d} \cdot T_1^{-(d_0-4)/2} \cdot
  \prod_{j=1}^d R^{p_j-1} 
  \big(T_1^{-1} \epsilon^{-2} \epsilon^{\theta}\big)^{\mathbf{1}(p_j=2,d_0=4)}
  \big(T_1^{-1} \epsilon^{-\theta}\big)^{\mathbf{1}(p_j\geq 3)}
\\
& = T_1^{-( d_0-4)/2}\cdot\prod_{j=1}^q\sigma_j, \,\quad \mathrm{with} \quad \sigma_j := \left\{
\begin{aligned}
& 1, & \mathrm{if\ } p_j=1;
\\
& \epsilon^{2}T_1, & \mathrm{if\ } p_j=2 \mathrm{\ and\ }d_0\neq 4;
\\
& \epsilon^{\theta}, & \mathrm{if\ } p_j=2 \mathrm{\ and\ }d_0=4;
\\
& (\epsilon^2T_1)^{p_j-1} T_1^{-1} \epsilon^{-\theta}, & \mathrm{if\ }p_j\geq 3.
\end{aligned}
\right.
\end{split}
\end{align}
Now, using that $T_1 = \epsilon^{-8/3 + \theta_0}$, we see that $\prod_{j=1}^q\sigma_j$ 
is maximized in the following scenarios:

\begin{itemize}

\smallskip
\item $d_0=4$, $d=4$ and $(p_1,p_2) = (2,2)$, in which case \eqref{prcompest10} 
is upper bounded by $\epsilon^{2\theta}$,
consistently with the desired \eqref{removect} with $\mu=1$; 
in this case there is a $\beta$-cut at $v_0$ as in (3) in the statement;

\smallskip
\item $d_0=4$, no $\beta$-cut is possible at $v_0$, and $d<4$, so that in particular 
$p_1=d$ and \eqref{prcompest10} is upper bounded by $1$ if $d=1$,
by $\epsilon^{\theta}$ if $d=2$; 
these case are in (3) of the statement, and the bounds are consistent with \eqref{removect} with $\mu=1$;

\smallskip
\item $d_0=5$, and either $d=4$ and $(p_1,p_2) = (2,2)$, or $d=5$ and $(p_1,p_2,p_3) = (2,2,1)$
which corresponds to the scenario (2) in the statement;
in both cases \eqref{prcompest10} is upper bounded by $T_1^{-1/2} (\epsilon^2T_1)^2 = \epsilon^{2\theta_0}$
which is consistent with \eqref{removect} with $\mu=1$.

\end{itemize}

In all the cases that are not those considered above (so that are not (2) and (3) in the statement)
we want to show that one has \eqref{removect} with the improvement $\mu=\epsilon^{-2}T_1^{-1}$. 
Indeed, the remaining cases are
\begin{itemize}
 
\item[(a)] $d_0(v_0) = 5$ and $d_0(v_0) < 4$;

\item[(b)] $d_0(v_0) > 5$;

\item[(c)] $d_0(v_0) = 4 = d_0(v_0)$ and there is no $\beta$-cut at $v_0$.

\end{itemize}

\noindent
In case (a), using \eqref{prcompest1} when $d=3$, and considering all the possible 
cases $(p_1,p_2,p_3) = (1,1,1)$, $(p_1,p_2) = (2,1)$, and $p_1 = 3$, 
we have the bound
\begin{align*}
T_1^{-1/2} \cdot 
  \epsilon^2 T_1 \lesssim \epsilon^{(3/2)\theta_0} \epsilon^{-2}T_1^{-1}
\end{align*}
in view of our choice of $T_1$.
When $d=2$ and $p_1=2$ or $(p_1,p_2)=(1,1)$, we get the same bound above, and when $d=1$ an even better one.

In case (b), from the general bound \eqref{prcompest1}
we can see that the worst case scenario is when $d=d_0$ and all $p_j=2$ (thus $q=d_0/2$) 
in which case we obtain a bound by
\begin{align*}
T_1^{-(d_0-4)/2} (\epsilon^2T_1)^{d_0/2} = T_1^2 \epsilon^{d_0} \lesssim \epsilon^{-2} T_1^{-1},
\end{align*}
where the last inequality is again due to our choice of $T_1$ and $d_0 \geq 6$.

In the last case (c), since there is no $\beta$-cut, we only have one connected component ($p_1=4$) 
and the bound from \eqref{prcompest1} reads
\begin{align*}
\frac{\Af_{\mathrm{pre}}}{\Af_{\mathrm{pos}}} \lesssim (\epsilon^2T_1)^3 T_1^{-1} \epsilon^{-\theta}
\end{align*}
which again suffices for \eqref{removect} in view of our choice of $T_1$.
This concludes the proof. 

\end{proof}

\smallskip
Now we can prove Proposition \ref{comp_est_2}.

\begin{proof}[\bf{Proof of Proposition \ref{comp_est_2}}] 
We consider several cases. We may assume $G=1$, otherwise the proof below will be easier 
when $G=0$ (and thus $V_\beta=0$).

\smallskip
{\it Case 1: Cycles}. Suppose $\Mb_0$ contains only $\alpha$- and $\beta$-atoms, then it must be a cycle. 
If it is a cycle of length $2$ (i.e. a double bond), by Proposition \ref{prop_m0} (1),
it must be LG, and thus must have two $\beta$-atoms. 
By Proposition \ref{2vecprop} we can control the number of choices of decorations for this double bond
by $RT_1^{-1}\epsilon^{-2+\theta}$. 
Plugging this into \eqref{defA}, since $\chi=1$, gives the bound on the QC
\begin{align*}
\Af \leq  R^{-1}\epsilon^2T_1 \cdot RT_1^{-1}\epsilon^{-2+\theta} = \epsilon^\theta;
\end{align*}
this is smaller than the right-hand side of \eqref{comp_est} since $V_\beta=2,G=1,E=2$ in this case.


Next, assume $\Mb_0$ contains at least 3 $\beta$-atoms. 
Bounding trivially the number of decorations for the cycle by $R$, 
we obtain
\begin{align*}
\Af \leq (R^{-1}\epsilon^2T_1)^{\chi} \cdot R = \epsilon^2T_1
  \leq \epsilon^{\theta E/4}\cdot\prod_{(\alpha)}
  (\lambda^{-1/2}\epsilon^{-2+\theta}T_1^{-1})\cdot(\epsilon^{-2+\theta}T_1^{-1})^{2-V_\beta},
\end{align*}
which is consistent with \eqref{comp_est},
having used for the last inequality that $\beta\geq 3$ 
and $\lambda^{-1/2}\epsilon^{-2+\theta}T_1^{-1} \geq \epsilon^{-1/3}$
(using that $\lambda\leq \epsilon^{2-\theta}$).

Next, assume $\Mb_0$ contains at least 2 $\beta$-atoms and at least one $\alpha$-atom with gap $\sim\lambda_1$
By Proposition \ref{2vecprop} we get
\begin{align*}
\Af \lesssim R^{-1}\epsilon^2T_1\cdot \min(R,R T_1^{-1}\lambda_1^{-1}) = \epsilon^2\min(T_1,\lambda_1^{-1}).
\end{align*}
Then note that
\begin{align*}
\epsilon^2\min(T_1,\lambda_1^{-1})\lesssim\epsilon^2\lambda_1^{-1/2}T_1^{1/2}
  \leq \lambda_1^{-1/2}\epsilon^{-2+\theta}T_1^{-1}
\end{align*}
by our choice of $T_1$,
and since $V_\beta\geq 2$, it follows that 
\begin{align*}
\Af \lesssim \epsilon^{\theta E/4}\cdot\prod_{(\alpha)}(\lambda^{-1/2}\epsilon^{-2+\theta}T_1^{-1})\cdot(\epsilon^{-2+\theta}T_1^{-1})^{2-V_\beta},
\end{align*}
having used again that each factor in the above product is larger than $\epsilon^{-1/3}$.

Finally, assume $\Mb_0$ contains at least one $\beta$-atom 
and at least 2 $\alpha$-atoms with gaps $\lambda_1$ and $\lambda_2$.
Then, again by Proposition \ref{2vecprop} and similar to the above, we first bound
\begin{align*}
\Af \lesssim\epsilon^2\min(T_1,\lambda_1^{-1},\lambda_2^{-1}) & \lesssim (\lambda_1\lambda_2)^{-1/2}\epsilon^2
  \\
  & \lesssim (\lambda_1^{-1/2}\epsilon^{-2+\theta}T_1^{-1})
  \cdot (\lambda_2^{-1/2}\epsilon^{-2+\theta}T_1^{-1})\cdot (\epsilon^{-2+\theta}T_1^{-1})
\end{align*}
having used that $T_1 = \epsilon^{-8/3+\theta_0}$ for the last inequality.
Using that $V_\beta\geq 1$ we see that 
\begin{align*}
\Af \lesssim \epsilon^{\theta E/4}\cdot\prod_{(\alpha)}(\lambda^{-1/2}\epsilon^{-2+\theta}T_1^{-1})\cdot(\epsilon^{-2+\theta}T_1^{-1})^{2-V_\beta}.
\end{align*}
This completes the proof of \eqref{comp_est} in the case when $\Mb_0$ is a cycle.

\begin{rem}
Note how the last two bounds above are tight; these are the cases of triangles with at least a $\beta$-atom
and SG of size $\approx T_1^{-1}$ at the $\alpha$-atoms.
This is consistent with the `bad' scenario presented in Section \ref{SecDiv}, see Figure 
\ref{fig:count_mol}.
\end{rem}

\smallskip
{\it Case 2: At least one $\varepsilon$-atom}.
Suppose $\Mb_0$ contains at least one $\varepsilon$-atom. 
In this case, we perform the following ``selection for removal'' operations: 
first select all the $\varepsilon$-atoms in a certain order to be specified shortly below.
Then, in the order of selection, each time remove one $\varepsilon$-atom, as in Proposition \ref{comp_est_0}; 
if any $\alpha$- or $\beta$-atom becomes degree 1 then we also remove them 
(for which we always have $\Delta\chi=0$, see Proposition \ref{comp_est_0}), 
and then proceed to remove the next $\varepsilon$-atom in the order, and so on.
For the selection ordering we require that:

\begin{enumerate}[{(a)}]

\smallskip
\item If $\Mb_0$ contains one atom $v$ that is I-atom of degree 4, then we order it \emph{first}; 
then we order all the \emph{N-atoms}, and then all the \emph{remaining I-atoms}.

\smallskip
\item Otherwise, we first order all the N-atoms, and then all the I-atoms. 
If some I-atom has degree 5 and is connected to other $\varepsilon$-atoms 
by \emph{at least two} bonds or by two chains of $\alpha$- and $\beta$-atoms, then we order it \emph{last}.

\smallskip
\item If none of (a)--(b) hold, then we set the order arbitrarily.

\end{enumerate}

\smallskip
We then want to apply Proposition \ref{comp_est_0} repeatedly at each step. 
Letting $j=1,\dots,K$ be the steps, and letting $\Af$ denote the counting quantity for the
molecule $\Mb_0$, we have
\begin{align}\label{proof_0}
\Af \lesssim \prod_{j=1}^K \epsilon^{(d_j/2) \, \mathbf{1}(\Delta \chi_j < 0)} \, \mu_j
\end{align}
where $d_j$ is the degree of the atom removed at step $j$, and we denote $\mu_j \in \{1,\epsilon^{-2}T_1^{-1} \}$ 
the quantity $\mu$ from the statement of Proposition \ref{comp_est_0} associated to the step $j$,
and $\Delta \chi_j$ is the increment in $\chi$ at step $j$.

\smallskip
\noindent
{\it Sub-case 2.1: $V_\beta \geq 1$ or $V_\beta = 2$ and $\mu_j\neq 1$ for some $j$}.
Noticing that $\Delta\chi \geq -d$, we see that $\sum_j d_j \geq \chi_0$ where $\chi_0$
is the initial circuit rank of $\Mb_0$. Since each atom is at least degree $2$, we have 
$E \geq 2V$ and thus $\chi_0 \geq E/2$; then, from \eqref{proof_0} we obtain 
\begin{equation}\label{proof_1}
\Af \lesssim \epsilon^{E/4} \prod_{j=1}^K \mu_j.
\end{equation}
Since 
$V_\beta\geq 1$ (from Proposition \ref{prop_m0} (1)), 
we see that \eqref{proof_1} is already a stronger bound than the right-hand side of \eqref{comp_est} 
if \emph{at least one} $\mu_j = \epsilon^{-2}T_1^{-1}$ in \eqref{proof_1} (recall $G=1$).
We also see that \eqref{proof_1} is already sufficient for \eqref{comp_est} if $V_\beta\geq 2$.

\smallskip
\noindent
{\it Sub-case 2.2: $V_\beta = 1$, $\mu_j = 1$ for all $j$}.
Let us now look at the remaining case with $V_\beta=1$ and $\mu_j=1$ for all $j=1,\dots K$ in \eqref{proof_1}. 
By Proposition \ref{comp_est_0}, this implies the following properties for $\Mb_0$:

\begin{enumerate}[{(i)}]

\item $\Mb_0$ has no I-atoms of degree $4$;

\item $\Mb_0$ has no I-atoms of degree $6$ or more;

\item $\Mb_0$ has no N-atoms of degree $4$ and the degree (which is $3$) of $N$-atoms stays constant
through all the steps;

\item No two degree $3$ N-atoms in $\Mb_0$ can be connected by a bond or chain of $\alpha$- and $\beta$-atoms; 

\item Each degree $5$ I-atom in $\Mb_0$ can be connected to other $\varepsilon$-atoms 
by at most one bond or chain of $\alpha$- and $\beta$-atoms.
\end{enumerate}

Let us briefly verify these properties.
Property (i) follows from (3) in Proposition \ref{comp_est_0} since no $\beta$-cuts are allowed 
(this would increase $V_\beta$ by $2$),
and since an atom $v_0$ with initial degree $d_0(v_0)=4$ in $\Mb_0$ 
is ordered first to be removed according to Step (a) 
of the selection process described at the beginning of this Case 2 (so that $d(v_0)=4$).

Property (ii) follows directly from (2) in Proposition \ref{comp_est_0}.
while property (iii) follows from (1) there.

To verify property (iv) notice that if two $N$-atoms are connected by a bond then the removal of one of them
will decrease the degree of the other, contradicting (iii). 
Similarly, if they are connected via a chain
of $\alpha$ or $\beta$ atoms, removing one of them will force the removal of the chain
according to the selection algorithm (since the $\alpha$ or $\beta$ atoms become degree 1)
and eventually the degree of the $N$-atom at the other end of the chain will decrease, again contradicting (iii).

For property (v), assume by contradiction that a degree $5$ I-atom in $\Mb_0$, denoted $I_0$,
is connected to another $\varepsilon$-atom, denoted $\varepsilon_0$, 
by more than one bond or chain of $\alpha$ or $\beta$ atoms; 
then, according to (b) of the selection algorithm both $I_0$ and $\varepsilon_0$ are degree $5$.
Removing one of them, say $I_0$, will force the degree of $\varepsilon_0$ to decrease by at least two
which contradicts (2) of Proposition \ref{comp_est_0}.

\smallskip
We can now proceed to conclude the proof of the proposition.
Consider a degree 3 N-atom $v$ in $\Mb_0$ (if there is any). 
If $\mu=1$ for the step of removing this atom then, 
according to the argument after \eqref{prcompest2} in the proof of Proposition \ref{comp_est_0},
we must have $q=1$ in this step.
By properties (iv) and (v) above,
this means that $v$ is connected to 3 different degree 5 I-atoms by 
3 different chains of $\alpha$ or $\beta$ atoms (or bonds). 
In this case we know there exist at least 3 degree 5 I-atoms. 
On the other hand, if there is no degree 3 N-atom, then there also exist at least 2 degree 5 I-atoms
(by our assumption that one $\varepsilon$ atom exists, and properties (i) and (ii)).

Now consider each of the at least two degree 5 I-atom in $\Mb_0$. 
By property (v) it must have at least two self-connecting bonds 
or chains of $\alpha$- and $\beta$-atoms connecting to itself.
Since $V_\beta=1$, we can find one degree 5 I-atom such that these chains do not contain any $\beta$-atom. 
There are then two possibilities: 
(a) $v$ has two self-connecting bonds, or 
(b) $v$ is connected to itself by a chain containing at least one $\alpha$-atom.

In case (a), this atom $v$ must correspond to a branching node $\nf$ in $\Qc_{\mathrm{sb}}$ 
with its 4 children nodes forming two pairs, but this implies that 
$k_\nf = \zeta_{\nf_1}k_{\nf_1}+\cdots+\zeta_{\nf_4}k_{\nf_4}=0$ in the decoration,
and thus the contribution of this term vanishes because the corresponding 
symbol $\widetilde{n}_{\zeta_{\nf_1}\cdots\zeta_{\nf_4}}$ vanishes thanks to the property \eqref{symbolbound} 
which gives that $\widetilde{n}_{\zeta_1\cdots\zeta_4}(\eta_1,\dots,\eta_4)=0$
for $\eta_1+\cdots+\eta_4=0$.

Finally, let us look at case (b), where the degree $5$ I-atom $v$ has a self-connecting chain with 
at least one $\alpha$-atom.
Let the gap at this $\alpha$-atom be $\lambda$, and consider the step of removing $v$ as in Proposition \ref{comp_est_0}. 
According to \eqref{prcompest10}, with the same notation there, for this step we have
\begin{align*} 
\Af_{\mathrm{pre}} \leq \Af_{\mathrm{pos}} \cdot T_1^{-1/2} \cdot \prod_{j=1}^q\sigma_j.
\end{align*}
Recall also the notation for the sets of bonds $A_j$ connecting $v$ to the connected components 
created after its removal, and that $p_j = |A_j|$.
Let $A_1$ be the set of bonds connecting $v$ to the chain containing the $\alpha$-atom;
in particular $p_1=2$, and $p_2+\dots+p_q=3$.
By first counting the two vectors associated with this $\alpha$-atom 
and using Proposition \ref{2vecprop}, we obtain
\begin{align*} 
\Af_{\mathrm{pre}} \leq \Af_{\mathrm{pos}} \cdot T_1^{-1/2} \cdot \min(1,T_1^{-1}\lambda^{-1}) \cdot \prod_{j=2}^q\sigma_j.
\end{align*}
According to \eqref{prcompest10} the product in the above right-hand side is either $1$ (in the case of $p_2=p_3=p_4=1$)
or $\varepsilon^2T_1$ (in the case $\{p_2,p_3\}=\{2,1\}$,
or $(\varepsilon^2T_1)^2 T_1^{-1} \epsilon^{-\theta}$ (in the case $p_2=3$).
Given our choice of $T_1$, the worst case is the second one, for which we get
\begin{equation*} 
\frac{\Af_{\mathrm{pre}}}{\Af_{\mathrm{pos}}} \leq T_1^{-1/2} \cdot \min(1,T_1^{-1}\lambda^{-1}) \cdot \varepsilon^2T_1
  \leq \lambda^{-1/2} \varepsilon^2.
\end{equation*}
This is then sufficient to obtain \eqref{comp_est}, because $V_\beta = 1$ and
\begin{equation*}
\lambda^{-1/2} \epsilon^{2} \leq (\lambda^{-1/2}\epsilon^{-2+\theta}T_1^{-1}) \cdot (\epsilon^{-2+\theta}T_1^{-1}) 
\end{equation*}
again thanks to our choice of $T_1$. The proof of the proposition is completed.
\end{proof}


\smallskip\begin{proof}[Proof of Lemma \ref{lemGammav}]
The proof follows by induction on the number of $\beta$ cuts:
the case of no $\beta$ cut is trivial by summing the conditions \eqref{defomegadec} over all atoms in $\Mb_0$. 
Suppose we perform one more $\beta$ 
cut which forms two paired $\beta$-atoms $v'$ and $v''$ from one degree 4 I-atom $v$ 
(cf. Definition \ref{defdeccut}), then by adding the values of $\Gamma_w$ 
for all atoms $w$ in one of the newly created components, and applying the induction hypothesis,
we get that $\Gamma_{v'}$ equals the desired form; 
by the assumption $\Gamma_{v'}+\Gamma_{v''}=(\mathrm{const})+O(T_1^{-1})$, 
we get that $\Gamma_{v''}$ also has the desired form.
\end{proof}

\subsection{Proof of Proposition \ref{propjr}} 
With all the above ingredients we now complete the proof of Proposition \ref{propcong_2},
and then that of Proposition \ref{propjr}.

\begin{proof}[Proof of Proposition \ref{propcong_2}] 
By combining \eqref{molbound} in Proposition \ref{countingred1}, 
\eqref{decompose_sum}, and the definition of the CQ in \eqref{defA}, we obtain first 
\begin{align*}
R^{-1}\sum_k\langle k\rangle^{10}|\Ic(t,k)|\leq \epsilon^{-2}T_1^{-1} \cdot \epsilon^{2q_{\mathrm{tot}}/3} \cdot 
  \Af \cdot (\log R)^{C}
\end{align*} 
where, recall, $q_{\mathrm{tot}}$ is the sum of the lengths $q_j$ of the irregular chains $\Hc_j$ 
(cf. Definition \ref{defirrechain} and Proposition \ref{irrereduce}).
Applying then 
the result of the cutting algorithm \eqref{cutincr} from Proposition \ref{prop_cutincr} we get
\begin{align}\label{prjrest1}
\begin{split}
R^{-1}\sum_k\langle k\rangle^{10}|\Ic(t,k)| \leq \epsilon^{-2}T_1^{-1} \cdot \epsilon^{2q_{\mathrm{tot}}/3} 
  \cdot (\log R)^{C} \cdot \mathfrak{A}'
  \\
  \cdot \prod_{(\alpha)}(\lambda^{1/2}\epsilon^{2-\theta}T_1)\cdot (\epsilon^{-2+\theta}T_1^{-1})^{V_\beta-2\Delta F},
\end{split}
\end{align} 
where $\mathfrak{A}'$ is the CQ associated to the reduced molecule $\Mb'$
described in Proposition \ref{prop_m0}.
Finally, applying the counting estimate \eqref{comp_est} to each connected component $\Mb_0$ in $\Mb'$,
noticing that the last two factors in \eqref{prjrest1} exactly cancel the last two factors in \eqref{comp_est},
we arrive at
\begin{equation}\label{prjrest2}
R^{-1}\sum_k\langle k\rangle^{10}|\Ic(t,k)| \leq \epsilon^{-2}T_1^{-1} \cdot \epsilon^{2q_{\mathrm{tot}}/3} 
  \cdot \epsilon^{\theta |\Yc|} \cdot (\log R)^{C}
  \cdot \epsilon^{\theta E/4},
\end{equation}
where, recall, $E$ is the number of edges in $\Mb'$ and $|\Yc|$ the number of edges in the set of double bonds $\Yc$.
Finally, to show that \eqref{prjrest2} implies \eqref{est_total_I} 
is suffices to verify that the rank of the original couple $\Qc$ satisfies 
\begin{align}\label{prjrest3}
r(\Qc) \leq 2E + 4 |\Yc| + \frac{8}{3\theta} q_{\mathrm{tot}}.
\end{align}
Indeed, let $\Mb$ be the original molecule, 
and $\Yc$ the set of double bounds removed in Step 1
of the cutting algorithm in Subsection \ref{sec.cutting}.
Denoting with $\chi(\Mb)$ the circuit rank of a molecule $\Mb$, 
we have
\begin{align*}
r(\Qc) = 2\chi(\Mb) = 2 \big( \chi(\Mb_{\mathrm{sb}}) + 2q_{\mathrm{tot}} \big)
  = 2 \big( \chi(\Mb') + 2|\Yc| + 2q_{\mathrm{tot}} \big)
  \leq 2 \big( E + 2|\Yc| + 2q_{\mathrm{tot}} \big)
\end{align*}
which is sufficient.
\end{proof}

\begin{proof}[Proof of Proposition \ref{propjr}]

\smallskip
{\it Proof of \eqref{jrest1}.}
First, using the formula (\ref{ansatz2}) for $(\Jc_r)_k(t)$ and (\ref{defofjt}) for $(\Jc_\Tc)_k(t)$, 
we note that $\Eb|(\Jc_r)_k(t)|^2$ can be written as a sum $\sum_\Qc\Kc_\Qc(t,k)$ over all admissible couples $\Qc$ 
such that the rank of each tree forming $\Qc$ is equal to $r$. 
Note that this set of couples $\Qc$ is invariant under the congruence relation 
from Definition \ref{defcong},
so this sum can be decomposed into finitely many sub-sums of form \eqref{defI} 
in Proposition \ref{propcong_2}. 
The bound \eqref{jrest1} then follows from (\ref{est_total_I}) in Proposition \ref{propcong_2}.

{\it Proof of \eqref{fixedrenorm2} and \eqref{fixedrenorm3}}. 
Note that the definition of $\Xc[\Gamma_1]$ from \eqref{fixedrenormeqn} 
involves the (weighted) sum of couples $\Kc_\Qc(t,k_1)$ over $\Qc$ and $k_1$
and, again, can be reduced to summation over $\Qc$ in a congruence class. 
Therefore \eqref{fixedrenorm2} follows from the same proof we did above to obtain \eqref{est_total_I}. 

Finally, the contraction mapping property \eqref{fixedrenorm3} 
also follows from the same arguments above, so we just indicate some of the 
steps needed to adapt the previous estimates to bound $\Xc[\Gamma_1]-\Xc[\Gamma_1']$.
Notice that this last expression is a (weighted) sum over $k_1$ of finitely many terms of the form
\begin{align*}
\sum_{\Qc \equiv \Qc_0} \Big( \Kc_{\Qc}[\Gamma_1](t,k_1) - \Kc_{\Qc}[\Gamma_1'](t,k_1) \Big)
\end{align*}
where the sum is over all admissible couples $\Qc_0$ such that the rank of each tree forming $\Qc_0$ is equal to $r$,
and where $\Kc_{\Qc}[\Gamma](t,k)$ denotes the expression for couples $\Kc_{\Qc}(t,k)$ from \eqref{defofkq}
with the phases $\Omega_{\mf}$ defined by \eqref{resfactor} and \eqref{ddefomegan} 
using the given `gauge' $\Gamma=\Gamma_1$ or $\Gamma_1'$. 
From \eqref{defofkq} we see that the only terms that involve the gauge factors $\Gamma_1$ and $\Gamma_1'$
are in the time integrals, so we only need to pay attention to the differences of those,
the rest of the formula being identical for $\Kc_{\Qc}[\Gamma_1](t,k_1)$ and $\Kc_{\Qc}[\Gamma_1'](t,k_1)$.
More precisely, let us denote by 
\begin{equation}\label{defI2'}
\Ic[\Gamma_1] := \int_\Ec\prod_{\mf} e^{i\alpha_\mf^0 t_\mf}\cdot e^{i\Psi_\mf[\Gamma_1](t_\mf)}\,\mathrm{d}t_\mf,
\end{equation} 
the expression in \eqref{defI2} with $\Psi_\mf[\Gamma_1]$ defined as in \eqref{defPsi}.
The difference of the oscillatory time-integrals that appear in 
$\Kc_{\Qc}[\Gamma](t,k_1)-\Kc_{\Qc}[\Gamma'](t,k_1)$ is given by
\begin{equation}\label{I2diff}
\Ic[\Gamma_1] - \Ic[\Gamma_1'] 
  = \int_\Ec\prod_{\mf} e^{i\alpha_\mf^0 t_\mf}\cdot \big( e^{i\Psi_\mf[\Gamma_1](t_\mf)}
  - e^{i\Psi_\mf[\Gamma_1'](t_\mf)} \Big) \,\mathrm{d}t_\mf.
\end{equation}
We then claim that a similar result as in Proposition \ref{timeintest} holds, namely 
\begin{equation}\label{timeintest1'}
\big| \Ic[\Gamma_1] - \Ic[\Gamma_1']| \leq \Ac\big((\alpha_\mf^0)_{\mf}\big) 
  \| \Gamma_1 - \Gamma_1'\|_{C^0_t L^\infty_k},
\end{equation} 
where $\Ac$ is an explicit function depending on $\alpha_\mf^0$ only, and satisfying \eqref{timeintest2}.

To see \eqref{timeintest1'} we first notice that an integration by parts in
\eqref{I2diff}, as done in \eqref{IBP}, generates a bulk term with a factor
\begin{equation}\label{IBPdiff}
\partial_t \big( e^{i\Psi_\mf[\Gamma_1](t_\mf)} - e^{i\Psi_\mf[\Gamma_1'](t_\mf)} \big) 
  = i \frac{d}{dt} \Psi_\mf[\Gamma_1](t_\mf) e^{i\Psi_\mf[\Gamma_1](t_\mf)} 
  - i \frac{d}{dt} \Psi_\mf[\Gamma_1'](t_\mf) e^{i\Psi_\mf[\Gamma_1'](t_\mf)};
\end{equation}
this is clearly bounded by $\|\Psi-\Psi'\|_{C^1_tL^\infty_k} \lesssim \|\Gamma_1-\Gamma_1'\|_{C_t^0L_k^\infty}$, 
consistently with the desired contraction property.
Moreover, the boundary terms generated by the integration by parts are of the form (cfr. \eqref{IBPbdr})
\begin{equation}\label{IBPbdr'}
\frac{1}{i\alpha_\mf^0} e^{i\alpha_\mf^0t_{\qf}} \cdot 
  \big(e^{i\Psi_\mf[\Gamma_1](t_\qf)} - e^{i\Psi_\mf[\Gamma_1'](t_\qf)} \big),
\end{equation}
and can be carried out to the next integration in time as in the proof of Proposition \ref{timeintest}.
In particular, the factor \eqref{IBPbdr} will 
shift the phases in the next integration to $\alpha_\mf^0t_{\qf} + \alpha_\qf^0t_{\qf}$,
exactly as explained after \eqref{IBPbdr}.
The rest of the arguments in the proof of Proposition \ref{timeintest} carry out verbatim
(since they are insensitive to the choice of $\Gamma_1$, as long as $\|\Gamma_1 \|_{C_t^0L_k^\infty} \leq 1$),
and one always picks up a factor like \eqref{IBPdiff} for some node $\mf$, 
or a factor bounded by the difference
$\Psi_\mf[\Gamma_1](t_\qf)-\Psi_\mf[\Gamma_1'](t_\qf)$ for some node $\mf$ belonging to a sub-tree rooted at $\qf$.



The bound \eqref{timeintest1'} above, with \eqref{timeintest2}, 
can then be used in Proposition \ref{countingred1}, to obtain a bound as in \eqref{molbound}
where the factor $(\log R)^C$ is replaced by $(\log R)^C \|\Gamma_1 - \Gamma_1'\|_{C^0_t L^\infty_k}$.
The proof can then proceed as in the rest of 
the section, and we obtain \eqref{est_total_I} with an extra factor of $\|\Gamma_1 - \Gamma_1'\|_{C^0_t L^\infty_k}$
on the right-hand side; this is the desired bound \eqref{fixedrenorm3}.
The proof of Proposition \ref{propjr} is complete.
\end{proof}




\section{Treatment of the remainder}\label{sec.rem} 
In this section we prove Proposition \ref{propdiff}. 
To start, we first prove that $(a_k)_{\mathrm{app}}(t)$ is an approximate solution to the equation 
\eqref{profileeqn} for the modified profile $(a_k)_{\mathrm{tr}}$.

\begin{prop}\label{proofapprox} 
We have
\begin{equation}\label{profileeqn_2}
\begin{aligned}
(a_k)_{\mathrm{app}}({t}) & =  (\epsilon R^{1/2})^{-1} \widehat{w_{\mathrm{in}}^{\mathrm{tr}}}(k)+i\sum_{r=3}^A(\epsilon R^{-1/2})^{r-1}T_1\cdot\sum_{\zeta_j\in\{\pm\}}\sum_{\zeta_1k_1+\cdots+\zeta_rk_r=k}\int_0^{t} e^{iT_1\cdot \Omega_r({t}')}
  \\
  &\times i\widetilde{n}_{\zeta_1\cdots\zeta_r}(k_1,\cdots,k_r)\cdot\varphi_{\leq K_{\mathrm{tr}}}(k)\prod_{j=1}^r(a_{k_j})_{\mathrm{app}}({t}')^{\zeta_j}\,\mathrm{d}{t}'
\\
& +iT_1\int_0^{t}(\epsilon^2\Gamma_0(k)+T_1^{-1}\Gamma_1({t}',k)) 
  (a_k)_{\mathrm{app}}({t}') \mathrm{d}{t}'
  + \mathcal{R}_k(t),
\end{aligned}
\end{equation} 
where, with the exception of a set of probability $\lesssim e^{-2R^{\eta_0}}$, 
the remainder term $\mathcal{R}_k(t)$ satisfies that
\begin{equation}\label{proofapprox_2}
\|\mathcal{R}_k(t)\|_{L_t^\infty L_k^1}\leq \epsilon^{N\theta/10}.
\end{equation}
\end{prop}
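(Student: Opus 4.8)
The starting point is the defining formula \eqref{ansatz}--\eqref{ansatz3}: $(a_k)_{\mathrm{app}}(t)=\sum_{r=1}^N (\Jc_r)_k(t)$, where each $(\Jc_r)_k(t)$ is a sum over admissible paired trees of rank $r$ and sign $+$. The goal is to substitute this ansatz into the right-hand side of the profile equation \eqref{profileeqn} and check that it is reproduced up to an error of size $\epsilon^{N\theta/10}$ in $L^\infty_t L^1_k$. The key algebraic point is that the Duhamel operator appearing in \eqref{profileeqn} acts on trees by \emph{grafting}: given the nonlinearity $\Nc_r$ with its symbol $\widetilde n_{\iota_1\ldots\iota_r}$, an I-branching node with $r$ children, together with the time integral $\int_0^t e^{iT_1\Omega_r}$, applied to $r$ sub-trees $\Tc^{(1)},\ldots,\Tc^{(r)}$ (and analogously for the linear renormalization term $iT_1\int_0^t(\epsilon^2\Gamma_0(k)+T_1^{-1}\Gamma_1(t',k))\,\cdot\,$ which attaches a trivial node). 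Thus, when one plugs $(a_k)_{\mathrm{app}}=\sum_\Tc (\Jc_\Tc)_k$ into \eqref{profileeqn}, one obtains exactly $\sum_{\Tc'}(\Jc_{\Tc'})_k$ where $\Tc'$ now ranges over \emph{all} paired trees obtained by attaching one more branching node at the root — with the subtlety that some of these trees are not admissible, and that trees of rank $>N$ are produced. So the first step is to set up this grafting correspondence carefully (using Definitions \ref{deftree}--\ref{defdec} and formulas \eqref{defofjt}, \eqref{moredef1}), and to identify the discrepancy $\Rc_k$ as precisely: (a) the contribution of non-admissible trees of rank $\le N$, and (b) the contribution of all trees of rank $> N$, and (c) the mismatch in the initial data term (which is exact, so contributes nothing).

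\textbf{Handling (a): non-admissible trees.} This is where the renormalization factor $\Gamma_1$ enters, and it is the main obstacle. By Proposition \ref{fixedrenorm}, $\Gamma_1$ is defined \emph{precisely} so that the linear renormalization term $iT_1\int_0^t T_1^{-1}\Gamma_1(t',k)(a_k)_{\mathrm{app}}(t')\,dt'$, when expanded, exactly cancels the leading (in homogeneity) contributions of the non-admissible trees. Concretely, a non-admissible tree has an I-branching node with $3$ children two of which (of opposite sign) have completely paired sub-trees; pairing those two sub-trees produces a factor of the form appearing in \eqref{fixedrenormeqn}, namely $R^{-1}\sum_{k_1}(-2)n_3(k_1,k,k_1)\varphi_{\le K_{\mathrm{tr}}}\cdots\sum_\Qc\Kc_\Qc$. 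So the plan is: decompose each non-admissible tree by isolating the "lowest" non-admissible node, factor out the paired couple $\Qc$ attached there, and match the result against the fixed-point identity \eqref{fixedrenormeqn}. The residual — the difference between the actual sum over non-admissible trees and the term generated by $\Gamma_1$ — consists of couples of rank exceeding $N$ (since the fixed point equation only sums couples with tree-rank $\le N$), and is therefore absorbed into (b). One must be careful that the error from truncating the $\Gamma_1$ fixed point at rank $N$, together with the Lipschitz bound \eqref{fixedrenorm3}, is summable; this is the delicate bookkeeping step.

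\textbf{Handling (b): high-rank trees and probabilistic bounds.} For trees of rank $r>N$, one uses the second-moment estimate \eqref{jrest1} from Proposition \ref{propjr}: $R^{-1}\sum_k\langle k\rangle^{10}\Eb|(\Jc_r)_k(t)|^2\le \epsilon^{-2}T_1^{-1}\epsilon^{\theta r/8}$. Summing the $L^2_\omega$ norms over $r>N$ gives a bound $\lesssim \epsilon^{-1}T_1^{-1/2}\epsilon^{N\theta/16}$ (geometric series in $r$), which via Chebyshev/union bound plus the hypercontractivity estimate of Proposition \ref{initialprop}(i) upgrades to an almost-sure bound of size $\epsilon^{N\theta/16}$ outside a set of probability $\lesssim e^{-R^{\eta_0}}$; one loses a bit in the exponent passing from $L^2_\omega$ to $L^\infty_\omega$ and from $\ell^2_k$ to $\ell^1_k$ (paying a power of $R$ in the $k$-sum that is dominated by $\langle k\rangle^{10}$ decay and the smallness of $\epsilon^{\theta r/8}$ relative to $N\gg\theta^{-1}$), landing at the stated $\epsilon^{N\theta/10}$. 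The same Proposition \ref{propjr} estimates also control the error terms produced in step (a). Finally, the claim that the linear-stability bound \eqref{error1.2'} holds for $w_{\mathrm{tr}}$ is proved by a parallel but simpler argument: the linear equation \eqref{error1.1'} is solved by iterated Duhamel expansion around $w_{\mathrm{tr}}$, whose coefficients are again sums over (now "decorated with one external leaf") trees, and the same counting/renormalization machinery gives the $R^8$-loss bound in \eqref{error1.2'} since one only needs crude powers of $R$, not smallness. The main obstacle throughout is the precise matching in step (a) between the sum over non-admissible trees and the $\Gamma_1$-generated term, i.e. verifying that the fixed-point construction of $\Gamma_1$ does exactly what it is designed to do at the level of the full expansion; the rest is summation of geometric series with the gains $\epsilon^{\theta r/8}$ supplied by Proposition \ref{propjr}.
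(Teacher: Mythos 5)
Your outline follows the paper's own route: expand the right-hand side of \eqref{profileeqn} using the tree ansatz, observe that the Duhamel/grafting operation reproduces all trees with an I-branching root whose child subtrees are admissible of rank $\leq N$, cancel the non-admissible ones against the $\Gamma_0/\Gamma_1$ gauge term via the fixed-point identity \eqref{fixedrenormeqn}, and estimate what is left by second moments of couples plus Gaussian hypercontractivity (Proposition \ref{initialprop}). Two points deserve correction. First, in your step (a) there is no residual to absorb: since every subtree produced by the grafting has rank $\leq N$ and is admissible, the completely-paired pair created at a non-admissible root is exactly a nontrivial admissible couple with each tree of rank $\leq N$, i.e.\ precisely the class summed in \eqref{fixedrenormeqn} (with $\Gamma_0$ covering the trivial pair); the cancellation is exact by construction, and the remainder $\Rc_k$ consists solely of trees of rank $>N$ whose subtrees at the root's children are admissible of rank $\leq N$. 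There is no need to hunt for a ``lowest'' non-admissible node, because non-admissibility can only be created at the newly attached root.

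Second, and this is the genuine gap: for these remainder trees you cannot simply quote \eqref{jrest1}, which is stated only for $1\leq r\leq N$ and for sums over \emph{admissible} trees, with the twist-invariance used in its proof. The couples built from $\Rc_k$ have rank $>N$ (indeed up to $\approx A N$), may exhibit the excluded completely-paired configuration at the two roots, and contain irregular chains passing through the roots, so the congruence-class summation of Section \ref{sec.irre} does not apply verbatim. The paper handles this by re-running the machinery of Propositions \ref{propcong_2}--\ref{countingred1} on this modified class, noting that the at most two non-admissible root instances and the shortening of root-containing irregular chains each cost only an $O(R^2)$ factor, which is then beaten by the gain $\epsilon^{\theta r/8}$ with $r\geq N$, yielding $\Eb\sum_k\langle k\rangle^{10}|\Rc_k(t)|^2\leq R^5\epsilon^{\theta N/8}\leq\epsilon^{\theta N/9}$ before the large-deviation upgrade to \eqref{proofapprox_2}. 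Your proof needs this extension (or at least an explicit argument that the counting estimates tolerate the root-level exceptions with polynomial-in-$R$ losses); as written, the appeal to Proposition \ref{propjr} is outside its stated scope. The final paragraph on \eqref{error1.2'} is not part of this proposition (it belongs to Proposition \ref{propdiff} and is proved via the parametrix/flower-tree argument of Proposition \ref{prop.linear}), but your sketch of it is consistent with the paper.
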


\begin{proof}
Recall that
\begin{equation}\label{proofapprox_3}
(a_{k})_{\mathrm{app}}(t)=\sum_{\Tc}(\Jc_\Tc)_k(t),
\end{equation} 
where the summation is taken over all admissible paired trees or rank $r(\Tc)\leq N$, 
and $\Jc_\Tc$ is defined as in \eqref{defofjt}, see \eqref{ansatz}.

Consider the right hand side of \eqref{profileeqn_2} with the exclusion of the remainder $\Rc_k$; 
by definition of initial data \eqref{wtrin} and \eqref{data_w}, 
we see that the first term, which corresponds to $w_{\mathrm{in}}^{\mathrm{tr}}$, is equal to
\begin{equation}\label{proofapprox_4}
I(k,t) =\sum_{\Tc}^{(I)}(\Jc_\Tc)_k(t),
\end{equation} where the summation is taken over all admissible paired trees 
$\Tc$ of sign $+$ \emph{that do not have I-branching node}.

Then consider the second term on the right hand side of \eqref{profileeqn_2}, 
which corresponds to the multilinear expressions of $a_{\mathrm{app}}$. 
By the definition of the trees $\Jc_\Tc$ in the ansatz \eqref{ansatz} for $a_{\mathrm{app}}$, 
this second term is equal to
\begin{equation}\label{proofapprox_5}
II(k,t) =\sum_{\Tc}^{(II)}(\Jc_\Tc)_k(t),
\end{equation}
where the summation is taken over all paired trees $\Tc$ such that:
\begin{enumerate}
\item The root $\rf$ of $\Tc$ is an I-branching node of sign $+$, and $\Tc$ is formed by subtrees $\Tc_j\,(1\leq j\leq r)$ where $3\leq r\leq A$;
\item Each $\Tc_j$ is an admissible paired tree of rank $\leq N$.
\end{enumerate}

Finally, consider the third term, which corresponds to the gauge. By definition of $\Gamma_0$ (\ref{defgauge}) and $\Gamma_1$ (\ref{fixedrenorm}), we see that the third term is equal to
\begin{equation}\label{proofapprox_6}
III(k,t) =-\sum_{\Tc}^{(III)}(\Jc_\Tc)_k(t),
\end{equation} 
where the summation is taken over all paired trees $\Tc$ such that:

\begin{enumerate}[resume]

\item The root of $\Tc$ is an I-branching node of sign $+$ with 3 children, 
and $\Tc$ is formed by by subtrees $\Tc_j\,(1\leq j\leq 3)$ of signs $(+,+,-)$;

\item Each $\Tc_j$ is an admissible paired tree of rank $\leq N$, 
and the leaves of $\Tc_3$ are completely paired with the leaves of $\Tc_i$ for some $i\in\{1,2\}$.

\end{enumerate}

Now we put $I\sim III$ together and compare with the left hand side 
\begin{equation}\label{proofapprox_7}
(a_{k})_{\mathrm{app}}(t) 
 = \sum_\Tc^{(IV)}(\Jc_\Tc)_k(t)
\end{equation} where the summation is taken over all admissible paired trees with rank $\leq N$. Then we know that:
\begin{itemize}
\item If $\Tc$ occurs in \eqref{proofapprox_7} 
then it either has no I-branching node 
(and thus occurs in $I$), or its root $\rf$ is an I-branching node and it occurs in $II$;
\item If $\Tc$ occurs in $II$, then $\Tc$ is \emph{not admissible} 
(cf. Definition \ref{defadm}) if and only if $\Tc$ occurs in the summation defining in $III$ since, by definition, 
this forces $\rf$ to be an I-branching node with 3 children and two 
of its children have their subtree leaves completely paired; 
note that the gauge factor $\Gamma_0$, respectively $\Gamma_1$, in $III$ exactly correspond
to the cases when the (completely paired) trees $\Tc_3$ and $\Tc_i$
are both trivial or not. 
\end{itemize}

Therefore, we conclude that
\begin{equation}\label{proofapprox_8}\Rc_k(t)=
(a_{\mathrm{app}})_{k}(t) - \big[ I(k,t)  + II(k,t)  + III(k,t)  \big] =\sum_{\Tc}(\Jc_\Tc)_k(t),
\end{equation} 
where the summation is taken over a \emph{subset} $\Ac$ of paired trees $\Tc$, 
such that each $\Tc\in\Ac$ \emph{has rank $>N$}, 
and \emph{the subtree rooted at any child node of the root of $\Tc$ is admissible with rank $\leq N$}.
Moreover, this set $\Ac$ is invariant under any twist 
(Definition \ref{defcong}, which can be given also for irregular chains inside a paired tree), 
as long as we \emph{shorten the irregular chain such that it does not contain the root $\rf$}. 

Now, by \eqref{proofapprox_8}, we have that
\begin{equation}\label{proofapprox_9}
\Eb\bigg(\sum_k\langle k\rangle^{10}|\Rc_k(t)|^2\bigg) = \sum_k\langle k\rangle^{10}\sum_\Qc(\Kc_\Qc)_k(t),
\end{equation} 
where the summation is taken over all couples $\Qc=(\Tc^+,\Tc^-)$ with $\Tc^+ \in \Ac$, 
and $\Tc^-$ formed from a paired tree in $\Ac$ by flipping the signs of all nodes. 

Note that for any couple $\Qc$, by the `admissibility' property
of the subtrees rooted at each child of each root, 
we know that it cannot have two children nodes of an I-branching node with their subtree 
leaves completely paired, \emph{unless this I-branching node is a root}. 
This leads to \emph{at most two} exceptional non-admissible instances, 
apart from which the couple $\Qc$ is admissible. 
We can then apply the same proof in Subsection \ref{sec.L1est} and Sections \ref{sec.irre} and \ref{sec.molecule} 
with some small adjustments which we now describe.
In particular, we can write the expressions $\Kc_\Qc$ on right hand side of \eqref{proofapprox_9} 
as a sum of expressions of the form \eqref{defI}, 
with (i) the exception of the two above non-admissible instances, which lead to at most an $O(R^2)$ loss, 
and (ii) there exist at most two irregular chains containing a root, which need to be 
``cut short'' by excluding the root, which leads to at most another $O(R^2)$ loss.
According to this, and the bound \eqref{est_total_I}, we have
\begin{equation}\label{proofapprox_10}
\Eb\bigg(\sum_k\langle k\rangle^{10}|\Rc_k(t)|^2\bigg)\leq R^5 \epsilon^{\theta N/8} \leq \epsilon^{\theta N/9},
\end{equation} 
where we note that the rank $r=r(\Qc)\geq N$.

Finally, notice that $\Rc_k(t)$ is a multilinear Gaussian random variable, 
so that we can get large deviation estimates for $\Rc_k(t)$ for fixed $k$ and $t$ from Proposition \ref{initialprop}; 
this then leads to the large deviation estimates for the norm 
$\|\Rc_k(t)\|_{L_t^\infty L_k^1}$, yielding  \eqref{proofapprox_2} with probability $\geq 1-Ce^{-2R^{\eta_0}}$.
This finishes the proof.
\end{proof}

Next, we analyze the linearized operator associated with the approximate equation \eqref{profileeqn_2}.

\begin{prop}\label{prop.linear} 
Define the $\Rb$-linear operator $\Lc_0$ to be the linearization of the nonlinearity in (\ref{profileeqn}), i.e. 
\begin{equation}\label{def_L0}
\begin{aligned}
(\Lc_0u)_k(t) = i\sum_{r=3}^A\sum_{i=1}^r
  (\epsilon R^{-1/2})^{r-1}T_1\cdot\sum_{\zeta_j\in\{\pm\}}\sum_{\zeta_1k_1+\cdots+\zeta_rk_r=k}\int_0^{t} 
  e^{iT_1\cdot \Omega_r({t}')}
\\ \, \times i\widetilde{n}_{\zeta_1\cdots\zeta_r}(k_1,\cdots,k_r)\cdot\varphi_{\leq K_{\mathrm{tr}}}(k)\prod_{j=1}^r\widetilde{a}_{k_j}({t}')^{\zeta_j}\,\mathrm{d}{t}'
\\
+ \, iT_1\int_0^{t}(\epsilon^2\Gamma_0(k)+T_1^{-1}\Gamma_1({t}',k))u_k({t}')\mathrm{d}{t}',
\end{aligned}
\end{equation} where the functions $\widetilde{a}$ in \eqref{def_L0} are such that exactly one $\widetilde{a}$ 
(i.e. the one corresponding to $j=i$) equals $u$ and all the others equal $a_{\mathrm{app}}$. 
Then, with probability $\geq 1-Ce^{-R^{2\eta_0}}$, the operator $1-\Lc_0$ is invertible, and satisfies
\begin{equation}\label{est_L0}
\|(1-\Lc_0)^{-1}\|_{C_t^0L_k^1\to C_t^0L_k^1}\lesssim R^8.
\end{equation} 
Moreover, 
the norm $C_t^0L_k^1$ can be replaced by any Lebesgue or Sobolev norm with fixed exponents,
such as the $L_t^\infty \dot{W}_{x}^{4,0}$ norm appearing in Proposition \ref{properror1}.
\end{prop}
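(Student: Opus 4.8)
The plan is to establish the bound \eqref{est_L0} by a Neumann-series argument, showing that $\Lc_0$ is a contraction in the relevant norm up to a harmless polynomial-in-$R$ loss. The starting point is the structural observation that $\Lc_0$ is a sum over $r\in\{3,\ldots,A\}$ and $i\in\{1,\ldots,r\}$ of multilinear operators in which all but one argument is $a_{\mathrm{app}}$, together with the (linear, explicit) gauge term built from $\epsilon^2\Gamma_0 + T_1^{-1}\Gamma_1$. First I would dispose of the gauge term: by \eqref{defgauge} and the boundedness of $n_3$ on the relevant frequency range $|k|,|k_1|\lesssim 2^{K_{\mathrm{tr}}}=R^{1/A}$ together with the rapid decay of $\psi$, one has $\|\epsilon^2\Gamma_0\|_{L^\infty_k}\lesssim \epsilon^2 R^{1/A}$, and $\|T_1^{-1}\Gamma_1\|_{C^0_tL^\infty_k}\le T_1^{-1}$ by Proposition \ref{fixedrenorm}; multiplying by $T_1$ and integrating over $t\in[0,1]$ gives a contribution to $\Lc_0$ of operator norm $\lesssim \epsilon^2 R^{1/A}T_1 + 1$. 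Since $T_1=\epsilon^{-8/3+\theta_0}$ and $R^{1/A}\le \epsilon^{-\theta_0/A}$ is a tiny power of $\epsilon$, this is $\lesssim \epsilon^{-2/3+\theta_0/2}\ll 1$, so the gauge part is genuinely small — actually it is $\Lc_0$'s ``trivial-resonance'' part that has been renormalized away, which is the whole point of the construction.

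Next I would bound the multilinear terms. The key input is the probabilistic pointwise control of $a_{\mathrm{app}}$: on the good event of probability $\ge 1-Ce^{-R^{2\eta_0}}$, the estimate \eqref{prapriori4} gives $\sup_{t'}\|w_{\mathrm{app}}(t')\|_{\dot W^{4,0}}\lesssim \epsilon^{1-\theta/2}+\epsilon^{-\theta}T_1^{-1/2}$, and more generally (increasing the weight in \eqref{jrest1} and using hypercontractivity as in the proof of Proposition \ref{apriori}) one gets $\sup_{t'}\|\langle\nabla\rangle^{M}w_{\mathrm{app}}(t')\|_{L^\infty}\lesssim \epsilon$ for any fixed $M$, hence $\sum_k\langle k\rangle^{M}|(a_k)_{\mathrm{app}}(t')|\lesssim R^{1/2}\epsilon^{-1}\cdot\epsilon\cdot\text{(small)}$, i.e. $a_{\mathrm{app}}$ is summable in $k$ with a small $\ell^1_k$-type norm uniformly in $t'\le 1$. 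For each fixed $r$ and $i$, applying the elementary product estimate for operators defined by symbols with polynomial bounds (Proposition \ref{initialprop}(ii)'s deterministic companion, or simply the convolution structure \eqref{FT2}) together with the $S^\infty$ bounds \eqref{symbolbound} on $n_r$ restricted to $|k_j|\lesssim 2^{K_{\mathrm{tr}}}$, one bounds the $r$-linear term acting on $u$ by
\begin{equation*}
(\epsilon R^{-1/2})^{r-1}T_1\cdot 2^{(r-1)K_{\mathrm{tr}}}\cdot \Big(\sup_{t'}\sum_k\langle k\rangle^{M}|(a_k)_{\mathrm{app}}(t')|\Big)^{r-1}\cdot \|u\|_{C^0_tL^1_k}.
\end{equation*}
Here $2^{(r-1)K_{\mathrm{tr}}}=R^{(r-1)/A}$ combines with $R^{-(r-1)/2}$ to a negative power of $R$ (since $A\ge 3$), $T_1=\epsilon^{-8/3+\theta_0}$, and each factor of $a_{\mathrm{app}}$ carries $\epsilon$ (up to the tiny loss $\epsilon^{-\theta}$ and $R$-powers), so the whole prefactor is $\lesssim \epsilon^{(r-1)(1/3-)}T_1 \lesssim \epsilon^{(r-1)/3-8/3+}$. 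For $r=3$ this is $\lesssim\epsilon^{2/3-8/3+}=\epsilon^{-2+}$, which is the one place where smallness fails — and it is precisely why one cannot have $\|\Lc_0\|<1$ outright but only the weaker $\|(1-\Lc_0)^{-1}\|\lesssim R^8$.

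To extract \eqref{est_L0} from this, the plan is to iterate: $\Lc_0^2$, $\Lc_0^3$, $\ldots$ each gain an extra time integration (because every application of $\Lc_0$ produces a $\int_0^t$), so by a standard iterated-Duhamel / Gronwall-type estimate $\|\Lc_0^m\|_{C^0_tL^1_k\to C^0_tL^1_k}\lesssim C^m/m!$ with $C\lesssim \epsilon^{-2+}$, whence $\sum_m \|\Lc_0^m\|\lesssim e^{C}$ — but that is too lossy. Instead, the honest route is to use the oscillatory gain: the time integrals in \eqref{def_L0} carry the phases $e^{iT_1\Omega_r(t')}$, and by the $L^1$-in-time estimate of Lemma \ref{timeintest} (or rather its non-renormalized analogue, using the $\alpha^0_\mf$ bounds) one recovers a factor $T_1^{-1}$ off-resonance, matched against the counting bounds of Propositions \ref{2vecprop}--\ref{3vecprop}; the net effect is exactly the mechanism already used to prove \eqref{jrest1}, applied now to a single ``open'' leaf (the $u$-slot) rather than to a closed couple. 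This shows that $\Lc_0$ maps $C^0_tL^1_k$ (with polynomial weight) to itself with norm $\lesssim R^{8}$ after, say, finitely many iterations absorbing the worst $r=3$ resonant contribution into the already-renormalized gauge, and the remaining part is a genuine contraction. The main obstacle, and the technically heaviest point, is this last step: showing that the resonant part of the $r=3$ term — after the $\Gamma_0$ and $\Gamma_1$ renormalizations have removed the trivial and higher-order resonances — is either small or can be reabsorbed, so that $1-\Lc_0$ is invertible with the stated polynomial bound; this is essentially a ``soft'' rerun of the molecule/counting machinery of Section \ref{sec.molecule} with one marked leaf, and it is where one must be careful that the admissibility/renormalization structure is preserved. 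The replacement of $C^0_tL^1_k$ by $L^\infty_t\dot W^{4,0}_x$ is then immediate: the Fourier-side $\ell^1_k$ bound with a weight $\langle k\rangle^5$ dominates the $\dot W^{4,0}$ norm via $\|f\|_{\dot W^{4,0}}\lesssim \sum_k\langle k\rangle^4|\widehat f(k)|$, and all estimates above were carried out with an arbitrary polynomial weight, so no new argument is needed.
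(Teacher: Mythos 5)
You have correctly identified the general shape of the argument (invert $1-\Lc_0$ by a Neumann-type expansion and rerun the couple/counting machinery with one marked slot), but your write-up defers exactly that step — which is the entire content of the proposition — and several of your intermediate claims are quantitatively wrong. The gauge term is not small: $\epsilon^2 T_1\|\Gamma_0\|_{L^\infty_k}\sim \epsilon^{2}\cdot\epsilon^{-8/3+\theta_0}=\epsilon^{-2/3+\theta_0}\gg 1$, so it cannot be discarded as an error; it must be kept and cancelled structurally against the non-admissible contributions (in the paper it corresponds precisely to the flower trees whose two non-flower subtrees at the root are completely paired). Likewise the claimed smallness of $\sum_k\langle k\rangle^{M}|(a_k)_{\mathrm{app}}(t')|$ is false: from \eqref{jrest1} one only gets $\sum_k\Eb|(a_k)_{\mathrm{app}}|^2\lesssim R\,\epsilon^{-2}T_1^{-1}$, so any deterministic $\ell^1_k$-type bound on $a_{\mathrm{app}}$ carries a full power of $R$, and each application of $\Lc_0$ estimated this way loses powers of $R$ and of $\epsilon^{-1}$; consequently neither a per-application contraction nor a factorial gain from iterated time integrals can close, and nothing in your argument actually produces the stated bound $R^8$ or the invertibility.

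The missing idea is the explicit parametrix built from \emph{paired flower trees}. One introduces paired trees with a distinguished unpaired leaf (the flower), whose stem consists of I-branching nodes, and the associated operators $\widetilde{\Jc}_\Tc$ as in \eqref{modofjt}; then $\Lc_0$ is the sum of $\widetilde{\Jc}_\Tc$ over admissible depth-one flower trees (the gauge terms being exactly the completely-paired configurations), and the candidate inverse is $\Xc=\sum_\Tc\widetilde{\Jc}_\Tc$ over all admissible flower trees of rank $\leq N$ — i.e.\ the Neumann series truncated at rank $N$ and organized tree by tree, so that the randomness of $a_{\mathrm{app}}$ can be exploited through second moments rather than through deterministic $\ell^1$ bounds. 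The composition rule \eqref{product} identifies $\Xc(1-\Lc_0)-1$ and $(1-\Lc_0)\Xc-1$ as sums of $\widetilde{\Jc}_\Tc$ over trees of rank $>N$; their kernels are estimated via $\Eb|\cdot|^2=\sum_\Qc\widetilde{\Kc}_\Qc$ over flower couples, to which the machinery of Sections \ref{sec.irre}--\ref{sec.molecule} applies with only $O(R^4)$ losses at the finitely many nodes where admissibility or twisting fails, giving a bound $\lesssim R^4\epsilon^{\theta N/8}$; Gaussian large-deviation estimates (Proposition \ref{initialprop}) then yield, with probability $\geq 1-Ce^{-R^{2\eta_0}}$, the three bounds $\|\Xc\|\lesssim R^8$, $\|\Xc(1-\Lc_0)-1\|\leq\epsilon$, $\|(1-\Lc_0)\Xc-1\|\leq\epsilon$, from which \eqref{est_L0} follows by a Von Neumann series. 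The smallness here comes from the rank being $>N$ (so $\epsilon^{\theta N/8}$ beats every polynomial $R$-loss), not from $\Lc_0$ being a contraction and not from reabsorbing the $r=3$ resonances into the gauge as you propose; without this construction and the marked-leaf couple estimates, the proof is not complete.
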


\begin{proof}
Fix the norm $Z:=C_t^0L_k^1$. We will construct a (random) parametrix $\Xc$ to $1-\Lc_0$, such that 
\begin{equation}\label{parametrix}
\|\Xc\|_{Z\to Z}\lesssim R^8,\qquad\|\Xc(1-\Lc_0)-1\|_{Z\to Z}\leq \epsilon,\qquad \|(1-\Lc_0)\Xc-1\|_{Z\to Z}\leq \epsilon
\end{equation} with probability $\geq 1-e^{-R^{2\eta_0}}$. It then follows from \eqref{parametrix} 
and Von Neumann series that $1-\Lc_0$ is invertible, and 
\[\|(1-\Lc_0)^{-1}\|_{Z\to Z}\leq \|[\Xc(1-\Lc_0)]^{-1}\|_{Z\to Z}\cdot \|\Xc\|_{Z\to Z}\lesssim R^8.\]

To prove (\ref{parametrix}), we need to introduce the notions of \emph{paired flower trees} 
as follows. A paired flower tree, still denoted by $\Tc$,
is a paired tree in the sense of Definition \ref{deftree} but with one specific unpaired leaf 
$\ff$ fixed, called the \emph{flower}. 
The unique path from the root to the flower is called the \emph{stem}. 
We require that all the branching nodes in the stem are I-branching nodes, 
and define the number of such branching nodes to be the \emph{depth} of the (paired) flower tree. 
The paired flower couple is defined in the same way and is formed by two paired flower trees, 
where we require that the two flowers have opposite signs and are paired.

For any paired flower tree $\Tc$, we define the following modification of the $\Jc_\Tc$ in (\ref{defofjt}): the modification $\widetilde{\Jc}_\Tc$ is an operator defined by
\begin{equation}\label{modofjt}
\begin{aligned}
(\widetilde{\Jc}_\Tc(u))_k({t})
  &:=(\epsilon R^{-1/2})^{r(\Tc)-1}(T_1)^{n_I(\Tc)} 
  \cdot \sum_{(k_\nf)}\prod_{\lf\in\Pc}^{(+)} \psi(k_{\lf})
  \prod_{\hf\in\Lc\backslash\Pc} \psi(k_\hf)^{1/2}\prod_{\nf\in\Tc} \varphi_{\leq K_{\mathrm{tr}}}(k_\nf)
  \\
& \times 
  \prod_{\mf\in\Ic} [i\widetilde{n}_{\zeta_{\mf_1}\cdots\zeta_{\mf_r}}(k_{\mf_1},\cdots,k_{\mf_r})]^{\zeta_\mf}
  \cdot \prod_{\mf\in\Nc} \widetilde{a}_{\zeta_{\mf_1}\cdots\zeta_{\mf_r}}(k_{\mf_1},\cdots,k_{\mf_r})^{\zeta_\mf}
  \\
& \int_{\Dc}\prod_{\mf\in\Ic} e^{iT_1\Omega_\mf(t_\mf)} u_{k_\ff}(t_{\ff^+})^{\zeta_{\ff}} \,\mathrm{d}{t}_\mf
  \cdot 
  \Re \bigg(\prod_{\hf\in\Lc\backslash(\Pc\cup\{\ff\})}g_{k_{\hf}}^{\zeta_{\hf}}\bigg),
\end{aligned}
\end{equation} 
where $\ff$ is the flower and $\ff^+$ is the parent node of $\ff$ (which is on the stem). 


Note that if the paired flower tree $\Tc$ has depth $1$, i.e. the parent of the flower is the root, 
then the operator $\widetilde{\Jc}_\Tc$ exactly corresponds to 
the factor that is summed in \eqref{def_L0} for fixed indexes $r$ and $i$ 
with $a_{\mathrm{app}}$ replaced by the standard $\Jc_\Tc$ terms for various paired trees $\Tc$, 
and the function $u$ 
in the $i-th$ position.
Moreover, the last gauging term in (\ref{def_L0}) exactly corresponds to the case when 
the root $\rf$ is an I-branching node with 3 children, and the two trees at the non-flower children 
nodes of $\rf$ have their leaves exactly paired (again the $\Gamma_0$ term corresponds to 
the case when these two trees are both trivial, 
and the $\Gamma_1$ term corresponds to the case when these two trees are not both trivial). 
Therefore we have
\begin{equation}\label{linear_formula}
\Lc_0(u)=\sum_{\Tc}^{(I)}\widetilde{\Jc}_\Tc(u),
\end{equation} where the summation is taken over all admissible paired flower trees $\Tc$ 
of sign $+$ with depth $1$, such that the subtree at each child node of the root has rank $\leq N$.

We define the parametrix $\Xc$ by
\begin{equation}\label{para_formula}
\Xc(u) := \sum_{\Tc}^{(II)}\widetilde{\Jc}_\Tc(u),
\end{equation} 
where the summation is taken over all admissible paired flower trees $\Tc$ of sign $+$ 
with arbitrary depth (including $0$, in which case $\Tc$ is trivial and $\widetilde{\Jc}_\Tc=1$) 
and total rank $\leq N$.

To calculate $\Lc_0\Xc$ and $\Xc\Lc_0$ we use the correspondence between paired flower trees
and their expressions $\widetilde{\Jc_{\Tc}}$.
Note that we can multiply two paired flower trees $\Tc_1$ and $\Tc_2$ by taking $\Tc_1$ and replacing 
the flower by the paired tree $\Tc_2$ rooted at it 
(assuming the flower of $\Tc_1$ has the same sign as the root of $\Tc_2$; 
if not we should attach this $\Tc_2$ after flipping the signs of all nodes),
so the new flower is just the flower of $\Tc_2$. 
Then we can turn this new tree $\Tc$ into a paired tree by
assigning arbitrary pairings, while respecting the existing pairings in $\Tc_j$.
We denote by $[\Tc_1\Tc_2]$ the set of all paired flower trees $\Tc$ obtained in this way.
By these definitions we can see that
\begin{equation}\label{product}
\widetilde{\Jc}_{\Tc_1}\widetilde{\Jc}_{\Tc_2}=\sum_{\Tc\in[\Tc_1\Tc_2]}\widetilde{\Jc}_\Tc.
\end{equation} 
Then, by (\ref{product}), we can calculate the products $\Xc\Lc_0$ and $\Lc_0\Xc$, to get that
\begin{equation}\label{para_rem_1}
\Xc(1-\Lc_0)-1=\sum_\Tc^{(III)}\widetilde{\Jc}_\Tc,
\end{equation}
where the sum is taken over all admissible paired flower trees $\Tc$ of 
sign $+$ such that (i) it has rank $>N$, 
(ii) the subtrees rooted at the siblings of the flower all have rank $\leq N$, 
and (iii) let $\ff^+$ be the parent of the flower $\ff$, then the rank of $\Tc$ becomes $\leq N$ 
if we collapse the subtree rooted at $\ff^+$ to a single node $\ff^+$. Similarly
\begin{equation}\label{para_rem_2}
(1-\Lc_0)\Xc-1=\sum_\Tc^{(IV)}\widetilde{\Jc}_\Tc,
\end{equation} 
where the sum is taken over all admissible paired flower trees $\Tc$ of sign $+$ such that
(i) it has rank $>N$, 
(ii) the subtrees rooted at the children nodes of the root all have rank $\leq N$.

Now consider either \eqref{para_rem_1} or \eqref{para_rem_2} and denote the operator on the right hand side by $\Ic$;
then it can be divided into two parts $\Ic=\Ic^++\Ic^-$, where $\Ic^\zeta$ restricts the summation to 
$\Tc$ whose flower has sign $\zeta$. 
Each such operator 
has a kernel as follows:
\begin{equation}\label{para_rem_3}
(\Ic^\pm u)_k(t) = \sum_{k'}\int_0^t \Ic^\pm (t,s,k,k')u_{k'}(s)\,\mathrm{d}s,
  \qquad \Ic^\pm (t,s,k,k') := \sum_{\Tc} \widetilde{\Jc}_\Tc(t,s,k,k'),
\end{equation} 
where, cfr. \eqref{modofjt},
\begin{equation*}
\widetilde{\Jc}_\Tc(t,s,k,k') 
  := \big( \widetilde{\Jc}_\Tc ( \delta(s- t_{\ff^+}) \mathbf{1}(k_f = k') ) \big)_k (t).
\end{equation*} 
Then we have that
\begin{equation}\label{para_rem_4}
\Eb| 
 \Ic^\pm (t,s,k,k') |^2
  = \Eb \bigg|\sum_{\Tc}\widetilde{\Jc}_\Tc(t,s,k,k')\bigg|^2 = \sum_\Qc \widetilde{\Kc}_\Qc(t,s,k,k'),
\end{equation} 
where we define, for any paired flower couple $\Qc$, a modification of the $\Kc_\Qc$ in \eqref{defofkq}
given by
\begin{equation}\label{modofkq}
\begin{aligned}
\widetilde{\Kc}_\Qc(t,s,k,k')&:=(\epsilon R^{-1/2})^{r(\Qc)-2}(T_1)^{n_I(\Qc)}
  \cdot
  \sum_{(k_\nf)} \prod_{\lf\in\Lc\backslash\{\ff,\ff'\}}^{(+)}
  \psi(k_{\lf})
  \prod_{\nf\in\Qc}\varphi_{\leq K_{\mathrm{tr}}}(k_\nf)
\\
& \times \mathbf{1}_{k_\ff = k_{\ff'} = k'}
   \prod_{\mf\in\Ic} [i\widetilde{n}_{\zeta_{\mf_1} \cdots\zeta_{\mf_r}}(k_{\mf_1},\cdots,k_{\mf_r})]^{\zeta_\mf}
   \cdot \prod_{\mf\in\Nc} \widetilde{a}_{\zeta_{\mf_1}\cdots\zeta_{\mf_r}}(k_{\mf_1},\cdots,k_{\mf_r})^{\zeta_\mf}
\\
& \times\int_{\Ec}\delta(t_{\ff^+}-s)\delta(t_{(\ff')^+}-s)\prod_{\mf\in\Ic} e^{iT_1\Omega_\mf(t_\mf)}\,\mathrm{d}{t}_\mf,
\end{aligned}
\end{equation}
where $\ff$ and $\ff'$ are the flowers and $\ff^+$ and $(\ff')^+$ are the parents of the flowers,
and in \eqref{para_rem_4} the summation is taken over all flower couples $\Qc=(\Tc_1,\Tc_2)$ 
such that each of $\Tc_1$ and $\Tc_2$ satisfy 
the requirements in the definitions of $(III)$ or $(IV)$.

Now it suffices to estimate the right-hand side of \eqref{para_rem_4}.
We can apply the same arguments as in Subsection \ref{sec.L1est}, and Sections \ref{sec.irre}
and \ref{sec.molecule} used to estimate $\Kc_\Qc$, 
with minor adjustments adapted to $\widetilde{\Kc}_\Qc$.
Note that the set of  $\Qc=(\Tc_1,\Tc_2)$ in \eqref{para_rem_4} is invariant under twisting, provided that the irregular 
chain involved in the twisting does not contain any child node of the root or the parent node of the flower. 
These exceptions lead at most to an $O(R^4)$ loss, and the rest of the proof is the same as for the case of $\Kc_\Qc$. 
Since the rank $r(\Qc)>N$, we then get that
\[
 \sum_\Qc \widetilde{\Kc}_\Qc(t,s,k,k') \lesssim R^4 \epsilon^{\theta N/8}.\] 
By the large deviation estimate for 
Gaussians, see Proposition \ref{initialprop}, in the same way as before we get
\[\sup_{t,s,k'k'}|\Ic^\pm(t,s,k,k')| \lesssim R^4 \epsilon^{\theta N/9} \] 
with probability $\geq 1- Ce^{-R^{2\eta_0}}$. 
Since $|k|,|k'|\leq 2^{K_{\mathrm{tr}}} \leq R^{1/A}$
and $0\leq t,s\leq 1$,
the above inequality and \eqref{para_rem_3} imply the second and third bounds in \eqref{parametrix}.

To prove the first bound in \eqref{parametrix}, we write, similarly to the above
\begin{equation}\label{para_rem_3+}
(\Xc u)_k(t) = \sum_{k'}\int_0^t \Xc
(t,s,k,k')u_{k'}(s)\,\mathrm{d}s,
   \qquad \Xc (t,s,k,k') := \sum_{\Tc}^{(II)} \widetilde{\Jc}_\Tc(t,s,k,k'),
\end{equation} 
where the sum is as in \eqref{para_formula}. We can then estimate
\begin{equation*}
\Eb|(\Xc u)_k(t)|^2 = \sum_\Qc \widetilde{\Kc}_\Qc(t,s,k,k') \lesssim R^4,
\end{equation*} 
which leads to the bound for $\Xc$ in \eqref{parametrix} with probability $1-e^{-R^\eta}$.
This completes the proof.
\end{proof}


\subsection{Proof of Proposition \ref{propdiff}}\label{ssecprpropdiff}
Using the above results we can now prove Proposition \ref{propdiff}.
Define $b_k(t) := (a_k)_{\mathrm{tr}}(t)-(a_k)_{\mathrm{app}}(t)$. 
Then by \eqref{profileeqn} and \eqref{profileeqn_2} we get that
\[(1-\Lc_0)b=\Rc+O(|b|^2),\] where $\Lc_0$ is defined as in (\ref{def_L0}), 
$\Rc$ is defined as in (\ref{profileeqn_2}), and $O(|b|^2)$ is a term that is at least quadratic in $b$. 
By Propositions \ref{proofapprox} and \ref{prop.linear} and a fixed point argument, 
we know that, with probability $\geq 1-Ce^{-R^{2\eta_0}}$, this equation for $b$ 
has a unique solution in the space $\|b\|_Z\leq \epsilon^{N\theta/12}$;
the same holds if one replaces the $Z=C_t^0L_k^1$ norm by a norm with a $\langle k\rangle^{10}$ weight. 


\subsection{Proof of \eqref{error1.2'}
}\label{ssecprerror1.2'}
Suppose $W$ solves the equation \eqref{error1.1'}, and apply to it the transform \eqref{profile} 
(which transforms $w_{\mathrm{tr}}$ to $a_{\mathrm{tr}}$) to get 
\begin{align*}
a_k(t) = (R^{1/2} \epsilon)^{-1} e^{iT_1(|k|^{1/2}t + \Gamma(t,k)}\widehat{W}(T_1t,k) =: \Pc(W)_k.
\end{align*}
This profile satisfies the equation
\begin{align*}
a_k(t) = (R^{1/2} \epsilon)^{-1} \widehat{W}(0) + \Lc(a)_k(t) + \int_0^t\Rc'_k(s) ds, \qquad  \Rc'_k(s) := \Pc(\Rc)_k
\end{align*}
for $t\leq 1$, where here $\Lc$ is defined in the same way as $\Lc_0$ in \eqref{def_L0}, but with $a_{\mathrm{app}}$ 
replaced by $a_{\mathrm{tr}}$.
We can then estimate, for $Z = C^0_tL^1_k$,
\begin{align*}
\| (1-\Lc_0) a \|_Z \leq R^{-1/2} \epsilon^{-1} \| \widehat{W}(0) \|_{L^1_k} 
  + \big\| (\Lc - \Lc_0)(a) \big\|_Z + \big\| \Rc' \big\|_Z .
\end{align*}
Using the estimate \eqref{est_L0}, the above equation implies
\begin{align}\label{estWZ}
\| \widehat{W} \|_Z \lesssim R^8 \big( \| \widehat{W}(0) \|_{L^1_k} + \| \Rc  \|_Z \big)
\end{align}
provided we can bound
\begin{align}\label{L-L0}
\big\| (\Lc - \Lc_0)u \big\|_Z \leq R^{-9} \| u \|_Z.
\end{align}
From the definitions of $\mathcal{L}_0$ in \eqref{def_L0} and $\mathcal{L}$ above, we see that
\begin{align*}
(\Lc - \Lc_0)u & = i\sum_{r=3}^A\sum_{i=1}^r
  (\epsilon R^{-1/2})^{r-1}T_1\cdot\sum_{\zeta_j\in\{\pm\}}\sum_{\zeta_1k_1+\cdots+\zeta_rk_r=k}\int_0^{t} 
  e^{iT_1\cdot \Omega_r({t}')}
  \\
  & \times [i \widetilde{n}_{\zeta_1\cdots\zeta_r}(k_1,\cdots,k_r)]^{\zeta_n}
  \cdot\varphi_{\leq K_{\mathrm{tr}}}(k)\prod_{j=1}^r\widetilde{a}_{k_j}({t}')^{\zeta_j}\,\mathrm{d}{t}'
\end{align*}
where, in the last product, exactly one of the $\widetilde{a}$ is $u$, at least one of the $\widetilde{a}$
is $b = a_{\mathrm{tr}}-a_{\mathrm{app}}$, and the others are $a_{\mathrm{app}}$.
Since $\| b \|_Z \lesssim \epsilon^{N\theta/12}$ is small, 
using also \eqref{ansatz3} and \eqref{jrest1} to bound $a_{\mathrm{app}}$, via Proposition \ref{initialprop},
we obtain \eqref{L-L0} with probability $\geq 1 - Ce^{-2R^{\eta_0}}.$ 

Finally, the desired \eqref{error1.2'} follows in the same way as \eqref{estWZ}
using that the bound \eqref{est_L0} for $(1-\Lc_0)^{-1}$ 
also holds if we measure in the $L_t^\infty \dot{W}_x^{4,0}$ norm of $W(t,x)$ 
instead of the $C_t^0L_k^1$ norm of the corresponding profile $a_k(t)=\Pc(W)$, 
as stated in Proposition \ref{prop.linear}, and that the same is true for the bound \eqref{L-L0}.

\section{Normal forms and proof of Proposition \ref{normalprop}}\label{sectionNormal}

We assume in this section that $A\geq 3$ is a fixed integer and $(h,\phi)\in C([T_1,T_2]:H^{N_2}\times \dot{H}^{N_2+1/2,1/2})$ is a solution of the system 
\begin{equation}\label{ww0repeat}
\begin{split}
&\partial_th=G(h)\phi,\\
&\partial_t\phi=-h-\frac{1}{2}(\partial_x\phi)^2+\frac{(G(h)\phi+\partial_xh\cdot\partial_x\phi)^2}{2(1+(\partial_xh)^2)},
\end{split}
\end{equation} 
satisfying the bounds \eqref{aprio}, i.e.
\begin{equation}\label{apriorepeat}
\|(h+i|\partial_x|^{1/2}\phi)(t)\|_{H^{N_2}}\leq B_2,\qquad \|(h+i|\partial_x|^{1/2}\phi)(t)\|_{\dot{W}^{N_\infty,0}}\leq \epsilon_\infty,
\end{equation}
for some parameters $N_2\geq 2A$, $N_\infty\in[4,N_2-4]$, $B_2\geq 1$, $\eps_\infty\ll1$, and for any $t\in[T_1,T_2]$. 

We recall a simple lemma from \cite{DIP}, see Lemmas 2.1 and 2.2:

\begin{lem}\label{touse} 

(i) For any $m\in S^\infty$ we define the associated multiplinear operator $M$ by
\begin{equation}\label{Moutm}
\mathcal{F}\big[M_m(f_1,\ldots,f_d)\big](\xi):=\frac{1}{(2\pi R)^{d-1}}\sum_{\eta_1,\ldots,\eta_d\in\mathbb{Z}/R,\,\eta_1+\ldots+\eta_d=\xi}m(\eta_1,\ldots,\eta_d)\widehat{f}(\eta_1)\cdot\ldots\cdot \widehat{f_d}(\eta_d).
\end{equation}
Then
\begin{equation}\label{Moutm2}
\begin{split}
&M_m(f_1,\ldots,f_d)(x)=\int_{\Tb_R^d}K_m(x-y_1,\ldots,x-y_d)f(y_1)\cdot\ldots\cdot f(y_d)\,dy_1\ldots dy_d,\\
&K_m(z_1,\ldots,z_d):=\frac{1}{(2\pi R)^d}\sum_{\eta_1,\ldots,\eta_d\in\Zb/R}e^{i(z_1\eta_1+\ldots+z_d\eta_d)}m(\eta_1,\ldots,\eta_d).
\end{split}
\end{equation}
In particular $\|K_m\|_{L^1(\Tb_R^d)}={\| m \|}_{S^\infty}$. Therefore, if $p_1,\ldots,p_d,r\in[1,\infty]$ satisfy $1/p_1+\ldots+1/p_d=1/r$ and $f_1,\ldots,f_d\in L^2(\mathbb{T}_R)$ then
\begin{equation}\label{mk6}
\|M(f_1,\ldots,f_d)\|_{L^r(\mathbb{T}_R)} \leq \|m\|_{S^\infty} \|f_1\|_{L^{p_1}(\mathbb{T}_R)}\cdot\ldots\cdot\|f_d\|_{L^{p_d}(\mathbb{T}_R)}.
\end{equation}

(ii) If $m,m'\in S^\infty$ then $m\cdot m'\in S^\infty$ and
\begin{equation}\label{al8}
\|m\cdot m'\|_{S^\infty}\leq \|m\|_{S^\infty}\|m'\|_{S^\infty}.
\end{equation}

(iii) Assume that $d\geq 1$, $m:\Rb^d\to\mathbb{C}$ is a continuous compactly supported function, and $\check{m}\in L^1(\Rb^d)$ is the Euclidean inverse Fourier transform of $m$. Then
\begin{equation}\label{al8.6}
\Big\|\frac{1}{(2\pi R)^d}\sum_{\xi_1,\ldots,\xi_d\in\Zb/R}m(\xi_1,\ldots,\xi_d)e^{i(x_1\xi_1+\ldots+x_d\xi_d)}\Big\|_{L^1_x(\Tb_R^d)}\lesssim \|\check{m}\|_{L^1(\Rb^d)}.
\end{equation}
\end{lem}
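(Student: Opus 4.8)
The three parts of the lemma are standard facts about Fourier multipliers on the torus $\Tb_R$ (essentially the statement that $S^\infty$ is a Wiener-type algebra), and the plan is to verify each in turn, staying on the physical side as much as possible.

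\emph{Part (i).} First I would derive the kernel representation by unwinding the definition \eqref{Moutm}. Applying the inversion formula from \eqref{FT2} to $M_m(f_1,\ldots,f_d)$ and then substituting $\widehat{f_j}(\eta_j)=\int_{\Tb_R}f_j(y_j)e^{-i\eta_j y_j}\,dy_j$ turns the frequency sum into the integral operator displayed in \eqref{Moutm2}, with $K_m$ being the $d$-dimensional inverse Fourier transform of $m$ in the normalization of \eqref{FT1}--\eqref{FT2}; by the very definition \eqref{Sinfty} this gives $\|K_m\|_{L^1(\Tb_R^d)}=\|m\|_{S^\infty}$. The product estimate \eqref{mk6} then follows from the change of variables $z_j=x-y_j$, which rewrites $M_m(f_1,\ldots,f_d)(x)=\int_{\Tb_R^d}K_m(z_1,\ldots,z_d)\prod_j f_j(x-z_j)\,dz$, followed by Minkowski's integral inequality in $x$ and Hölder's inequality under the $z$-integral with the exponents $1/p_1+\ldots+1/p_d=1/r$.

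\emph{Part (ii).} Here I would set $K=\mathcal{F}^{-1}(m)$ and $K'=\mathcal{F}^{-1}(m')$. The convolution identity on $\Tb_R^d$ dual to the last line of \eqref{FT2} gives $m\cdot m'=\mathcal{F}(K\ast K')$, where $\ast$ denotes convolution on $\Tb_R^d$; hence $\mathcal{F}^{-1}(m\cdot m')=K\ast K'$, and \eqref{al8} is exactly Young's inequality $\|K\ast K'\|_{L^1}\le\|K\|_{L^1}\|K'\|_{L^1}$.

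\emph{Part (iii).} I would argue by matching Fourier coefficients with a periodization (this is Poisson summation, made rigorous using the hypotheses). Since $m$ is continuous with compact support, the sum in \eqref{al8.6} is finite, so its left-hand side, call it $P(x)$, is a trigonometric polynomial whose Fourier coefficients on $\Tb_R^d$ are precisely $m(\xi)$ for $\xi\in(\Zb/R)^d$. On the other hand the periodization $\widetilde m(x):=\sum_{k\in\Zb^d}\check m(x-2\pi Rk)$ belongs to $L^1(\Tb_R^d)$ because $\check m\in L^1(\Rb^d)$, and unfolding the integral $\int_{\Tb_R^d}\widetilde m(x)e^{-i\xi x}\,dx=\int_{\Rb^d}\check m(y)e^{-i\xi y}\,dy$ together with Fourier inversion on $\Rb^d$ shows that $\widetilde m$ has the same Fourier coefficients $m(\xi)$. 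Two $L^1$ functions with equal Fourier coefficients agree a.e., so $P=\widetilde m$, and therefore $\|P\|_{L^1(\Tb_R^d)}=\|\widetilde m\|_{L^1(\Tb_R^d)}\le\sum_{k}\|\check m(\cdot-2\pi Rk)\|_{L^1(\Tb_R^d)}=\|\check m\|_{L^1(\Rb^d)}$, which is \eqref{al8.6} (with constant $1$).

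The computations in (i) and (ii) are routine. The only step requiring a little care is the bookkeeping of constants in (iii): one must match the covolume $R^{-d}$ of the lattice $(\Zb/R)^d$ against the factor $(2\pi R)^{-d}$ in \eqref{al8.6} and the $\Rb^d$-Fourier-inversion constant $(2\pi)^{-d}$, and one must justify that the trigonometric polynomial $P$ really coincides with the periodization $\widetilde m$ and not merely agrees with it at the level of Fourier coefficients — which is harmless since $m$ has compact support and $\check m\in L^1$. Since the entire statement is quoted from \cite{DIP} (Lemmas 2.1--2.2), one may alternatively simply refer to that work; the argument above is included for completeness.
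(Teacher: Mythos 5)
Your proof is correct. The paper itself gives no argument for this lemma — it is quoted directly from \cite{DIP} (Lemmas 2.1--2.2) — and your three steps (the kernel representation of $M_m$ followed by Minkowski and H\"older for \eqref{mk6}, Young's inequality for the convolution $\mathcal{F}^{-1}(m\cdot m')=\mathcal{F}^{-1}(m)\ast\mathcal{F}^{-1}(m')$ for \eqref{al8}, and the periodization/Poisson-summation identification of the lattice sum with $\sum_k\check m(\cdot-2\pi Rk)$ for \eqref{al8.6}) are exactly the standard arguments behind the cited result, with the hypotheses ($m$ continuous and compactly supported, $\check m\in L^1$) used correctly to justify the pointwise Fourier inversion and the a.e.\ identification of the two $L^1$ functions.
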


The first main issue is to understand the expansion of the Dirichlet-Neumann operator $G(h)$. We prove the following: 

\begin{lem}\label{DNexpansion}

(i) We have $G(h)\phi\in C([T_1,T_2]: H^{N_2-3/2})$ and, for any $t\in[T_1,T_2]$,
\begin{equation}\label{adi7}
\begin{split}
&G(h)\phi=|\partial_x|\phi+\sum_{n=2}^AG_n(h,\phi)+G_{>A},\\
&G_2(h,\phi)=-\partial_x(h\partial_x\phi)-|\partial_x|(h|\partial_x|\phi),\\
&\|G_{>A}\|_{\dot{H}^{N_2-3/2,-3/4}}\lesssim\eps_\infty^{A}B_2,\qquad \|G_{>A}\|_{\dot{W}^{N_\infty-3/2,-3/4}}\lesssim\eps_\infty^{A+1},\\
&\|G_n(h,\phi)\|_{\dot{H}^{N_2-3/2,-3/4}}\lesssim\eps_\infty^{n-1}B_2,\qquad \|G_n(h,\phi)\|_{\dot{W}^{N_\infty-3/2,-3/4}}\lesssim\eps_\infty^{n},\qquad n\in\{2,\ldots,A\}.
\end{split}
\end{equation}
In addition, the functions $G_n(h,\phi)$, $n\in\{2,\ldots,A\}$ are defined by multipliers $q_n$ satisfying
\begin{equation}\label{adi8}
\begin{split}
&\widehat{G_n(h,\phi)}(\xi)=\frac{1}{(2\pi R)^{n-1}}\sum_{\eta_1,\ldots,\eta_n\in\Zb/R,\,\eta_1+\ldots+\eta_n=\xi}q_n(\eta_1,\ldots,\eta_n)\widehat{\phi}(\eta_1)\widehat{h}(\eta_2)\cdot\ldots\cdot\widehat{h}(\eta_n),\\
&q_n(\lambda\underline\eta)=\lambda^nq_n(\underline\eta),\qquad\text{ for any }\,\,\lambda\in (0,\infty),\,\,\underline{\eta}\in\Rb^n,\\
&\big\|q_n(\eta_1,\ldots,\eta_n)\varphi_k(\eta_1+\ldots+\eta_n)\varphi_{k_1}(\eta_1)\cdot\ldots\cdot\varphi_{k_n}(\eta_n)\big\|_{S^\infty}\lesssim 2^{k+\widetilde{k}_1+\ldots+\widetilde{k}_{n-1}}(1+|\widetilde{k}_n-\widetilde{k}_1|)^A.
\end{split}
\end{equation}
The $S^\infty$ inequality in the last line of \eqref{adi8} holds for any $k,k_1,\ldots k_n\in\Zb$, where $\widetilde{k}_1\leq \widetilde{k}_2\leq\ldots\leq \widetilde{k}_n$ denotes the nondecreasing rearrangement of $k_1,\ldots,k_n$.

(ii) As a consequence, the functions $h,\phi$ satisfy the equations 
\begin{equation}\label{adi12.2}
\partial_t h=|\partial_x|\phi+\sum_{n=2}^AG_n(h,\phi)+G_{>A},\qquad\partial_t\phi=-h+\sum_{n=2}^AH_n(h,\phi)+H_{>A},
\end{equation}
where $G_n,G_{>A}$ are as in \eqref{adi7}--\eqref{adi8} and the functions $H_n$ and $H_{>A}$ satisfy
\begin{equation}\label{adi12.3}
\begin{split}
&H_2(h,\phi)=-\frac{1}{2}(\partial_x\phi)^2+\frac{1}{2}(|\partial_x|\phi)^2,\\
&\|H_{>A}\|_{H^{N_2-3/2}}\lesssim\eps_\infty^{A}B_2,\qquad \|H_{>A}\|_{\dot{W}^{N_\infty-3/2,1/10}}\lesssim\eps_\infty^{A+1},\\
&\|H_n(h,\phi)\|_{H^{N_2-3/2}}\lesssim\eps_\infty^{n-1}B_2,\qquad \|H_n(h,\phi)\|_{\dot{W}^{N_\infty-3/2,1/10}}\lesssim\eps_\infty^{n},\qquad n\in\{2,\ldots,A\},
\end{split}
\end{equation}
for any $t\in[T_1,T_2]$. In addition, the functions $H_n(h,\phi)$, $n\in\{3,\ldots,A\}$, are defined by multipliers $r_n$ satisfying
\begin{equation}\label{adi12.4}
\begin{split}
&\widehat{H_n(h,\phi)}(\xi)=\frac{1}{(2\pi R)^{n-1}}\sum_{\eta_1,\ldots,\eta_n\in\Zb/R,\,\eta_1+\ldots+\eta_n=\xi}r_n(\eta_1,\ldots,\eta_n)\widehat{\phi}(\eta_1)\widehat{\phi}(\eta_2)\widehat{h}(\eta_3)\cdot\ldots\cdot\widehat{h}(\eta_n),\\
&r_n(\lambda\underline\eta)=\lambda^nq_n(\underline\eta),\qquad\text{ for any }\,\,\lambda\in (0,\infty),\,\,\underline{\eta}\in\Rb^n,\\
&\big\|r_n(\eta_1,\ldots,\eta_n)\varphi_{k_1}(\eta_1)\cdot\ldots\cdot\varphi_{k_n}(\eta_n)\big\|_{S^\infty}\lesssim 2^{k_1+\ldots+k_n}(1+|\widetilde{k}_n-\widetilde{k}_1|)^A.
\end{split}
\end{equation}

\end{lem}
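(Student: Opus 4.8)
The plan is to derive the expansion of $G(h)\phi$ from the analytic (in $h$) Taylor expansion of the Dirichlet–Neumann operator, read off the multilinear symbols from the recursion that generates the Taylor coefficients, and then substitute into \eqref{ww0repeat} to obtain part (ii); the argument follows closely the analysis in \cite{DIP}. First I would recall the standard recursion for the homogeneous components $G^{(m)}(h)$ of $G(h)=\sum_{m\ge 0}G^{(m)}(h)$, where $G^{(m)}(h)$ is $m$-homogeneous in $h$ and linear in $\phi$: in the infinite-depth one-dimensional setting one has $G^{(0)}=|\partial_x|$ and $G^{(1)}(h)\phi=-\partial_x(h\,\partial_x\phi)-|\partial_x|(h\,|\partial_x|\phi)$, and each $G^{(m)}$ with $m\ge 2$ is a finite sum of compositions of the Fourier multipliers $|\partial_x|$, $\partial_x$ with multiplications by $h$, applied to the previously constructed $G^{(\ell)}$, $\ell<m$. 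Relabelling $G_n(h,\phi):=G^{(n-1)}(h)\phi$, so that $G_n$ is the degree-$n$ term as in \eqref{adi7}, the multiplier representation and the homogeneity $q_n(\lambda\underline\eta)=\lambda^n q_n(\underline\eta)$ in \eqref{adi8} are immediate, since the whole construction only involves Fourier multipliers homogeneous of degree $1$ and pointwise products.

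The heart of part (i) is the $S^\infty$ bound in the last line of \eqref{adi8}, with the precise derivative distribution $2^{k+\widetilde k_1+\cdots+\widetilde k_{n-1}}(1+|\widetilde k_n-\widetilde k_1|)^A$. I would prove this by induction on $n$, tracking how derivatives are reallocated at each step of the recursion: a priori each factor $|\partial_x|$ supplies a full derivative that could fall on the largest input frequency, but the structure of the recursion is such that every $h$-factor absorbs one net derivative from an adjacent multiplier, so that in the worst case exactly one full derivative survives on the output and one on each of the $n-1$ lowest input frequencies; the cost of this reallocation, and of commuting multipliers past the Littlewood–Paley cutoffs, is the polynomial factor $(1+|\widetilde k_n-\widetilde k_1|)^A$. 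Lemma \ref{touse}(ii) and the algebra property $\|m\cdot m'\|_{S^\infty}\le\|m\|_{S^\infty}\|m'\|_{S^\infty}$ are used to combine the symbols produced at successive steps, and all constants must be kept uniform for $n\le A$. The remainder $G_{>A}=\sum_{m\ge A}G^{(m)}(h)\phi$ is then controlled using the quantitative analyticity of $h\mapsto G(h)$ on the small ball $\|h\|_{\dot W^{N_\infty,0}}\le\eps_\infty\ll 1$: the $S^\infty$ symbol bounds above together with Lemma \ref{touse}(i) show that the $\dot H^{N_2-3/2,-3/4}$ and $\dot W^{N_\infty-3/2,-3/4}$ norms of $G^{(m)}(h)\phi$ decay geometrically in $m$ under the bootstrap \eqref{apriorepeat}, so summing the tail yields the stated $\eps_\infty^A B_2$ and $\eps_\infty^{A+1}$ bounds; the loss of $3/2$ derivatives and the weight $-3/4$ are dictated by the most singular piece $|\partial_x|(h\,|\partial_x|\phi)$ and its higher-order analogues. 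The analogous norm bounds for $G_n$, $2\le n\le A$, follow from the same $S^\infty$ bounds and Lemma \ref{touse}(i).

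For part (ii): the first equation in \eqref{adi12.2} is just $\partial_t h=G(h)\phi$ with the expansion of part (i). For the second, I would substitute this expansion into $-h-\tfrac12(\partial_x\phi)^2+\tfrac{(G(h)\phi+\partial_xh\,\partial_x\phi)^2}{2(1+(\partial_xh)^2)}$, expand $\tfrac1{1+(\partial_xh)^2}=\sum_{j\ge 0}(-1)^j(\partial_xh)^{2j}$ and the square, and collect by homogeneity: the degree-$2$ term is $H_2=-\tfrac12(\partial_x\phi)^2+\tfrac12(|\partial_x|\phi)^2$, the second piece being the leading part of $\tfrac12(G(h)\phi)^2$, and for $n\ge 3$ the term $H_n$ is a finite multilinear expression with exactly two $\phi$-factors and $n-2$ $h$-factors. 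Its symbol $r_n$ inherits the homogeneity and the $S^\infty$ bound \eqref{adi12.4} from the $q_m$ of part (i) via the product rule Lemma \ref{touse}(ii), and the norm bounds \eqref{adi12.3} again follow from Lemma \ref{touse}(i) and \eqref{apriorepeat}; the slightly better space $\dot W^{N_\infty-3/2,1/10}$ for the $\dot W$-norm reflects that each $\phi$ in $H_n$ carries at most one factor $|\partial_x|$, so no derivative loss occurs at low output frequency.

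\textbf{Main obstacle.} The only genuinely non-routine step is the sharp $S^\infty$ symbol estimate in \eqref{adi8}: verifying that the Dirichlet–Neumann recursion really distributes the spare derivatives onto the low input frequencies rather than the highest one requires a careful induction and precise bookkeeping of the derivative reallocation at each stage. Everything else — the remainder estimates, the substitution and homogeneity expansion in part (ii), and the passage from symbol bounds to $L^p$ and Sobolev estimates — is a routine consequence of the $S^\infty$ calculus of Lemma \ref{touse} and the a priori assumptions \eqref{apriorepeat}.
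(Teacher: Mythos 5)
Your overall architecture (expand $G(h)\phi$ in homogeneity, read off symbols, prove a weighted $S^\infty$ bound, then substitute into the equations for part (ii)) matches the paper, and your treatment of part (ii) and of the passage from symbol bounds to norm bounds via Lemma \ref{touse} is essentially the same as the paper's. But the one step you yourself flag as the heart of the matter --- the $S^\infty$ bound in \eqref{adi8} with the derivative distribution $2^{k+\widetilde k_1+\cdots+\widetilde k_{n-1}}$ --- is exactly where your plan has a genuine gap. You propose to obtain it by induction on the Craig--Sulem-type recursion, claiming that ``every $h$-factor absorbs one net derivative from an adjacent multiplier.'' This is not a term-by-term property of that recursion: in the regime where all $h$-frequencies are much smaller than the frequency of $\phi$, the output frequency of an inner expression like $h^{j}\,G^{(\ell)}(h)\phi$ is comparable to the high $\phi$-frequency, so the outer multipliers $\partial_x^{j}$, $|\partial_x|^{j}$ in the recursion land on the \emph{highest} frequency and produce $2^{j\widetilde k_n}$, violating the claimed bound. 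The favorable low-frequency distribution only emerges after cancellations between the different terms at each order of the recursion, and your ``careful bookkeeping'' induction, with the inductive hypothesis you state, does not see these cancellations; this is precisely why the naive route is not used.

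The paper sidesteps this by working with a different representation altogether: the Cauchy-integral (conformal) formulation from \cite{DIP}, $G(h)\phi=-\partial_x\psi$ with $(I+T_1)\psi=(H_0+T_2)\phi$, where $T_1,T_2$ expand into homogeneous operators $R_n$ whose multipliers have the explicit formula \eqref{na13.2}, $M_{n+1}(\eta_0,\underline\eta)=\frac{-i\eta_0}{\pi(n+1)}\int e^{-i\eta_0\rho}\prod_l\frac{1-e^{-i\eta_l\rho}}{\rho}\,d\rho$. Here the ``one derivative per $h$-factor, placed on the low frequencies'' structure is manifest in each factor $(1-e^{-i\eta_l\rho})/\rho$, and the localized $S^\infty$ bound \eqref{Wie2} follows by estimating kernels and integrating in $\rho$, with the factor $(1+|\widetilde k_{n+1}-\widetilde k_n|)$ appearing naturally. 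Compositions $R_{a_1}\cdots R_{a_l}H_0^\iota$ coming from the truncated Neumann series for $(I+T_1)^{-1}$ are then handled by an induction on $l$ that uses the support property \eqref{Wie1} and the Wiener-algebra structure; the remainder is not an infinite Taylor tail but the exact term $(-1)^AT_1^A\psi$, estimated using the $R_n$ bounds together with the a priori control of $\||\partial_x|^{1/2}\psi\|_{\dot W^{1,0}}$ from \cite{DIP}. To repair your argument you would either have to carry out this kind of explicit-kernel analysis, or exhibit and exploit the inter-term cancellations in the Craig--Sulem recursion, neither of which is supplied by the induction you describe.
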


\begin{proof} (i) We fix $t\in[T_1,T_2]$ and use some of the results in \cite[Section 8]{DIP}. Loss of derivative is not important here, so, for the sake of simplicity, we do not attempt to optimize this aspect.

{\bf{Step 1.}} We start from the formulas (8.18) and (8.14) in \cite{DIP},
\begin{equation}\label{adi1}
G(h)\phi=-\partial_x\psi,\qquad (I+T_1)\psi=(H_0+T_2)\phi,
\end{equation}
where the operators $H_0$ (the Hilbert transform), $T_1$, $T_2$ are defined by the formulas
\begin{equation}\label{na20}
(H_0 f)(\alpha):=\frac{-1}{\pi }\mathrm{p.v.} \int_{\mathbb{R}}\frac{f^\#(\beta)}{\alpha-\beta}\,d\beta,
\end{equation}
\def\be{\beta}
\begin{equation}\label{na21}
\begin{split}
&(T_1 f)(\alpha):=\frac{1}{\pi}\int_{\mathbb{R}}
\frac{h^\#(\alpha)-h^\#(\beta)-\partial_\be h^\#(\beta)(\alpha-\beta)}{|\gamma(\alpha)-\gamma(\beta)|^2}f^\#(\beta)\,d\beta,\\
&(T_2 f)(\alpha):=\frac{1}{\pi}\int_{\mathbb{R}}
\frac{h^\#(\alpha)-h^\#(\beta)-\partial_\be h^\#(\beta)(\alpha-\beta)}{|\gamma(\alpha)-\gamma(\beta)|^2}\frac{h^\#(\alpha)-h^\#(\beta)}{\alpha-\beta}f^\#(\beta)\,d\beta.
\end{split}
\end{equation}
Here $h^\#:\Rb\to\Rb$ and $f^\#:\Rb\to\mathbb{C}$ denote the periodic extensions of the functions $h$ and $f$, and $\gamma(x):=x+ih^\#(x)$. The operators $T_1$ and $T_2$ admit expansions
\begin{equation}\label{na23}
T_1=\sum_{n\geq 0}(-1)^{n}R_{2n},\qquad T_2=\sum_{n\geq 0}(-1)^{n}R_{2n+1}.
\end{equation}
where the homogeneous real operators $R_n$ are given by
\begin{equation}\label{na8}
 (R_nf)(\alpha):=\frac{1}{\pi}\int_{\mathbb{R}}
\frac{h^\#(\alpha)-h^\#(\beta)-\partial_\beta h^\#(\beta)(\alpha-\beta)}{(\alpha-\beta)^2}\Big(\frac{h^\#(\alpha)-h^\#(\beta)}{\alpha-\beta}\Big)^nf^\#(\beta)\,d\beta.
\end{equation}
The operators $R_n$ can be defined in terms of Fourier multipliers, in the form
\begin{equation}\label{na13.3}
\widehat{R_nf}(\xi)=\frac{1}{(2\pi R)^{n+1}}\sum_{\eta_0,\ldots,\eta_{n+1}\in \Zb/R,\,\eta_0+\ldots+\eta_{n+1}=\xi}M_{n+1}(\eta_0,\underline{\eta})\widehat{f}(\eta_0)\widehat{h}(\eta_1)\cdot\ldots\cdot\widehat{h}(\eta_{n+1}),
\end{equation}
where $\underline{\eta}:=(\eta_1,\ldots,\eta_{n+1})$ and 
\begin{equation}\label{na13.2}
M_{n+1}(\eta_0,\underline{\eta}):=\frac{-i\eta_0}{\pi(n+1)}\int_{\mathbb{R}}e^{-i\eta_0\rho}\prod_{l=1}^{n+1}\frac{1-e^{-i\eta_l\rho}}{\rho}\,d\rho.
\end{equation}
See the identities (8.41)-(8.42) in \cite{DIP}. 

The formula \eqref{na13.2} shows that the symbols $M_{n+1}$ are homogeneous of degree $n+1$,
\begin{equation}\label{adi2}
M_{n+1}(\lambda\eta_0,\lambda\underline{\eta})=\lambda^{n+1}M_{n+1}(\eta_0,\underline{\eta}),\qquad\lambda\in (0,\infty),\eta_0\in\Rb,\underline{\eta}\in\Rb^{n+1}.
\end{equation}
Moreover, when $n=0$ we calculate explicitly
\begin{equation}\label{adi6.9}
\begin{split}
&M_1(\eta_0,\eta_1)=\eta_0[\mathrm{sgn}\,(\eta_0+\eta_1)-\mathrm{sgn}\,(\eta_0)],\\
&\widehat{R_0f}(\xi)=\frac{1}{2\pi R}\sum_{\eta_0\in\Zb/R}\big[\mathrm{sgn}(\xi)-\mathrm{sgn}(\eta_0)\big]\eta_0\widehat{f}(\eta_0)\widehat{h}(\xi-\eta_0).
\end{split}
\end{equation}
Notice also that 
\begin{equation}\label{Wie1}
\begin{split}
&M_{n+1}(\eta_0,\eta_1,\ldots,\eta_{n+1})=0\qquad\text{ if }\eta_0\cdot\ldots\cdot\eta_{n+1}=0,\\
&M_{n+1}(\eta_0,\eta_1,\ldots,\eta_{n+1})=0\qquad\text{ if }|\eta_0|>|\eta_1|+\ldots+|\eta_{n+1}|.
\end{split}
\end{equation}
Indeed, the identity in the first line follows directly from the definition \eqref{na13.2}, while the identity in the second line follows by induction over $n$ once we observe that 
\begin{equation*}
\partial_{\eta_1}M_{n+1}(\eta_0,\eta_1,\ldots,\eta_{n+1})=(-i)\frac{n}{n+1}M_{n}(\eta_0+\eta_1,\eta_2,\ldots,\eta_{n+1}).
\end{equation*}

We show now that if $n\geq 1$ and $k_0,\ldots,k_{n+1}\in\overline{\Zb}$ then
\begin{equation}\label{Wie2}
\big\|M_{n+1}(\eta_0,\ldots,\eta_{n+1})\cdot\varphi_{k_0}(\eta_0)\ldots\varphi_{k_{n+1}}(\eta_{n+1})\big\|_{S^\infty}\leq C_1^n2^{k_0}2^{\widetilde{k}_1+\ldots+\widetilde{k}_n}(1+|\widetilde{k}_{n+1}-\widetilde{k}_n|),
\end{equation}
where $\widetilde{k}_1\leq\ldots\leq\widetilde{k}_{n+1}$ is the nondecreasing rearrangement of $k_1,\ldots,k_{n+1}$ and $C_1\geq 1$ is a constant. Indeed, we have
\begin{equation}\label{Wie3}
\begin{split}
&\mathcal{F}^{-1}(M_{n+1,\underline{k}})(x_0,\ldots,x_{n+1})=C_n\int_{\Rb}\widetilde{\psi}_{k_0}(x_0-\rho)\prod_{l=1}^{n+1}\frac{\psi_{k_l}(x_l)-\psi_{k_l}(x_l-\rho)}{\rho}\,d\rho,\\
&\psi_k(x):=\frac{1}{2\pi R}\sum_{\eta\in\Zb/R}e^{ix\eta}\varphi_k(\eta),\qquad \widetilde{\psi}_k(x):=\frac{1}{2\pi R}\sum_{\eta\in\Zb/R}e^{ix\eta}(\eta/2^k)\varphi_k(\eta),
\end{split}
\end{equation}
where $M_{n+1,\underline{k}}(\eta_0,\ldots,\eta_{n+1}):=M_{n+1}(\eta_0,\ldots,\eta_{n+1})\cdot\varphi_{k_0}(\eta_0)\ldots\varphi_{k_{n+1}}(\eta_{n+1})$, $\underline{k}:=(k_0,\ldots,k_{n+1})$. This follows easily from \eqref{na13.2}. Using \eqref{al8.6} we see easily that
\begin{equation*}
\|\psi_k\|_{L^1(\Tb_R)}+\|\widetilde{\psi}_k\|_{L^1(\Tb_R)}\lesssim 1,\qquad \|\psi_k(x)-\psi_k(x-\rho)\|_{L^1_x(\Tb_R)}\lesssim \min(1,2^k|\rho|)
\end{equation*}
for any $k\in\Zb$ and $\rho\in\Rb$. The desired bounds \eqref{Wie2} follow from \eqref{Wie3} by integration in $\rho$.

{\bf{Step 2.}} To prove \eqref{adi7}--\eqref{adi8} we start from the formula \eqref{adi1}, so
\begin{equation}\label{adi6}
\psi=(I-T_1+T_1^2+\ldots+(-1)^{A-1}T_1^{A-1})(H_0+T_2)\phi+(-1)^{A}T_1^{A}\psi.
\end{equation}
We use the expansions \eqref{na23} as well as the bounds
\begin{equation}\label{adi6.5}
\begin{split}
&\|R_nf\|_{\dot{H}^{N_2-1/2,1/10}}\leq B_2(C_2\eps_\infty)^n\||\partial_x|^{1/2}f\|_{\dot{W}^{1,0}},\\
&\|R_nf\|_{\dot{W}^{N_\infty-1/2,1/10}}\leq (C_2\eps_\infty)^{n+1}\||\partial_x|^{1/2}f\|_{\dot{W}^{1,0}},
\end{split}
\end{equation}
for some constant $C_2\geq 1$ and any $n\geq 0$, which follow from \eqref{adi6.9}--\eqref{Wie2} and \eqref{mk6}. Since $\||\partial_x|^{1/2}\psi\|_{\dot{W}^{1,0}}\lesssim\eps_\infty$ (due to estimate (8.20) in \cite{DIP}), we have
\begin{equation}\label{adi6.6}
\|T_1^{A}\psi\|_{\dot{H}^{N_2-1/2,1/10}}\lesssim B_2\eps_\infty^{A},\qquad \|T_1^{A}\psi\|_{\dot{W}^{N_\infty-1/2,1/10}}\lesssim \eps_\infty^{A+1}.
\end{equation}

We examine now the main term in \eqref{adi6}, expand the operators $T_1,T_2$ as in \eqref{na23}, and use the bounds \eqref{adi6.5} to estimate all the terms of homogeneity $\geq A+1$ as in \eqref{adi6.6}. We can thus write
\begin{equation}\label{adi6.7}
\begin{split}
&\psi=H_0\phi-R_0H_0\phi+\sum_{n=3}^A\widetilde{G}_n(h,\phi)+\widetilde{G}_{>A},\\
&\|\widetilde{G}_{>A}\|_{\dot{H}^{N_2-1/2,1/10}}\lesssim B_2\eps_\infty^{A},\qquad \|\widetilde{G}_{>A}\|_{\dot{W}^{N_\infty-1/2,1/10}}\lesssim \eps_\infty^{A+1},\\
&\widetilde{G}_n(h,\phi)=\sum_{l\geq 1,\,\iota\in\{0,1\},\,a_1+\ldots+a_l=n-l-1}c_{\iota,a_1,\ldots,a_l}R_{a_1}\ldots R_{a_l}H_0^\iota\phi,
\end{split}
\end{equation}
for some suitable coefficients $c_{\iota,a_1,\ldots,a_l}\in\Zb$. Then we can finally define
\begin{equation}\label{adi6.10}
G_2(h,\phi):=\partial_xR_0H_0\phi,\qquad G_n(h,\phi):=-\partial_x\widetilde{G}_n(h,\phi),n\geq 3,\qquad G_{>A}:=-\partial_x\widetilde{G}_{>A}.
\end{equation}
The identities and the estimates in \eqref{adi7} follow from \eqref{adi6.7}, \eqref{adi6.5}, and the observation that $\partial_xR_0H_0\phi=-\partial_x(h\partial_x\phi)-|\partial_x|(h|\partial_x|\phi)$. 

The formulas \eqref{na13.3}--\eqref{na13.2} show that if $a_1+\ldots+a_l+l=n-1$ then
\begin{equation}\label{adi6.8}
\begin{split}
&\mathcal{F}\big[R_{a_1}\ldots R_{a_l}f\big](\xi)=\frac{1}{(2\pi R)^{n-1}}\sum_{\eta_0,\ldots,\eta_{n-1}\in\Zb/R,\,\eta_0+\ldots+\eta_{n-1}=\xi}\widetilde{q}_{a_1,\ldots,a_l}(\eta_0,\eta_1,\ldots,\eta_{n-1})\\
&\qquad\qquad\qquad\qquad\times\widehat{f}(\eta_0)\widehat{h}(\eta_1)\cdot\ldots\cdot\widehat{h}(\eta_{n-1}),\\
&\widetilde{q}_{a_1,\ldots,a_l}(\eta_0,\eta_1,\ldots,\eta_{n-1}):=M_{a_1+1}(\eta_0+s_1,\eta_1,\ldots,\eta_{a_1+1})\\
&\qquad\qquad\qquad\times\ldots\times M_{a_l+1}(\eta_0+s_l,\eta_{a_1+\ldots+a_{l-1}+l},\ldots,\eta_{a_1+\ldots+a_{l}+l}).
\end{split}
\end{equation}
where
\begin{equation*}
s_m:=\eta_{a_1+a_2+\ldots+a_m+m+1}+\ldots+\eta_{n-1}\qquad\text{ for }m=1,\ldots,l.
\end{equation*}
The homogeneity property \eqref{adi8} follows from \eqref{adi2}. To estimate the localized $S^\infty$ norms we assume $k,k_0,k_1,\ldots,k_{n-1}\in\overline{\Zb}$ and let $K_{a_1\ldots a_l;\iota}^{k;k_0k_1\ldots k_{n-1}}$ denote the kernels defined by the localized multipliers $\widetilde{q}_{a_1,\ldots,a_l}$, i.e.
\begin{equation}\label{Wie5}
\begin{split}
\mathcal{F}(K_{a_1,\ldots, a_l;\iota}^{k;k_0,\ldots, k_{n-1}})(\eta_0,\ldots,\eta_{n-1})&=\widetilde{q}_{a_1,\ldots,a_l}(\eta_0,\ldots,\eta_{n-1})\varphi_{k_0}(\eta_0)\cdot\ldots\cdot\varphi_{n-1}(\eta_{n-1})\\
&\times\varphi_{k}(\eta_0+\ldots+\eta_{n-1})(i\mathrm{sgn}\,\eta_0)^\iota.
\end{split}
\end{equation}
We will prove that 
\begin{equation}\label{Wie6}
\|K_{a_1,\ldots, a_l;\iota}^{k;k_0,\ldots, k_{n-1}}\|_{L^1(\Tb^n)}\lesssim 2^{k_0+k_1+\ldots+k_{n-1}}2^{-\widetilde{k}_{n-1}}(1+|\widetilde{k}_{n-1}-\widetilde{k}_1|)^l,
\end{equation}
where $\widetilde{k}_1\leq\ldots\leq \widetilde{k}_{n-1}$ denotes the nondecreasing rearrangement of $k_1,\ldots,k_{n-1}$.

We prove the bounds \eqref{Wie6} by induction over $l$. If $l=1$ then $n=a_1+2$, and the bounds \eqref{Wie6} follow from \eqref{adi6.9}--\eqref
{Wie2} and \eqref{mk6}. To prove the induction step, assume that the bounds \eqref{Wie6} hold for all $l'\leq l$. Assume that $a_1,\ldots,a_{l+1}$ are given, $a_1+\ldots+a_{l+1}+l+1=n-1$, $k,k_0,\ldots,k_{n-1}\in\overline{\Zb}$, and let $K_{a_1;0}^{k;p,k_1,\ldots,k_{a_1+1}}$ and $K_{a_2,\ldots, a_{l+1};\iota}^{p;k_0,k_{a_1+2},\ldots, k_{n-1}}$ denote the localized kernels corresponding to the operators $R_{a_1}$ and $R_{a_2,\ldots,a_{l+1}}H_0^\iota$ respectively. The main observation is that
\begin{equation*}
\begin{split}
K_{a_1,\ldots, a_{l+1};\iota}^{k;k_0,\ldots, k_{n-1}}(x_0,\ldots,x_{n-1})&=\sum_{p\in\Z}\int_{\Tb_R} K_{a_1;0}^{k;n,k_1,\ldots,k_{a_1+1}}(\alpha, x_1,\ldots,x_{a_1+1})\\
&\times K_{a_2,\ldots, a_{l+1};\iota}^{n;k_0,k_{a_1+2},\ldots, k_{n-1}}(x_0-\alpha, x_{a_1+2}-\alpha, \ldots, x_{n-1}-\alpha)\,d\alpha.
\end{split}
\end{equation*}
In particular
\begin{equation}\label{Wie7}
\|K_{a_1,\ldots, a_l;\iota}^{k;k_0,\ldots, k_{n-1}}\|_{L^1(\Tb^n)}\leq \sum_{p\in\Z}\|K_{a_1;0}^{k;p,k_1,\ldots,k_{a_1+1}}\|_{L^1(\Tb^{a_1+2})}\|K_{a_2,\ldots, a_{l+1};\iota}^{p;k_0,k_{a_1+2},\ldots, k_{n-1}}\|_{L^1(\Tb^{n-a_1-1})}.
\end{equation}
Moreover, $K_{a_1;0}^{k;p,k_1,\ldots,k_{a_1+1}}\equiv 0$ unless $2^p\lesssim 2^{k_{max,1}}$ where $k_{max,1}:=\max(k_1,\ldots,k_{a_1+1})$ and 
$K_{a_2,\ldots, a_{l+1};\iota}^{p;k_0,k_{a_1+2},\ldots, k_{n-1}}\equiv 0$ unless $2^p\lesssim 2^{k_{max,2}}$ where $k_{max,2}:=\max(k_{a_1+2},\ldots,k_{n-1})$, due to \eqref{Wie1}. Using \eqref{Wie7} and the induction hypothesis on the kernels $K_{a_1;0}^{k;p,k_1,\ldots,k_{a_1+1}}$ and $K_{a_2,\ldots, a_{l+1};\iota}^{p;k_0,k_{a_1+2},\ldots, k_{n-1}}$, we can finally estimate
\begin{equation*}
\begin{split}
\|K_{a_1,\ldots, a_l;\iota}^{k;k_0,\ldots, k_{n-1}}\|_{L^1(\Tb^n)}&\lesssim\sum_{2^p\lesssim 2^{\min(k_{max,1},k_{max,2})}}\frac{2^p2^{k_1+\ldots+k_{a_1+1}}}{2^{k_{\max,1}}}\frac{2^{k_0}2^{k_{a_1+2}+\ldots+k_{n-1}}}{2^{k_{\max,2}}}(1+|\widetilde{k}_{n-1}-\widetilde{k}_1|)^{l+1}\\
&\lesssim 2^{k_0+k_1+\ldots+k_{n-1}}2^{-\max(k_{max,1},k_{max,2})}(1+|\widetilde{k}_{n-1}-\widetilde{k}_1|)^{l+1}.
\end{split}
\end{equation*}
This completes the induction step and the proof of \eqref{Wie6}.

(ii) The functions $\partial_th$ and $\partial_t\phi$ can be expressed using the main equations \eqref{ww0repeat} and the expansion \eqref{adi7}. The bounds in \eqref{adi12.3} follow from the bounds in \eqref{adi7} and simple algebra properties (see \cite[Lemma 2.4]{DIP}), while the $S^\infty$ bounds in \eqref{adi12.4} follow from \eqref{adi8}.
\end{proof} 

\subsection{The first normal form} As in \cite[Lemma 5.1]{IoPu2} we define the new variables
\begin{align}\label{defv}
\begin{split}
&v:= h^\ast+i|\partial_x|^{1/2}\phi^\ast,\\
&h^\ast:= h+|\partial_x|\big(H_0h \cdot H_0h\big)/2,\qquad \phi^\ast:=\phi + H_0\big(H_0h\cdot |\partial_x|\phi \big).
  \end{split}
\end{align}
Then we calculate
\begin{equation}\label{adi12.1}
\begin{split}
\partial_th^\ast&=\partial_th+|\partial_x|\big(H_0h \cdot H_0\partial_th\big),\\
\partial_t\phi^\ast&=\partial_t\phi + H_0\big(H_0\partial_th\cdot |\partial_x|\phi \big)+H_0\big(H_0h\cdot |\partial_x|\partial_t\phi \big).
\end{split}
\end{equation}

We can thus calculate
\begin{equation}\label{adi12.5}
(\partial_t+i|\partial_x|^{1/2})v=[\partial_th^\ast-|\partial_x|\phi^\ast]+i|\partial_x|^{1/2}[\partial_t\phi^\ast+h^\ast].
\end{equation}
Using the identities \eqref{adi12.2}, we calculate
\begin{equation*}
\begin{split}
\partial_th^\ast-|\partial_x|\phi^\ast&=-|\partial_x|\big[\phi+H_0\big(H_0h\cdot |\partial_x|\phi \big)\big]+|\partial_x|\phi+\sum_{n=2}^AG_n(h,\phi)+G_{>A}\\
&+|\partial_x|\Big[H_0h \cdot H_0\Big(|\partial_x|\phi+\sum_{n=2}^AG_n(h,\phi)+G_{>A}\Big)\Big]\\
&=\sum_{n=2}^A\widetilde{\mathcal{N}}^1_n(h,\phi)+\widetilde{\mathcal{N}}^1_{>A},
\end{split}
\end{equation*}
where, with $n\in\{3,\ldots,A\}$,
\begin{equation}\label{adi12.6}
\begin{split}
\widetilde{\mathcal{N}}^1_2(h,\phi)&:=-\partial_x\big(H_0h\cdot |\partial_x|\phi \big)+G_2(h,\phi)+|\partial_x|(H_0h\cdot\partial_x\phi),\\
\widetilde{\mathcal{N}}^1_n(h,\phi)&:=G_n(h,\phi)+|\partial_x|\big[H_0h \cdot H_0G_{n-1}(h,\phi)\big],\\
\widetilde{\mathcal{N}}^1_{>A}&:=G_{>A}+|\partial_x|\big[H_0h \cdot H_0\big(G_A(h,\phi)+G_{>A}\big)\big].
\end{split}
\end{equation}
Similarly, we also calculate
\begin{equation*}
\begin{split}
&\partial_t\phi^\ast+h^\ast=h+|\partial_x|\big(H_0h \cdot H_0h\big)/2-h+\sum_{n=2}^AH_n(h,\phi)+H_{>A}\\
&+ H_0\Big[H_0\Big(|\partial_x|\phi+\sum_{n=2}^AG_n(h,\phi)+G_{>A}\Big)\cdot |\partial_x|\phi \Big]+H_0\Big[H_0h\cdot |\partial_x|\Big(-h+\sum_{n=2}^AH_n(h,\phi)+H_{>A}\Big)\Big]\\
&=\sum_{n=2}^A\widetilde{\mathcal{N}}^2_n(h,\phi)+\widetilde{\mathcal{N}}^2_{>A},
\end{split}
\end{equation*}
where, with $n\in\{3,\ldots,A\}$,
\begin{equation}\label{adi12.7}
\begin{split}
\widetilde{\mathcal{N}}^2_2(h,\phi)&:=|\partial_x|\big(H_0h \cdot H_0h\big)/2+H_2(h,\phi)+H_0(\partial_x\phi\cdot|\partial_x|\phi)-H_0(H_0h\cdot |\partial_x|h),\\
\widetilde{\mathcal{N}}^2_n(h,\phi)&:=H_n(h,\phi)+H_0\big(H_0G_{n-1}(h,\phi)\cdot|\partial_x|\phi\big)+H_0\big(H_0h\cdot |\partial_x|H_{n-1}(h,\phi)\big),\\
\widetilde{\mathcal{N}}^2_{>A}&:=H_{>A}+H_0\big[H_0(G_{A}(h,\phi)+G_{>A})\cdot|\partial_x|\phi\big]+H_0\big[H_0h\cdot |\partial_x|(H_A(h,\phi)+H_{>A})\big].
\end{split}
\end{equation}

We show now that the quadratic terms $\widetilde{\mathcal{N}}^1_2(h,\phi)$ and $\widetilde{\mathcal{N}}^2_2(h,\phi)$ vanish. Indeed,
\begin{equation*}
\mathcal{F}(\widetilde{\mathcal{N}}^1_2(h,\phi))(\xi)=\frac{1}{2\pi R}\sum_{\eta,\rho\in\Zb/R,\,\eta+\rho=\xi}n_2^1(\eta,\rho)\widehat{h}(\eta)\widehat{\phi}(\rho)
\end{equation*}
where, recalling that $G_2(h,\phi)=-\partial_x(h\partial_x\phi)-|\partial_x|(h|\partial_x|\phi)$,
\begin{equation*}
\begin{split}
n_2^1(\eta,\rho)&:=(\eta+\rho)\,\mathrm{sgn}(\eta)|\rho|-|\eta+\rho|\,\mathrm{sgn}(\eta)\rho+(\eta+\rho)\rho-|\eta+\rho||\rho|\\
&=|\eta+\rho||\rho|\big[\mathrm{sgn}(\eta+\rho)\mathrm{sgn}(\eta)-\mathrm{sgn}(\rho)\mathrm{sgn}(\eta)+\mathrm{sgn}(\eta+\rho)\mathrm{sgn}(\rho)-1\big]=0.
\end{split}
\end{equation*}
The last identity follows by examining explicitly the cases $\rho>0$ and $\rho<0$. Moreover
\begin{equation*}
|\partial_x|\big(H_0h \cdot H_0h\big)/2-H_0(H_0h\cdot |\partial_x|h)=H_0\partial_x\big(H_0h \cdot H_0h\big)/2-H_0(H_0h\cdot |\partial_x|h)=0
\end{equation*}
and, recalling that $H_2(h,\phi)=-(\partial_x\phi)^2/2+(|\partial_x|\phi)^2/2$,
\begin{equation*}
\begin{split}
&\mathcal{F}[H_2(h,\phi)+H_0(\partial_x\phi\cdot|\partial_x|\phi)](\xi)=\frac{1}{2\pi R}\sum_{\eta,\rho\in\Zb/R,\,\eta+\rho=\xi}n_2^2(\eta,\rho)\widehat{\phi}(\eta)\widehat{\phi}(\rho),\\
&n_2^2(\eta,\rho)=\frac{1}{2}|\eta||\rho|+\frac{1}{2}\eta\rho-\frac{1}{2}\mathrm{sgn}(\eta+\rho)\eta|\rho|-\frac{1}{2}\mathrm{sgn}(\eta+\rho)|\eta|\rho\\
&\qquad\quad\,\,=(1/2)|\eta||\rho|\big[1+\mathrm{sgn}(\eta)\mathrm{sgn}(\rho)-\mathrm{sgn}(\eta+\rho)\mathrm{sgn}(\eta)-\mathrm{sgn}(\eta+\rho)\mathrm{sgn}(\rho)]=0.
\end{split}
\end{equation*}
Therefore $\widetilde{\mathcal{N}}^1_2(h,\phi)=0$ and $\widetilde{\mathcal{N}}^2_2(h,\phi)=0$. To summarize, we proved the following:

\begin{lem}\label{QuadNorm}
With $v$ defined as in \eqref{defv} we have
\begin{equation}\label{adi12.9}
(\partial_t+i|\partial_x|^{1/2})v=\sum_{n=3}^A\big[\widetilde{\mathcal{N}}^1_n(h,\phi)+i|\partial_x|^{1/2}\widetilde{\mathcal{N}}^2_n(h,\phi)\big]+\big[\widetilde{\mathcal{N}}^1_{>A}+i|\partial_x|^{1/2}\widetilde{\mathcal{N}}^2_{>A}\big],
\end{equation}
where the nonlinear terms $\widetilde{\mathcal{N}}^1_n(h,\phi), \widetilde{\mathcal{N}}^2_n(h,\phi), \widetilde{\mathcal{N}}^1_{>A},\widetilde{\mathcal{N}}^2_{>A}$ are defined in \eqref{adi12.6}--\eqref{adi12.7}.
\end{lem}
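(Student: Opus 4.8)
The plan is to compute directly the evolution equation satisfied by the quadratic normal form variable $v=h^\ast+i|\partial_x|^{1/2}\phi^\ast$, whose corrections $h^\ast-h$ and $\phi^\ast-\phi$ in \eqref{defv} are the standard quadratic normal form for $2$D gravity water waves (as in \cite[Lemma 5.1]{IoPu2}), and to verify that all quadratic contributions cancel, so that the right-hand side is purely cubic-and-higher. First I would differentiate $h^\ast$ and $\phi^\ast$ in time; since $H_0$ and $|\partial_x|$ are constant-coefficient Fourier multipliers they commute with $\partial_t$, and the Leibniz rule applied to the quadratic corrections yields the identities \eqref{adi12.1}. Substituting the expansions \eqref{adi12.2} of $\partial_t h$ and $\partial_t\phi$ (from Lemma \ref{DNexpansion}) into \eqref{adi12.1}, and then into $(\partial_t+i|\partial_x|^{1/2})v=[\partial_t h^\ast-|\partial_x|\phi^\ast]+i|\partial_x|^{1/2}[\partial_t\phi^\ast+h^\ast]$ as in \eqref{adi12.5}, produces an expression which I would regroup by homogeneity in $(h,\phi)$: the pieces of degree $n$ with $3\le n\le A$ are collected into $\widetilde{\mathcal{N}}^1_n,\widetilde{\mathcal{N}}^2_n$ as in \eqref{adi12.6}--\eqref{adi12.7}, the pieces of degree $\ge A+1$ into $\widetilde{\mathcal{N}}^1_{>A},\widetilde{\mathcal{N}}^2_{>A}$, and what remains are the quadratic contributions $\widetilde{\mathcal{N}}^1_2,\widetilde{\mathcal{N}}^2_2$.

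The heart of the argument is to show that $\widetilde{\mathcal{N}}^1_2(h,\phi)=\widetilde{\mathcal{N}}^2_2(h,\phi)=0$. For this I would pass to the Fourier side, writing each term as a bilinear form with an explicit symbol, using $G_2(h,\phi)=-\partial_x(h\partial_x\phi)-|\partial_x|(h|\partial_x|\phi)$ and the fact that the Fourier symbol of $H_0$ is $-i\,\mathrm{sgn}(\cdot)$. For $\widetilde{\mathcal{N}}^1_2$, which is $(h,\phi)$-bilinear, one finds that its symbol is $n^1_2(\eta,\rho)=|\eta+\rho|\,|\rho|\,\big[\mathrm{sgn}(\eta+\rho)\mathrm{sgn}(\eta)-\mathrm{sgn}(\rho)\mathrm{sgn}(\eta)+\mathrm{sgn}(\eta+\rho)\mathrm{sgn}(\rho)-1\big]$, and the bracket vanishes identically by a short case analysis on the signs of $\eta,\rho,\eta+\rho$ (in the degenerate cases the prefactor $|\eta+\rho|\,|\rho|$ already kills the term). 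Similarly the $(\phi,\phi)$-part of $\widetilde{\mathcal{N}}^2_2$ has symbol $\frac12|\eta|\,|\rho|\,\big[1+\mathrm{sgn}(\eta)\mathrm{sgn}(\rho)-\mathrm{sgn}(\eta+\rho)\mathrm{sgn}(\eta)-\mathrm{sgn}(\eta+\rho)\mathrm{sgn}(\rho)\big]=0$, while the remaining $(h,h)$-part vanishes by the elementary identity $|\partial_x|(H_0h\cdot H_0h)/2=H_0\partial_x(H_0h\cdot H_0h)/2=H_0(H_0h\cdot|\partial_x|h)$. Assembling these cancellations gives the asserted identity \eqref{adi12.9}.

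I expect the only genuine difficulty to be the bookkeeping: quadratic terms are produced both by the linear part ($|\partial_x|\phi$ and $-h$) of the equations \eqref{adi12.2} acting on the quadratic corrections $h^\ast-h,\phi^\ast-\phi$, and by the true quadratic nonlinearities $G_2,H_2$, so one must collect precisely the combination appearing in $\widetilde{\mathcal{N}}^1_2,\widetilde{\mathcal{N}}^2_2$ — in fact the numerical coefficients in the definitions \eqref{defv} of $h^\ast,\phi^\ast$ are dictated exactly by the requirement that the two $\mathrm{sgn}$-identities above hold. No further work is needed for the higher order terms at this stage; their structure (and the quantitative bounds needed later) follows from the $S^\infty$ estimates \eqref{adi8}, \eqref{adi12.4} together with the product and composition estimates of Lemma \ref{touse}, and the elimination of the remaining non-resonant cubic terms is carried out in the subsequent cubic normal form step.
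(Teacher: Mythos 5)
Your proposal follows essentially the same route as the paper: differentiate $h^\ast,\phi^\ast$ to get \eqref{adi12.1}, substitute the expansions \eqref{adi12.2} into \eqref{adi12.5}, regroup by homogeneity into \eqref{adi12.6}--\eqref{adi12.7}, and kill the quadratic terms by the same explicit Fourier-symbol computations (the sign-bracket identities for $n^1_2$ and the $(\phi,\phi)$ part of $\widetilde{\mathcal{N}}^2_2$, plus the Hilbert-transform identity for the $(h,h)$ part). This matches the paper's argument, so no changes are needed.
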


\subsection{The cubic nonlinearities} The cubic terms $\widetilde{\mathcal{N}}^1_3(h,\phi)$ and $|\partial_x|^{1/2}\widetilde{\mathcal{N}}^2_3(h,\phi)$ and be calculated explicitly in terms of the variable $u$. Using \eqref{adi12.6} and \eqref{adi6.10} we write
\begin{equation*}
\widetilde{\mathcal{N}}^1_3(h,\phi)=G_3(h,\phi)+|\partial_x|\big[H_0h \cdot H_0G_2(h,\phi)\big]=-\partial_x\widetilde{G}_3(h,\phi)-|\partial_x|\big[H_0h \cdot |\partial_x|R_0H_0\phi\big].
\end{equation*}
We examine the expansion \eqref{adi6} and identify the cubic term (using also \eqref{na23}),
\begin{equation*}
\widetilde{G}_3(h,\phi)=R_1\phi+R_0^2H_0\phi.
\end{equation*}
The identity (8.47) in \cite{DIP} shows that
\begin{equation}\label{adi13.1}
\begin{split}
\widehat{R_1f}(\xi)&=\frac{1}{(2\pi R)^2}\sum_{\eta_1,\eta_2,\sigma\in\Zb/R,\,\eta_1+\eta_2+\sigma=\xi}\frac{i}{2}\rho\big[|\sigma|+|\eta_1+\eta_2+\sigma|-|\eta_1+\sigma|-|\eta_2+\sigma|\big]\\
&\qquad\qquad\qquad\times\widehat{f}(\sigma)\widehat{h}(\eta_1)\widehat{h}(\eta_2).
\end{split}
\end{equation}
Using also \eqref{adi6.9} and symmetrizing in the variables $\eta_1,\eta_2$ (corresponding to the two copies of $h$) we have
\begin{equation}\label{adi13.2}
\begin{split}
&\mathcal{F}(G_3(h,\phi))(\xi)=\frac{1}{(2\pi R)^2}\sum_{\eta_1,\eta_2,\sigma\in\Zb/R,\,\eta_1+\eta_2+\sigma=\xi}q_3(\eta_1,\eta_2,\sigma)\widehat{\phi}(\sigma)\widehat{h}(\eta_1)\widehat{h}(\eta_2),\\
&q_3(\eta_1,\eta_2,\sigma):=\frac{|\sigma||\eta_1+\eta_2+\sigma|}{2}\big[|\eta_1+\sigma|+|\eta_2+\sigma|-|\eta_1+\eta_2+\sigma|-|\sigma|\big].
\end{split}
\end{equation}
Moreover, using \eqref{adi6.9} again,
\begin{equation*}
\begin{split}
\mathcal{F}&\big\{-|\partial_x|\big[H_0h \cdot |\partial_x|R_0H_0\phi\big]\big\}(\xi)=\frac{1}{(2\pi R)^2}\sum_{\eta_1,\eta_2,\sigma\in\Zb/R,\,\eta_1+\eta_2+\sigma=\xi}\widehat{\phi}(\sigma)\widehat{h}(\eta_1)\widehat{h}(\eta_2)\\
&\times\big\{|\eta_1+\eta_2+\sigma|\mathrm{sgn}(\eta_1)|\eta_2+\sigma||\sigma|\big[\mathrm{sgn}(\eta_2+\sigma)-\mathrm{sgn}(\sigma)\big]\big\}.
\end{split}
\end{equation*}
We symmetrize this formula in $\eta_1$ and $\eta_2$. Using also \eqref{adi13.2} we derive the identity
\begin{equation}\label{adi13.3}
\begin{split}
&\mathcal{F}[\widetilde{\mathcal{N}}^1_3(h,\phi)](\xi)=\frac{1}{(2\pi R)^2}\sum_{\eta,\rho,\sigma\in\Zb/R,\,\eta+\rho+\sigma=\xi}\widehat{h}(\eta)\widehat{h}(\rho)\widehat{|\partial_x|^{1/2}\phi}(\sigma)a_3^1(\eta,\rho,\sigma),\\
&a_3^1(\eta,\rho,\sigma):=\frac{|\xi||\sigma|^{1/2}}{2}\big\{|\sigma+\eta|+|\sigma+\rho|-|\xi|-|\sigma|-|\sigma+\eta|\mathrm{sgn}(\sigma)\mathrm{sgn}(\rho)\\
&\qquad\qquad\,\,-|\sigma+\rho|\mathrm{sgn}(\sigma)\mathrm{sgn}(\eta)+(\sigma+\eta)\mathrm{sgn}(\rho)+(\sigma+\rho)\mathrm{sgn}(\eta)\big\}.
\end{split}
\end{equation}

Similarly, using equation \eqref{adi12.7} we have
\begin{equation*}
\widetilde{\mathcal{N}}^2_3(h,\phi)=H_3(h,\phi)+H_0\big(H_0G_2(h,\phi)\cdot|\partial_x|\phi\big)+H_0\big(H_0h\cdot |\partial_x|H_2(h,\phi)\big).
\end{equation*}
Recalling the definitions \eqref{adi12.2} and the equations \eqref{ww0repeat}, we have
\begin{equation*}
H_3(h,\phi)=|\partial_x|\phi\cdot\big[G_2(h,\phi)+\partial_xh\cdot\partial_x\phi\big].
\end{equation*}
Using also the formulas for $G_2(h,\phi)$ and $H_2(h,\phi)$ in \eqref{adi7} and \eqref{adi12.3}, we have
\begin{equation*}
\begin{split}
\mathcal{F}[\widetilde{\mathcal{N}}^2_3&(h,\phi)](\xi)=\frac{1}{(2\pi R)^2}\sum_{\eta,\rho,\sigma\in\Zb/R,\,\eta+\rho+\sigma=\xi}\widehat{\phi}(\eta)\widehat{\phi}(\rho)\widehat{h}(\sigma)\big\{|\eta|\big[\rho(\sigma+\rho)-|\rho||\sigma+\rho|-\sigma\rho\big]\\
&-\mathrm{sgn}(\xi)|\eta|\big[\rho|\sigma+\rho|-|\rho|(\sigma+\rho)\big]-(1/2)\mathrm{sgn}(\xi)\mathrm{sgn}(\sigma)|\eta+\rho|(\eta\rho+|\eta||\rho|)\big\}.
\end{split}
\end{equation*}
We symmetrize in $\eta$ and $\rho$ to derive our second main formula
\begin{equation}\label{adi13.5}
\begin{split}
&\mathcal{F}[|\partial_x|^{1/2}\widetilde{\mathcal{N}}^2_3(h,\phi)](\xi)=\frac{1}{(2\pi R)^2}\sum_{\eta,\rho,\sigma\in\Zb/R,\,\eta+\rho+\sigma=\xi}\widehat{|\partial_x|^{1/2}\phi}(\eta)\widehat{||\partial_x|^{1/2}\phi}(\rho)\widehat{h}(\sigma)a_3^2(\eta,\rho,\sigma),\\
&a_3^2(\eta,\rho,\sigma):=\frac{|\xi|^{1/2}|\eta|^{1/2}|\rho|^{1/2}}{2}\big\{|\eta|+|\rho|+(\xi+\sigma)\mathrm{sgn}(\xi)-|\sigma+\rho|\big(1+\mathrm{sgn}(\xi)\mathrm{sgn}(\rho)\big)\\
&\qquad\qquad-|\sigma+\eta|\big(1+\mathrm{sgn}(\xi)\mathrm{sgn}(\eta)\big)-|\eta+\rho|\mathrm{sgn}(\xi)\mathrm{sgn}(\sigma)\big(1+\mathrm{sgn}(\eta)\mathrm{sgn}(\rho)\big)\big\}.
\end{split}
\end{equation}
Since
\begin{equation*}
h=\frac{u+\overline{u}}{2},\qquad |\partial_x|^{1/2}\phi=\frac{u-\overline{u}}{2i},
\end{equation*}
we can use the identities \eqref{adi13.3}--\eqref{adi13.5} to calculate
\begin{equation}\label{adi13.8}
\begin{split}
\widetilde{\mathcal{N}}^1_3(h,\phi)+i|\partial_x|^{1/2}\widetilde{\mathcal{N}}^2_3(h,\phi)=\mathcal{N}_{3,+++}+\mathcal{N}_{3,++-}+\mathcal{N}_{3,+--}+\mathcal{N}_{3,---},\\
\mathcal{F}(\mathcal{N}_{3,+++})(\xi):=\frac{1}{(2\pi R)^2}\sum_{\xi_1,\xi_2,\xi_3\in\Zb/R,\,\xi_1+\xi_2+\xi_3=\xi}p_{+++}(\xi_1,\xi_2,\xi_3)\widehat{u}(\xi_1)\widehat{u}(\xi_2)\widehat{u}(\xi_3),\\
\mathcal{F}(\mathcal{N}_{3,++-})(\xi):=\frac{1}{(2\pi R)^2}\sum_{\xi_1,\xi_2,\xi_3\in\Zb/R,\,\xi_1+\xi_2+\xi_3=\xi}p_{++-}(\xi_1,\xi_2,\xi_3)\widehat{u}(\xi_1)\widehat{u}(\xi_2)\widehat{\overline{u}}(\xi_3),\\
\mathcal{F}(\mathcal{N}_{3,+--})(\xi):=\frac{1}{(2\pi R)^2}\sum_{\xi_1,\xi_2,\xi_3\in\Zb/R,\,\xi_1+\xi_2+\xi_3=\xi}p_{+--}(\xi_1,\xi_2,\xi_3)\widehat{u}(\xi_1)\widehat{\overline{u}}(\xi_2)\widehat{\overline{u}}(\xi_3),\\
\mathcal{F}(\mathcal{N}_{3,---})(\xi):=\frac{1}{(2\pi R)^2}\sum_{\xi_1,\xi_2,\xi_3\in\Zb/R,\,\xi_1+\xi_2+\xi_3=\xi}p_{---}(\xi_1,\xi_2,\xi_3)\widehat{\overline{u}}(\xi_1)\widehat{\overline{u}}(\xi_2)\widehat{\overline{u}}(\xi_3),
\end{split}
\end{equation}
where the symbols $p_{\iota_1\iota_2\iota_3}$ are given by
\begin{equation}\label{adi13.9}
\begin{split}
p_{+++}(\xi_1,\xi_2,\xi_3)&:=\frac{1}{8i}\big[a_3^1(\xi_1,\xi_2,\xi_3)+a_3^2(\xi_1,\xi_2,\xi_3)\big],\\
p_{++-}(\xi_1,\xi_2,\xi_3)&:=\frac{1}{8i}\big[-a_3^1(\xi_1,\xi_2,\xi_3)+a_3^1(\xi_3,\xi_1,\xi_2)+a_3^1(\xi_3,\xi_2,\xi_1)\\
&\qquad\quad\,+a_3^2(\xi_1,\xi_2,\xi_3)-a_3^2(\xi_3,\xi_1,\xi_2)-a_3^2(\xi_3,\xi_2,\xi_1)\big],\\
p_{+--}(\xi_1,\xi_2,\xi_3)&:=\frac{1}{8i}\big[-a_3^1(\xi_1,\xi_2,\xi_3)-a_3^1(\xi_1,\xi_3,\xi_2)+a_3^1(\xi_2,\xi_3,\xi_1)\\
&\qquad\quad\,-a_3^2(\xi_1,\xi_2,\xi_3)-a_3^2(\xi_1,\xi_3,\xi_2)+a_3^2(\xi_2,\xi_3,\xi_1)\big],\\
p_{---}(\xi_1,\xi_2,\xi_3)&:=\frac{1}{8i}\big[-a_3^1(\xi_1,\xi_2,\xi_3)+a_3^2(\xi_1,\xi_2,\xi_3)\big].
\end{split}
\end{equation}
The identities \eqref{adi12.9} and \eqref{adi13.8} show that
\begin{equation}\label{adi32}
\begin{split}
&(\partial_t+i|\partial_x|^{1/2})v=\mathcal{IN}_3+\mathcal{IN}_{\geq 4},\\
&\mathcal{IN}_3:=\mathcal{N}_{3,+++}+\mathcal{N}_{3,++-}+\mathcal{N}_{3,+--}+\mathcal{N}_{3,---}\\
&\mathcal{IN}_{\geq 4}:=\sum_{n=4}^A\big[\widetilde{\mathcal{N}}^1_n(h,\phi)+i|\partial_x|^{1/2}\widetilde{\mathcal{N}}^2_n(h,\phi)\big]+\big[\widetilde{\mathcal{N}}^1_{>A}+i|\partial_x|^{1/2}\widetilde{\mathcal{N}}^2_{>A}\big].
\end{split}
\end{equation}

We can calculate explicitly the symbols $a_3^1$ and $a_3^2$ using the formulas \eqref{adi13.3}--\eqref{adi13.5}. Assuming that $\xi=\eta+\rho+\sigma\geq 0$, we calculate 
\begin{equation}\label{adi18.1}
a_3^1(\eta,\rho,\sigma)=0\,\,\text{ and }\,\,a_3^2(\eta,\rho,\sigma)=-|\xi|^{3/2}|\eta|^{1/2}|\rho|^{1/2}\qquad\text{ if }\eta,\rho,\sigma> 0,
\end{equation}
\begin{equation}\label{adi18.2}
\begin{split}
a_3^1(\eta,\rho,\sigma)=|\xi||\sigma|^{1/2}\big[|\sigma+\eta|+|\sigma+\rho|-|\sigma|\big]\,\,&\text{ and }\,\,a_3^2(\eta,\rho,\sigma)=3|\xi|^{3/2}|\eta|^{1/2}|\rho|^{1/2}\\
&\text{ if }\eta,\rho> 0\text{ and }\sigma <0,\\
\end{split}
\end{equation}
\begin{equation}\label{adi18.3}
a_3^1(\eta,\rho,\sigma)=a_3^2(\eta,\rho,\sigma)=0\qquad\text{ if }\eta<0\text{ and }\rho,\sigma> 0,
\end{equation}
\begin{equation}\label{adi18.4}
a_3^1(\eta,\rho,\sigma)=-|\xi|^2|\sigma|^{1/2}\,\,\text{ and }\,\,a_3^2(\eta,\rho,\sigma)=0\qquad\text{ if }\eta,\sigma<0\text{ and }\rho> 0,
\end{equation}
\begin{equation}\label{adi18.5}
a_3^1(\eta,\rho,\sigma)=a_3^2(\eta,\rho,\sigma)=0\qquad\text{ if }\eta,\sigma>0\text{ and }\rho<0,
\end{equation}
\begin{equation}\label{adi18.51}
a_3^1(\eta,\rho,\sigma)=-|\xi|^2|\sigma|^{1/2}\,\,\text{ and }\,\,a_3^2(\eta,\rho,\sigma)=0\qquad\text{ if }\eta>0\text{ and }\rho,\sigma <0,
\end{equation}
\begin{equation}\label{adi18.52}
a_3^1(\eta,\rho,\sigma)=0\,\,\text{ and }\,\,a_3^2(\eta,\rho,\sigma)=|\xi|^{3/2}|\eta|^{1/2}|\rho|^{1/2}\qquad\text{ if }\eta,\rho<0\text{ and }\sigma >0,
\end{equation}
\begin{equation}\label{adi18.8}
a_3^1(\eta,\rho,\sigma)=a_3^2(\eta,\rho,\sigma)=0\qquad\text{ if }\eta=0\text{ or }\rho=0\text{ or }\sigma=0.
\end{equation}

\subsection{The second normal form} The symbols $a_3^j$ in \eqref{adi18.1}--\eqref{adi18.8} are homogeneous of degree $5/2$ and satisfy good $S^\infty$ bounds of the form 
\begin{equation}\label{adi35.4}
\big\|\varphi_{k}(\xi_1+\xi_2+\xi_3)\varphi_{k_1}(\xi_1)\varphi_{k_2}(\xi_2)\varphi_{k_3}(\xi_3)a_3^j(\xi_1,\xi_2,\xi_3)\big\|_{S^\infty}\lesssim 2^{k}2^{\widetilde{k}_2/2}2^{\widetilde{k}_3},
\end{equation}
for any $k,k_1,k_2,k_3\in\overline{\Zb}$ and $j\in\{1,2\}$, where $\widetilde{k}_1\leq\widetilde{k}_2\leq\widetilde{k}_3$ denotes the nondecreasing rearangement of $k_1,k_2,k_3$. This can be easily verified using the explicit formulas. The main issue, however, is that these symbols do not satisfy the $C^{1/2}$ bounds \eqref{symbolcubic}, which are important in some of the combinatorial arguments involving cancellation. To correct this problem we use a second normal form to eliminate the cubic terms $\mathcal{N}_{3,+++}$, $\mathcal{N}_{3,+--}$, and $\mathcal{N}_{3,---}$, as well as part of the cubic term $\mathcal{N}_{3,++-}$. More precisely:

\begin{lem}\label{lemCubNor} (i) We define
\begin{equation}\label{adi30}
\begin{split}
&w:=v+A_{3,+++}(u,u,u)+A_{3,++-}(u,u,\overline{u})+A_{3,+--}(u,\overline{u},\overline{u})+A_{3,---}(\overline{u},\overline{u},\overline{u}),\\
&\mathcal{F}(A_{3,\iota_1\iota_2\iota_3}(f,g,h))(\xi):=\frac{1}{(2\pi R)^2}\sum_{\xi_1,\xi_2,\xi_3\in\Zb/R,\,\xi_1+\xi_2+\xi_3=\xi}q_{\iota_1\iota_2\iota_3}(\xi_1,\xi_2,\xi_3)\widehat{f}(\xi_1)\widehat{g}(\xi_2)\widehat{h}(\xi_3),
\end{split}
\end{equation}
where the symbols $q_{\iota_1\iota_2\iota_3}$ are defined for $(\iota_1\iota_2\iota_3)\in\{(+++), (++-), (+--), (---)\}$ by
\begin{equation}\label{adi31}
\begin{split}
&q_{\iota_1\iota_2\iota_3}(\xi_1,\xi_2,\xi_3):=\frac{ip_{3,\iota_1\iota_2\iota_3}(\xi_1,\xi_2,\xi_3)}{|\xi|^{1/2}-\iota_1|\xi_1|^{1/2}-\iota_2|\xi_2|^{1/2}-\iota_3|\xi_3|^{1/2}}\qquad\text{ if }\,\,(\iota_1\iota_2\iota_3)\neq (++-),\\
&q_{++-}(\xi_1,\xi_2,\xi_3):=\frac{ip_{3,++-}(\xi_1,\xi_2,\xi_3)\big[\mathbf{1}_{\mathcal{R}}(\xi_1,\xi_2,\xi_3)+\mathbf{1}_{{}^c\mathcal{R}}(\xi_1,\xi_2,\xi_3)\Upsilon(\xi_1,\xi_2,\xi_3)\big]}{|\xi|^{1/2}-|\xi_1|^{1/2}-|\xi_2|^{1/2}+|\xi_3|^{1/2}},\\
&\mathcal{R}:=\{(x_1,x_2,x_3)\in\mathbb{R}^3:\,x_1x_2x_3(x_1+x_2+x_3)\geq 0\},\\
&\Upsilon(\xi_1,\xi_2,\xi_3):=\varphi\Big(\frac{2^{10}(\xi_1^2+\xi_2^2+\xi_3^2+\xi^2)\min(\xi_1^2,\xi_2^2,\xi_3^2,\xi^2)}{\xi_1\xi_2\xi_3\xi}\Big),
\end{split}
\end{equation}
where $\xi=\xi_1+\xi_2+\xi_3$. Then
\begin{equation}\label{adi31.5}
\begin{split}
&(\partial_t+i|\partial_x|^{1/2})w=\mathcal{W}_3+\mathcal{W}_{\geq 4},\\
&\widehat{\mathcal{W}_3}(\xi):=\frac{1}{(2\pi R)^2}\sum_{\xi_1,\xi_2,\xi_3\in\Zb/R,\,\xi_1+\xi_2+\xi_3=\xi}[p_{3,++-}\mathbf{1}_{{}^c\mathcal{R}}(1-\Upsilon)](\xi_1,\xi_2,\xi_3)\widehat{u}(\xi_1)\widehat{u}(\xi_2)\widehat{\overline{u}}(\xi_3),\\
&\mathcal{W}_{\geq 4}:=\mathcal{IN}_{\geq 4}+\sum_{(\iota_1\iota_2\iota_3)\in\{(+++), (++-), (+--), (---)\}}\big\{A_{3,\iota_1\iota_2\iota_3}((\partial_tu+i|\partial_x|^{1/2}u)^{\iota_1},u^{\iota_2},u^{\iota_3})\\
&\qquad\,\,+A_{3,\iota_1\iota_2\iota_3}(u^{\iota_1},(\partial_tu+i|\partial_x|^{1/2}u)^{\iota_2},u^{\iota_3})+A_{3,\iota_1\iota_2\iota_3}(u^{\iota_1},u^{\iota_2},(\partial_tu+i|\partial_x|^{1/2}u)^{\iota_3})\big\},
\end{split}
\end{equation}
where $\mathcal{IN}_{\geq 4}$ is defined in \eqref{adi32} and $f^+=f$, $f^-=\overline{f}$. 

(ii) Moreover the symbols $q_{\iota_1\iota_2\iota_3}$ are homogeneous of degree $2$ and satisfy the $S^\infty$ bounds
\begin{equation}\label{adi31.6}
\begin{split}
&q_{\iota_1\iota_2\iota_3}(\lambda\xi_1,\lambda\xi_2,\lambda\xi_3)=\lambda^2q_{\iota_1\iota_2\iota_3}(\xi_1,\xi_2,\xi_3)\qquad\text{ for any }\lambda>0,\,\xi_1,\xi_2,\xi_3\in\mathbb{R},\\
&\big\|q_{\iota_1\iota_2\iota_3}(\xi_1,\xi_2,\xi_3)\varphi_k(\xi_1+\xi_2+\xi_3)\varphi_{k_1}(\xi_1)\varphi_{k_2}(\xi_2)\varphi_{k_3}(\xi_3)\big\|_{S^\infty}\lesssim 2^{k/2}2^{3\max(k_1,k_2,k_3)/2}
\end{split}
\end{equation}
for any $k,k_1,k_2,k_3\in\overline{\Zb}$.
\end{lem}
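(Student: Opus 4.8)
The plan is to prove (i) by a direct normal‑form computation and (ii) by combining the homogeneity of the symbols $a_3^1,a_3^2$ with lower bounds for the cubic resonance functions on the frequency regions selected by the cutoffs in \eqref{adi31}.

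For part (i) I would apply $\mathcal{L}:=\partial_t+i|\partial_x|^{1/2}$ to the definition \eqref{adi30} of $w$. We already know from \eqref{adi32} that $\mathcal{L}v=\mathcal{IN}_3+\mathcal{IN}_{\geq 4}$, so it remains to compute $\mathcal{L}A_{3,\iota_1\iota_2\iota_3}(u^{\iota_1},u^{\iota_2},u^{\iota_3})$ for each of the four sign patterns. Writing the system \eqref{adi12.2} in the complex variable $u=h+i|\partial_x|^{1/2}\phi$ gives $\partial_t u=-i|\partial_x|^{1/2}u+(\text{homogeneity}\geq 2)$, and hence $\partial_t(u^{\iota_j})=-\iota_j\, i|\partial_x|^{1/2}(u^{\iota_j})+(\text{homogeneity}\geq 2)$. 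Using the Leibniz rule for $\partial_t$ on the trilinear form, feeding the linear part into the $j$‑th slot, and letting $i|\partial_x|^{1/2}$ act on the output frequency $\xi=\xi_1+\xi_2+\xi_3$, the homogeneity‑$3$ part of $\mathcal{L}A_{3,\iota_1\iota_2\iota_3}(u^{\iota_1},u^{\iota_2},u^{\iota_3})$ is the trilinear operator with symbol
\[
i\,q_{\iota_1\iota_2\iota_3}(\xi_1,\xi_2,\xi_3)\,\big[\,|\xi|^{1/2}-\iota_1|\xi_1|^{1/2}-\iota_2|\xi_2|^{1/2}-\iota_3|\xi_3|^{1/2}\,\big],
\]
while the remaining terms (where the homogeneity‑$\geq 2$ part of $(\partial_t+i|\partial_x|^{1/2})u$ is fed into one slot of $A_3$) are of homogeneity $\geq 4$. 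By the defining relation \eqref{adi31}, this homogeneity‑$3$ part equals $-\mathcal{N}_{3,\iota_1\iota_2\iota_3}$ when $(\iota_1\iota_2\iota_3)\neq(++-)$, and equals $-\mathcal{N}_{3,++-}$ plus the operator with symbol $[p_{3,++-}\mathbf{1}_{{}^c\mathcal{R}}(1-\Upsilon)]$ when $(\iota_1\iota_2\iota_3)=(++-)$ (using $\mathbf{1}_{\mathcal{R}}+\mathbf{1}_{{}^c\mathcal{R}}\Upsilon=1-\mathbf{1}_{{}^c\mathcal{R}}(1-\Upsilon)$). Summing the four contributions the cubic terms $\mathcal{IN}_3$ telescope away, leaving exactly $\mathcal{W}_3$, and collecting the homogeneity‑$\geq 4$ remainders together with $\mathcal{IN}_{\geq 4}$ yields the formula \eqref{adi31.5} for $\mathcal{W}_{\geq 4}$. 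The only delicate point here is the bookkeeping of the signs and complex conjugations of $u^{\pm}$.

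For part (ii) the homogeneity statement in \eqref{adi31.6} is immediate: by \eqref{adi13.9} the symbol $p_{3,\iota_1\iota_2\iota_3}$ is a fixed linear combination of the degree‑$5/2$ symbols $a_3^1,a_3^2$ with permuted arguments, the denominators $|\xi|^{1/2}-\iota_1|\xi_1|^{1/2}-\iota_2|\xi_2|^{1/2}-\iota_3|\xi_3|^{1/2}$ are homogeneous of degree $1/2$, and the cutoffs $\mathbf{1}_{\mathcal{R}},\mathbf{1}_{{}^c\mathcal{R}},\Upsilon$ are homogeneous of degree $0$, so each $q_{\iota_1\iota_2\iota_3}$ is homogeneous of degree $2$. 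For the $S^\infty$ bound I would localize dyadically to $\varphi_k(\xi)\varphi_{k_1}(\xi_1)\varphi_{k_2}(\xi_2)\varphi_{k_3}(\xi_3)$, rescale to unit frequencies, and apply Lemma \ref{touse}(iii). By \eqref{adi35.4} the numerator contributes $\lesssim 2^k2^{\widetilde k_2/2}2^{\widetilde k_3}$ on this piece (the localized Wiener norm being invariant under permutations of the $\xi_j$), so it suffices to show that the denominator $\Phi_{\iota_1\iota_2\iota_3}:=|\xi|^{1/2}-\iota_1|\xi_1|^{1/2}-\iota_2|\xi_2|^{1/2}-\iota_3|\xi_3|^{1/2}$ obeys, on the relevant support, a lower bound of the form $|\Phi_{\iota_1\iota_2\iota_3}|\gtrsim 2^{\widetilde k_2/2}2^{(k-\widetilde k_3)/2}$ (which is exactly what matches \eqref{adi31.6}), together with corresponding bounds on its derivatives so that $1/\Phi_{\iota_1\iota_2\iota_3}$ is a symbol of the right order. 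For $(\iota_1\iota_2\iota_3)\in\{(+++),(+--),(---)\}$ the concavity of $t\mapsto|t|^{1/2}$ and the constraint $\xi=\xi_1+\xi_2+\xi_3$ show that $\Phi_{\iota_1\iota_2\iota_3}$ cannot vanish except on $\{\xi_1\xi_2\xi_3=0\}$, where $a_3^j$ vanishes by \eqref{adi18.8}, and in fact the quantitative lower bound holds on every dyadic piece. For $(\iota_1\iota_2\iota_3)=(++-)$ the function $\Phi_{++-}$ genuinely vanishes on a nontrivial resonant set; here one uses that (a) inside $\mathcal{R}$ every such resonance is of the ``nontrivial'' type, and the explicit formulas \eqref{adi18.1}--\eqref{adi18.8} exhibit a null vanishing of $p_{3,++-}$ there commensurate with the simple zero of $\Phi_{++-}$, so the quotient $p_{3,++-}\mathbf{1}_{\mathcal{R}}/\Phi_{++-}$ is regular with the right size; and (b) on $\mathrm{supp}(\Upsilon)\cap{}^c\mathcal{R}$ the cutoff $\Upsilon$ is supported away from the (trivial) resonances and from the degenerate high$\times$high$\to$low configurations, which yields the desired lower bound on $|\Phi_{++-}|$ directly, so that $p_{3,++-}\mathbf{1}_{{}^c\mathcal{R}}\Upsilon/\Phi_{++-}$ is again a good symbol.

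I expect the main obstacle to be the two facts invoked for $(++-)$: checking that the specific cutoff $\Upsilon$ of \eqref{adi31} really separates its support from the zero set of $\Phi_{++-}$ with the quantitatively correct gain in \emph{every} frequency configuration, and, dually, that the explicit cubic symbol $p_{3,++-}$ carries the null vanishing on the nontrivial $(++-)$‑resonances lying in $\mathcal{R}$. Both reduce to elementary but somewhat involved case analyses in the eight sign regions of \eqref{adi18.1}--\eqref{adi18.8}, further complicated by the presence of the sharp cutoffs $\mathbf{1}_{\mathcal{R}},\mathbf{1}_{{}^c\mathcal{R}}$ in the Wiener norm; these last are harmless because $p_{3,++-}$ vanishes, like a square root, on $\partial\mathcal{R}=\{\xi_1\xi_2\xi_3\xi=0\}$, so each product $p_{3,++-}\mathbf{1}_{\mathcal{R}}$, $p_{3,++-}\mathbf{1}_{{}^c\mathcal{R}}$ is at least $C^{1/2}$ across that hypersurface, which suffices after the dyadic decomposition.
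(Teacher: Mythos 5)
Your proposal follows essentially the same route as the paper's proof. Part (i) is the paper's computation: the linear part of $(\partial_t+i|\partial_x|^{1/2})$ acting on $A_{3,\iota_1\iota_2\iota_3}(u^{\iota_1},u^{\iota_2},u^{\iota_3})$ has symbol $iq_{\iota_1\iota_2\iota_3}\cdot\big(|\xi|^{1/2}-\iota_1|\xi_1|^{1/2}-\iota_2|\xi_2|^{1/2}-\iota_3|\xi_3|^{1/2}\big)$, which cancels $\mathcal{IN}_3$ except for the piece with symbol $p_{3,++-}\mathbf{1}_{{}^c\mathcal{R}}(1-\Upsilon)$, while the quadratic-and-higher part of $\partial_tu+i|\partial_x|^{1/2}u$ fed into each slot gives exactly $\mathcal{W}_{\geq 4}$; part (ii) uses the same splitting (homogeneity by degree count, \eqref{adi35}/\eqref{adi35.3}-type lower bounds for the non-$(++-)$ signs, cancellation of $p_{3,++-}$ against the phase on $\mathcal{R}$, and the small-frequency separation enforced by $\Upsilon$ together with \eqref{adi19} on ${}^c\mathcal{R}$).

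Two caveats. First, the step you defer as the main obstacle, namely that inside $\mathcal{R}$ the vanishing of $p_{3,++-}$ on the resonant set matches the zero of the denominator, is in the paper not a local ``commensurate vanishing'' argument but an exact algebraic division: in each sign region of $\mathcal{R}$ one has $ip_{3,++-}/\big(|\xi|^{1/2}-|\xi_1|^{1/2}-|\xi_2|^{1/2}+|\xi_3|^{1/2}\big)=\tfrac{1}{16}|\xi|^{3/2}\big(\pm|\xi|^{1/2}\pm|\xi_1|^{1/2}\pm|\xi_2|^{1/2}\pm|\xi_3|^{1/2}\big)$, see \eqref{adi18.6}; so your plan does go through, but the verification is this explicit identity. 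Second, your justification for the harmlessness of the sharp cutoffs is incorrect: $p_{3,++-}$ does \emph{not} vanish like a square root across $\partial\mathcal{R}$. By the explicit formulas around \eqref{adi18.55}, for fixed $\xi_1,\xi_2>0$ the one-sided limits of $p_{3,++-}$ as $\xi_3\to0^{\pm}$ are nonzero and unequal, so the symbol has a jump across the hyperplane $\{\xi_3=0\}\subset\partial\mathcal{R}$. This does not damage the argument, because the cutoffs are harmless for a simpler reason: after localizing with $\varphi_k(\xi)\varphi_{k_1}(\xi_1)\varphi_{k_2}(\xi_2)\varphi_{k_3}(\xi_3)$ all four frequencies are bounded away from zero, so $\mathbf{1}_{\mathcal{R}}$ and $\mathbf{1}_{{}^c\mathcal{R}}$ reduce to finite sums of single-variable sign restrictions which act as smooth cutoffs of uniformly bounded Wiener norm. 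Relatedly, your description of $\mathrm{supp}\,\Upsilon$ is slightly off: $\Upsilon$ is supported precisely where the smallest of the four frequencies (possibly the output) is much smaller than the second smallest, which \emph{includes} low-output configurations; but the conclusion you actually use there, a lower bound $|\,|\xi|^{1/2}-|\xi_1|^{1/2}-|\xi_2|^{1/2}+|\xi_3|^{1/2}|\gtrsim(\text{second smallest})^{1/2}$ away from the (excluded) trivial pairings, is exactly \eqref{adi19} and is what the paper uses.
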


\begin{proof} (i) Using the definitions \eqref{adi30} we calculate
\begin{equation*}
\begin{split}
(\partial_t&+i|\partial_x|^{1/2})w=(\partial_t+i|\partial_x|^{1/2})v+\sum_{(\iota_1\iota_2\iota_3)}\big\{i|\partial_x|^{1/2}A_{3,\iota_1\iota_2\iota_3}(u^{\iota_1},u^{\iota_2},u^{\iota_3})\\
&+A_{3,\iota_1\iota_2\iota_3}(\partial_tu^{\iota_1},u^{\iota_2},u^{\iota_3})+A_{3,\iota_1\iota_2\iota_3}(u^{\iota_1},\partial_tu^{\iota_2},u^{\iota_3})+A_{3,\iota_1\iota_2\iota_3}(u^{\iota_1},u^{\iota_2},\partial_tu^{\iota_3})\big\},
\end{split}
\end{equation*}
where the sum is taken over $(\iota_1\iota_2\iota_3)\in\{(+++), (++-), (+--), (---)\}$. We use the first identity in \eqref{adi32} and notice that
\begin{equation*}
\begin{split}
\mathcal{N}_{3,\iota_1\iota_2\iota_3}&+i|\partial_x|^{1/2}A_{3,\iota_1\iota_2\iota_3}(u^{\iota_1},u^{\iota_2},u^{\iota_3})-i\iota_1A_{3,\iota_1\iota_2\iota_3}(\{|\partial_x|^{1/2}u^{\iota_1},u^{\iota_2},u^{\iota_3})\\
&-i\iota_2A_{3,\iota_1\iota_2\iota_3}(u^{\iota_1},|\partial_x|^{1/2}u^{\iota_2},u^{\iota_3})-i\iota_3A_{3,\iota_1\iota_2\iota_3}(u^{\iota_1},u^{\iota_2},|\partial_x|^{1/2}u^{\iota_3})=0
\end{split}
\end{equation*}
for any $(\iota_1\iota_2\iota_3)\in\{(+++), (+--), (---)\}$, due to the definitions \eqref{adi30}--\eqref{adi31}. The desired identity \eqref{adi31.5} follows.

(ii) The homogeneity property in \eqref{adi31.6} follows from the definitions \eqref{adi31}, since the symbols $p_{\iota_1\iota_2\iota_3}$ are homogeneous of degree $5/2$ (see the formulas \eqref{adi13.9} and \eqref{adi13.3}--\eqref{adi13.5}). 

To prove the $S^\infty$ bounds in the second line we notice the elementary inequality
\begin{equation}\label{adi35}
|x_1|^{1/2}+|x_2|^{1/2}+|x_3|^{1/2}-|x_1+x_2+x_3|^{1/2}\geq |x_2|^{1/2}/2
\end{equation}
for any $x_1,x_2,x_3\in\mathbb{R}$ with $|x_1|\leq |x_2|\leq |x_3|$. In fact, using  Lemma \ref{touse} (iii) it is easy to see that
\begin{equation}\label{adi35.3}
\Big\|\frac{\varphi_{k_0}(\xi_1+\xi_2+\xi_3)\varphi_{k_1}(\xi_1)\varphi_{k_2}(\xi_2)\varphi_{k_3}(\xi_3)}{|\xi_1+\xi_2+\xi_3|^{1/2}-\iota_1|\xi_1|^{1/2}-\iota_2|\xi_2|^{1/2}-\iota_3|\xi_3|^{1/2}}\Big\|_{S^\infty}\lesssim 2^{-\widetilde{k}_1/2},
\end{equation}
for any $k_0,k_1,k_2,k_3\in\overline{\Zb}$ and triplets $(\iota_1\iota_2\iota_3)\in\{(+++),\,(+--),\,(-+-),\,(--+), (---)\}$, where $\widetilde{k}_0\leq\widetilde{k}_1\leq\widetilde{k}_2\leq\widetilde{k}_3$ is the nondecreasing rearrangement of $k_0,k_1,k_2,k_3$. The desired bounds in \eqref{adi31.6} follow using also \eqref{adi35.4} and \eqref{al8}.

In the remaining case $(\iota_1\iota_2\iota_3)=(++-)$ we need to be more careful because we have to exploit certain cancellations in order to estimate the contribution of the Benjamin-Feir resonances. The formulas \eqref{adi18.1}--\eqref{adi18.8} show that if $\xi=\eta+\rho+\sigma\geq 0$ then
\begin{equation}\label{adi18.55}
\begin{split}
&(a_3^1-a_3^2)(\eta,\rho,\sigma)=|\xi|^{3/2}|\eta|^{1/2}|\rho|^{1/2}\quad\text{ if }\eta,\rho,\sigma> 0,\\
&(a_3^1-a_3^2)(\eta,\rho,\sigma)=|\xi||\sigma|^{1/2}\big[|\sigma+\eta|+|\sigma+\rho|-|\sigma|\big]-3|\xi|^{3/2}|\eta|^{1/2}|\rho|^{1/2}\quad\text{ if }\eta,\rho> 0\text{ and }\sigma <0,\\
&(a_3^1-a_3^2)(\eta,\rho,\sigma)=0\quad\text{ if }\eta\cdot\rho<0\text{ and }\sigma>0,\\
&(a_3^1-a_3^2)(\eta,\rho,\sigma)=-|\xi|^2|\sigma|^{1/2}\quad\text{ if }\eta\cdot\rho<0\text{ and }\sigma<0,\\
&(a_3^1-a_3^2)(\eta,\rho,\sigma)=-|\xi|^{3/2}|\eta|^{1/2}|\rho|^{1/2}\quad\text{ if }\eta,\rho<0\text{ and }\sigma >0.
\end{split}
\end{equation}
Using now the formulas \eqref{adi13.9}, if $\xi_1+\xi_2+\xi_3=\xi\geq 0$ we calculate
\begin{equation*}
\begin{split}
p_{++-}(\xi_1,\xi_2,\xi_3)&=\frac{|\xi|^{3/2}\big(|\xi_1|^{1/2}|\xi_3|^{1/2}+|\xi_2|^{1/2}|\xi_3|^{1/2}-|\xi_1|^{1/2}|\xi_2|^{1/2}\big)}{8i}\qquad \text{ if }\xi_1,\xi_2,\xi_3>0,\\
p_{++-}(\xi_1,\xi_2,\xi_3)&=\frac{|\xi|^{3/2}\big(|\xi|^{1/2}|\xi_3|^{1/2}-|\xi_2|^{1/2}|\xi|^{1/2}-|\xi_2|^{1/2}|\xi_3|^{1/2}\big)}{8i}\qquad \text{ if }\xi_1>0,\,\xi_2,\xi_3<0,\\
p_{++-}(\xi_1,\xi_2,\xi_3)&=\frac{|\xi|^{3/2}\big(|\xi|^{1/2}|\xi_3|^{1/2}-|\xi_1|^{1/2}|\xi|^{1/2}-|\xi_1|^{1/2}|\xi_3|^{1/2}\big)}{8i}\qquad \text{ if }\xi_2>0,\,\xi_1,\xi_3<0,\\
p_{++-}(\xi_1,\xi_2,\xi_3)&=\frac{|\xi|^{3/2}\big(|\xi_1|^{1/2}|\xi_2|^{1/2}-|\xi_1|^{1/2}|\xi|^{1/2}-|\xi_2|^{1/2}|\xi|^{1/2}\big)}{8i}\qquad \text{ if }\xi_3>0,\,\xi_1,\xi_2<0,
\end{split}
\end{equation*}
and
\begin{equation}\label{adi18.57}
\begin{split}
p_{++-}(\xi_1,\xi_2,\xi_3)&=\frac{(a^1_3-a^2_3)(\xi_1,\xi_2,\xi_3)}{-8i}\qquad \text{ if }\xi_1,\xi_2>0,\,\xi_3<0,\\
p_{++-}(\xi_1,\xi_2,\xi_3)&=\frac{(a^1_3-a^2_3)(\xi_3,\xi_1,\xi_2)}{8i}\qquad \text{ if }\xi_1,\xi_3>0,\,\xi_2<0,\\
p_{++-}(\xi_1,\xi_2,\xi_3)&=\frac{(a^1_3-a^2_3)(\xi_3,\xi_2,\xi_1)}{8i}\qquad \text{ if }\xi_2,\xi_3>0,\,\xi_1<0.
\end{split}
\end{equation}
Therefore, after algebraic simplifications,
\begin{equation}\label{adi18.6}
\begin{split}
\frac{ip_{++-}(\xi_1,\xi_2,\xi_3)}{|\xi|^{1/2}-|\xi_1|^{1/2}-|\xi_2|^{1/2}+|\xi_3|^{1/2}}&=\frac{|\xi|^{3/2}\big(|\xi|^{1/2}+|\xi_1|^{1/2}+|\xi_2|^{1/2}-|\xi_3|^{1/2}\big)}{16}\,\,\,\text{ if }\xi_1,\xi_2,\xi_3>0,\\
\frac{ip_{++-}(\xi_1,\xi_2,\xi_3)}{|\xi|^{1/2}-|\xi_1|^{1/2}-|\xi_2|^{1/2}+|\xi_3|^{1/2}}&=\frac{|\xi|^{3/2}\big(|\xi|^{1/2}+|\xi_1|^{1/2}-|\xi_2|^{1/2}+|\xi_3|^{1/2}\big)}{16}\,\,\,\text{ if }\xi_1>0,\,\xi_2,\xi_3<0,\\
\frac{ip_{++-}(\xi_1,\xi_2,\xi_3)}{|\xi|^{1/2}-|\xi_1|^{1/2}-|\xi_2|^{1/2}+|\xi_3|^{1/2}}&=\frac{|\xi|^{3/2}(|\xi|^{1/2}-|\xi_1|^{1/2}+|\xi_2|^{1/2}+|\xi_3|^{1/2})}{16}\,\,\, \text{ if }\xi_2>0,\,\xi_1,\xi_3<0,\\
\frac{ip_{++-}(\xi_1,\xi_2,\xi_3)}{|\xi|^{1/2}-|\xi_1|^{1/2}-|\xi_2|^{1/2}+|\xi_3|^{1/2}}&=\frac{|\xi|^{3/2}\big(|\xi|^{1/2}-|\xi_1|^{1/2}-|\xi_2|^{1/2}-|\xi_3|^{1/2}\big)}{16}\,\,\,\text{ if }\xi_3>0,\,\xi_1,\xi_2<0.
\end{split}
\end{equation}
The multiplier $q^1_{++-}:=\mathbf{1}_{\mathcal{R}}\cdot q_{++-}$ is therefore smooth in the region $\mathcal{R}$, and the desired bounds \eqref{adi31.6} follow for the symbol $q^1_{++-}$.

It remains to prove similar bounds for the multiplier
\begin{equation}\label{adi18.7}
q^2_{++-}(\xi_1,\xi_2,\xi_3):=(\mathbf{1}_{{}^c\mathcal{R}}\cdot q_{++-})(\xi_1,\xi_2,\xi_3)=\frac{ip_{3,++-}(\xi_1,\xi_2,\xi_3)\mathbf{1}_{{}^c\mathcal{R}}(\xi_1,\xi_2,\xi_3)\Upsilon(\xi_1,\xi_2,\xi_3)}{|\xi|^{1/2}-|\xi_1|^{1/2}-|\xi_2|^{1/2}+|\xi_3|^{1/2}}.
\end{equation}
The point is that the denominator does not vanish in the support of the function $\Upsilon$. Indeed, we notice that if $a,b,c,d\in\Rb$, $a+b+c+d=0$, and $|d|\leq \min(|a|, |b|, |c|)/20$ then
\begin{equation}\label{adi19}
\begin{split}
|a|^{1/2}+|b|^{1/2}-|c|^{1/2}-|d|^{1/2}&=\frac{|a|+|b|+2\sqrt{|a||b|}-|c|-|d|-2\sqrt{|c||d|}}{|a|^{1/2}+|b|^{1/2}+|c|^{1/2}+|d|^{1/2}}\\
&\gtrsim
\begin{cases}
\min(|a|^{1/2},|b|^{1/2})\quad&\text{ in all cases},\\
\max(|a|^{1/2},|b|^{1/2})\quad&\text{ if }|c|\lesssim \min(|a|,|b|).
\end{cases}
\end{split}
\end{equation}
Since $\varphi$ is supported in $[-2,2]$, the definition of the function $\Upsilon$ in \eqref{adi31} shows that $$q^2_{++-}(\xi_1,\xi_2,\xi_3)\varphi_{k_1}(\xi_1)\varphi_{k_2}(\xi_2)\varphi_{k_3}(\xi_3)\varphi_{k_0}(\xi_1+\xi_2+\xi_3)\equiv 0$$ if $\widetilde{k}_0\geq\widetilde{k}_1-10$, where $\widetilde{k}_0\leq\widetilde{k}_1\leq\widetilde{k}_2\leq\widetilde{k}_3$ denotes the nondecreasing rearangement of $k_0,k_1,k_2,k_3$. On the other hand, if $\widetilde{k}_0\leq\widetilde{k}_1-10$ then it is easy to see that
\begin{equation*}
\Big\|\frac{\varphi_{k_0}(\xi_1+\xi_2+\xi_3)\varphi_{k_1}(\xi_1)\varphi_{k_2}(\xi_2)\varphi_{k_3}(\xi_3)}{|\xi_1+\xi_2+\xi_3|^{1/2}-|\xi_1|^{1/2}-|\xi_2|^{1/2}+|\xi_3|^{1/2}}\Big\|_{S^\infty}\lesssim 2^{-\widetilde{k}_1/2},
\end{equation*}
due to \eqref{adi19} and \eqref{al8.6}. Moreover, using \eqref{adi35.4},
\begin{equation*}
\big\|\varphi_{k_0}(\xi_1+\xi_2+\xi_3)\varphi_{k_1}(\xi_1)\varphi_{k_2}(\xi_2)\varphi_{k_3}(\xi_3)[p_{3,++-}\mathbf{1}_{{}^c\mathcal{R}}\Upsilon](\xi_1,\xi_2,\xi_3)\big\|_{S^\infty}\lesssim 2^{k_0/2}2^{\widetilde{k}_1/2}2^{3\widetilde{k}_3/2}.
\end{equation*}
Therefore, using \eqref{al8},
\begin{equation*}
\big\|\varphi_{k_0}(\xi_1+\xi_2+\xi_3)\varphi_{k_1}(\xi_1)\varphi_{k_2}(\xi_2)\varphi_{k_3}(\xi_3)q^2_{++-}(\xi_1,\xi_2,\xi_3)\big\|_{S^\infty}\lesssim 2^{k_0/2}2^{3\widetilde{k}_3/2},
\end{equation*}
which completes the proof of the bounds \eqref{adi31.6} in the case $(\iota_1\iota_2\iota_3)=(++-)$. 
\end{proof}

\subsection{Proof of Proposition \ref{normalprop}} We start by proving canonical expansions for the nonlinearities $\widetilde{\mathcal{N}}^1_n(h,\phi)$, $\widetilde{\mathcal{N}}^2_n(h,\phi)$, $\widetilde{\mathcal{N}}^1_{>A}$, $\widetilde{\mathcal{N}}^2_{>A}(h,\phi)$ defined in \eqref{adi12.6}--\eqref{adi12.7}, in terms of the original variable $u$.

\begin{lem}\label{level1T}
With $u=h+i|\partial_x|^{1/2}\phi$ as before, for any $n\in[3,A]$ we can write
\begin{equation}\label{adi20.1}
\begin{split}
\mathcal{F}&\big\{\widetilde{\mathcal{N}}^1_n(h,\phi)+i|\partial_x|^{1/2}\widetilde{\mathcal{N}}^2_n(h,\phi)\big\}(\xi)\\
&=\frac{1}{(2\pi R)^{n-1}}\sum_{\iota_j\in\{\pm\}}\sum_{\eta_1,\ldots,\eta_n\in\Zb/R,\,\eta_1+\ldots+\eta_n=\xi}b_{\iota_1\ldots\iota_n}(\eta_1,\ldots,\eta_n)\prod_{j=1}^n\widehat{u^{\iota_j}}(\eta_j),
\end{split}
\end{equation}
where $u^+=u$, $u^-=\overline{u}$, and the symbols $b_{\iota_1\ldots\iota_n}$ are homogeneous of degree $n-1/2$ and satisfy the $S^\infty$ bounds
\begin{equation}\label{adi20.2}
\big\|b_{\iota_1\ldots\iota_n}(\eta_1,\ldots,\eta_n)\varphi_k(\eta_1+\ldots+\eta_n)\varphi_{k_1}(\eta_1)\cdot\ldots\cdot\varphi_{k_n}(\eta_n)\big\|_{S^\infty}\lesssim 2^{k/2}2^{(n-1)\max(k_1,\ldots,k_n)},
\end{equation}
for any $k,k_1,\ldots k_n\in\overline{\Zb}$. Moreover
\begin{equation}\label{adi20.25}
\big\|\widetilde{\mathcal{N}}^1_{>A}(h,\phi)+i|\partial_x|^{1/2}\widetilde{\mathcal{N}}^2_{>A}(h,\phi)\big\|_{H^{N_2-A-1}}\lesssim \eps_\infty^AB_2.
\end{equation}
\end{lem}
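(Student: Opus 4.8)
The plan is to obtain the expansion \eqref{adi20.1} by systematically substituting the expansions of $G_n$ and $H_n$ from Lemma \ref{DNexpansion} into the defining formulas \eqref{adi12.6}--\eqref{adi12.7} for $\widetilde{\mathcal{N}}^1_n$ and $\widetilde{\mathcal{N}}^2_n$, and then passing from the $(h,\phi)$ variables to the complex variable $u$ via $h=(u+\overline{u})/2$ and $|\partial_x|^{1/2}\phi=(u-\overline{u})/(2i)$. First I would note that each $\widetilde{\mathcal{N}}^j_n(h,\phi)$ is, by \eqref{adi12.6}--\eqref{adi12.7}, a finite sum of terms of the schematic form $|\partial_x|^a\big(H_0^{b}h\cdot H_0^{c} X\big)$ where $X$ is itself $G_{n-1}(h,\phi)$, $H_{n-1}(h,\phi)$, or one of $|\partial_x|^\beta\phi$, $|\partial_x|^\beta h$; inductively unwinding the nested $G_m$, $H_m$ using \eqref{adi8} and \eqref{adi12.4} expresses everything as a fixed multilinear Fourier multiplier acting on $n$ factors, each of which is $h$ or $|\partial_x|^{1/2}\phi$ (after extracting the appropriate powers of $|\partial_x|$). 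The substitution $h,|\partial_x|^{1/2}\phi\rightsquigarrow (u\pm\overline{u})/(2,2i)$ then produces the sum over sign patterns $\iota_1\ldots\iota_n$ in \eqref{adi20.1}, and symmetrizing in the input variables defines $b_{\iota_1\ldots\iota_n}$.

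For the $S^\infty$ bound \eqref{adi20.2} I would track the degree of homogeneity and the loss of derivatives through the composition. The homogeneity of degree $n-1/2$ is forced by the homogeneities recorded in \eqref{adi8} (degree $n$ for $q_n$ applied to $\phi$ and $n-1$ copies of $h$), \eqref{adi12.4}, together with the half-derivative relation between $u$ and $\phi$: each $\phi$ contributes a $|\partial_x|^{-1/2}$ relative to $u$, and the explicit formulas for $G_2$, $H_2$, $R_0$, $H_0$ carry definite orders. For the $S^\infty$ estimate itself, the key input is the composition bound for kernels, exactly as in the proof of \eqref{Wie6}: the $S^\infty$ norm of a composed multiplier is controlled by the product of $L^1$ norms of the component kernels (Lemma \ref{touse}(i)--(ii)), and each localized piece $q_m(\cdots)\varphi_{k_i}(\cdot)$ obeys the bound in \eqref{adi8} with a factor $2^{(m-1)\max}$ of derivative loss plus a low-frequency gain $2^{k}$. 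Summing the intermediate (internal) frequencies — which are constrained to be $\lesssim$ the maximum external frequency by the support property analogous to \eqref{Wie1} — converts the internal $2^{k_{\mathrm{int}}}$ gains into at most $2^{(n-1)\max(k_1,\ldots,k_n)}$ overall, while the outermost $|\partial_x|^{1/2}$ (from the $i|\partial_x|^{1/2}\widetilde{\mathcal{N}}^2_n$ piece, or from the explicit $|\partial_x|$ in \eqref{adi12.6} combined with the half-derivative from $\phi$) yields the remaining $2^{k/2}$. One must also keep in mind the factor $(1+|\widetilde k_n-\widetilde k_1|)^A$ appearing in \eqref{adi8} and \eqref{adi12.4}; this polynomial-in-log factor is absorbed into the $2^{(n-1)\max}$ factor since $n\le A$ is fixed and the excess is harmless after summation, so one does not actually need a clean $2^{(n-1)\max}$ with no log — but to match the stated bound exactly I would argue that the $(1+|\widetilde k_n - \widetilde k_1|)^A$ is dominated by $C 2^{\epsilon(\widetilde k_n-\widetilde k_1)}$ and reabsorbed, or simply allow the implicit constant to depend on $A$.

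For the remainder bound \eqref{adi20.25} I would use the estimates already in \eqref{adi7}, \eqref{adi12.3} for $G_{>A}$ and $H_{>A}$, plus \eqref{adi6.6}, \eqref{adi6.7} which control the homogeneity-$>A$ tails of the operator expansions, together with the algebra/product estimates in high Sobolev spaces (\cite[Lemma 2.4]{DIP}): the terms in $\widetilde{\mathcal{N}}^1_{>A}$, $\widetilde{\mathcal{N}}^2_{>A}$ are products of $H_0h$ (or $h$) with $G_{>A}$, $G_A$, $H_{>A}$, $H_A$, each of which is bounded either by $\eps_\infty^A B_2$ in $H^{N_2-3/2}$-type norms or by $\eps_\infty^{A+1}$ in $\dot W$-type norms, so one factor supplies the Sobolev bound $B_2$ and the rest supply powers of $\eps_\infty$; the loss of $A+1$ derivatives (down to $H^{N_2-A-1}$) is wasteful but allowed.

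The main obstacle I anticipate is bookkeeping rather than any genuine analytic difficulty: correctly propagating both the homogeneity degree and the derivative-loss exponent through the iterated compositions $R_{a_1}\cdots R_{a_l}$ (as in \eqref{adi6.8}) after the change of variables to $u$, while making sure the half-powers of $|\partial_x|$ balance so that the final degree is exactly $n-1/2$ and the outer weight is exactly $2^{k/2}$. A secondary subtlety is handling the quadratic terms $\widetilde{\mathcal N}^1_2$, $\widetilde{\mathcal N}^2_2$ — but these have already been shown to vanish in Lemma \ref{QuadNorm}, so the expansion genuinely starts at $n=3$, and one should double-check that no spurious quadratic contribution sneaks in when rewriting the cubic and higher terms; here the explicit cubic computations \eqref{adi13.3}--\eqref{adi13.9} and \eqref{adi18.1}--\eqref{adi18.8} serve as a consistency check for the $n=3$ case.
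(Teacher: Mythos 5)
Your proposal is correct and follows essentially the same route as the paper, whose proof of this lemma is in fact just the observation that the bounds ``follow easily from the formulas \eqref{adi12.6}--\eqref{adi12.7} and the estimates on $G_n,G_{>A},H_n,H_{>A}$ in Lemma \ref{DNexpansion}'' (with the remark that the derivative loss in \eqref{adi20.25} is far from optimal but irrelevant); your substitution of the expansions into \eqref{adi12.6}--\eqref{adi12.7}, the passage to $u,\overline{u}$, the composition/localization argument in the spirit of \eqref{Wie6} and Lemma \ref{level2T}(i), and the absorption of the $(1+|\widetilde{k}_n-\widetilde{k}_1|)^A$ factor via the exponential slack from the low frequencies are exactly the details the authors leave implicit.
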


\begin{proof} The bounds follow easily from the formulas \eqref{adi12.6}--\eqref{adi12.7} and the estimates on the functions $G_n,G_{>A},H_n,H_{>A}$ in Lemma \ref{DNexpansion}.  At this stage one can actually prove stronger bounds in terms of derivative loss (for example one could replace the Sobolev space $H^{N_2-A-1}$ with $H^{N_2-3}$ in \eqref{adi20.25}), but such an improvement would not change the following steps.
\end{proof} 

To prove the main bounds in Proposition \ref{normalprop} we need now to understand compositions of operators defined by suitable $S^\infty$ symbols. We start by defining a suitable class of operators:

\begin{df}\label{DefO} For any $n\in\{1,2,\ldots\}$ and $n'\in[0,\infty)$ we define $\mathcal{O}_{n,n'}$ as the normed space of $n$-linear operators $T:(H^{n'+2}({\Tb_R}))^n\to L^2(\Tb_R)$ of the form
\begin{equation}\label{adi20.3}
\mathcal{F}\{T(f_1,\ldots,f_n)\}(\xi)=\frac{1}{(2\pi R)^{n-1}}\sum_{\eta_1,\ldots,\eta_n\in\Zb/R,\,\eta_1+\ldots+\eta_n=\xi}a(\eta_1,\ldots,\eta_n)\widehat{f_1}(\eta_1)\cdot\ldots\cdot \widehat{f_n}(\eta_n),
\end{equation}
where the symbol $a$ satisfies the $S^\infty$ bounds
\begin{equation}\label{adi20.4}
\big\|a(\eta_1,\ldots,\eta_n)\varphi_k(\eta_1+\ldots+\eta_n)\varphi_{k_1}(\eta_1)\cdot\ldots\cdot\varphi_{k_n}(\eta_n)\big\|_{S^\infty}\leq B2^{k/2}2^{(n'-1/2)\max(k_1,\ldots,k_n)},
\end{equation}
for any $k,k_1,\ldots,k_n\in\overline{\Zb}$. Here $\|T\|_{\mathcal{O}_{n,n'}}:=B$ is the smallest constant such that the inequality \eqref{adi20.4} holds for any $k,k_1,\ldots,k_n\in\overline{\Zb}$.
\end{df}

The point of this definition is that we can prove the following lemma concerning compositions and mapping properties of $\mathcal{O}_{n,n'}$ operators:

\begin{lem}\label{level2T} (i) Assume $T\in\mathcal{O}_{n,n'}$ and $S_j\in \mathcal{O}_{m_j,m'_j}$, $j\in\{1,\ldots,n\}$, are multi-linear operators. We define the multi-linear operator $X$ (of order $d:=m_1+\ldots+m_n$) by
\begin{equation}\label{adi20.41}
X(f_{11},\ldots f_{1m_1},\ldots f_{n1},\ldots,f_{n\,m_n}):=T(S_1(f_{11},\ldots f_{1m_1}),\ldots, S_n(f_{n1},\ldots,f_{n\,m_n})).
\end{equation}
Then $X\in\mathcal{O}_{d,d'}$, where $d':=n'+m'_1+\ldots+m'_n$, and
\begin{equation}\label{adi20.42}
\|X\|_{\mathcal{O}_{d,d'}}\lesssim \|T\|_{\mathcal{O}_{n,n'}}\|S_1\|_{\mathcal{O}_{m_1,m'_1}}\cdot\ldots\cdot\|S_n\|_{\mathcal{O}_{m_n,m'_n}}.
\end{equation}

(ii) Assume $T\in\mathcal{O}_{n,n'}$ and $f_1,\ldots,f_n\in L^2(\T_R)$ satisfy the bounds 
\begin{equation}\label{adi20.43}
\|f_i\|_{H^{s_2}}\leq A_2,\qquad \|f_i\|_{\dot{W}^{s_\infty,0}}\leq A_\infty,
\end{equation}
for any $i\in\{1,\ldots,n\}$, where $s_2\geq n'$ and $s_\infty\geq 0$. Then
\begin{equation}\label{adi20.44}
\|T(f_1,\ldots,f_n)\|_{H^{s_2-n'}}\lesssim \|T\|_{\mathcal{O}_{n,n'}}A_2A_\infty^{n-1}.
\end{equation}
Moreover, if $s_\infty\geq n'$ then we also have
\begin{equation}\label{adi20.45}
\|T(f_1,\ldots,f_n)\|_{\dot{W}^{s_\infty-n',0}}\lesssim \|T\|_{\mathcal{O}_{n,n'}}A_\infty^{n}.
\end{equation}
\end{lem}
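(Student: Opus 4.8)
\textbf{Proof plan for Lemma \ref{level2T}.} The plan is to reduce both parts to Littlewood--Paley decompositions and then to the single composition estimate for $S^\infty$ symbols, namely part (ii) of Lemma \ref{touse} (the bound $\|m\cdot m'\|_{S^\infty}\le \|m\|_{S^\infty}\|m'\|_{S^\infty}$) together with the product estimate $\|M(f_1,\dots,f_d)\|_{L^r}\le \|m\|_{S^\infty}\prod\|f_j\|_{L^{p_j}}$ from \eqref{mk6}.

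\emph{Part (i).} First I would write the symbol of $X$ explicitly. If $a$ is the symbol of $T$ and $b_j$ is the symbol of $S_j$, then, reading off the definitions \eqref{adi20.3} and \eqref{adi20.41}, the symbol of $X$ is
\[
c(\underline\eta^{(1)},\dots,\underline\eta^{(n)})=a\Big(\sum_i \eta^{(1)}_i,\dots,\sum_i\eta^{(n)}_i\Big)\prod_{j=1}^n b_j(\underline\eta^{(j)}),
\]
where $\underline\eta^{(j)}=(\eta^{(j)}_1,\dots,\eta^{(j)}_{m_j})$. Then I would localize every input frequency $\eta^{(j)}_i$ to a dyadic block $\varphi_{p^{(j)}_i}$, localize each internal output $\zeta_j:=\sum_i\eta^{(j)}_i$ to a block $\varphi_{q_j}$, and the final output to $\varphi_k$. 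On such a block, the $S^\infty$ bound \eqref{adi20.4} for $b_j$ gives a factor $\lesssim \|S_j\|\,2^{q_j/2}2^{(m'_j-1/2)\max_i p^{(j)}_i}$, and the bound for $a$ gives $\lesssim \|T\|\,2^{k/2}2^{(n'-1/2)\max_j q_j}$; multiplying these via \eqref{al8} and summing the resulting geometric series over $q_j$ (noting $2^{q_j}\lesssim 2^{\max_i p^{(j)}_i}$ because $b_j$ vanishes unless the output frequency is controlled by the inputs — or, more robustly, because the decay $2^{q_j/2}$ against $2^{-q_j/2}$ from the $a$-factor leaves a convergent sum once $q_j\gtrsim \max_i p^{(j)}_i$ is excluded by the support of $\varphi_k$-localization paired with the triangle inequality) yields exactly $2^{k/2}2^{(d'-1/2)\max p^{(j)}_i}$, with $d'=n'+\sum m'_j$. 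This is \eqref{adi20.42}. The one point requiring a little care is the summation in the $q_j$: I would handle it by splitting into the region $\max_j q_j\le \max k$ (finitely many terms up to the final localization) and using the extra $2^{-q_j/2}$ decay coming from comparing $2^{(n'-1/2)q_j}$ against $2^{(m'_j-1/2)\max_i p^{(j)}_i}$ with $n',m'_j\ge 0$ — this is the only genuinely non-cosmetic step.

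\emph{Part (ii).} I would decompose $T(f_1,\dots,f_n)=\sum_{k,k_1,\dots,k_n} P_k T(P_{k_1}f_1,\dots,P_{k_n}f_n)$. For each fixed dyadic tuple, order the exponents so that $k_1$ is the largest, apply \eqref{mk6} with the symbol bound \eqref{adi20.4}: placing the highest-frequency factor in $L^2$ and all others in $L^\infty$ gives
\[
\|P_kT(P_{k_1}f_1,\dots,P_{k_n}f_n)\|_{L^2}\lesssim \|T\|\,2^{k/2}2^{(n'-1/2)k_1}\,\|P_{k_1}f_1\|_{L^2}\prod_{i\ge2}\|P_{k_i}f_i\|_{L^\infty},
\]
and $2^{k/2}\lesssim 2^{k_1/2}$ since the output frequency is bounded by the max of the inputs; hence the prefactor is $\lesssim 2^{n'k_1}$. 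Multiplying by $2^{(s_2-n')k}\lesssim 2^{(s_2-n')k_1}$ converts this into $2^{s_2 k_1}\|P_{k_1}f_1\|_{L^2}$ (summable to $A_2$ by the definition of the Sobolev norm, using $s_2\ge n'$), while each $\|P_{k_i}f_i\|_{L^\infty}$ is summable to $\lesssim A_\infty$ against the $\dot W^{0,0}$ norm once we use the (small) room from comparing $k_i\le k_1$; after summing the geometric series in $k_2,\dots,k_n$ and then in $k_1$ we obtain \eqref{adi20.44}. The $L^\infty$-based bound \eqref{adi20.45} is proved the same way, now placing \emph{all} factors in $L^\infty$ in \eqref{mk6} (with $r=\infty$), using $s_\infty\ge n'$ so that the weight $2^{(s_\infty-n')k}$ is absorbed by $2^{(s_\infty-n')k_1}\le 2^{s_\infty k_1}$ against $\|P_{k_1}f_1\|_{L^\infty}$, and summing.

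\textbf{Main obstacle.} I expect the only real subtlety is the bookkeeping of the \emph{internal} dyadic sums in part (i): one must verify that the composed symbol genuinely lands in $\mathcal{O}_{d,d'}$ with the additive loss $d'=n'+\sum m'_j$ rather than something worse, and this hinges on the $2^{\pm q_j/2}$ cancellation making the sum over the intermediate output frequencies $q_j$ converge geometrically. Everything else is a routine Littlewood--Paley argument built on \eqref{mk6}, \eqref{al8}, and the definition of the Sobolev and $\dot W^{N,b}$ norms in \eqref{normsx1}.
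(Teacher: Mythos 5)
Your plan is correct and follows essentially the same route as the paper: for (i) the paper also writes the composed symbol, localizes the intermediate output frequencies (which are automatically $\lesssim$ the maximal input frequency), and multiplies the localized $S^\infty$ bounds before summing — it just does this rigorously at the kernel level, bounding the $L^1$ norm of the composed kernel by the convolution formula rather than invoking \eqref{al8} directly on the pulled-back symbol of $a$; for (ii) the paper likewise puts the highest-frequency factor in $L^2$ and the rest in $L^\infty$ via \eqref{mk6}. The only caution is in your summation step for (ii): $\sum_{k_1}2^{s_2k_1}\|P_{k_1}f_1\|_{L^2}$ is not controlled by $\|f_1\|_{H^{s_2}}$ alone (the Sobolev norm is $\ell^2$-based), and the paper handles this exact bookkeeping with a frequency envelope $\rho_k$, exploiting the geometric room coming from $s_2\geq n'$ and the constraint $k\leq \max_i k_i+O(1)$ — the same room you point to, so your outline goes through once that Cauchy--Schwarz/envelope step is made explicit.
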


\begin{proof} (i) Let $a,b_1, \ldots,b_n$ denote the symbols associated to the operators $T,S_1,\ldots,S_n$ respectively. Then $X$ can be written in the form
\begin{equation}\label{adi20.5}
\begin{split}
&\mathcal{F}\{X(f_{11},\ldots,f_{n\,m_n})\}(\xi)=\frac{1}{(2\pi R)^{d-1}}\sum_{\eta_{11},\ldots,\eta_{n\,m_n}\in\Zb/R,\,\eta_{11}+\ldots+\eta_{n\,m_n}=\xi}p(\eta_{11},\ldots,\eta_{n\,m_n})\\
&\qquad\qquad\times\widehat{f_{11}}(\eta_{11})\cdot\ldots\cdot \widehat{f_{n\,m_n}}(\eta_{n\,m_n}),\\
&p(\eta_{11},\ldots,\eta_{n\,m_n}):=a(\eta_{11}+\ldots+\eta_{1\,m_1},\ldots,\eta_{n1}+\ldots+\eta_{n\,m_n})\prod_{i=1}^nb_i(\eta_{i1},\ldots,\eta_{i\,m_i}).
\end{split}
\end{equation}
By multi-linearity we may assume that  $\|T\|_{\mathcal{O}_{d,d'}}=\|S_1\|_{\mathcal{O}_{m_1,m'_1}}=\ldots=\|S_n\|_{\mathcal{O}_{m_n,m'_n}}=1$. We have to prove that for any $k, l_{11},\ldots,l_{n\,m_n}\in\overline{\Zb}$ we have
\begin{equation}\label{adi20.6}
\big\|p(\eta_{11},\ldots,\eta_{n\,m_n})\varphi_k(\eta_{11}+\ldots+\eta_{n\,m_n})\varphi_{l_{11}}(\eta_{11})\cdot\ldots\cdot\varphi_{l_{n\,m_n}}(\eta_{n\,m_n})\big\|_{S^\infty}\lesssim 2^{k/2}2^{(m'-1/2)L},
\end{equation}
where $L:=\max(l_{11},\ldots,l_{n\,m_n})$. 

For any $k,k_1,\ldots,k_n\in\overline{\Zb}$ let $K_{k;k_1\ldots k_n}$ denote the inverse Fourier transform of the symbol $(\xi_1,\ldots,\xi_n)\to a(\xi_1,\ldots,\xi_n)\varphi_{k_1}(\xi_1)\cdot\ldots\cdot\varphi_{k_n}(\xi_n)\varphi_k(\xi_1+\ldots+\xi_n)$, thus
\begin{equation*}
\begin{split}
&a(\xi_1,\ldots,\xi_n)\varphi_{k_1}(\xi_1)\cdot\ldots\cdot\varphi_{k_n}(\xi_n)\varphi_k(\xi_1+\ldots+\xi_n)\\
&\qquad\qquad\qquad=\int_{\Tb_R^n}K_{k;k_1\ldots k_n}(x_1,\ldots,x_n)e^{-i(x_1\xi_1+\ldots+x_n\xi_n)}\,dx_1\ldots dx_n.
\end{split}
\end{equation*}
Therefore, using \eqref{adi20.5}, the inverse Fourier transform of the multiplier in the left-hand side of \eqref{adi20.6} is equal to
\begin{equation*}
\begin{split}
&G_{k;l_{11}\ldots l_{nm_n}}(y_{11},\ldots,y_{n\,m_n})=\frac{1}{(2\pi R)^d}\sum_{k_1,\ldots,k_n\leq L+2d}\sum_{\eta_{11},\ldots,\eta_{n\,m_n}\in\Zb/R}e^{i(y_{11}\eta_{11}+\ldots+y_{n\,m_n}\eta_{n\,m_n})}\\
&\times\prod_{i=1}^n\big[b_i(\eta_{i1},\ldots,\eta_{i\,m_i})\varphi_{l_{i1}}(\eta_{i1})\cdot\ldots\cdot\varphi_{l_{i\,m_i}}(\eta_{i\,m_i})\varphi_{[k_i-2,k_i+2]}(\eta_{i1}+\ldots+\eta_{i\,m_i})\big]\\
&\times\int_{\Tb_R^n}K_{k;k_1\ldots k_n}(x_1,\ldots,x_n)e^{-i\sum_{i=1}^n x_i(\eta_{i1}+\ldots+\eta_{i\,m_i})}\,dx_1\ldots dx_n.
\end{split}
\end{equation*}
Letting $L^i_{k_i;l_{i1}\ldots l_{i\,m_i}}$ denote the inverse Fourier transform of the multiplier $$(\eta_1,\ldots,\eta_{m_i})\to b_i(\eta_{1},\ldots,\eta_{m_i})\varphi_{l_{i1}}(\eta_{1})\cdot\ldots\cdot\varphi_{l_{i\,m_i}}(\eta_{m_i})\varphi_{[k_i-2,k_i+2]}(\eta_{1}+\ldots+\eta_{m_i}),$$ we therefore have
\begin{equation*}
\begin{split}
&G_{k;l_{11}\ldots l_{nm_n}}(y_{11},\ldots,y_{n\,m_n})=\sum_{k_1,\ldots,k_n\leq L+2d}\int_{\Tb_R^n}K_{k;k_1\ldots k_n}(x_1,\ldots,x_n)\\
&\qquad\qquad\times\prod_{i=1}^nL^i_{k_i;l_{i1}\ldots l_{i\,m_i}}(y_{i1}-x_i,\ldots,y_{i\,m_i}-x_i)\,dx_1\ldots dx_n.
\end{split}
\end{equation*}
In particular
\begin{equation}\label{adi20.8}
\|G_{k;l_{11}\ldots l_{nm_n}}\|_{L^1(\Tb_R^d)}\lesssim \sum_{k_1,\ldots,k_n\leq L+2d}\|K_{k;k_1\ldots k_n}\|_{L^1(\Tb_R^n)}\prod_{i=1}^n\big\|L^i_{k_i;l_{i1}\ldots l_{i\,m_i}}\big\|_{L^1(\Tb_R^{m_i})}.
\end{equation}
The desired bounds \eqref{adi20.6} follow since
\begin{equation*}
\|K_{k;k_1\ldots k_n}\|_{L^1(\Tb_R^n)}\lesssim 2^{k/2}2^{(n'-1/2)\max(k_1,\ldots,k_n)},\qquad \big\|L^i_{k_i;l_{i1}\ldots l_{i\,m_i}}\big\|_{L^1}\lesssim 2^{k_i/2}2^{(m'_i-1/2)L},
\end{equation*}
due to the assumption $\|T\|_{\mathcal{O}_{n,n'}}=\|S_1\|_{\mathcal{O}_{m_1,m'_1}}=\ldots=\|S_n\|_{\mathcal{O}_{m_n,m'_n}}=1$.

(ii) To prove \eqref{adi20.44} we may assume $\|T\|_{\mathcal{O}_{n,n'}}=1$ and define the frequency envelope
\begin{equation}\label{adi20.85}
\rho_k:=\sup_{k'\in\overline{\Zb},\,i\in\{1,\ldots,n\}}2^{-|k-k'|/10}2^{s_2k'_+}\|P_{k'}f_i\|_{L^2},\qquad \text{ for any }k\in\overline{\Zb},
\end{equation}
where $b_+:=\max(b,0)$ for any $b\in\overline{\Zb}$. Then, using \eqref{adi20.43}
\begin{equation}\label{adi20.9}
\sum_{k\in\overline{\Zb}}\rho_k^2\lesssim A_2^2,\qquad \|P_{k'}f_i\|_{L^2}\lesssim\rho_k2^{|k-k'|/10}2^{-s_2k'_+}\text{ for any }k,k'\in\overline{\Zb},\,i\in\{1,\ldots,n\}.
\end{equation}
Using now \eqref{mk6} and the assumption $\|T\|_{\mathcal{O}_{n,n'}}=1$, for any $k\in\overline{\Zb}$ we estimate
\begin{equation*}
\begin{split}
\|P_k(T(f_1,\ldots,f_n))\|_{H^{s_2-n'}}&\lesssim 2^{(s_2-n')k_+}\sum_{k_1,\ldots,k_n\in\Zb}\|P_k(T(P_{k_1}f_1,\ldots,P_{k_n}f_n))\|_{L^2}\\
&\lesssim A_\infty^{n-1}2^{(s_2-n')k_+}2^{k/2}\sum_{k'\geq k-4n,\,i\in\{1,\ldots,n\}}2^{(n'-1/2)k'}\|P_{k'}f_i\|_{L^2},
\end{split}
\end{equation*}
by always estimating the highest frequency in $L^2$ and all the lower frequencies in $L^\infty$. Thus
\begin{equation*}
\begin{split}
\|P_k(T(f_1,\ldots,f_n))\|_{H^{s_2-n'}}&\lesssim A_\infty^{n-1}2^{(s_2-n')k_+}2^{k/2}\sum_{k'\geq k-4n}2^{(n'-1/2)k'}\rho_k2^{(k'-k)/10}2^{-s_2k'_+}\\
&\lesssim A_\infty^{n-1}\rho_k
\end{split}
\end{equation*}
The bounds \eqref{adi20.44} follow due to the first estimate in \eqref{adi20.9}. 

The bounds \eqref{adi20.45} can be proved in a similar way, by defining a suitable frequency envelope analogous to \eqref{adi20.85}, based on frequency-localized $L^\infty$ norms.
\end{proof}

We can now complete the proof of all the claims in Proposition \ref{normalprop}. 

{\bf{Step 1.}} The function $w$ is defined as in \eqref{defv} and \eqref{adi30},
\begin{equation}\label{adi40}
\begin{split}
&w:=u+\mathcal{A}_2(u)+\mathcal{A}_3(u),\\
&\mathcal{A}_2(u):=A_{2,++}(u,u)+A_{2,+-}(u,\overline{u})+A_{2,--}(\overline{u},\overline{u}),\\
&\mathcal{A}_3(u):=A_{3,+++}(u,u,u)+A_{3,++-}(u,u,\overline{u})+A_{3,+--}(u,\overline{u},\overline{u})+A_{3,---}(\overline{u},\overline{u},\overline{u}),
\end{split}
\end{equation}
where the operators $A_{3,\iota_1\iota_2\iota}$ are defined as in Lemma \ref{lemCubNor}, and
 \begin{equation}\label{adi41}
\begin{split}
&A_{2,++}(f,g):=(1/8)|\partial_x|(H_0f\cdot H_0g)+(1/8)|\partial_x|^{1/2}H_0[H_0f\cdot |\partial_x|^{1/2}g+|\partial_x|^{1/2}f\cdot H_0g\big],\\
&A_{2,+-}(f,g):=(1/4)|\partial_x|(H_0f\cdot H_0g)-(1/4)|\partial_x|^{1/2}H_0\big[H_0f\cdot |\partial_x|^{1/2}g-|\partial_x|^{1/2}f\cdot H_0g\big],\\
&A_{2,--}(f,g):=(1/8)|\partial_x|(H_0f\cdot H_0g)-(1/8)|\partial_x|^{1/2}H_0[H_0f\cdot |\partial_x|^{1/2}g+|\partial_x|^{1/2}f\cdot H_0g\big].
\end{split}
\end{equation}
These definitions show easily that $A_{2,\iota_1\iota_2}\in\mathcal{O}_{2,1}$, while the bounds \eqref{adi31.6} show that $A_{3,\iota_1\iota_2\iota_3}\in\mathcal{O}_{3,2}$. This completes the proof of the identities and the bounds \eqref{normal}, \eqref{symbolboundN}, and the identities in the first line of \eqref{nonlin1}. The bounds $\|w\|_{H^{N_2-2}}\lesssim B_2$ and $\|w\|_{\dot{W}^{N_\infty-2,0}}\lesssim \varepsilon_\infty$ in \eqref{nonlin2} follow from \eqref{apriorepeat} and Lemma \ref{level2T} (ii).

{\bf{Step 2.}} We show now that the nonlinearity $\mathcal{W}_{\geq 4}$ defined in \eqref{adi31.5} can be written as
\begin{equation}\label{adi45}
\begin{split}
&\mathcal{W}_{\geq 4}=\sum_{n=4}^A\sum_{\iota_1,\ldots,\iota_n\in\{+,-\}}W_{n,\iota_1\ldots\iota_n}(u^{\iota_1},\ldots, u^{\iota_n})+W_{>A},\\
&W_{n,\iota_1\ldots\iota_n}\in\mathcal{O}_{n,n-1/2},\qquad\|W_{>A}\|_{H^{N_2-A-2}}\lesssim \eps_\infty^AB_2.
\end{split}
\end{equation}

Indeed, the terms in $\mathcal{IN}_{\geq 4}$ are already expressed in the desired form in Lemma \ref{level1T}. To express the remaining terms, we notice that, as a consequence of Lemma \ref{DNexpansion}
\begin{equation}\label{adi47}
\begin{split}
\partial_tu+i|\partial_x|^{1/2}u&=\sum_{n=2}^A[G_n(h,\phi)+i|\partial_x|^{1/2}H_n(h,\phi)]+(G_{>A}+i|\partial_x|^{1/2}H_{>A})\\
&=\sum_{n=2}^A\sum_{\iota_1,\ldots,\iota_n\in\{+,-\}}L_{n,\iota_1\ldots\iota_n}(u^{\iota_1},\ldots,u^{\iota_n})+L_{>A},
\end{split}
\end{equation}
for some operators $L_{n,\iota_1\ldots\iota_n}\in\mathcal{O}_{n,n-1/2}$ defined by homogeneous symbols of degree $n-1/2$ and an error term $L_{>A}$ satisfying the bounds $\|L_{>A}\|_{\dot{H}^{N_2-2,-1/2}}\lesssim \epsilon_\infty^AB_2$ and $\|L_{>A}\|_{\dot{W}^{N_\infty-2,-1/4}}\lesssim \epsilon_\infty^{A+1}$. The desired representation \eqref{adi45} follows using Lemma \ref{level2T}, since $A_{3,\iota_1\iota_2\iota_3}\in\mathcal{O}_{3,2}$ (see \eqref{adi31.6}).

{\bf{Step 3.}} Finally, we prove the conclusions on the nonlinear terms $\mathcal{N}_n(w)$ stated in Proposition \eqref{normalprop}. Notice that, as a consequence of the identities \eqref{adi31.5} and \eqref{adi45},
\begin{equation}\label{adi48}
\partial_tw+i|\partial_x|^{1/2}w=\sum_{n=3}^A\sum_{\iota_1,\ldots,\iota_n\in\{+,-\}}W_{n,\iota_1\ldots\iota_n}(u^{\iota_1},\ldots, u^{\iota_n})+W_{>A},
\end{equation}
for some operators $W_{n,\iota_1\ldots\iota_n}\in\mathcal{O}_{n,n-1/2}$ defined by homogeneous symbols of degree $n-1/2$ and an error term $W_{>A}$ satisfying the bounds $\|W_{>A}\|_{\dot{H}^{N_2-A-2}}\lesssim \epsilon_\infty^AB_2$. We have to show that we can express the nonlinear terms in the right-hand side of \eqref{adi48} in terms of the variable $w$.

Assume $n\geq 3$, $S\in\mathcal{O}_{n,n-1/2}$ is a multi-linear operator, and $\iota_1,\ldots\iota_n\in\{+,-\}$. We use the identity $u=w-\mathcal{A}_{\geq 2}(u)$ in \eqref{adi40}, where $\mathcal{A}_{\geq 2}(u):=\mathcal{A}_{2}(u)+\mathcal{A}_3(u)$, to write
\begin{equation}\label{adi49}
\begin{split}
&S(u^{\iota_1},\ldots,u^{\iota_n})=S(w^{\iota_1},u^{\iota_2},\ldots,u^{\iota_n})-S([\mathcal{A}_{\geq 2}(u)]^{\iota_1},u^{\iota_2},\ldots,u^{\iota_n})\\
&=S(w^{\iota_1},w^{\iota_2},\ldots,u^{\iota_n})-S(w^{\iota_1},[\mathcal{A}_{\geq 2}(u)]^{\iota_2},\ldots,u^{\iota_n})-S([\mathcal{A}_{\geq 2}(u)]^{\iota_1},u^{\iota_2},\ldots,u^{\iota_n})\\
&=\ldots=S(w^{\iota_1},\ldots,w^{\iota_n})-\sum_{a=0}^{n-1}S(w^{\iota_1},\ldots,w^{\iota_a},[\mathcal{A}_{\geq 2}(u)]^{\iota_{a+1}},\ldots,u^{\iota_n})\\
&=S(w^{\iota_1},\ldots,w^{\iota_n})-\sum_{a=0}^{n-1}S(u^{\iota_1}+[\mathcal{A}_{\geq 2}(u)]^{\iota_1},\ldots,u^{\iota_a}+[\mathcal{A}_{\geq 2}(u)]^{\iota_a},[\mathcal{A}_{\geq 2}(u)]^{\iota_{a+1}},\ldots,u^{\iota_n}).
\end{split}
\end{equation}
We can therefore replace $S(u^{\iota_1},\ldots,u^{\iota_n})$ with $S(w^{\iota_1},\ldots,w^{\iota_n})$ at the expense of errors of higher homogeneity; more precisely we can write
\begin{equation}\label{adi49.2}
S(u^{\iota_1},\ldots,u^{\iota_n})=S(w^{\iota_1},\ldots,w^{\iota_n})+\sum_{a=n+1}^{\min(3n,A)}\sum_{\zeta_1,\ldots,\zeta_a\in\{+,-\}}S'_{a,\zeta_1\ldots\zeta_a}(u^{\zeta_1},\ldots,u^{\zeta_a})+S'_{>A}
\end{equation}
for some multi-linear operators $S'_{a,\zeta_1\ldots\zeta_a}\in\mathcal{O}_{a,a-1/2}$ and an error term $S'_{>A}$ satisfying the Sobolev bounds $\|S'_{>A}\|_{H^{N_2-A-2}}\lesssim \epsilon_\infty^A B_2$. The error bounds are obtained by applying Lemma \ref{level2T} (ii) to the terms of homogeneity  $\geq A+1$ in the right-hand side of \eqref{adi49}, and using the bounds $\|\mathcal{A}_l(u)\|_{H^{N_2-l}}\lesssim \epsilon_\infty^lB_2$ and $\|\mathcal{A}_l(u)\|_{\dot{W}^{N_\infty-l,0}}\lesssim \epsilon_\infty^{l+1}$, $l\in\{2,3\}$. 

We can now apply the formula \eqref{adi49.2} recursively to the terms in the right-hand side of \eqref{adi48} to conclude that
\begin{equation}\label{adi49.3}
\partial_tw+i|\partial_x|^{1/2}w=\sum_{n=3}^A\sum_{\iota_1,\ldots,\iota_n\in\{+,-\}}N_{n,\iota_1\ldots\iota_n}(u^{\iota_1},\ldots, u^{\iota_n})+N_{>A},
\end{equation}
for some operators $N_{n,\iota_1\ldots\iota_n}\in\mathcal{O}_{n,n-1/2}$ defined by homogeneous symbols of degree $n-1/2$ and an error term $N_{>A}$ satisfying the bounds $\|W_{>A}\|_{\dot{H}^{N_2-A-2}}\lesssim \epsilon_\infty^AB_2$. This suffices to prove all the claims concerning the nonlinearities $\mathcal{N}_n(w)$ and $\mathcal{N}_{>A}$ in Proposition \ref{normalprop}, with the exception of the special claims concerning the cubic nonlinearity $\mathcal{N}_3$.

{\bf{Step 4.}} We start from the identity \eqref{adi31.5}. The argument above shows that the cubic nonli\-ne\-arity $\mathcal{N}_3(w)$ is defined by
\begin{equation}\label{adi50}
\begin{split}
&\widehat{\mathcal{N}_3(w)}(\xi)=\frac{1}{(2\pi R)^2}\sum_{\xi_1,\xi_2,\xi_3\in\Zb/R,\,\xi_1+\xi_2+\xi_3=\xi}n_{3,++-}(\xi_1,\xi_2,\xi_3)\widehat{w}(\xi_1)\widehat{w}(\xi_2)\widehat{\overline{w}}(\xi_3),\\
&n_{3,++-}:=p_{3,++-}\mathbf{1}_{{}^c\mathcal{R}}(1-\Upsilon).
\end{split}
\end{equation}
Clearly the cubic symbols $n_{3,\iota_1\iota_2\iota_3}$ vanish unless $(\iota_1\iota_2\iota_3)=(++-)$, and $n_{3,++-}$ is purely imaginary due to the formulas \eqref{adi13.9} and \eqref{adi18.1}--\eqref{adi18.8}.

To prove the $C^{1/2}$ bounds \eqref{symbolcubic} we examine the formulas \eqref{adi18.57} and the formulas in the second line of \eqref{adi18.55}. Recalling also the definitions of the domain $\mathcal{R}$ and the function $\Upsilon$ in \eqref{adi31}, it suffices to prove that if $b:\Rb^3\to\Rb$ is the function defined by
\begin{equation}\label{adi51}
\begin{split}
b(\eta,\rho,&\sigma):=\mathbf{1}_+(\eta)\mathbf{1}_+(\rho)\mathbf{1}_{-}(\sigma)\mathbf{1}_+(\eta+\rho+\sigma)(1-\Upsilon)(\eta,\rho,\sigma)\\
&\times\big\{|\eta+\rho+\sigma||\sigma|^{1/2}\big[|\sigma+\eta|+|\sigma+\rho|-|\sigma|\big]-3|\eta+\rho+\sigma|^{3/2}|\eta|^{1/2}|\rho|^{1/2}\big\},
\end{split}
\end{equation}
where $\mathbf{1}_+$ and $\mathbf{1}_-$ denote the characteristic functions of the intervals $[0,\infty)$ and $(-\infty,0]$, then
\begin{equation}\label{adi52}
|b(x)-b(y)|\lesssim |x-y|^{1/2}(|x|^2+|y|^2)\qquad\text{ for any }x,y\in\Rb^3.
\end{equation}

The definition of the function $\Upsilon$ shows that $1-\Upsilon$ vanishes if the smallest of the variables $|\eta|,|\rho|,|\sigma|,|\eta+\rho+\sigma|$ is sufficiently smaller than the second smallest of these variables. In other words, the smallest and the second smallest of these variables are proportional in the support of $1-\Upsilon$. The definition \eqref{adi51} then shows that
\begin{equation}\label{adi55}
|b(\eta,\rho,\sigma)|\lesssim \mathbf{1}_+(\eta)\mathbf{1}_+(\rho)\mathbf{1}_{-}(\sigma)\mathbf{1}_+(\eta+\rho+\sigma)(|\eta|^2+|\rho|^2+|\sigma|^2)\min(|\eta|,|\rho|, |\sigma|, |\eta+\rho+\sigma|)^{1/2}
\end{equation}
for any $\eta,\rho,\sigma\in\Rb$. In particular, the function $b$ is continuous on $\Rb^3$. 

To prove the $C^{1/2}$ bounds \eqref{adi52} we may assume by homogeneity that $|x|=1$ and $|y|\leq 2$. Letting $y=x+\delta e$ it suffices to prove that 
\begin{equation}\label{adi56}
|b(x+\delta e)-b(x)|\lesssim \delta^{1/2}\qquad\text{ for any }x,e\in\mathbb{S}^2\text{ and }\delta\in [0,4].
\end{equation}
In proving \eqref{adi56} we may assume that $\delta$ is small, $\delta\leq 1/10$. The bounds follow from \eqref{adi55} if $x=(\eta,\rho,\sigma)\in\mathbb{S}^2$ has the property that $\min(|\eta|, |\rho|, |\sigma|, |\eta+\rho+\sigma|)\leq 2\delta$. On the other hand, if $\eta,\rho,\eta+\rho+\sigma\geq 2\delta$, $\eta\leq-2\delta$, and $|x|=|\eta,\rho,\sigma|=1$ then the definition \eqref{adi51} shows that the function $b$ satisfies the stronger Lipschitz bounds
\begin{equation*}
|b(x+\delta e)-b(x)|\lesssim \delta\min(|\eta|,|\rho|, |\sigma|, |\eta+\rho+\sigma|)^{-1/2}.
\end{equation*}
This suffices to prove \eqref{adi56} in all cases, which completes the proof of Proposition \ref{normalprop}.

\section{A set of divergent diagrams}\label{SecDiv}
In this section we present a set of couples that produce divergent contributions,
\emph{in the case when $T_1\sim \epsilon^{-(8/3)-}$.} This demonstrates 
the optimality of our choice $T_1\sim \epsilon^{-(8/3)+}$ in Theorem \ref{main}, 
(at least for our current arguments).
In this section, we assume $T_1=\epsilon^{-\alpha'}$ with some fixed $\alpha'>(8/3)$.

\medskip
\textbf{Scenario}. Consider the following scenario in a tree where 
each branching node is an I-branching node and has exactly 3 children nodes. Assume $\nf_{j+1}$ is a child node of $\nf_j$ (where $0\leq j\leq 4q$), and the other children nodes $(\mf_j,\pf_j)$ of $\nf_j$ are all leaves. Assume each $\nf_j$ has sign $+$ and $(\mf_j,\pf_j)$ has sign $(-,+)$. Moreover, assuming they have the following pairing structure:
\begin{itemize}
\item $\mf_{2j}$ is paired with $\pf_{2j+2}$ for each $0\leq j\leq 2q-1$;
\item $(\mf_{2j-1},\pf_{2j-1})$ is paired with $(\pf_{2\pi(j)-1},\mf_{2\pi(j)-1})$ 
for an involution $\pi$ from $\{1,\cdots,2q\}$ to itself.
\end{itemize} An example with $q=2$ is shown in Figure \ref{fig:counter}, where $\pi =(3,4,1,2)$.
  \begin{figure}[h!]
  \includegraphics[scale=0.20]{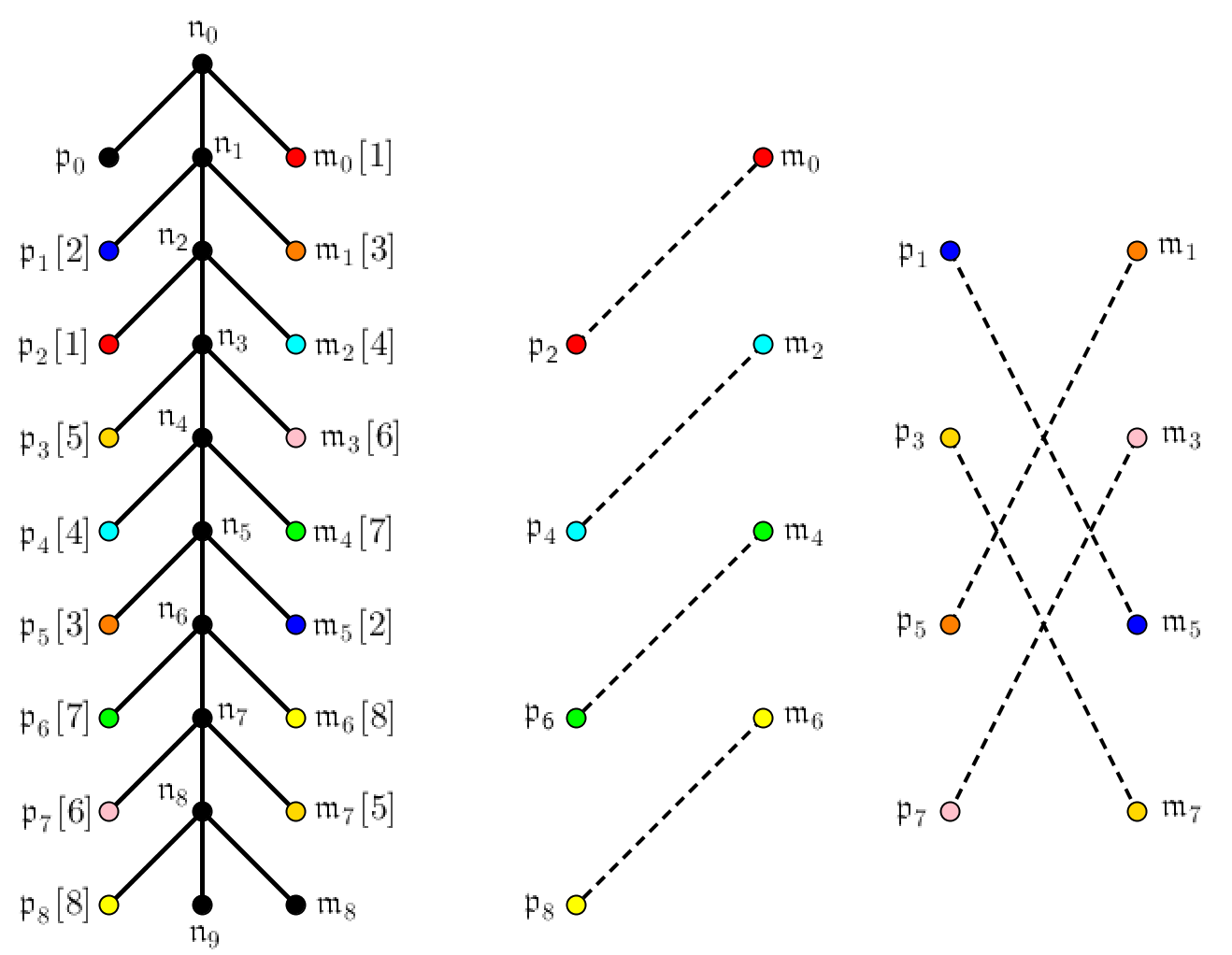}
  \caption{An example of a problematic scenario with $q=2$ and $\pi=(3,4,1,2)$. Here the leaves of the same color are paired (with the bracket after the leaf labels representing the number of its pair); for illustration, we also draw separate pictures that include the $\mf_j$ and $\pf_j$ for even and odd $j$ separately.}
  \label{fig:counter}
\end{figure}

Regarding the above example, we (heuristically) demonstrate the following points:
\smallskip

\begin{enumerate}
\item 
For an individual structure $\Rc$ as above that is contained in a couple $\Qc$, 
if its size $q$ is large enough, then the corresponding sub-expression in $\Kc_\Qc$
involving this structure $\Rc$ diverges in the limit $\epsilon\to 0$;

\smallskip
\item This divergence is a \emph{pure 1D phenomenon}; 

\smallskip
\item There does not seem to be the same cancellation between different choices 
of $\Rc$, similar to the one involving the irregular chains in Section \ref{sec.irre}.

\end{enumerate}

\medskip
{\it About (1)}:
Consider a decoration of $\Rc$ where $k_{\nf_j}=k_j$ and $(k_{\mf_j},k_{\pf_j})=(\ell_j,p_j)$; 
for simplicity we will ignore the $\varphi_{\leq K_{\mathrm{tr}}}$ cutoffs, the phase corrections $\Gamma_0$ and $\Gamma_1$, 
and the symbol $n_3$ from the equation.
Now, from the definition (\ref{defofkq}) for $\Kc_\Qc$, 
by arguments similar to the irregular chain in Section \ref{sec.irre}, we can extract its part that involves $\Rc$ as 
\begin{equation}\label{eq.exp_counter}
\widetilde{\Gc}:=\sum_{(k_j,\ell_j,p_j)}^{(*)}\prod_{j=0}^{4q-1}
	\psi(\ell_j) \psi(p_{j+1})\cdot\int_{t'>t_1>\cdots >t_{4q}>t''}\prod_{j=0}^{4q}
	e^{iT_1\Omega_jt_j}\,\mathrm{d}t_1\cdots\mathrm{d}t_{4q}.
\end{equation} Here in (\ref{eq.exp_counter}):
\begin{itemize}
\item The summation is taken over all $(k_j:1\leq j\leq 4q)$, $(m_j:0\leq j\leq 4q-1)$ and $(p_j:1\leq j\leq 4q)$, with the values of $(k_0,p_0,k_{4q+1},\ell_{4q})=(k_{\nf_0},k_{\pf_0},k_{\nf_{4q+1}},k_{\mf_{4q}})$ being fixed, and under the pairing assumptions $\ell_{2j}=p_{2j+2}\,(0\leq j\leq 2q-1)$ and $\ell_{2j-1}=p_{2\pi(j)-1}\,(1\leq j\leq 2q)$.
\item In the integral in (\ref{eq.exp_counter}), we have $t_j=t_{\nf_j}\,(1\leq j\leq 4q)$ and $(t',t'')=(t_{\nf_0},t_{\nf_{4q+1}})$, and 
\begin{equation}\label{eq.defomegacounter}\Omega_j:=|k_j|^{1/2}-|k_{j+1}|^{1/2}+|\ell_j|^{1/2}-|p_j|^{1/2}.\end{equation}
\end{itemize}

To estimate $\widetilde{\Gc}$, we restrict the decoration to
\begin{align*}
|\ell_j-p_j|=|k_j-k_{j+1}|\leq T_1^{-1} \qquad \mbox{for each \emph{odd} $j$,}
\end{align*}
and $|k_0-p_0|\leq T_1^{-1}$. Note that by pairing,
\begin{align*}
|k_2-p_2| = |k_2-m_0|\leq|k_2-k_1|+|k_1-m_0|=|\ell_1-p_1|+|k_0-p_0|\lesssim T_1^{-1},
\end{align*}
and, similarly,
\begin{align*}
|k_j-p_j|=|\ell_j-k_{j+1}|\lesssim T_1^{-1} \qquad \mbox{for each \emph{even} $j$}
\end{align*}
by induction. 
By \eqref{eq.defomegacounter}, for \emph{each} $j$ (even or odd), 
we always have $|\Omega_j|\lesssim T_1^{-1}$, so there is no oscillation in the time integral in \eqref{eq.exp_counter}. 
Therefore, ignoring the constants that depend only on $q$, 
we see the (absolute value of the) time integral in \eqref{eq.exp_counter} is bounded below by $O(1)$.

It then suffices to estimate the number of choices for the decoration $(k_j,\ell_j,p_j)$. For this, we first fix the values of $(\ell_j,p_j)$ for each \emph{odd} $j$; by pairing and the assumption $|\ell_j-p_j|\leq T_1^{-1}$, we get $(R^2T_1^{-1})^q$ choices. Once they are fixed, we then only need to fix the values of $k_j$ for even $j$ (as they uniquely determine the values of $k_j$ for odd $j$ and thus all the remaining decorations), which has $R^{2q}$ choices. This leads to the estimate
\begin{equation}\label{eq.lower_bd}
|\widetilde{\Gc}|\gtrsim (R^4T_1^{-1})^q.
\end{equation}

Finally, if we take into account the pre-factors in (\ref{defofkq}) (that involve the structure $\Rc$), which is
\begin{equation}\label{eq.power_counter}
(\epsilon R^{-1/2})^{8q}T_1^{4q}
\end{equation} 
(because the number $r(\Rc)$ of leaves in $\Rc$ equals $8q$, 
while the number $n_I(\Rc)$ of I-branching nodes in $\Rc$ equals $4q$), 
we get
\begin{equation}\label{eq.total_counter}
(R^4T_1^{-1})^q\cdot (\epsilon R^{-1/2})^{8q}T_1^{4q}= (T_1^3\epsilon^8)^q=\epsilon^{(8-3\alpha')q}
\end{equation} 
which diverges as $\epsilon\to 0$, assuming $\alpha'>8/3$.

\medskip
About (2):
Suppose we work in dimension $d\geq 2$ instead of $d=1$, then the bound \eqref{eq.lower_bd} should be replaced by
\begin{equation}\label{eq.lower_bd_1}
|\widetilde{\Gc}|\gtrsim (R^{4d}T_1^{-d})^q,
\end{equation} and the pre-factor in (\ref{eq.power_counter}) should be replaced by
\begin{equation}\label{eq.power_counter_2}
(\epsilon R^{-d/2})^{8q}T_1^{4q},
\end{equation} hence instead of (\ref{eq.total_counter}) we have
\begin{equation}\label{eq.total_counter_1}
(R^{4d}T_1^{-d})^q\cdot (\epsilon R^{-d/2})^{8q}T_1^{4q}= (T_1^{4-d}\epsilon^8)^q,
\end{equation} 
which is bounded as long as $d\geq 2$ and $T_1\leq T_{\mathrm{kin}}:=\epsilon^{-4}$
(which allows us to cover the conjectured kinetic time).

\medskip
About (3):
A key feature of the cancellation in Section \ref{sec.irre} is the following:
suppose two irregular chains $\Hc$ and $\Hc'$ are twists of each other, 
then there exists a bijection between decorations of $\Hc$ and decorations of $\Hc'$ 
that preserves the \emph{exact expressions of $\Omega_\nf$ for each branching node $\nf$}. 
This relies on the fact that the molecules for $\Hc$ and $\Hc'$ are 
\emph{exactly the same up to directions of all the bonds}. 
However, by examining the structure of the molecules corresponding to $\Rc$, 
one can see that there does not exist such a correspondence between different choices of $\Rc$. 
Therefore it is unlikely the terms for different $\Rc$ can exhibit cancellations 
similar to those already observed in the literature.

For example, consider the (partial) molecule $\Mb$ corresponding to the structure in
Figure \ref{fig:counter}, which has the shape as in Figure \ref{fig:count_mol}.
  \begin{figure}[h!]
  \includegraphics[scale=0.25]{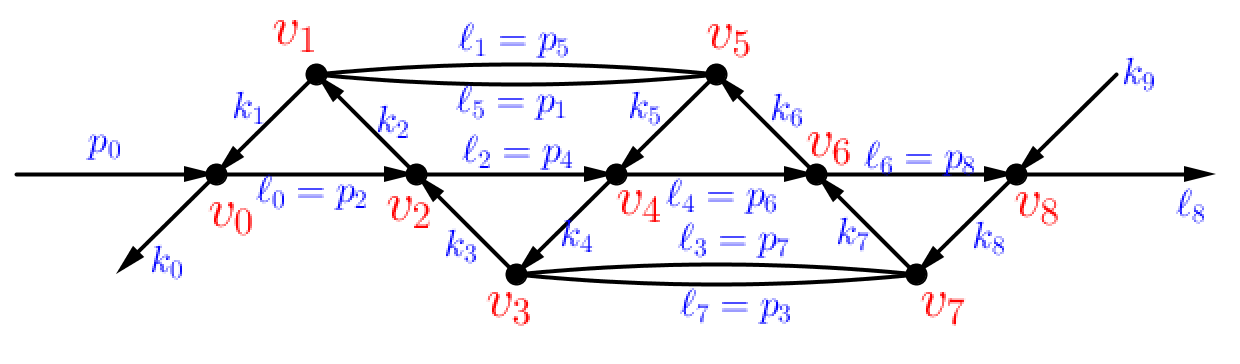}
  \caption{The molecule $\Mb$ corresponding to the structure $\Rc$ in Figure \ref{fig:counter}. 
  Here each atom $v_j$ corresponds to the I-branching node $\nf_j$, 
  with each edge marked by the corresponding vector in the decoration. 
  Each of the two double bonds is understood as consisting of two bonds of opposite direction.}
  \label{fig:count_mol}
\end{figure}

Now, suppose another structure $\Rc'$ corresponds to the same molecule $\Mb$ in Figure \ref{fig:count_mol}, 
with the same directions of bonds etc. 
Assume $\Rc'$ has the same structure as $\Rc$ in Figure \ref{fig:counter}, 
where the atoms $v_j$ in $\Mb$ correspond to I-branching nodes $\nf_j$ in $\Rc$ and $\nf_j'$ in $\Rc'$,
where (say) $\nf_0$ and $\nf_0'$ both have sign $+$. 
Then, since the bond between $v_0$ and $v_1$ goes from $v_1$ to $v_0$, 
by Definition \ref{defmol}, we know that both $\nf_1$ and $\nf_1'$ must have sign $+$. 
In the same way, all nodes $\nf_j$ and $\nf_j'$ 
must have sign $+$\footnote{In contrast, for two congruence irregular chains, 
while $\nf_0$ must have the same sign, the subsequent $\nf_j$ may not have the same sign.}, 
so by symmetry we may assume $\nf_j$ is the second child node of $\nf_{j-1}$, 
and $\nf_j'$ is the second child of $\nf_{j-1}'$. 
By considering the other bonds in $\Mb$ and proceeding in this way, 
we see that $\Rc$ must be equal to $\Rc'$ (up to symmetry) if they correspond to the same molecule,
thus they give rise to the same terms that do not cancel.


\end{document}